\documentclass[11pt,reqno]{amsart}
\usepackage{amsmath, amsfonts, color, amsthm, graphics, amssymb,url}
\usepackage{stmaryrd}
\usepackage[notcite,notref,final]{showkeys}

\setlength\topmargin{0in}
\setlength\headheight{0in}
\setlength\headsep{0.3in}
\setlength\textheight{8.8in}
\setlength\textwidth{6.5in}
\setlength\oddsidemargin{0in}
\setlength\evensidemargin{0in}

\usepackage{cleveref}


\crefname{section}{Section}{Sections}
\crefformat{section}{#2Section~#1#3} 
\Crefformat{section}{#2Section~#1#3} 

\crefname{subsection}{}{Subsections}
\crefformat{subsection}{\S#2#1#3} 
\Crefformat{subsection}{\S#2#1#3}

\crefname{definition}{Definition}{Definitions}
\crefformat{definition}{#2Definition~#1#3} 
\Crefformat{definition}{#2Definition~#1#3} 

\crefname{example}{Example}{Examples}
\crefformat{example}{#2Example~#1#3} 
\Crefformat{example}{#2Example~#1#3} 

\crefname{examplenodiamond}{Example}{Examples}
\crefformat{examplenodiamond}{#2Example~#1#3} 
\Crefformat{examplenodiamond}{#2Example~#1#3} 

\crefname{remark}{Remark}{Remarks}
\crefformat{remark}{#2Remark~#1#3} 
\Crefformat{remark}{#2Remark~#1#3} 

\crefname{remarknodiamond}{Remark}{Remarks}
\crefformat{remarknodiamond}{#2Remark~#1#3} 
\Crefformat{remarknodiamond}{#2Remark~#1#3} 

\crefname{convention}{Convention}{Conventions}
\crefformat{convention}{#2Convention~#1#3} 
\Crefformat{convention}{#2Convention~#1#3} 

\crefname{lemma}{Lemma}{Lemmas}
\crefformat{lemma}{#2Lemma~#1#3} 
\Crefformat{lemma}{#2Lemma~#1#3} 

\crefname{definition-lemma}{Definition-Lemma}{Definition-Lemmas}
\crefformat{definition-lemma}{#2Definition-Lemma~#1#3} 
\Crefformat{definition-lemma}{#2Definition-Lemma~#1#3} 

\crefname{proposition}{Proposition}{Propositions}
\crefformat{proposition}{#2Proposition~#1#3} 
\Crefformat{proposition}{#2Proposition~#1#3} 

\crefname{corollary}{Corollary}{Corollaries}
\crefformat{corollary}{#2Corollary~#1#3} 
\Crefformat{corollary}{#2Corollary~#1#3} 

\crefname{theorem}{Theorem}{Theorems}
\crefformat{theorem}{#2Theorem~#1#3} 
\Crefformat{theorem}{#2Theorem~#1#3} 

\crefname{assumption}{Assumption}{Assumptions}
\crefformat{assumption}{#2Assumption~#1#3} 
\Crefformat{assumption}{#2Assumption~#1#3}

\crefname{notation}{Notation}{Notation}
\crefformat{notation}{#2Notation~#1#3} 
\Crefformat{notation}{#2Notation~#1#3}

\crefname{hypothesis}{Hypothesis}{Hypotheses}
\crefformat{hypothesis}{#2Hypothesis~#1#3} 
\Crefformat{hypothesis}{#2Hypothesis~#1#3}

\crefname{equation}{}{}
\crefformat{equation}{(#2#1#3)} 
\Crefformat{equation}{(#2#1#3)}

\crefname{align}{}{}
\crefformat{align}{(#2#1#3)} 
\Crefformat{align}{(#2#1#3)}

\crefname{proofstep}{Step}{Steps}
\crefformat{proofstep}{#2Step~#1#3} 
\Crefformat{proofstep}{#2Step~#1#3}

\usepackage{tikz}
\usetikzlibrary{arrows,calc,decorations.pathreplacing,decorations.markings,intersections,shapes.geometric,through,fit,shapes.symbols,positioning,decorations.pathmorphing}


\numberwithin{equation}{section}
 \setcounter{tocdepth}{2}
 \theoremstyle{plain}
\newtheorem{theorem}[equation]{Theorem}

\newtheorem{definition-prop}[equation]{Definition-Proposition}
\newtheorem{lemma}[equation]{Lemma}

\newtheorem{corollary}[equation]{Corollary}
\newtheorem{proposition}[equation]{Proposition}

\theoremstyle{definition}
\newtheorem{definition}[equation]{Definition}
\newtheorem{notation}[equation]{Notation}

\newtheorem{remark}[equation]{Remark}

\newtheorem{hypothesis}[equation]{Hypothesis}
\newtheorem{example}[equation]{Example}

\newcommand{\mc}{\mathcal}

\newcommand{\A}{\mathbb{A}}
\newcommand{\B}{\mathbb{B}}
\newcommand{\I}{\mathbb{I}}

\newcommand{\Hom}{\textrm{Hom}}
\newcommand{\Ext}{\textrm{Ext}}

\newcommand{\ts}{\textstyle}

\newcommand{\gldim}{\textrm{gldim}}
\newcommand{\id}{\textrm{id}}
\newcommand{\ev}{\textrm{ev}}
\newcommand{\kk}{\Bbbk}

\newcommand\bA{\mathbb A}
\newcommand\bB{\mathbb B}

\newcommand\bE{\mathbb E}
\newcommand\bF{\mathbb F}

\newcommand\bI{\mathbb I}

\newcommand\bP{\mathbb P}
\newcommand\bQ{\mathbb Q}

\newcommand\bU{\mathbb U}

\newcommand\cB{\mathcal B}

\newcommand\cF{\mathcal F}
\newcommand\cG{\mathcal G}
\newcommand\cH{\mathcal H}

\newcommand\cO{\mathcal O}

\newcommand\cS{\mathcal S}

\newcommand\cW{\mathcal W}

\newcommand\e{{\sf e}}
\newcommand\f{{\sf f}}

\input xy
\xyoption{all}

\begin{document}

\title[On quantum groups associated to a pair of preregular forms]
{On quantum groups associated to a pair\\ of preregular forms}

\author{Alexandru Chirvasitu}
\address{Chirvasitu: Dept. of Mathematics, University of Washington, Seattle, Washington 98195, USA}
\email{chirva@uw.edu }

\author{Chelsea Walton}
\address{Walton: Dept. of Mathematics, Temple University, Philadelphia, Pennsylvania 19122, USA}
\email{notlaw@temple.edu}

\author{Xingting Wang}
\address{Wang: Dept. of Mathematics, Temple University, Philadelphia, Pennsylvania 19122, USA}
\email{xingting.wang@temple.edu}

  \begin{abstract}
We define the universal quantum group $\mc{H}$ that preserves a pair of Hopf comodule maps, whose underlying vector space maps are preregular forms defined on dual vector spaces. This generalizes the construction of Bichon and Dubois-Violette (2013), where the target of these comodule maps are the ground field. We also recover the quantum groups introduced by Dubois-Violette and Launer (1990), by Takeuchi (1990), by Artin, Schelter, and Tate (1991), and by Mrozinski (2014), via our construction. As a consequence, we obtain an explicit presentation of a universal quantum group that coacts simultaneously on a pair of $N$-Koszul Artin-Schelter regular algebras with arbitrary quantum determinant.
 \end{abstract}
 
 \subjclass[2010]{16S10, 81R50, 16E65, 16T05}

\keywords{Artin-Schelter regular, homological codeterminant, $N$-Koszul, preregular form, twisted superpotential, universal quantum group}

 \maketitle
 
 \tableofcontents 
 
 \bibliographystyle{abbrv}  
 \section{Introduction}
 Take  $\kk$ to be our base field and let  an unadorned $\otimes$ denote $\otimes_\kk$. All algebraic structures are over $\kk$. 
Our motivation for this work is to determine when a Hopf algebra, that coacts universally on an $N$-Koszul Artin-Schelter regular algebra  $A$ of dimension $d \geq 3$, shares the same homological and ring-theoretic behavior of the comodule algebra $A$. 
Recall that an {\it Artin-Schelter \textnormal{(}AS\textnormal{)}-regular algebra $A$ of dimension $d$} is a (noncommutative) homological analogue of a commutative polynomial ring;  it is, by definition, a connected $\mathbb{N}$-graded algebra $A$  that is {\it AS-Gorenstein} (has injective dimension $d < \infty$ on both sides with $\Ext^i_A(_A\kk, {}_AA) \cong \Ext^i_A(\kk_A,A_A)= \delta_{i,d} ~\kk$), and has global dimension $d$ on both sides. We refer the reader to \cite{Berger:Koszulity} for a discussion of $N$-Koszulity.

\medbreak This study is a continuation of the last two authors' work \cite{Wsquared},  where Hopf algebras that coact universally  on ($N$-Koszul) AS-regular algebras of dimension $2$ were examined. Like in \cite{Wsquared}, here we must consider a nice Hopf quotient of the universal quantum group that coacts on a higher dimensional Artin-Schelter regular algebra to achieve our goal (cf. \cite[Introduction ($\star\star$)]{Wsquared}). For instance, the universal quantum group that coacts on a commutative polynomial ring in two variables $\kk[u,v]$ is too large to reflect the homological and ring-theoretic behavior  of $\kk[u,v]$ \cite{RvdB2}.\footnote{There are fruitful representation-theoretic directions to pursue in this case; see, for example, recent work of Raedschelders and van den Bergh \cite{RvdB1}.}
To remedy this issue, we consider the universal quantum group $\cO_{A,A'}$ that coacts on {\it simultaneously} on a pair of $N$-Koszul Artin-Schelter regular algebras $A, A'$ (\cref{OA(e)A(f)}), which is a Hopf
quotient of each of Manin's universal quantum groups $\cO_A$ of $A$ and  $\cO_{A'}$ of $A'$ \cite{Manin:QGNCG} (\cref{OA(e) OA(f)}). (See \cite[Section~2.2]{CWZ:Nakayama} for a discussion of how one extends the definition of Manin's universal quantum group to the non-quadratic case.)

\medbreak We proceed by considering Dubois-Violette's framework of {\it preregular forms} (Definition~\ref{preregular}) \cite{DV2005, DV2007} to obtain a presentation of the universal quantum groups $\cO_A$, $\cO_{A'}$, and $\cO_{A,A'}$, where the coactions on $A, A'$ have arbitrary quantum determinant, or more precisely, arbitrary {\it homological codeterminant} (\cref{hcodet}). Namely, since $A, A'$ are $N$-Koszul AS-regular, the algebras are realized as {\it superpotential algebras} (\cref{superpotential alg}) associated to {\it preregular} $m$-linear forms $ \e ,  \f $, respectively (Proposition~\ref{propA}). (In particular, these forms are also realized as duals  of {\it twisted superpotentials} in some sense.) See \cref{sec:prereg} for further details. 

\medbreak The reader may wish to refer to Notation~\ref{not} at this point. Now the universal quantum groups of interest in this work are the following.

\begin{theorem}[Definitions~\ref{D:A(e)},~\ref{D:A(f)},~\ref{def:Hef} and Propositions~\ref{P:H(e)},~\ref{P:UH(e)},~\ref{P:H(f)},~\ref{P:UH(f)},~\ref{P:H(ef)},~\ref{P:UHef}] \label{T:intro1} \textcolor{white}{,}\\ Let $ \e $ and $ \f $ be preregular $m$-linear  forms on $n$-dimensional vector spaces $V$, $V^*$, respectively.
\begin{enumerate}
\item Take $\cH( \e )$ \textnormal{(}resp., $\cH( \f )$\textnormal{)} to be the universal Hopf algebra that preserves any right \textnormal{(}resp., left\textnormal{)} Hopf comodule map $\hat{\e}$ from $V^{\otimes m}$ \textnormal{(}resp., $\hat{\f}$ from $V^{* \otimes m}$\textnormal{)} to a one-dimensional comodule, whose underlying vector space map is $ \e $ \textnormal{(}resp., is $ \f $\textnormal{)}.  Then, $\cH( \e )$ \textnormal{(}resp., $\cH( \f )$\textnormal{)} has a finite presentation; in particular, its underlying $\kk$-algebra structure has $2n^2 + 2$ generators and finitely many relations. 

\medskip

\item Take $\cH( \e , \f )$ to be the universal Hopf algebra that preserves any right Hopf comodule map from $V^{\otimes m}$ to a one-dimensional comodule, and also the induced left Hopf comodule map from $V^{* \otimes m}$ to a one-dimensional comodule, whose underlying vector space maps are $ \e $ and~$\f $, respectively. Then, $\cH( \e, \f )$ has a finite presentation; in particular, its underlying $\kk$-algebra structure has $n^2 + 4$ generators and finitely many relations. 
\end{enumerate}
\end{theorem}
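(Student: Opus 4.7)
The plan is to construct each of the Hopf algebras in the theorem by explicit generators and relations, generalizing the Bichon--Dubois-Violette construction that is recovered when the target comodule is the trivial $\kk$; the finite presentation will then be read off directly, and the universal property will be verified by checking that any competing Hopf algebra with the required comodule-map-preservation property factors uniquely through the one constructed. The two parts are organized separately (matching the definitions and propositions cited in the statement) so that (b) can exploit (a) through the duality between $V$ and $V^*$.

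For part (a) I would construct $\cH(\e)$ in two stages. First, form the bialgebra with $n^2$ generators $\{a_{ij}\}_{1\le i,j\le n}$ modelling a right coaction $\rho(v_j)=\sum_i v_i\otimes a_{ij}$, together with a grouplike $D$ modelling the coaction on the one-dimensional target. The condition that $\hat{\e}$ is a comodule map translates, after expanding $\e$ in coordinates, into the family of relations
\[
\sum_{i_1,\dots,i_m}\e_{i_1\cdots i_m}\,a_{i_1 j_1}\cdots a_{i_m j_m} \;=\; \e_{j_1\cdots j_m}\,D.
\]
Second, to upgrade to a Hopf algebra universally one adjoins an additional $n^2$ generators $\{b_{ij}\}$ modelling $S(a_{ij})$ together with an inverse grouplike $D^{-1}$, and imposes the antipode axioms in matrix form plus the mirror preservation relations obtained by applying $S$ to the relations above. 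This yields $2n^2+2$ generators and finitely many relations; the key algebraic fact making this a bona fide Hopf algebra, rather than a collapsed quotient, is the preregularity of $\e$, which ensures sufficient non-degeneracy to recover $b_{ij}$ as a closed expression in the $a_{ij}$ and $D^{-1}$ (via the contraction $\iota$ appearing in the definition of preregular). The construction of $\cH(\f)$ is formally identical with $V$ replaced by $V^*$ and right coactions replaced by left ones.

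For part (b) I would define $\cH(\e,\f)$ so as to avoid duplicating matrix generators: since $\e$ is defined on $V$ and $\f$ on $V^*$, a single matrix $\{a_{ij}\}$ encoding the coaction on $V$ also determines the coaction on $V^*$ via the antipode, so the only additional data needed beyond the $n^2$ matrix generators are two grouplikes $D_\e, D_\f$ for the two one-dimensional targets together with their formal inverses $D_\e^{-1}, D_\f^{-1}$, for a total of $n^2+4$ generators. The relations are the preservation relations for $\e$ (written in the $a_{ij}$ and $D_\e$) together with those for $\f$ (which, through the duality, can also be written in the $a_{ij}$, with $D_\f$ replacing $D$). Universality then follows by showing that the two natural Hopf algebra maps $\cH(\e)\to\cH(\e,\f)$ and $\cH(\f)\to\cH(\e,\f)$ given by the inclusion of generators satisfy the expected universal factorization: any Hopf algebra preserving both comodule maps gives rise to compatible data on $V$ and $V^*$, which by the duality identification factors through the single-matrix presentation.

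The main obstacle I anticipate is not the bookkeeping on generators but the verification that the constructed algebras really carry Hopf structures and really possess the stated universal property. Concretely, one must show that the relations are consistent (the algebras are nontrivial), that the candidate antipode maps defined on generators extend to anti-homomorphisms respecting the preservation relations, and that in the $\cH(\e,\f)$ case the identification of the $V^*$-coaction with the dual of the $V$-coaction is compatible with $\f$. All three points hinge on preregularity: this hypothesis provides the non-degenerate contractions needed to express the antipode of generators explicitly and to rewrite the $\f$-preservation relations in terms of the $a_{ij}$, so the proof will reduce to careful manipulations with these contractions together with a routine check of the universal property against an arbitrary competing Hopf algebra.
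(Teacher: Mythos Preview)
Your proposal is essentially the paper's approach and would succeed, but one point in part (b) is stated in a way that obscures the actual mechanism and could lead you into circularity if carried out literally.

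You write that the coaction on $V^*$ is ``determined via the antipode'' and that this is why a single matrix of generators suffices. That is not how the paper (or the statement) sets things up: the left coaction on $V^*$ is simply $\theta_i\mapsto \sum_j u_{ij}\otimes\theta_j$, using the \emph{same} matrix entries as the right coaction on $V$, with no antipode involved. The $\f$-preservation relation is then written directly in the $u_{ij}$ as $\sum_{i_1,\dots,i_m}\f_{i_1\cdots i_m}u_{j_1 i_1}\cdots u_{j_m i_m}=\f_{j_1\cdots j_m}{\sf D}_\f^{-1}$. If you instead try to pass through $S(u_{ij})$ you must already know the antipode before you have finished presenting the algebra, which is circular.

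The genuine reason no $b_{ij}$ generators are needed in $\cH(\e,\f)$---and this is the one nontrivial point in part~(b)---is that the \emph{pair} of relations above forces $\mathbb U$ to have both a left inverse (built from a polar form $\widetilde\e$ and ${\sf D}_\e^{-1}$, using preregularity of $\e$) and a right inverse (built from $\widetilde\f$ and ${\sf D}_\f$, using preregularity of $\f$); a left and right inverse of the same matrix automatically coincide, and this common inverse gives $S(u_{ij})$ explicitly in the $u_{ij}$. This is the content of the paper's Lemma preceding the proposition that $\cH(\e,\f)$ is a Hopf algebra. By contrast, in $\cH(\e)$ alone the form $\e$ supplies only a left inverse, which is why the $b_{ij}$ must be adjoined there (your part~(a) handles this correctly). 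Once you make this adjustment, the rest of your outline---checking that the antipode extends to an anti-homomorphism preserving the relations, and verifying universality against an arbitrary $K$---matches the paper exactly.
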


 We also provide a description of the quantum group in part (b) as a pushout of the quantum groups in part (a); see Theorem~\ref{push}. 

\medbreak Next, we consider coactions on pairs of $N$-Koszul AS-regular algebras. See Definition~\ref{D:Sym} for the notion of a {\it balanced} simultaneous coaction on a pair of such algebras.

\begin{theorem} \label{T:intro2} Retain the notation of Theorem~\ref{T:intro1}. Take $N$-Koszul Artin-Schelter algebras \linebreak $A=A( \e ,N)$ and $A'=A( \f ,N)$ associated to preregular multilinear forms $ \e $ and $ \f $ that are generated as algebras by $V^*$ and $V$, respectively. Then,
\begin{enumerate}
\item[(a)] \textnormal{[}Proposition~\ref{HeHf-coacts}, \cref{pr.univ}\textnormal{]} $\cH( \e )$  and $\cH( \f )$ coact universally on $A$ and $A'$ from the left and right, respectively, that is, $\cO_A = \cH( \e )$ and $\cO_{A'} = \cH( \f )$ as Hopf algebras; and 
\medskip

\item[(b)] \textnormal{[}\cref{Hef-coacts}(a), \cref{H-Oisom}(a)\textnormal{]}  $\cH( \e , \f )$ coacts universally on $A$ and $A'$  from the left and right in a balanced fashion, that is, $\cO_{A,A'} = \cH( \e ,  \f )$ as Hopf algebras.
\end{enumerate}
\end{theorem}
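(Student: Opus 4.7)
The plan is to prove both parts by exhibiting Hopf algebra morphisms in two directions and invoking the corresponding universal properties. For part~(a), I focus on the identity $\cO_A = \cH(\e)$; the statement $\cO_{A'} = \cH(\f)$ is symmetric. First, I would verify that $\cH(\e)$ coacts on $A = A(\e,N)$ from the left. The presentation of $\cH(\e)$ in \cref{T:intro1}(a) encodes a right coaction of $\cH(\e)$ on $V$, and dualization yields a left coaction on $V^*$ extending canonically to the tensor algebra $T(V^*)$. To descend it to $A$, I must show that the degree-$N$ relation subspace $R \subseteq (V^*)^{\otimes N}$ is a subcomodule; by \cref{propA}, $R$ is obtained from the preregular form $\e$ (viewed inside $(V^*)^{\otimes m}$) by contractions with elements of $V^{\otimes(m-N)}$, and combined with the hypothesis that $\hat{\e}: V^{\otimes m} \to \kk_\sigma$ is a right comodule map to a one-dimensional comodule, the scalar nature of the twist by $\sigma$ ensures that these contractions stay inside $R$. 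The universal property of $\cO_A$ then yields a surjection $\cO_A \twoheadrightarrow \cH(\e)$.

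For the opposite direction, let $H$ be any Hopf algebra coacting on $A$ from the left in a grading-preserving way. Its degree-one restriction is a coaction on $V^*$, whose dual is a right coaction of $H$ on $V$ extending to $V^{\otimes m}$. Since $R$ is an $H$-subcomodule of $(V^*)^{\otimes N}$, and the preregularity of $\e$ lets one recover $\e$ essentially uniquely from $R$ via the superpotential reconstruction, the form $\e$ must itself be an $H$-comodule map $V^{\otimes m} \to \kk_\tau$ for some one-dimensional $H$-comodule $\kk_\tau$, with $\tau$ coinciding with the homological codeterminant of \cref{hcodet}. The universal property of $\cH(\e)$ then produces a Hopf map $\cH(\e) \to H$; specializing to $H = \cO_A$ gives a surjection inverse to the previous one, so $\cO_A = \cH(\e)$.

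For part~(b), I would combine part~(a) with the pushout description in \cref{push}. Since $\cH(\e,\f)$ admits natural Hopf maps to both $\cH(\e)$ and $\cH(\f)$, applying part~(a) endows $A$ and $A'$ with $\cH(\e,\f)$-coactions from the left and right, respectively; the compatibility between the two one-dimensional comodules built into the definition of $\cH(\e,\f)$ is exactly the balancedness condition of \cref{D:Sym}. Conversely, a balanced coaction by a Hopf algebra $H$ on $(A,A')$ induces Hopf maps $\cH(\e) \to H$ and $\cH(\f) \to H$ by part~(a) that agree on the shared one-dimensional twist, and the pushout property forces them to factor through a unique Hopf map $\cH(\e,\f) \to H$. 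Taking $H = \cO_{A,A'}$ yields $\cO_{A,A'} = \cH(\e,\f)$.

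The main obstacle lies in the first direction of part~(a): proving concretely that the superpotential relations of $A(\e,N)$ are preserved by the coaction of $\cH(\e)$. This amounts to commuting the contraction operation that produces $R$ from $\e$ with the $\cH(\e)$-coaction and tracking the cancellation of the one-dimensional twist $\kk_\sigma$. Preregularity of $\e$ is the essential ingredient that makes these contractions nondegenerate and, symmetrically, makes the reconstruction of $\e$ from $R$ used in the second direction well-defined up to scalar.
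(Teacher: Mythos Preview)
Your argument for part~(a) is essentially the paper's: one direction is \cref{T:CoactionA(e)}((e)$\Rightarrow$(a)) establishing that $\cH(\e)$ coacts on $A(\e,N)$, and the converse is \cref{T:CoactionA(e)}((a)$\Rightarrow$(c)) together with the universal property \cref{P:UH(e)}(d). Your description of the ``main obstacle'' (that $R$ is obtained from $\e^*$ by contractions, hence is a subcomodule once $\kk\e^*$ is) is exactly the content of the implication (c)$\Rightarrow$(b) in \cref{T:CoactionA(e)}.

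Part~(b), however, contains two genuine errors. First, the Hopf maps go the other way: by \cref{C:QHef} there are quotient maps $\phi_\e:\cH(\e)\to\cH(\e,\f)$ and $\phi_\f:\cH(\f)\to\cH(\e,\f)$, not maps from $\cH(\e,\f)$ to $\cH(\e)$ and $\cH(\f)$. With the correct direction, pushing forward the coactions of part~(a) along $\phi_\e,\phi_\f$ does give the balanced $\cH(\e,\f)$-coaction on $(A,A')$. Second, and more seriously, you have misidentified the balancedness condition of \cref{D:Sym}: it requires that the left coaction on $A$ and the right coaction on $A'$ use the \emph{same matrix of elements} $(k_{ij})$ in $K$, not that the two one-dimensional comodules agree. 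Accordingly, in the pushout description of \cref{push} the amalgamation is over the free Hopf algebra $\cF$ on the $n\times n$ matrix coalgebra (\cref{D:FreeH}), not over a group-like element; a balanced coaction yields maps $\cH(\e)\to K$ and $\cH(\f)\to K$ that agree on the $a_{ij}$'s, i.e.\ on the image of $\cF$, and it is this agreement that the pushout property uses. The grouplike elements ${\sf D}_\e$ and ${\sf D}_\f$ need not coincide in $\cH(\e,\f)$ (cf.\ \cref{C:central}), so your ``shared one-dimensional twist'' does not exist in general.
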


We recover several quantum groups in the literature as special cases of our Hopf algebra $\cH( \e ,  \f )$, including quantum groups of  Artin, Schelter, and Tate \cite{AST}, of Bichon and Dubois-Violette \cite{BDV}, of Dubois-Violette and Launer \cite{DVL}, and of Takeuchi \cite{Takeuchi}. Naturally, the commutative Hopf algebra $\cO(GL_n)$ also arises in our construction; this is discussed in \cref{polynomial}. See \cref{oldexamples} for details. 

\medbreak In fact, $\cH( \e ,  \f )$ is a multilinear version of Mrozinski's $GL(2)$-like quantum group \cite{Mrozinski} for generic $ \e $ and $ \f $; in the generic case, we denote $\cH$ by $\cG( \e ,  \f )$ and obtain a $GL(n)$-like quantum group (using Corollary~\ref{C:central} with Notation~\ref{N:odot} in Definition~\ref{D:G(e,f)}). 

\medbreak We also obtain results similar to Theorem~\ref{T:intro2} for $SL(n)$-like quantum groups, which we  denote by $\cS(\e)$, $\cS(\f)$ and $\cS( \e , \f )$ (Definitions~\ref{S(e) S(f)} and \ref{D:S(e,f)}); namely, see Definitions~\ref{OA(e) OA(f)}(b,d) and~\ref{OA(e)A(f)}(b), along with Proposition~\ref{Hef-coacts}(b) and Theorems~\ref{pr.univ} and~\ref{H-Oisom}(b). These universal quantum groups (under a compatibility condition for $\cS( \e , \f )$) are {\it cosovereign} due to Theorem~\ref{pr.cosv_SeSf} (cf. \cite[Section~5]{cosv}). It means that there exists a monoidal natural isomorphism between the identity and the double dual functor on the category of their finite-dimensional (right or left) modules. These $SL(n)$-like quantum groups, it turns out, form a natural setting for defining and studying cyclic cohomology \cite{CM,crainic}. 

\medbreak Returning to our original goal, we now have a candidate (namely,  $\cH( \e , \f )$) for a universal quantum group that reflects the behavior of the $N$-Koszul Artin-Schelter regular algebras upon which it coacts. We provide results on:
\medskip

\begin{itemize}
\item how to compute the homological codeterminants of the $\cH( \e )$-coaction on $A=A( \e )$ and of $\cH( \f )$-coaction on $A'=A( \f )$ (and thus of the $\cH( \e , \f )$-coactions on $A$, $A'$) (\cref{thm:codet});
\medskip

\item when the homological codeterminants of the $\cH( \e , \f )$-coactions on both $A$ and $A'$ are central (Corollary~\ref{C:central}(c)); and 
\medskip

\item  when the Hopf algebra $\cH( \e , \f )$ is involutory (\cref{Hef-invol}).
\end{itemize}
\medskip

The first result is a generalization of \cite[Theorem~2.1]{CKWZ}, the $d=2$ case.  We also believe that there is a connection between Jing-Zhang's study of {\it quantum hyperdeterminants} \cite{JingZhang} and the homological codeterminants used in this work; this is worth further investigation.

\medbreak Finally, by using $\cH( \e , \f )$,  we present new examples of universal quantum groups  that coact on centain $N$-Koszul Artin-Schelter regular algebras of global dimension $d \geq 3$ in \cref{sec:examples}. In this direction, connections between our work and the L\"{u}-Mao-Zhang's \cite{LMZ} study of finite-dimensional Hopf actions of AS-regular algebras of dimension 3 is an avenue for future research.

  
\section{Preregular forms, twisted superpotentials, and $N$-Koszulity} \label{sec:prereg}
 
Let $m \geq N \geq 2$ be integers. Let $Z$ be a finite-dimensional $\kk$-vector space and define the linear map $c: Z^{\otimes m} \to Z^{\otimes m}$ by
 $$ c(z_1 \otimes z_2 \otimes \cdots \otimes z_{m-1} \otimes z_m) := 
 z_m \otimes z_1 \otimes \cdots \otimes z_{m-2} \otimes z_{m-1}.$$
 for any $z_1,\dots,z_m\in Z$.

 \begin{definition}[preregular form] \label{preregular}
We say that an $m$-linear form ${\sf t}: Z^{\otimes m}\to \kk$ on $Z$ is {\it preregular} if it satisfies the following conditions.
\begin{itemize}
\item[(a)] If ${\sf t}(z_1,z_2,\dots,z_m)=0$ for any $z_2,\dots,z_{m}\in Z$, then $z_1=0$.
\item[(b)] ${\sf t}$ is $\phi$-cyclic for some $\phi \in GL(Z)$; that is
\[
\begin{array}{rl}
{\sf t}(z_1, \dots, z_{m-1},z_m) & =\left({\sf t}\circ c\circ (\id^{\otimes(m-1)} \otimes \phi)\right)(z_1 \otimes \cdots \otimes z_{m-1} \otimes z_m)\\
&  =~ {\sf t}\left(\phi(z_m), z_1, \dots, z_{m-1}\right)
\end{array}
\]
for any $z_1, \dots, z_{m} \in Z$. In this case, we  write ${\sf t}=({\sf t}, \phi)$.
\end{itemize}
We say that ${\sf t}$ is {\it cyclic} if (b) holds for some $\phi$.
\end{definition}

The dual notion of a preregular form is a {\it twisted superpotential}, as defined in Mori-Smith \cite{MoriSmith}. 

\begin{definition}[twisted superpotential] \label{twist-superpot}
Take ${\sf s} \in Z^{\otimes m}$ and $\phi \in GL(Z)$. We call ${\sf s}$ a 
\begin{enumerate}
\item {\it superpotential} if $c({\sf s}) = {\sf s}$; 
\item {\it $\phi$-twisted superpotential} if $(\phi \otimes \id^{\otimes(m-1)})c({\sf s}) = {\sf s}$; and 
\item {\it twisted superpotential} if it is a $\phi$-twisted superpotential for some $\phi$; here, we write ${\sf s}=({\sf s}, \phi)$.
\end{enumerate}
\end{definition}

With the identification $Z^{\otimes m} \cong ((Z^*)^{\otimes m})^*$, we have that the set of $\phi$-cyclic  $m$-linear forms on $Z^*$ corresponds bijectively to the set of $\phi$-twisted superpotentials in $Z^{\otimes m}$, as will see in \cref{super-prereg} below. Now consider the following algebra.

\begin{definition}[superpotential algebra $A({\sf s},N)$] \label{superpotential alg}
Given a twisted superpotential ${\sf s} \in Z^{\otimes m}$, the {\it superpotential algebra} associated to ${\sf s}$ is 
$$A({\sf s}, N)  = T Z/ \partial^{m-N}(\kk{\sf s}),$$
where $TZ$ is the tensor algebra on $Z$ and  $\partial(\kk{\sf s}) = \{(\nu \otimes \id^{\otimes(m-1)})(\alpha {\sf s}) ~|~ \nu \in Z^*, \alpha \in \kk\}$ and $\partial^{i+1}(\kk{\sf s}) = \partial(\partial^{i}(\kk{\sf s}))$ for  all $i \geq 0$. Observe that $\partial^{m-N}(\kk{\sf s}) \subseteq  Z^{\otimes N}$.  
\end{definition}

We also define an algebra associated to a cyclic form by identifying it with a superpotential algebra as we will see below. The following lemma is clear. 

\begin{lemma}[${\sf s}^*$, ${\sf t}^*$] \label{super-prereg} Take $Z = \bigoplus_{i=1}^n \kk z_i$, an $n$-dimensional $\kk$-vector space. Then, the results below hold.
\begin{enumerate}
\item Given a $\phi$-twisted superpotential ${\sf s} \in Z^{\otimes m}$, we have that ${\sf s}^* \in ((Z^*)^{ \otimes m})^*$ with ${\sf s}^*(z_{i_1}^*, \dots, z_{i_m}^*) $ equal to the coefficient of $z_{i_1}\cdots z_{i_m}$ in ${\sf s}$ is a $\phi$-cyclic $m$-linear form \textnormal{(}corresponding to ${\sf s}$\textnormal{)}.

\smallskip

\item Given a $\phi$-cyclic $m$-linear form ${\sf t} \in ((Z^*)^{ \otimes m})^*$, we have that  ${\sf t}^* \in Z^{\otimes m}$ given by \linebreak ${\sf t}^* = \sum_{i_1, \dots, i_m=1}^n {\sf t}(z_{i_1}^*, \dots, z_{i_m}^*)z_{i_1}\cdots z_{i_m}$ is a $\phi$-twisted superpotential \textnormal{(}corresponding to ${\sf t}$\textnormal{)}.

\smallskip

\item We obtain that $({\sf s}^*)^* = {\sf s}$ and $({\sf t}^*)^* = {\sf t}$.

\smallskip

\item Given a $\phi$-cyclic $m$-linear form ${\sf t} \in ((Z^*)^{\otimes m})^*$, let $A({\sf t}, N):=TZ/\partial^{m-N}(\kk {\sf t}^*)$, where the relation space $\partial^{m-N}(\kk {\sf t}^*)$ is generated by 
$$\textstyle \sum_{j_1, \dots, j_N = 1}^n ~{\sf t}(z^*_{i_1}, \dots, z^*_{i_r}, z^*_{j_1}, \dots, z^*_{j_N}, z^*_{k_1}, \dots, z^*_{k_s}) z_{j_1} \cdots z_{j_N}$$
for any fixed integers $r,s$ satisfying $r+s=m-N$ and all $i_1, \dots, i_r,k_1,\dots,k_s \in \{1, \dots, n\}$. \qed
\end{enumerate} 
\end{lemma}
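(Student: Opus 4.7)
The plan is to treat all four parts as an unwinding of the pairing $Z^{\otimes m}\cong ((Z^*)^{\otimes m})^*$, setting up coordinates once and then reading off each assertion.

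First I would fix the dual basis $\{z_i^*\}\subset Z^*$ to $\{z_i\}\subset Z$ and expand ${\sf s}=\sum \alpha_{i_1\cdots i_m}\,z_{i_1}\otimes\cdots\otimes z_{i_m}$. Then by construction ${\sf s}^*(z_{i_1}^*,\dots,z_{i_m}^*)=\alpha_{i_1\cdots i_m}$, and conversely for a form ${\sf t}$ the element ${\sf t}^*$ has coefficients ${\sf t}(z_{i_1}^*,\dots,z_{i_m}^*)$. Part (c) then follows immediately from this reciprocity, since both operations are simply reading coefficients in mutually dual bases. I would record that under the pairing we may (and will) identify $\phi\in GL(Z)$ with its action on $Z^*$ via the same symbol, so that ${\sf s}^*(\phi(z_m^*),\ldots)$ means pairing against $(\phi\otimes\id^{\otimes(m-1)})(\cdots)$ in $Z^{\otimes m}$.

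For part (a), I would compute $c({\sf s})=\sum \alpha_{i_2\cdots i_m i_1}\,z_{i_1}\otimes\cdots\otimes z_{i_m}$ after reindexing, so that the equality $(\phi\otimes\id^{\otimes(m-1)})c({\sf s})={\sf s}$ translates into coefficient identities $\alpha_{k\, i_2\cdots i_m}=\sum_{i_1}\phi_{k,i_1}\,\alpha_{i_2\cdots i_m\, i_1}$. Pairing both sides against $z_k^*\otimes z_{i_2}^*\otimes\cdots\otimes z_{i_m}^*$ and using the defining formula for ${\sf s}^*$ yields exactly
\[
{\sf s}^*(z_{i_1}^*,z_{i_2}^*,\dots,z_{i_m}^*)={\sf s}^*(\phi(z_{i_m}^*),z_{i_1}^*,\dots,z_{i_{m-1}}^*),
\]
which is the $\phi$-cyclic condition. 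Part (b) is the same computation run in reverse: assume ${\sf t}$ satisfies the $\phi$-cyclic identity, expand ${\sf t}^*$ in the basis using the coefficients ${\sf t}(z_{i_1}^*,\dots,z_{i_m}^*)$, and the cyclicity identity becomes exactly the coefficient identity that defines $(\phi\otimes\id^{\otimes(m-1)})c({\sf t}^*)={\sf t}^*$.

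For part (d), I would return to Definition~\ref{superpotential alg}. A single application of $\partial$ to $\alpha{\sf t}^*$ contracts the first tensor slot with an arbitrary $\nu\in Z^*$; iterating $m-N$ times contracts the first $m-N$ slots with arbitrary linear functionals, which is equivalent to summing over all indices $i_1,\dots,i_r$ (and, after applying the $\phi$-cyclic identity to rotate, over $k_1,\dots,k_s$ with $r+s=m-N$) while leaving the remaining $N$ slots as tensor factors $z_{j_1}\cdots z_{j_N}$. Substituting the coefficient formula ${\sf t}(z_{i_1}^*,\dots,z_{i_r}^*,z_{j_1}^*,\dots,z_{j_N}^*,z_{k_1}^*,\dots,z_{k_s}^*)$ yields exactly the stated generating set of $\partial^{m-N}(\kk{\sf t}^*)$.

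None of the four steps poses a genuine obstacle — the only point requiring care is the consistent identification of the operator $\phi$ on $Z$ with its action on $Z^*$ under the pairing, so that the twisted condition on tensors and the cyclic condition on forms literally translate into the same coefficient identity. Once that bookkeeping is set up, each of (a)–(d) is a one-line verification.
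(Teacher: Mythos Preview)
Your proposal is correct and is precisely the routine coordinate verification the paper has in mind; the paper itself simply declares ``The following lemma is clear'' and marks the statement with a \qed, giving no written argument at all. Your write-up is a faithful expansion of that omitted verification, with the only subtlety (the identification of $\phi$ acting on $Z$ versus $Z^*$ under the pairing) correctly flagged.
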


The one-to-one correspondence between $\phi$-cyclic forms and $\phi$-twisted superpotentials stated in lemma above actually descends to preregular forms through the following definition.

\begin{definition}[preregular superpotential]
We say that a $\phi$-twisted superpotential ${\sf s} \in Z^{\otimes m}$ is {\it preregular} if it satisfies one of the following equivalent conditions.
\begin{enumerate}
\item ${\sf s}^*$ is a $\phi$-cyclic preregular form on $Z^*$.
\item $(\nu \otimes \id^{\otimes m-1})({\sf s})=0$ for some $\nu\in Z^*$ implies that $\nu=0$.
\end{enumerate}
\end{definition}

Let us collect some examples and properties of superpotential algebras (associated to preregular superpotentials); more details about the following will be discussed in \cref{polynomial} and Section~\ref{Skly3}.

 \begin{example} \label{exSabc} Take $Z = \kk z_1 \oplus \kk z_2 \oplus \kk z_3$ with $m = 3$ and $N=2$.  \begin{enumerate}
 \item We get $A({\sf s}, 2) = \kk[z_1, z_2, z_3]$, a commutative polynomial algebra, by taking
$${\sf s} = z_1z_2z_3 + z_2z_3z_1 + z_3z_1z_2 - z_1z_3z_2 - z_3z_2z_1 - z_2z_1z_3,$$ 
as  $(z_i^* \otimes \id \otimes \id)({\sf s}) = z_{i+1}z_{i+2}-z_{i+2}z_{i+1}$, for $i =1,2,3$ with indices taken modulo 3. In this case ${\sf s}^*: Z^{*\otimes 3} \to \kk$ is given by 
{\small \[
{\sf s}^*(z^*_{i_1}, z^*_{i_2}, z^*_{i_3}) = 
\begin{cases} 
1 & \text{ for } (i_1, i_2, i_3) = (1,2,3), (2,3,1), (3,1,2)\\
-1 & \text{ for } (i_1, i_2, i_3) = (1,3,2), (3,2,1), (2,1,3)\\
0 & \text{ otherwise.}
\end{cases}
\]}
\smallskip

 \item We get $A({\sf s}_{abc}, 2) = Skly_3(a,b,c)$, a  three-dimensional Sklyanin algebra, by taking
 $${\sf s}_{abc} = a(z_1z_2z_3 + z_2z_3z_1 + z_3z_1z_2) + b(z_1z_3z_2 + z_3z_2z_1 + z_2z_1z_3) + c(z_1^3 + z_2^3 + z_3^3)$$ 
 for $[a:b:c] \in \mathbb{P}^2_{\kk}$ with $abc \neq 0$ and $(3abc)^3 \neq (a^3+b^3+c^3)^3$, since $$(z_i^* \otimes \id \otimes \id)({\sf s}) = az_{i+1}z_{i+2}+bz_{i+2}z_{i+1}+cz_i^2,$$ for $i =1,2,3$ with indices taken modulo 3. In this case ${\sf s}^*: Z^{*\otimes 3} \to \kk$ is given by 
{\small \[
{\sf s}_{abc}^*(z^*_{i_1}, z^*_{i_2}, z^*_{i_3}) = 
\begin{cases} 
a & \text{ for } (i_1, i_2, i_3) = (1,2,3), (2,3,1), (3,1,2)\\
b & \text{ for } (i_1, i_2, i_3) = (1,3,2), (3,2,1), (2,1,3)\\
c & \text{ for } (i_1, i_2, i_3) = (1,1,1), (2,2,2), (3,3,3)\\
0 & \text{ otherwise.}
\end{cases}
\]}
 \end{enumerate}
 \end{example}

The following result ensures that a host of well-behaved algebras can be recovered as superpotential algebras associated to preregular forms.

\begin{proposition}\cite[Theorem~11]{DV2007} \label{propA} 
If $A$ is an $N$-Koszul, Artin-Schelter regular algebra of global dimension $d$ generated by $n$ elements in degree one, then $A = A({\sf s}, N)$ for some $\phi$-twisted preregular superpotential ${\sf s}\in (\kk^{n})^{\otimes m}$  with $m \geq N \geq 2$. 
We also get that
$m \equiv 1 \textnormal{ mod } N$ when $N \geq 3$, and $m = d$ when $N=2$.\qed 
\end{proposition}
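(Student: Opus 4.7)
The plan is to extract the twisted preregular superpotential $\sf s$ directly from the minimal graded projective resolution of the trivial module ${}_A\kk$. Write $V = A_1 \cong \kk^n$ and $A = TV/(R)$ with $R \subseteq V^{\otimes N}$ the space of defining relations. By $N$-Koszulity (Berger), the minimal resolution of ${}_A\kk$ has the form $\cdots \to A \otimes E_k \to \cdots \to A \otimes E_0 \to \kk \to 0$, where $E_0 = \kk$, $E_1 = V$, $E_2 = R$, and more generally $\deg E_{2k} = kN$ and $\deg E_{2k+1} = kN + 1$, with the explicit description
\[
E_{2k} = \bigcap_{i=0}^{k-1} V^{\otimes iN} \otimes R \otimes V^{\otimes (k-1-i)N}, \qquad E_{2k+1} = \bigcap_{i=0}^{k-1} V^{\otimes (iN+1)} \otimes R \otimes V^{\otimes (k-1-i)N}.
\]

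AS-Gorensteinness forces $\dim_\kk E_d = 1$, and I set $m := \deg E_d$. In the quadratic case $N = 2$ one immediately has $m = d$. For $N \geq 3$, a combinatorial consistency check with the above pattern, together with the requirement that the top-degree piece be one-dimensional and pair nontrivially via the resolution with $E_1 = V$, forces $d$ to be odd and $m = \tfrac{d-1}{2}N + 1$, giving the congruence $m \equiv 1 \pmod{N}$. Choose a generator $\sf s \in E_d \subseteq V^{\otimes m}$ via the inclusion above. The description of $E_d$ as the intersection of subspaces of the form $V^{\otimes r} \otimes R \otimes V^{\otimes s}$ translates directly into the identity $\partial^{m-N}(\kk {\sf s}) = R$ in the sense of \cref{superpotential alg}; hence $A = A({\sf s}, N)$.

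To produce the twist $\phi \in GL(V)$, I would invoke AS-regularity in its Ext-vanishing form $\operatorname{Ext}^i_A({}_A\kk, {}_AA) = \delta_{i,d}\kk$. Dualizing the minimal resolution of ${}_A\kk$ then yields a minimal resolution of $\kk_A$, and AS-Gorensteinness ensures the top term of each resolution is one-dimensional; matching the two resolutions up to the Nakayama twist identifies a distinguished automorphism $\phi$ of $V$ (the degree-one piece of the Nakayama automorphism of the Koszul-dual Frobenius structure) such that the two natural embeddings of $E_d \cong \kk$ into $V \otimes E_{d-1}$, coming respectively from the differentials $A\otimes E_d \to A \otimes E_{d-1}$ on the left and the right, differ by cycling the tensor factors once and applying $\phi$ to the rightmost slot. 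Translating this into coordinates gives the $\phi$-cyclicity relation $(\phi \otimes \id^{\otimes (m-1)})\, c({\sf s}) = {\sf s}$, so $\sf s$ is a $\phi$-twisted superpotential.

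Finally, preregularity of $\sf s$, namely that $(\nu \otimes \id^{\otimes(m-1)})({\sf s}) = 0$ implies $\nu = 0$, follows from the injectivity of the last differential in the minimal resolution: a nonzero $\nu \in V^*$ annihilating $\sf s$ in this way would produce a nonzero element in the kernel of $A \otimes E_d \to A \otimes E_{d-1}$ in top degree, contradicting minimality of the resolution (or equivalently, contradicting that $\dim_\kk E_d = 1$ is the correct multiplicity). Combining Steps yields ${\sf s} = ({\sf s}, \phi)$ as required.

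The main obstacle is Step on the twist: producing $\phi$ rigorously requires carefully comparing the left and right minimal resolutions and identifying the induced Nakayama-type automorphism, which is where AS-Gorensteinness is used in an essential way. The numerical statement $m \equiv 1 \pmod N$ for $N \geq 3$ is the second delicate point, since one must rule out even $d$ using the explicit form of the resolution degrees rather than only abstract AS data.
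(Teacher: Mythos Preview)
The paper does not prove this statement; it is quoted from Dubois-Violette with a \qed, so there is no proof in the paper to compare against. Your outline follows the approach of that reference, and the overall strategy---extract $\mathsf{s}$ as a generator of the one-dimensional top term $E_d$ of the $N$-Koszul resolution, then use AS-Gorenstein duality to obtain the twist and preregularity---is correct. Three points need repair.

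First, your intersection formulas for $E_{2k}$ and $E_{2k+1}$ are wrong: in Berger's complex the intersection $\bigcap_{i+j=n_k-N} V^{\otimes i}\otimes R\otimes V^{\otimes j}$ runs over \emph{all} shifts $0\le i\le n_k-N$, not only those divisible by $N$. Second, the equality $\partial^{m-N}(\kk\mathsf{s})=R$ is only half-argued: the inclusion $\subseteq$ is immediate from $E_d\subseteq V^{\otimes(m-N)}\otimes R$, but the reverse inclusion genuinely uses the AS duality $\dim E_{d-i}=\dim E_i$ and does not ``translate directly'' from the intersection description.

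Third, and most substantively, your preregularity argument is not correct. A nonzero $\nu\in V^*$ with $(\nu\otimes\mathrm{id}^{\otimes(m-1)})(\mathsf{s})=0$ does \emph{not} produce a kernel element of the differential $A\otimes E_d\to A\otimes E_{d-1}$: that map is injective simply because the resolution is exact and $E_d$ sits in the lowest degree, regardless of preregularity. The correct argument is dual. Writing $\mathsf{s}=\sum_j u_j\otimes e_j$ with $\{e_j\}$ a basis of $E_{d-1}$ and $u_j\in V$, AS-Gorensteinness says that $\mathrm{Hom}_A(-,A)$ applied to the resolution yields a minimal resolution of $\kk_A$; its first differential is $e_j^*\otimes a\mapsto u_j a$, and minimality forces $\{u_j\}$ to be a basis of $V$. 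This is exactly the statement that $\nu\mapsto(\nu\otimes\mathrm{id})(\mathsf{s})$ is injective. Your palindromy intuition for $m\equiv 1\pmod N$ when $N\ge 3$ is right: AS duality forces the degree sequence $(0,1,N,N+1,\dots,n_d)$ to satisfy $n_i+n_{d-i}=m$, and one checks this fails for even $d$ when $N\ge 3$.
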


According to the result above, that ${\sf s}$ is preregular  is a necessary condition for $A({\sf s},N)$ being $N$-Koszul Artin-Schelter regular. Nonetheless, a sufficient condition for a superpotential algebra being $N$-Koszul Artin-Schelter regular remains elusive.

Placing ourselves in the setting of \Cref{propA}, the $N$-Koszul complex of $A({\sf s},N)$ can be recovered from the superpotential ${\sf s}$ as follows. Consider the preliminary result below.

\begin{lemma}[$\partial^{m-N}(\kk {\sf s})^\perp$]\label{L:ANNR}
Take $Z = \bigoplus_{i=1}^n \kk z_i$, an $n$-dimensional $\kk$-vector space. For a superpotential ${\sf s} = \sum_{i_1, \dots, i_m=1}^n ~{\sf s}_{i_1\cdots i_m}z_{i_1}\cdots z_{i_m}\in Z^{\otimes m}$, the annihilator of $\partial^{m-N}(\kk {\sf s})$ in $(Z^*)^{\otimes N}$ is spanned by ${\sf t}=\sum_{i_1,\dots,i_N=1}^n {\sf t}_{i_1\cdots i_N}z_{i_1}^*\cdots z_{i_N}^*$ satisfying 
\begin{align*}
\ts \sum_{i_1,\dots,i_N=1}^n {\sf s}_{\lambda_1\cdots \lambda_{m-N}i_1\cdots i_N}{\sf t}_{i_1\dots i_N}=0,
\end{align*}
for all $\lambda_1,\dots,\lambda_{m-N}\in \{1,\dots,n\}$. We denote this annihilator by $\partial^{m-N}(\kk {\sf s})^\perp\subseteq (Z^*)^{\otimes N}$. \qed
\end{lemma}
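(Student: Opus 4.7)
The plan is essentially to unwind definitions and evaluate a canonical pairing on a spanning set.

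First I would make explicit a set of generators of $\partial^{m-N}(\kk{\sf s})$ as a subspace of $Z^{\otimes N}$. By iterating the definition of $\partial$, applying the covectors $z^*_{\lambda_1}, \dots, z^*_{\lambda_{m-N}} \in Z^*$ successively to the leftmost slots of ${\sf s}$ produces the element
\[
r_{\lambda_1\cdots \lambda_{m-N}} \;=\; \sum_{i_1,\dots,i_N=1}^n {\sf s}_{\lambda_1\cdots \lambda_{m-N} i_1\cdots i_N}\, z_{i_1}\otimes\cdots\otimes z_{i_N}\in Z^{\otimes N},
\]
and since $\{z^*_\lambda\}$ is a basis of $Z^*$, the family $\{r_{\lambda_1\cdots \lambda_{m-N}}\mid \lambda_k\in\{1,\dots,n\}\}$ spans $\partial^{m-N}(\kk{\sf s})$.

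Next I would invoke the canonical perfect pairing $(Z^*)^{\otimes N}\times Z^{\otimes N}\to \kk$ determined on pure tensors in the chosen dual bases by $\langle z^*_{i_1}\otimes\cdots\otimes z^*_{i_N},\, z_{j_1}\otimes\cdots\otimes z_{j_N}\rangle = \delta_{i_1j_1}\cdots\delta_{i_Nj_N}$. For an arbitrary ${\sf t}=\sum {\sf t}_{i_1\cdots i_N}z^*_{i_1}\otimes\cdots\otimes z^*_{i_N}$, a direct expansion gives
\[
\langle {\sf t}, r_{\lambda_1\cdots \lambda_{m-N}}\rangle \;=\; \sum_{i_1,\dots,i_N=1}^n {\sf s}_{\lambda_1\cdots \lambda_{m-N} i_1\cdots i_N}\,{\sf t}_{i_1\cdots i_N}.
\]
Since ${\sf t}$ annihilates $\partial^{m-N}(\kk{\sf s})$ if and only if it annihilates every $r_{\lambda_1\cdots \lambda_{m-N}}$, the vanishing of all these scalars for every choice of $(\lambda_1,\dots,\lambda_{m-N})\in\{1,\dots,n\}^{m-N}$ is exactly the system of equations in the statement.

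There is essentially no obstacle here: everything reduces to writing $\partial^{m-N}$ in coordinates and pairing against the dual basis. The only thing one might want to double-check is that the iterated contraction $\partial^{m-N}$, which at each step produces only elements $(\nu\otimes\id^{\otimes k})(x)$, is spanned (not merely generated) as a vector space by the contractions against basis covectors; this is immediate from the linearity of $\nu\mapsto (\nu\otimes\id^{\otimes k})(x)$ and is what justifies passing from arbitrary $\nu_j\in Z^*$ to the basis elements $z^*_{\lambda_j}$ in Step~1.
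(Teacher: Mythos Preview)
Your proposal is correct and is exactly the routine verification the paper has in mind; note that the paper gives no proof at all (the lemma ends with \qed), treating the statement as an immediate consequence of the definition of $\partial^{m-N}(\kk{\sf s})$ together with the standard pairing $(Z^*)^{\otimes N}\times Z^{\otimes N}\to\kk$. Your write-up simply makes explicit what the paper leaves implicit.
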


Now the result below holds by the proof of \cite[Theorem~11]{DV2007}.

\begin{proposition} \label{N-Koszul} Retain the notation above. For a superpotential 
$$\textstyle {\sf s} = \sum_{i_1, \dots, i_m=1}^n ~{\sf s}_{i_1\cdots i_m}z_{i_1}\cdots z_{i_m},$$ we define a subspace $W^{(r)} \subset Z^{\otimes (m-r)}$ for any $0 \leq r \leq m-N$ to be the span of the  elements  $$\textstyle \sum_{\mu_1, \dots, \mu_{m-r}=1}^n~{\sf s}_{\lambda_1\cdots\lambda_r\mu_1\cdots\mu_{m-r}} z_{\mu_1}\cdots z_{\mu_{m-r}},$$ for all $\lambda_1,\dots,\lambda_r \in \{1, \dots, n\}$.  Take $A$ to be the superpotential algebra $A({\sf s}, N) = TZ/\partial^{m-N}(\kk {\sf s})$, and consider $A^! := TZ^*/\partial^{m-N}(\kk {\sf s})^\perp$, the $N$-Koszul dual of $A$. Then, the $N$-Koszul resolution of the trivial left $A$-module  is given by
\begin{equation}\label{eq:Kosz_N}
  0\to A \otimes \cW_m\overset{d}{\longrightarrow} A \otimes \cW_{m-1}\overset{d^{N-1}}{\longrightarrow} \cdots\overset{d}{\longrightarrow}A \otimes \cW_N\overset{d^{N-1}}{\longrightarrow} A \otimes \cW_1\overset{d}{\longrightarrow}\cW_0 \to \kk \to 0,
\end{equation}
where
\begin{enumerate}
\item $\cW_0 = A$, $\cW_1 = (A_1^!)^* = Z$, and $\cW_\ell = (A_{\ell}^!)^* = W^{(m - \ell)}$ 
for $N \leq \ell \leq m$, and
\item the differentials are induced by maps 
\[d:=\mathrm{mult}\otimes \mathrm{id}_{Z^{\otimes \ell}}: A\otimes Z^{\otimes(\ell+1)}=(A\otimes Z)\otimes Z^{\otimes \ell} \to A\otimes Z^{\otimes \ell}.\]
\end{enumerate}
In particular, we obtain that
\begin{enumerate}
\item[(c)] 
$\cW_m=W^{(0)} = \kk {\sf s}$, so that the last term of \eqref{eq:Kosz_N} is $A \otimes \kk {\sf s}$, 
\item[(d)] $\cW_N = W^{(m-N)}=\partial^{m-N}(\kk {\sf s})$,  the span of elements $\sum_{\mu_1, \dots, \mu_{N}=1}^n~{\sf s}_{\lambda_1\cdots\lambda_{m-N}\mu_1\cdots\mu_{N}} z_{\mu_1}\cdots z_{\mu_{N}}$ for all $\lambda_1, \dots, \lambda_{m-N} \in \{1, \dots, n\}$, and
\item[(e)] the preregular form ${\sf s}^*: (Z^*)^{\otimes m}=(A^!_1)^{\otimes m}\to A^!_m\cong \kk$ given by  multiplication of $A^!$.
\qed
\end{enumerate}
\end{proposition}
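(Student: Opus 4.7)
The plan is to apply Berger's $N$-Koszul complex construction to the presentation $A = TZ/\partial^{m-N}(\kk\mathsf{s})$, and then use the superpotential structure of $\mathsf{s}$ to identify the resolution spaces with the $W^{(r)}$. Recall that for any $N$-homogeneous algebra $A = TZ/R$ with $R\subseteq Z^{\otimes N}$, Berger's candidate complex has terms $A\otimes J_\ell$, where
\[
J_0 = \kk,\qquad J_1 = Z,\qquad J_\ell = \bigcap_{i+j+N=\ell} Z^{\otimes i}\otimes R\otimes Z^{\otimes j}\ \text{for } \ell\ge N,
\]
and differentials alternate between a single multiplication $d$ and its $(N{-}1)$-fold iterate $d^{N-1}$; this is a resolution of $\kk$ precisely when $A$ is $N$-Koszul, which holds here by hypothesis.

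The key step is to identify $J_\ell$ with $W^{(m-\ell)}$ for $N\le \ell\le m$. The extreme cases $\ell=N$ and $\ell=m$ are immediate: $J_N = R = W^{(m-N)}$ by definition, giving~(d); and $J_m = \kk\mathsf{s}$ is one-dimensional, spanned by the superpotential itself, giving~(c). For intermediate $\ell$, one containment follows from the $\phi$-twisted superpotential identity $(\phi\otimes\id^{\otimes (m-1)})c(\mathsf{s}) = \mathsf{s}$: iterating it allows any element of $W^{(m-\ell)}$ to be rewritten with its contracted prefix cyclically shifted, placing it inside $Z^{\otimes i}\otimes R\otimes Z^{\otimes j}$ for each admissible $i,j$. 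The reverse containment uses the nondegeneracy clause (a) of \Cref{preregular}, which ensures that an element of the intersection is uniquely reconstructed from its contracted coefficients and hence must arise from $\mathsf{s}$.

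The remaining items follow by inspection. Part~(a) absorbs the augmentation term $\cW_0 = A$ together with $\cW_1 = Z$ coming from the degree-one part of $A^!$. Part~(b) asks that multiplication on $A$ restricts to the claimed maps $A\otimes W^{(m-\ell-1)}\to A\otimes W^{(m-\ell)}$; again the cyclic identity makes this a direct verification. Part~(e) invokes \Cref{L:ANNR}: the form $\mathsf{s}^*$ annihilates $\partial^{m-N}(\kk\mathsf{s})$ in every set of $N$ consecutive slots, so it descends to a nonzero element of $(A^!_m)^*\cong\kk$ (one-dimensional since $A$ is AS-regular), which must coincide with the multiplication pairing up to normalisation, fixed by the explicit formula $\mathsf{s}^* = \sum \mathsf{s}_{i_1\cdots i_m}z^*_{i_1}\cdots z^*_{i_m}$.

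The main obstacle is the identification $J_\ell = W^{(m-\ell)}$ for the intermediate range $N<\ell<m$: the superpotential identity gives the inclusion $\supseteq$ immediately, but controlling the full intersection in the other direction requires combining the twisted cyclic symmetry with the preregularity hypothesis, and is the combinatorial heart of Dubois-Violette's original argument. Once this identification is established, exactness of the complex is a direct consequence of the assumed $N$-Koszulity of $A$.
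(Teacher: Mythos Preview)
The paper gives no proof of its own: it simply records that the statement ``holds by the proof of \cite[Theorem~11]{DV2007}'' and places a \qed. Your outline is therefore already more detailed than what the paper offers, and it follows the same route as Dubois-Violette: build Berger's $N$-Koszul complex with terms $A\otimes J_\ell$ for $J_\ell=\bigcap_{i+j=\ell-N} Z^{\otimes i}\otimes R\otimes Z^{\otimes j}$, and then identify $J_\ell$ with $W^{(m-\ell)}$.

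There is one point where your sketch is not quite right. You attribute the inclusion $J_\ell\subseteq W^{(m-\ell)}$ to the nondegeneracy clause of \Cref{preregular}, saying an element of the intersection is ``uniquely reconstructed from its contracted coefficients and hence must arise from $\mathsf{s}$.'' Preregularity alone does not give this. Nondegeneracy of $\mathsf{s}$ only controls the pairing $A^!_1\otimes A^!_{m-1}\to A^!_m$ at the extreme degrees; it does not force $J_\ell=(A^!_\ell)^*$ to coincide with the contraction image $W^{(m-\ell)}$ for intermediate $\ell$. What actually drives this equality in Dubois-Violette's argument is the AS-Gorenstein hypothesis on $A$ (implicit here, since the paper has placed itself ``in the setting of \Cref{propA}''): AS-regularity makes $A^!$ graded Frobenius, and the resulting perfect pairings $A^!_{m-\ell}\otimes A^!_\ell\to A^!_m\cong\kk$ are exactly what identify $(A^!_\ell)^*$ with the image of left-contraction by $(A^!_{m-\ell})^{**}=A^!_{m-\ell}$, namely $W^{(m-\ell)}$. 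Your inclusion $W^{(m-\ell)}\subseteq J_\ell$ via the twisted-cyclic identity is correct; it is the reverse inclusion whose source you have misidentified. Since you already defer the hard step to ``Dubois-Violette's original argument,'' this does not derail the proposal, but the mechanism you name should be corrected.
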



 \section{Hopf coaction and homological codeterminant}
We say that a Hopf algebra $H$ {\it coacts} on a vector space $V$ (resp., on an algebra $A$) if $V$ is an $H$-{\it comodule} (resp., if $A$ is an $H$-{\it comodule
algebra}). It is sometimes useful to restrict ourselves to $H$-coactions that do not factor through coactions of `smaller' Hopf algebras. For this, we provide the definition of inner-faithfulness.
\begin{definition}[inner-faithful] \label{innerfaithful}
Let $A$ be a right $H$-comodule (algebra)
with comodule structure map $\rho: A \rightarrow A \otimes H$. We say
that this coaction is {\it inner-faithful} if $\rho(A)\not\subset
A\otimes H'$ for any proper Hopf subalgebra $H'\subsetneq H$.
\end{definition}

Now we recall an important invariant of Hopf algebra coactions on AS-regular algebras: homological codeterminant. To do so, note that a connected graded algebra $A$ is AS-regular if and only
if the Yoneda algebra $E(A):=\bigoplus_{i\geq 0} \Ext^i_A(_A\kk,_A\kk)$ of $A$ is graded Frobenius \cite[Corollary D]{LPWZ:Koszul}. 
\begin{definition}[homological codeterminant ${\sf D}$] \label{hcodet}
Let $A$ be an Artin-Schelter regular algebra with graded Frobenius Yoneda algebra $E(A)$ as above.
Let $H$ be a Hopf algebra with bijective antipode $S$ coacting on
$A$ from the right (resp., from the left). We get that $H$ coacts on $E(A)$
from the left (resp., from the right). Suppose ${\mathfrak e}$ is a nonzero element in (so a $\kk$-basis of)
$\Ext^d_A(_A\kk,_A\kk)$ where $d=\gldim A$.
\begin{enumerate}
\item The {\it homological codeterminant} of the
$H$-coaction on $A$, is defined to be an element ${\sf
D} \in H$  where $\rho({\mathfrak e})={\sf D}\otimes {\mathfrak e}$ (resp.,  $\rho({\mathfrak e})={\mathfrak e} \otimes {\sf D}^{-1}$). Note that  ${\sf D}$ is a grouplike element of $H$. 
\item We say the homological
codeterminant is {\it trivial} if ${\sf D}=1_H$.
\end{enumerate}
\end{definition}

The following proposition is a generalization of \cite[Theorem 2.1]{CKWZ} for computing the homological codeterminant for a Hopf coaction on a (higher-dimensional) $N$-Koszul AS-regular algebra.  
\begin{theorem} \label{thm:codet} Let $m \geq N \geq 2$ be integers.
Let $Z$ be an $n$-dimensional $\kk$-vector space and take a twisted superpotential ${\sf s} \in Z^{\otimes m}$. Let $A:=A({\sf s},N)=TZ/\partial^{m-N}(\kk {\sf s})$ be an $N$-Koszul AS-regular algebra of global dimension $d$. Let $H$ be a Hopf algebra with bijective antipode that coacts on $A$ from the right \textnormal{(}resp., from the left\textnormal{)} via $\rho$, preserving the grading of~$A$, with homological codeterminant ${\sf D}$.
Then, the following statements hold:
\begin{enumerate}
\item  $\kk {\sf s}$ is an one-dimensional $H$-subcomodule of $Z^{\otimes m}$ so that $\rho({\sf s})={\sf s}\otimes {\sf D}^{-1}$ \textnormal{(}resp., $\rho({\sf s})= {\sf D}\otimes {\sf s}$\textnormal{)}. 
\item  {\sf D} is trivial if and only if $\kk{\sf s}$ is the trivial right \textnormal{(}resp., left\textnormal{)} $H$-comodule.
\end{enumerate}
\end{theorem}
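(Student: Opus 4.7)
The strategy is to exploit the functoriality of the $N$-Koszul resolution from \Cref{N-Koszul} in the $H$-coaction: once $\kk{\sf s} = \cW_m$ is recognized as an $H$-subcomodule of $Z^{\otimes m}$, the one-dimensional comodule structure it carries can be read off from the induced comodule structure on $\Ext^d_A(\kk,\kk) \cong \cW_m^*$, which by definition encodes ${\sf D}$. I treat the right-coaction case; the left case is dual.

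For part (a), the first step is to show that $\kk{\sf s}$ is an $H$-subcomodule of $Z^{\otimes m}$. Since $\rho$ preserves the grading, it restricts to a coaction on $Z = A_1$, which diagonalises to coactions on every $Z^{\otimes \ell}$. Because $A$ is an $H$-comodule algebra, the natural surjection $Z^{\otimes N} \twoheadrightarrow A_N$ is $H$-equivariant, so its kernel $\cW_N = \partial^{m-N}(\kk{\sf s})$ is an $H$-subcomodule of $Z^{\otimes N}$. All higher Koszul spaces $\cW_\ell$ appearing in \eqref{eq:Kosz_N}, including the top term $\cW_m = \kk{\sf s}$, are built canonically out of $\cW_N$ (as intersections of shifted copies of $\cW_N$ inside $Z^{\otimes \ell}$, which is the intrinsic characterisation underlying the $N$-Koszul complex), so they automatically inherit $H$-subcomodule structures. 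Being one-dimensional, $\kk{\sf s}$ must therefore satisfy $\rho({\sf s}) = {\sf s} \otimes g$ for a unique grouplike element $g \in H$.

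To identify $g$ with ${\sf D}^{-1}$, I apply $\Hom_A(-,\kk)$ to the resolution \eqref{eq:Kosz_N}. All differentials there are $H$-equivariant, being assembled from the multiplication of the comodule algebra $A$ and the identity on the $\cW_\ell$. The resulting cochain complex therefore computes $\Ext^*_A(\kk,\kk)$ as an $H$-comodule, yielding in top degree a natural $H$-equivariant isomorphism $\Ext^d_A(\kk,\kk) \cong (\kk{\sf s})^*$. Dualising the right comodule $\rho({\sf s}) = {\sf s} \otimes g$ gives a left comodule with $\rho({\sf s}^*) = S(g) \otimes {\sf s}^* = g^{-1} \otimes {\sf s}^*$. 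Identifying ${\sf s}^*$ with $\mathfrak{e} \in \Ext^d_A(\kk,\kk)$ and comparing with the defining relation $\rho(\mathfrak{e}) = {\sf D} \otimes \mathfrak{e}$ from \Cref{hcodet} forces $g^{-1} = {\sf D}$, so $\rho({\sf s}) = {\sf s} \otimes {\sf D}^{-1}$. Part (b) is then immediate: ${\sf D} = 1_H$ iff $g = 1_H$ iff $\rho({\sf s}) = {\sf s} \otimes 1_H$, which is exactly the condition that $\kk{\sf s}$ be trivial as a right $H$-comodule.

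The principal obstacle I anticipate is the first step, specifically verifying that the top-degree space $\cW_m = \kk{\sf s}$, rather than just the relation space $\cW_N$, is $H$-stable: one must reduce the $H$-stability of each $\cW_\ell$ to that of $\cW_N$ via the canonical intersection description supplied by the $N$-Koszul structure, and it is here that the preregular/superpotential nature of ${\sf s}$ enters essentially. A secondary bookkeeping hazard is tracking left/right comodule conventions through the dualisation, which is precisely what accounts for the inverse appearing on the right (${\sf D}^{-1}$) in the right-coaction case while appearing on the left (${\sf D}$) in the left-coaction case.
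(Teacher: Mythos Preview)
Your proposal is correct and follows essentially the same route as the paper: first show $\cW_N=R$ is an $H$-subcomodule, then use the intersection description $\cW_{N+r}=\bigcap_{s+t=r}Z^{\otimes s}\otimes R\otimes Z^{\otimes t}$ to propagate $H$-stability up to $\cW_m=\kk{\sf s}$, and finally dualise the $H$-equivariant Koszul resolution to identify the grouplike on $\kk{\sf s}$ with ${\sf D}^{-1}$ via $\Ext^d_A(\kk,\kk)\cong(\kk{\sf s})^*$. Your anticipated ``principal obstacle'' is exactly the step the paper handles by that intersection formula, and your left/right bookkeeping matches the paper's use of the antipode on the grouplike.
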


\begin{proof}
(a) Giving a right $H$-coaction on $A$ is equivalent to giving a right $H$-comodule structure on $Z$ so that $R:=\partial^{m-N}(\kk {\sf s})$ becomes a right $H$-subcomodule of $Z^{\otimes N}$. Further, if $A^! = TZ^*/R^\perp$ is the $N$-Koszul dual of $A$, then $(A_{N+r}^!)^* = \bigcap_{s+t=r} Z^{\otimes s} \otimes R\otimes Z^{\otimes t}$ is a right $H$-subcomodule of $Z^{\otimes(N+r)}$ for $0 \leq r \leq m-N$. By Proposition \ref{N-Koszul}(a,c), we see that $\kk {\sf s}=(A^!_m)^*$ is a one-dimensional right $H$-subcomodule of $Z^{\otimes m}$.

Now we follow \cite[Remark~1.6(d) and the comments below]{CWZ:Nakayama}  to compute the homological codeterminant of the right $H$-coaction on $A$. Note that the differentials in \cref{eq:Kosz_N} are right $H$-comodule maps, as they are induced by 1-fold or $(N-1)$-fold left multiplication of $A$. The projection map $A \to \kk$ in \cref{eq:Kosz_N} is also a right $H$-comodule map. Hence, the complex \cref{eq:Kosz_N} is indeed a resolution of $\kk$ in the category of graded left-right $(A,H)$-Hopf modules. In order to find the $H$-coaction on $\Ext_A^d(\kk,\kk)$, we apply the functor $\text{Hom}_A(-,\kk)$ to the resolution \cref{eq:Kosz_N} to obtain a complex of $H$-comodules. It is clear that the resulting complex has zero differentials due to the minimality of~\cref{eq:Kosz_N} and simplicity of $\kk$. Therefore, we get left $H$-comodule isomorphisms
\[
\Ext^d_A(\kk, \kk) \cong  \Hom_A(A \otimes \kk{\sf s}, \kk) \cong \Hom_\kk(\kk {\sf s}, \kk) = (\kk {\sf s})^*\subset (Z^*)^{\otimes m};
\]
see \cref{N-Koszul}(c) for the first isomorphism. We consider $(\kk {\sf s})^*$ as a one-dimensional left $H$-subcomodule of $(Z^*)^{\otimes m}$ by applying the antipode of $H$. Thus, the left $H$-coaction on $\Ext^d_A(\kk,\kk)$ is the left $H$-coaction on $(\kk {\sf s})^*$. By \cref{hcodet}, $\rho_E(\mathfrak{e}) = {\sf D} \otimes \mathfrak{e}$ for any basis $\mathfrak{e}$ of $\Ext^d_A(\kk,\kk)$. Therefore, we have $\rho({\sf s})={\sf s} \otimes {\sf D}^{-1}$ by applying the antipode of $H$  and by using the isomorphisms of $H$-comodules above.

Likewise, the parenthetical statement holds.
\smallskip

(b) This follows immediately from part (a).
\end{proof}

 
  \section{Standing hypotheses, notation, and preliminary results}

In this section, we set notation that will be employed throughout this work. We also establish preliminary results on Hopf coactions on $N$-Koszul Artin-Schelter regular algebras (or on superpotential algebras defined by a preregular superpotential by Proposition~\ref{propA}).
  
\begin{notation}\label{not}  Let $m \geq N \geq 2$ be integers.

 \begin{itemize}
\item $V$ is an $n$-dimensional $\kk$-vector space with basis $\{v_1, \dots, v_n\}$.
 \smallskip
 
 \item $V^*$ is the vector space dual of $V$ with dual basis $\{\theta_1,\dots, \theta_n\}$.
 \smallskip
  
 \item $ \e =( \e , \sigma): V^{\otimes m} \to \kk$ is a  preregular $m$-linear form,  which we identify with a $\sigma$-twisted preregular superpotential $\e^*\in (V^{\otimes m})^* \cong (V^*)^{\otimes m}$ by \cref{super-prereg}(b).
 \smallskip
  
 \item  $ \e _{i_1 \cdots i_m}:=  \e (v_{i_1} \otimes \dots \otimes v_{i_m})$.
 \smallskip
  
 \item $ \f=(\f, \psi): (V^*)^{ \otimes m} \to \kk$ is a preregular $m$-linear form, which we identify with a $\psi$-twisted preregular superpotential $\f^*\in ((V^*)^{\otimes m})^* \cong V^{\otimes m}$ by \cref{super-prereg}(b).

 \smallskip
  
 \item $ \f _{i_1 \cdots i_m}:=  \f (\theta_{i_1} \otimes \dots \otimes \theta_{i_m})$.
 \smallskip
   
 \item $(M_\e,\hat\e)$ consists of a one-dimensional $\kk$-vector space $M_\e$ with basis $m_\e$ and a linear map $\hat \e: V^{\otimes m}\to M_\e$ given by $\hat \e (v_{i_1}\otimes \dots \otimes v_{i_m})=\e_{i_1\cdots i_m}m_\e$.   
  \smallskip
  
   \item $(M_\f,\hat \f)$ consists of a one-dimensional $\kk$-vector space $M_\f$ with basis $m_\f$ and a linear map $\hat \f: (V^*)^{\otimes m}\to M_\f$ given by  $\hat \f(\theta_{i_1}\otimes \dots \otimes \theta_{i_m})=\f_{i_1\cdots i_m}m_\f$.   
  \smallskip
   
 \item $A( \e , N) = T V^*/ \partial^{m-N}(\kk  \e^* )$ is the superpotential algebra associated to $ \e $, which by \linebreak \cref{super-prereg}(d) is isomorphic to  
 $$\frac{\kk\langle x_1, \dots, x_n \rangle}{\left(\sum_{j_1, \dots, j_N=1}^n  \e _{{i_1} \cdots {i_{m-N}} {j_1} \cdots {j_N}} x_{j_1}\cdots x_{j_N}\,\big |\, 1\le i_1,\dots,i_{m-N}\le n\right)}.$$ 
Here, the basis $\{\theta_i\}$ of $V^*$ is identified with the indeterminate set $\{x_i\}$.
 \smallskip
   
  \item $A( \f , N) = T V/\partial^{m-N}(\kk  \f^* )$ is the superpotential algebra associated to $ \f $, which by  \linebreak \cref{super-prereg}(d) is isomorphic to  
 $$\frac{\kk\langle y_1, \dots, y_n \rangle}{\left(\sum_{j_1, \dots, j_N=1}^n   \f _{{i_1} \cdots {i_{m-N}} {j_1} \cdots {j_N}}y_{j_1}\cdots y_{j_N}\,\big |\, 1\le i_1,\dots,i_{m-N}\le n\right)}.$$ 
Here, the basis $\{v_i\}$ of $V$ is identified with the indeterminate set $\{y_i\}$.

 \smallskip
   
 \item $\mathbb{P}$ is the matrix representing $\sigma \in GL(V) = GL_n(\kk)$.
 \smallskip
   
  \item $\mathbb{Q}$ is the matrix representing $\psi \in GL(V^*)=GL_n(\kk)$.
 \smallskip
  \item superscripts $T$ and $-T$ of matrices mean transpose and transpose-inverse, respectively.
  \smallskip
\item ${\sf D}_H=:{\sf D}$ is the homological codeterminant of a coaction of a Hopf algebra $H$ on a fixed algebra $A$.
\end{itemize}  
 \end{notation}

Moreover, \cref{propA} prompts the standing assumptions below.
\begin{hypothesis}\label{hyp}
Throughout the paper, we assume that 
\begin{enumerate}
\item superpotential algebras $A( \e , N)$ and $A( \f , N)$ are all $N$-Koszul Artin-Schelter regular; and
\item all Hopf coactions on $N$-Koszul AS-regular algebras (or on superpotential algebras) $A$ preserve the grading of $A$. 
\end{enumerate}
\end{hypothesis}

The non-degeneracy condition in \Cref{preregular}(a) allows preregular forms $\e$ and $\f$ have  inverses in the following sense.       

\begin{lemma}[$\widetilde \e $, $\widetilde \f $] \label{polar}
There exist $m$-linear forms $\widetilde{ \e }: (V^*)^{\otimes m} \to \kk$ and $\widetilde{ \f }: V^{\otimes m} \to \kk$ so that 
\begin{align}
\ts \sum_{k_1, \dots, k_{m-1} =1}^n \widetilde \e _{i k_1 \cdots k_{m-1}}  \e _{k_1 \cdots k_{m-1} j} ~&=~ \delta_{ij}, \label{polar-e}\\
\ts \sum_{k_1, \dots, k_{m-1} =1}^n  \f _{i k_1 \cdots k_{m-1}}\widetilde \f _{k_1 \cdots k_{m-1} j}  ~&=~ \delta_{ij}. \label{polar-f}
\end{align}
\end{lemma}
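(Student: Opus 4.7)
The plan is to translate each of the displayed equations into an assertion that a certain rectangular matrix, built from $\e$ or $\f$, admits a one-sided inverse, and then to verify the required rank via the non-degeneracy condition in \Cref{preregular}(a) (together with the cyclicity condition for the $\e$-case).

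For $\widetilde{\e}$, I would encode the data as an $n^{m-1} \times n$ matrix $E$ with entries $E_{(k_1,\dots,k_{m-1}),\,j} = \e_{k_1\cdots k_{m-1}j}$ and rewrite \cref{polar-e} as $\widetilde E \cdot E = I_n$, where $\widetilde E$ is the $n\times n^{m-1}$ matrix of coefficients of $\widetilde\e$. Existence of $\widetilde E$ is then equivalent to $E$ having full column rank $n$. Suppose $\sum_j c_j\, \e_{k_1\cdots k_{m-1}j} = 0$ for every tuple $(k_1,\dots,k_{m-1})$; writing $c := \sum_j c_j v_j \in V$, this says $\e(v_{k_1},\dots,v_{k_{m-1}},c) = 0$ for all $v_{k_1},\dots,v_{k_{m-1}}$. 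Using the $\sigma$-cyclicity of $\e$ (\Cref{preregular}(b)) this equals $\e(\sigma(c), v_{k_1}, \dots, v_{k_{m-1}})$, and the non-degeneracy condition \Cref{preregular}(a) then forces $\sigma(c)=0$; since $\sigma\in GL(V)$, we get $c=0$, so the columns of $E$ are linearly independent and the desired left inverse $\widetilde E$ exists.

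For $\widetilde{\f}$, I would analogously form the $n \times n^{m-1}$ matrix $F$ with entries $F_{i,\,(k_1,\dots,k_{m-1})} = \f_{i k_1\cdots k_{m-1}}$ and rewrite \cref{polar-f} as $F\cdot \widetilde F = I_n$, where $\widetilde F$ is $n^{m-1}\times n$ with entries coming from $\widetilde \f$. Existence is equivalent to $F$ having full row rank $n$. If $\sum_i c_i\, \f_{i k_1\cdots k_{m-1}} = 0$ for all $(k_1,\dots,k_{m-1})$, then for $c := \sum_i c_i \theta_i \in V^*$ we have $\f(c,\theta_{k_1},\dots,\theta_{k_{m-1}}) = 0$ for all tuples, so \Cref{preregular}(a) applied to the preregular form $\f$ directly yields $c=0$. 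Hence the rows of $F$ are linearly independent and a right inverse $\widetilde F$ exists, giving $\widetilde \f$.

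There is essentially no hard step: the content is the trick of using cyclicity to transfer the non-degeneracy hypothesis from the first slot to the last slot in the $\e$-case, whereas for $\f$ the hypothesis is used as stated. The rest is standard linear algebra over $\kk$, and I would just note that the forms $\widetilde\e$ and $\widetilde\f$ are not unique (when $m\geq 2$ the matrices $\widetilde E,\widetilde F$ have many-parameter families of choices), but any choice will do for the uses in the sequel.
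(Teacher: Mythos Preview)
Your proof is correct and follows essentially the same approach as the paper's: both reduce the existence of $\widetilde{\e}$ and $\widetilde{\f}$ to a one-sided invertibility statement obtained from the non-degeneracy condition in \Cref{preregular}(a), invoking cyclicity to transfer non-degeneracy to the relevant slot when needed. The paper phrases this via a surjective linear map $\varphi:V^{\otimes(m-1)}\to V^*$ and a choice of section, while you phrase it via full-rank rectangular matrices and one-sided inverses; these are equivalent formulations of the same argument.
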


\begin{proof}
For the $m$-linear preregular form $\e: V^{\otimes m}\to \kk$, we define a map $\varphi: V^{\otimes (m-1)}\to V^*$ such that $\langle \varphi(v_{i_2}\otimes \cdots \otimes v_{i_m}), v_{i_1}\rangle =\e(v_{i_1}\otimes \cdots \otimes v_{i_m})$, where $\langle-,-\rangle: V^*\times V\to \kk$ is the natural evaluation. In view of \Cref{preregular}(a), one sees that the non-degeneracy of $\e$ in the first coordinate is equivalent to the surjectivity of $\varphi$. (For instance,  suppose that $\varphi$ is surjective. Then, \linebreak $\theta_{i_1} = \varphi(\sum_{i_2, \dots, i_m=1}^n \gamma_{i_2 \dots i_m} v_{i_2} \otimes \cdots \otimes v_{i_m})$ for $\gamma_{i_2 \cdots i_m} \in \kk$. 
If $\e$ is degenerate, then for some $v_{i_1} \neq 0$ we have $\e_{i_1 i_2 \cdots i_m}=0$ for any  $i_2, \dots, i_m \in \{1,\dots,n\}$. Thus $\langle \theta_{i_1}, v_{i_1} \rangle = \sum_{i_2, \dots, i_m = 1}^n \gamma_{i_2 \cdots i_m} \e_{i_1 i_2 \cdots i_m} = 0$, which yields a contradiction.) 
Hence we can take a section of $\varphi$, denoted by $\phi: V^*\to V^{\otimes (m-1)}$, such that $\varphi\circ \phi=\mathrm{id}_{V^*}$. We define $\widetilde{\e}: (V^*)^{\otimes m}\to \kk$ by $$\widetilde{\e}(\theta_{i_1}\otimes \cdots \otimes \theta_{i_m})=\langle \theta_{i_1}\otimes \cdots \otimes \theta_{i_{m-1}},\phi(\theta_{i_m})\rangle.$$ Thus by writing $\phi(\theta_{i_{m+1}}) = \sum_{j_2, \dots, j_m=1}^n \alpha_{j_2 \cdots j_m} v_{j_2} \otimes \cdots \otimes v_{j_m}$ for $\alpha_{j_2 \cdots j_m} \in \kk$ we get
\[
\begin{array}{l}
\medskip

\ts \sum_{i_2,\dots,i_m=1}^n \e(v_{i_1}\otimes \cdots \otimes v_{i_m})\, \widetilde{\e}(\theta_{i_2}\otimes \cdots \otimes \theta_{i_{m+1}})\\

\medskip

\quad =\ts \sum_{i_2,\dots,i_m=1}^n \langle \varphi(v_{i_2}\otimes \cdots \otimes v_{i_m}), v_{i_1}\rangle\, \langle \theta_{i_2}\otimes \cdots \otimes \theta_{i_m},\phi(\theta_{i_{m+1}})\rangle \\

\medskip

\quad=  \ts \sum_{i_2,\dots,i_m=1}^n \langle\varphi(v_{i_2}\otimes \cdots \otimes v_{i_m}), v_{i_1}\rangle\, \langle \theta_{i_2}\otimes \cdots \otimes \theta_{i_m},\sum_{j_2,\dots,j_m=1}^n\alpha_{j_2 \cdots j_m} v_{j_2} \otimes \cdots \otimes v_{j_m}\rangle\\

\medskip

\quad= \ts  \left\langle \varphi\left(\sum_{j_2,\dots,j_m=1}^n\alpha_{j_2 \cdots j_m}v_{j_2}\otimes \cdots \otimes v_{j_m}\right), v_{i_1}\right\rangle
~~= \langle \varphi\circ \phi(\theta_{i_{m+1}}),v_{i_1}\rangle ~~=\langle \theta_{i_{m+1}},v_{i_1}\rangle ~~=\delta_{i_1i_{m+1}}.
\end{array}
\]

Clearly, the cyclic condition in \Cref{preregular}(b) implies that any preregular form is also nondegenerate in the last component. Then $\widetilde{\f}$ can be constructed in a similar way.
\end{proof}

\begin{definition}[$\text{Aff}( \e )$, $\text{Aff}( \f )$] 
Let  $\text{Aff}( \e )$ (resp., $\text{Aff}( \f )$) be the set of forms $\widetilde \e $ satisfying \eqref{polar-e} (resp., forms $\widetilde \f $ satisfying \eqref{polar-f}). These are referred to as {\it polar forms}.
\end{definition}

As noted in \cite[page~459]{BDV}, both $\text{Aff}( \e )$ and $\text{Aff}( \f )$ consist of one element if $m=2$, and both consist of more than one element if $m>2$. 

\medbreak On the other hand, as a consequence of \Cref{preregular}(b), we obtain the following notation and result (cf. \cite[(3.2) and (3.3)]{BDV}).

\begin{lemma}[$\mathbb{P}_{k\ell}$, $\mathbb{Q}_{k\ell}$] \label{rotate}
 Let $\mathbb{P}_{k\ell}$ be the components of $\mathbb{P}$ so that 
 $\mathbb{P}(v_\ell) = \sum_{k =1}^n \mathbb{P}_{k\ell}v_k$. Let $\mathbb{Q}_{k\ell}$ be the components of $\mathbb{Q}$ so that 
 $\mathbb{Q}(\theta_\ell) = \sum_{k =1}^n \mathbb{Q}_{k\ell}\theta_k$.
 Then,
\begin{align}
 \ts \sum_{k=1}^n\mathbb{P}_{k\ell} \e _{k i_1 \cdots i_{m-1}} =  \e _{i_1 \cdots i_{m-1} \ell},  \quad  \ts \sum_{\ell=1}^n\mathbb{P}_{\ell k}^{-1} \e _{i_1 \cdots i_{m-1} \ell} =  \e _{k i_1 \cdots i_{m-1}}, \label{rotate1}\\
\ts \sum_{k=1}^n\mathbb{Q}_{k\ell} \f _{k i_1 \cdots i_{m-1}} =  \f _{i_1 \cdots i_{m-1} \ell},\quad \ts \sum_{\ell=1}^n\mathbb{Q}_{\ell k}^{-1}  \f _{i_1 \cdots i_{m-1} \ell} =  \f _{k i_1 \cdots i_{m-1}}. \label{rotate2}
 \end{align}
\qed
 \end{lemma}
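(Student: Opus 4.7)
The plan is to derive these identities as direct unpackings of the $\sigma$-cyclic and $\psi$-cyclic conditions from \Cref{preregular}(b), evaluated on the chosen bases $\{v_i\}$ and $\{\theta_i\}$. The only ingredients required are the definitions of the matrices $\mathbb{P}$ and $\mathbb{Q}$ and the fact that $\e$ and $\f$ are multilinear, so no structural argument beyond substitution is needed.

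First I would treat the identities for $\e$. Since $\e$ is $\sigma$-cyclic, for any basis vectors $v_{i_1},\dots,v_{i_{m-1}},v_\ell$ we have
\[
\e(v_{i_1}\otimes\cdots\otimes v_{i_{m-1}}\otimes v_\ell) \;=\; \e\bigl(\sigma(v_\ell)\otimes v_{i_1}\otimes\cdots\otimes v_{i_{m-1}}\bigr).
\]
Expanding $\sigma(v_\ell)=\sum_{k=1}^n \mathbb{P}_{k\ell}v_k$ and using multilinearity gives $\e_{i_1\cdots i_{m-1}\ell}=\sum_{k=1}^n \mathbb{P}_{k\ell}\,\e_{k\,i_1\cdots i_{m-1}}$, which is the first identity in \cref{rotate1}. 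For the second identity, I would apply $\sigma^{-1}$ to both sides of the cyclic relation, so that $\e(\sigma^{-1}(v_k)\otimes v_{i_1}\otimes \cdots \otimes v_{i_{m-1}})=\e(v_{i_1}\otimes\cdots\otimes v_{i_{m-1}}\otimes \sigma^{-1}(v_k))$ by one further application of $\sigma$-cyclicity, and then rewrite using $\sigma^{-1}(v_k)=\sum_\ell \mathbb{P}^{-1}_{\ell k}v_\ell$. Equivalently, one may just invert the linear system in the first identity with respect to the index $\ell$ versus $k$.

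The identities for $\f$ in \cref{rotate2} are proved by the identical argument with $V$, $\sigma$, $\mathbb{P}$, $\{v_i\}$, $\e$ replaced by $V^*$, $\psi$, $\mathbb{Q}$, $\{\theta_i\}$, $\f$ respectively. There is no substantive obstacle here; the only mildly delicate point is keeping the index conventions straight, in particular remembering that the matrix entries are defined by $\sigma(v_\ell)=\sum_k \mathbb{P}_{k\ell}v_k$ (so $\ell$ is the column and $k$ the row), which is precisely what produces the placement of $\mathbb{P}_{k\ell}$ rather than $\mathbb{P}_{\ell k}$ in the first relation.
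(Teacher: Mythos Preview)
Your proposal is correct and is exactly the routine verification the paper has in mind: the lemma is stated with a terminal \qed\ and no proof, since the identities are immediate consequences of the $\phi$-cyclicity condition in \Cref{preregular}(b) unpacked on basis vectors, precisely as you describe. The only remark is that your derivation of the second identity via ``one further application of $\sigma$-cyclicity'' is slightly roundabout; the alternative you mention---simply multiplying the first identity by $\mathbb{P}^{-1}_{\ell k'}$ and summing over $\ell$---is cleaner and is all that is needed.
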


We end this section with two results that are crucial to our work.

 \begin{theorem}\label{T:CoactionA(e)}
 Let $K$ be a Hopf algebra coacting on the free algebra $TV^*=\kk \langle x_1,\dots,x_n\rangle$ with $\rho(x_i)=\sum_{j=1}^n a_{ij}\otimes x_j$ for some $a_{ij}\in K$. Consider the $N$-Koszul Artin-Schelter regular algebra $A(\e,N)=TV^*/(R)$, where $R=\partial^{m-N}(\kk \e^*)\subset (V^*)^{\otimes N}$. Then, the following are equivalent.
 
 \smallskip
 
\begin{enumerate}
\item The map $\rho: TV^*\to K\otimes TV^*$ induces naturally a coaction of $K$ on $A(\e,N)$ such that $A(\e,N)$ is a left $K$-comodule algebra. 

\smallskip

\item The coaction $\rho$ satisfies $\rho(R)\subseteq K\otimes R$, that is,
\begin{align*}
\ts \sum_{i_1,\dots,i_N,j_1,\dots,j_N=1}^n \e_{\lambda_1\cdots \lambda_{m-N}i_1\cdots i_N}\mu_{j_1\cdots j_N}\, a_{i_1j_1}\cdots a_{i_Nj_N}=0,
\end{align*} 
where $\ts \sum_{i_1,\dots,i_N=1}^n {\sf e}_{\lambda_1 \cdots \lambda_{m-N} i_1 \cdots i_N} \mu_{i_1 \cdots i_N} = 0$ for all $ \lambda_1,\dots \lambda_{m-N}\in \{1,\dots,n\}$.

\smallskip

\item There is a unique grouplike element $g\in K$ such that the natural inclusion $\kk \e^*\hookrightarrow (V^*)^{\otimes m}$ is a left $K$-comodule map where the coactions of $K$ on $V^*$ and on $\kk \e^*$ are given by $\rho$ and $g \otimes -$, respectively.

\smallskip

\item There is a unique grouplike element $g\in K$ such that the map $\hat\e: V^{\otimes m}\to M_\e$ is a right $K$-comodule map where the coactions of $K$ on $V$ and on $M_\e$  are given by $v_j \mapsto \sum_{i=1}^n v_i\otimes a_{ij}$ and $-\otimes g$, respectively.

\smallskip

\item There is a unique grouplike element $g\in K$ such that the following relations hold in $K$:
\begin{align*}
\ts \sum_{i_1,\dots,i_m=1}^n \e_{i_1\dots i_m}\, a_{i_1j_1}\cdots a_{i_mj_m}=\e_{j_1\cdots j_m}g,
\end{align*} 
for all $j_1,\dots,j_m\in \{1,\dots,n\}$.
\end{enumerate}

\smallskip

\noindent If $K$ satisfies these conditions, then the unique grouplike element $g\in K$ in parts \textnormal{(c)}-\textnormal{(e)} is given by 
 \begin{align*}
 g=\ts \frac{1}{\e_{j_1\dots j_m}}\sum_{i_1,\dots,i_m=1}^n \e_{i_1\dots i_m}\, a_{i_1j_1}\cdots a_{i_mj_m}
 \end{align*}
 for any choice of $j_1,\dots,j_m\in \{1,\dots,n\}$ such that $\e_{j_1\dots j_m}\neq 0$. Moreover, the homological codeterminant of the $K$-coaction on $A(\e,N)$ is given by $g$. 
 \end{theorem}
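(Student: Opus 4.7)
The plan is to establish all five conditions as equivalent via the cycle $(a) \Leftrightarrow (b) \Leftrightarrow (c) \Leftrightarrow (d) \Leftrightarrow (e)$, after which both the formula for $g$ and its identification with the homological codeterminant via \Cref{thm:codet}(a) are immediate. The equivalence $(a) \Leftrightarrow (b)$ is the universal property of the quotient $A(\e,N) = TV^*/(R)$: since $\rho$ is an algebra map on $TV^*$, it descends to $A(\e,N)$ exactly when it preserves the two-sided ideal $(R)$, equivalently the generating subspace $R$, which is precisely what (b) expresses in coordinates. The equivalences $(c) \Leftrightarrow (d) \Leftrightarrow (e)$ follow by routine unpacking: expanding $\rho(\e^*) = g \otimes \e^*$ in the basis $\{x_{j_1} \otimes \cdots \otimes x_{j_m}\}$ of $(V^*)^{\otimes m}$ yields (e), while the comodule-map condition in (d), read via $v_j \mapsto \sum_i v_i \otimes a_{ij}$ on the source and $m_\e \mapsto m_\e \otimes g$ on the target, gives the same identity; in either direction the grouplike $g$ is uniquely determined since $\e^* \ne 0$ and $\hat\e \ne 0$.

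For $(b) \Rightarrow (c)$, assume $\rho(R) \subseteq K \otimes R$. For every decomposition $s + t = m - N$, the subspace $(V^*)^{\otimes s} \otimes R \otimes (V^*)^{\otimes t} \subseteq (V^*)^{\otimes m}$ is a tensor product of left $K$-subcomodules of $(V^*)^{\otimes m}$, hence itself a left $K$-subcomodule, and an intersection of subcomodules remains a subcomodule. By \Cref{N-Koszul}(c), together with the standing $N$-Koszul assumption on $A(\e,N)$, this intersection equals $\kk\e^* = (A^!_m)^*$. Hence $\kk\e^*$ is a one-dimensional left $K$-subcomodule of $(V^*)^{\otimes m}$, so $\rho(\e^*) = g \otimes \e^*$ for a unique $g \in K$, which is grouplike by coassociativity and counitality of $\rho$.

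The main obstacle is the reverse direction $(c) \Rightarrow (b)$, which I would handle by using the antipode of $K$ to invert the tensor-product coaction matrix on $(V^*)^{\otimes m-N}$. Write $\e^* = \sum_k \tilde r_k \otimes x_k$ in $(V^*)^{\otimes N} \otimes (V^*)^{\otimes m-N}$, where $\{x_k := x_{k_1} \otimes \cdots \otimes x_{k_{m-N}}\}$ is the standard basis and $\tilde r_k := \sum_j \e_{j_1 \cdots j_N k_1 \cdots k_{m-N}} x_{j_1} \otimes \cdots \otimes x_{j_N}$; by \Cref{super-prereg}(d) applied with $(r,s) = (0,m-N)$, the collection $\{\tilde r_k\}$ spans $R$. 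Applying $\rho$ to both sides, writing $a_{k k'} := a_{k_1 k'_1} \cdots a_{k_{m-N} k'_{m-N}}$, and matching coefficients of $x_{k'}$ yields
\[
\sum_k \rho(\tilde r_k) \cdot a_{k k'} \;=\; g \otimes \tilde r_{k'} \quad\text{in } K \otimes (V^*)^{\otimes N},
\]
with $a_{k k'}$ acting by right multiplication on the $K$-factor. Right-multiplying by $S(a_{k' k''})$ and summing over $k'$, the antipode identity $\sum_{k'} a_{k k'} S(a_{k' k''}) = \delta_{k k''}$ (coming from $\Delta(a_{k k''}) = \sum_{k'} a_{k k'} \otimes a_{k' k''}$) collapses the left-hand side to $\rho(\tilde r_{k''})$, while the right-hand side becomes $\sum_{k'} g\, S(a_{k' k''}) \otimes \tilde r_{k'} \in K \otimes R$. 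This proves $\rho(R) \subseteq K \otimes R$, which is (b). The explicit formula for $g$ is then obtained by solving (e) at any tuple $(j_1, \dots, j_m)$ with $\e_{j_1 \cdots j_m} \ne 0$, which exists by the nondegeneracy condition in \Cref{preregular}(a). Finally, the identification $g = {\sf D}$ follows from \Cref{thm:codet}(a) applied to the induced left $K$-coaction on $A(\e, N)$, which gives $\rho(\e^*) = {\sf D} \otimes \e^*$ and hence $g = {\sf D}$ by uniqueness.
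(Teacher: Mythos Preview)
Your proof is correct and follows the same overall architecture as the paper: the same equivalences $(a)\Leftrightarrow(b)$, $(c)\Leftrightarrow(d)\Leftrightarrow(e)$, and $(b)\Rightarrow(c)$ via the intersection description $\kk\e^*=\bigcap_{r+s=m-N}(V^*)^{\otimes r}\otimes R\otimes(V^*)^{\otimes t}$ from \Cref{N-Koszul}(c), with the final claims about $g$ and ${\sf D}$ handled identically.

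The one place you diverge is $(c)\Rightarrow(b)$. The paper argues categorically: it equips $V$ with the left $K$-comodule structure $v_j\mapsto\sum_i S(a_{ij})\otimes v_i$, making $V$ the right dual of $V^*$ in left $K$-comodules, and then observes that the composite
\[
\kk\e^*\otimes V^{\otimes(m-N)}\xrightarrow{\iota\otimes\mathrm{id}}(V^*)^{\otimes m}\otimes V^{\otimes(m-N)}\xrightarrow{\mathrm{id}\otimes\mathrm{ev}}(V^*)^{\otimes N}
\]
is a comodule map with image exactly $R$, so $R$ is a subcomodule. Your argument is the coordinate version of the same idea: decomposing $\e^*=\sum_k\tilde r_k\otimes x_k$ and right-multiplying by $S(a_{k'k''})$ is precisely what the iterated evaluation against the dual basis $v_{k''}$ does once unwound. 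Your presentation is more elementary and avoids invoking duality in the comodule category; the paper's is more conceptual and makes clearer \emph{why} the antipode enters (it is what turns $V$ into a dual object). Both are equally valid.
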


 \begin{proof}
First, note that the elements $a_{ij}\in K$ given by the $K$-coaction on $TV^*$ span a matrix coalgebra such that $\Delta(a_{ij})=\sum_{k=1}^n a_{ik}\otimes a_{kj}$ and $\varepsilon(a_{ij})=\delta_{ij}$ for all $1\le i,j\le n$. 

\medskip

(a)$\Leftrightarrow$(b) is clear; use Lemma~\ref{L:ANNR} and \cite[Lemma 2.2]{CWZ:Nakayama} for the $m=N=2$ case, for instance. 

\medskip
 
 (d)$\Leftrightarrow$(e) Denote by $\rho(m_\e)=m_\e\otimes g$ the $K$-coaction on $M_\e$ given by the grouplike element $g\in K$. Clearly, $\hat \e: V^{\otimes m}\to M_\e$ is a right $K$-comodule map if and only if the following diagram commutes
\[
\xymatrix{
V^{\otimes m}\ar[d]_-{\rho} \ar[rr]^-{\hat\e} && M_\e\ar[d]^-{-\otimes g}\\
V^{\otimes m}\otimes K \ar[rr]_-{\hat \e \otimes \mathrm{id}_K} && M_\e \otimes K.
}
\]
It is straightforward to check that the commutativity of the diagram above is equivalent to the following condition. 
\begin{align*}
\ts \sum_{i_1,\cdots,i_m=1}^n \e_{i_1\cdots i_m}a_{i_1j_1}\cdots a_{i_mj_m}=e_{j_1\cdots j_m}g,
\end{align*}  
for all $j_1,\dots,j_m\in\{1,\dots,n\}$.

\medskip
 
(c)$\Leftrightarrow$(e) follows similarly. 
 
\medskip
 
(b)$\Rightarrow$(c) Since $R$ is a $K$-subcomodule of $(V^*)^{\otimes N}$,  $\kk \e^*=\bigcap_{r+s=m-N} (V^*)^{\otimes r}\otimes R\otimes (V^*)^{\otimes s}$ is a $K$-subcomodule of $(V^*)^{\otimes m}$ by Proposition \ref{N-Koszul}(c). Thus it is clear that the natural inclusion $\kk \e^*\hookrightarrow (V^*)^{\otimes m}$ is a $K$-comodule map. Since $\kk \e^*$ is one-dimensional, any $K$-coaction on $\kk \e^*$ must be given by some grouplike element in $K$, which in our setting is uniquely determined by $\rho$. 
 
\medskip
 
(c)$\Rightarrow$(b) We endow $V$ with a left $K$-comodule structure defined by $\rho^\vee(v_j)=\sum_{i=1}^n S(a_{ij})\otimes v_i$. Take $V^*$ a left $K$-comodule via $\rho$. Thus in the category of left $K$-comodules, $V$ is the right dual of $V^*$, where the natural evaluation $\ev: V^*\times V\to \kk$ is a $K$-comodule map. Then we consider the following sequence of maps: 
\begin{equation}\label{eq:comp}
    \begin{tikzpicture}[auto,baseline=(current  bounding  box.center)]
    \path[anchor=base] (0,0) node (1) {$\kk \e^* \otimes V^{\otimes(m-N)}$} +(6,0) node (2) {$(V^*)^{\otimes m}\otimes V^{\otimes(m-N)}$}+(13,0) node (3) {$(V^*)^{\otimes N}$};
         \draw[->] (1) to node[pos=.5] {$ \iota\otimes \id_{V^{\otimes(m-N)}}$}  (2);
         \draw[->] (2) to node[pos=.5] {$\id_{(V^*)^{\otimes N}}\otimes \ev_{V^{\otimes(m-N)}}$}  (3);
  \end{tikzpicture}
\end{equation}
where $\iota: \kk \e^*  \to (V^*)^{\otimes m}$ is the inclusion and 
\begin{equation*}
 \ev_{V^{\otimes(m-N)}}: (V^*)^{\otimes(m-N)}\otimes V^{\otimes(m-N)}\to \kk
\end{equation*}
is the iterated evaluation of  $V^*$ against $V$ via the natural evaluation $\ev: V^*\otimes V\to \kk$, with the innermost $V \otimes V$ evaluated first, etc. In view of \cref{super-prereg}(d) (with $r=0,s=m-N$), it is routine to check that 
\[\text{Image}\left(\kk \e^* \otimes V^{\otimes(m-N)}\longrightarrow(V^*)^{\otimes N}\right)=\partial^{m-N}(\kk \e^*)=R.\] 
Now $\iota$ is a map of left $K$-comodules by assumption. Since the evaluation  $\ev: V^*\times V\to \kk$ is also a map of left $K$-comodules, the entire composition \Cref{eq:comp} is preserved by the left $K$-coactions on its domain and codomain. In particular, its image $R$ is a $K$-subcomodule of $(V^*)^{\otimes N}$, or equivalently $\rho(R)\subseteq K\otimes R$ as desired.

\medskip
 
Finally, suppose $K$ satisfies the equivalent conditions (a)-(e). Thus the independency of the choice of $j_1,\dots,j_m\in \{1,\dots,n\}$ in the definition of $g$ is clear from part (e). Moreover, we get $\rho(\e^*)=g \otimes \e^*$ by part (b). Hence the homological codeterminant of the left $K$-coaction on $A(\e,N)$ follows from \cref{thm:codet}(a).
 \end{proof}
 
The following result can be obtained in a similar fashion. 

 \begin{theorem}\label{T:CoactionA(f)}
 Let $K$ be a Hopf algebra coacting on the free algebra $TV=\kk \langle y_1,\dots,y_n\rangle$ with $\rho(y_j)=\sum_{i=1}^n y_i\otimes a_{ij}$ for some $a_{ij}\in K$. Consider the $N$-Koszul Artin-Schelter regular algebra $A(\f,N)=TV/(R)$, where $R=\partial^{m-N}(\kk \f^*)\subset V^{\otimes N}$. Then the following are equivalent.
 
 \smallskip
 
\begin{enumerate}
\item The map $\rho: TV\to TV\otimes K$ induces naturally a coaction of $K$ on $A(\f,N)$ such that $A(\f,N)$ is a right $K$-comodule algebra. 

\smallskip

\item The coaction $\rho$ satisfies $\rho(R)\subseteq R\otimes K$, that is,
\begin{align*}
\ts \sum_{i_1,\dots,i_N,j_1,\dots,j_N=1}^n \f_{\lambda_1\cdots \lambda_{m-N}j_1\cdots j_N}\nu_{i_1\cdots i_N}\, a_{i_1j_1}\cdots a_{i_Nj_N}=0,
\end{align*} 
where $\sum_{j_1, \dots, j_N =1}^n {\sf f}_{\lambda_1, \dots, \lambda_{m-N}, j_1, \dots, j_N}\nu_{j_1\cdots j_N} =0$ for all $ \lambda_1,\dots \lambda_{m-N}\in\{1,\dots,n\}$.

\smallskip

\item There is a unique grouplike element $g\in K$ such that the natural inclusion $\kk \f^*\hookrightarrow V^{\otimes m}$ is a right $K$-comodule map where the coactions of $K$ on $V$ and on $\kk \f^*$  are given by $\rho$ and $-\otimes g$, respectively.

\smallskip

\item There is a unique grouplike element $g\in K$ such that the map $\hat\f: (V^*)^{\otimes m}\to M_\f$ is a left $K$-comodule map where the coactions of $K$ on $V^*$ and on $M_\f$ are given by $\theta_i \mapsto \sum_{j=1}^n a_{ij}\otimes \theta_j$ and $g\otimes -$, respectively.

\smallskip

\item There is a unique grouplike element $g\in K$ such that the following relations hold in $K$:
\begin{align*}
\ts \sum_{j_1,\dots,j_m=1}^n \f_{j_1\dots j_m}\, a_{i_1j_1}\cdots a_{i_mj_m}=\f_{i_1\cdots i_m}g,
\end{align*} 
for all $i_1,\dots,i_m\in \{1,\dots,n\}$.
\end{enumerate}

\smallskip

\noindent If $K$ satisfies these conditions, then the unique grouplike element $g\in K$ in parts \textnormal{(c)}-\textnormal{(e)} is given by 
 \begin{align*}
 g=\ts \frac{1}{\f_{i_1\dots i_m}} \sum_{j_1,\dots,j_m=1}^n \f_{j_1\dots j_m}\, a_{i_1j_1}\cdots a_{i_mj_m}
 \end{align*}
 for any choice of $i_1,\dots,i_m\in \{1,\dots,n\}$ such that $\f_{i_1\dots i_m}\neq 0$. Moreover, the homological codeterminant of the $K$-coaction on $A(\f,N)$ is given by $g^{-1}$. \qed
 \end{theorem}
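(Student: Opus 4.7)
The plan is to transcribe the proof of Theorem~\ref{T:CoactionA(e)} verbatim, reversing the handedness of every $K$-coaction and interchanging the roles of $V \leftrightarrow V^*$, of $\e \leftrightarrow \f$, and of $\e^* \leftrightarrow \f^*$ throughout. First, (a)$\Leftrightarrow$(b) is immediate from Lemma~\ref{L:ANNR}: the coaction on $TV$ descends to $A(\f,N) = TV/(R)$ precisely when $\rho(R) \subseteq R \otimes K$, and the explicit condition on $\nu_{i_1\cdots i_N}$ in (b) expresses exactly that every generator of $R$ is sent into $R \otimes K$ under the coaction encoded by the $(a_{ij})$.

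The equivalences (d)$\Leftrightarrow$(e) and (c)$\Leftrightarrow$(e) are checked by chasing the analogous commutative squares. For (d)$\Leftrightarrow$(e), I would endow $V^*$ with the left $K$-coaction $\theta_i \mapsto \sum_j a_{ij} \otimes \theta_j$ dual to the given right $K$-coaction on $V$, apply $\hat\f \otimes \mathrm{id}_K$, and compare with the coaction $g \otimes m_\f$ on $M_\f$; commutativity of the square is equivalent to the $m$-fold product relation displayed in (e). The same calculation, applied to the inclusion $\kk \f^* \hookrightarrow V^{\otimes m}$, yields (c)$\Leftrightarrow$(e).

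For (b)$\Rightarrow$(c), Proposition~\ref{N-Koszul}(c) identifies $\kk\f^* = \bigcap_{r+s=m-N} V^{\otimes r} \otimes R \otimes V^{\otimes s}$, and since $R$ is a right $K$-subcomodule of $V^{\otimes N}$ by (b), $\kk\f^*$ is a right $K$-subcomodule of $V^{\otimes m}$; one-dimensionality then forces the coaction on $\kk\f^*$ to be given by a unique grouplike $g \in K$. For (c)$\Rightarrow$(b), I would endow $V^*$ with the right $K$-comodule structure $\theta_i \mapsto \sum_j \theta_j \otimes S(a_{ij})$, which makes $V^*$ the left dual of $V$ in the category of right $K$-comodules and renders the natural evaluation $\mathrm{ev}: V^* \otimes V \to \kk$ a right $K$-comodule map. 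The composition
\[
(V^*)^{\otimes(m-N)} \otimes \kk\f^* \xrightarrow{\mathrm{id} \otimes \iota} (V^*)^{\otimes(m-N)} \otimes V^{\otimes m} \xrightarrow{\mathrm{ev}_{(V^*)^{\otimes(m-N)}} \otimes \mathrm{id}_{V^{\otimes N}}} V^{\otimes N},
\]
in which the second arrow pairs $(V^*)^{\otimes(m-N)}$ against the first $m-N$ tensor factors of $V^{\otimes m}$ (innermost pair first), is then a map of right $K$-comodules whose image is exactly $\partial^{m-N}(\kk\f^*) = R$ by Lemma~\ref{super-prereg}(d). Hence $R$ is a right $K$-subcomodule of $V^{\otimes N}$, proving~(b).

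Finally, the formula for $g$ is forced by (e) and is manifestly independent of the chosen indices. The identity ${\sf D} = g^{-1}$ is the one genuinely new ingredient compared with Theorem~\ref{T:CoactionA(e)}: applying Theorem~\ref{thm:codet}(a) to the right $K$-coaction on $A(\f,N)$ gives $\rho(\f^*) = \f^* \otimes {\sf D}^{-1}$, whereas (c) gives $\rho(\f^*) = \f^* \otimes g$, so ${\sf D} = g^{-1}$. The main obstacle is bookkeeping: selecting the correct right-handed dual coaction on $V^*$ (via the antipode $S$) so that the evaluation employed in (c)$\Rightarrow$(b) is genuinely a right $K$-comodule map. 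Beyond this careful handedness check, every step is a faithful mirror of the left-handed argument.
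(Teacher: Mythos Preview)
Your proposal is correct and is exactly the approach the paper intends: the paper proves this theorem by stating that it ``can be obtained in a similar fashion'' to Theorem~\ref{T:CoactionA(e)} and marks it with a \qed, so your detailed transcription of the handedness-reversed argument is precisely what is being left to the reader. Your bookkeeping is sound, including the choice of right $K$-coaction $\theta_i \mapsto \sum_j \theta_j \otimes S(a_{ij})$ on $V^*$ making $\mathrm{ev}\colon V^*\otimes V\to\kk$ a right comodule map, and the identification ${\sf D}=g^{-1}$ via Theorem~\ref{thm:codet}(a).
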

 
\begin{remark}\label{R:Bialgebra}
Generally, we can consider the coaction of a bialgebra $B$ on $A(\e,N)$ or on $A(\f,N)$, but the adjustments to the theorems above are given as follows. 
\begin{enumerate}
\item Let $B$ coact on the free algebra $TV^*=\kk \langle x_1,\dots,x_n\rangle$ with $\rho(x_i)=\sum_{j=1}^n b_{ij}\otimes x_j$ for some \linebreak $b_{ij}\in B$. Using the same proofs in \cref{T:CoactionA(e)}, we can show that (a)$\Leftrightarrow$(b) and (c)$\Leftrightarrow$(d)$\Leftrightarrow$(e). Moreover, (b) implies (c), but the converse does not necessarily hold.

\smallskip

\item Let $B$ coact on the free algebra $TV=\kk \langle y_1,\dots,y_n\rangle$ with $\rho(y_j)=\sum_{i=1}^n  y_i\otimes b_{ij}$ for some $b_{ij}\in B$. Similarly, we have (a)$\Leftrightarrow$(b) and (c)$\Leftrightarrow$(d)$\Leftrightarrow$(e); again (b)$\Rightarrow$(c), but (c)$\not\Rightarrow$(b).
\end{enumerate}
\end{remark}

\section{The quantum groups $\cH( \e )$, $\cH( \f )$ associated to a preregular form $ \e $, $ \f $, resp.}\label{HeHf}
Recall the notation from the previous section, especially Notation~\ref{not}.
Now we begin the study of universal quantum groups $\cH(\e)$ and $\cH(\f)$ associated to preregular forms $\e$ and $\f$, respectively. We define these Hopf algebras in Sections~\ref{sec:He-pres} and~\ref{sec:Hf-pres}, and show that they coact universally on superpotential algebras $A(\e,N)$ and $A(\f,N)$, respectively, in Section~\ref{sec:He-Hf-prop}.

\subsection{Presentation of $\cH(\e)$}\label{sec:He-pres}

\begin{definition}\label{D:A(e)}
For a preregular form $\e: V^{\otimes m}\to \kk$, let $\cH(\e)$ be the algebra with generators $(a_{ij})_{1\leq i,j \leq n}$, $(b_{ij})_{1\leq i,j \leq n}$, ${\sf D}_ \e ^{\pm 1}$ satisfying the relations 
\begin{align}
\ts \sum_{i_1, \dots, i_m = 1}^n  \e _{i_1 \cdots i_m} a_{i_1 j_1} \cdots a_{i_m j_m}
~&=~  \e _{j_1 \cdots j_m} {\sf D_e},\ \quad \quad \forall\, 1\le j_1,\dots, j_m\le n \label{He-aij},\\
\ts \sum_{i_1, \dots, i_m = 1}^n  \e _{i_1 \cdots i_m} b_{i_m j_m} \cdots b_{i_1 j_1} 
~&=~  \e _{j_1 \cdots j_m} {\sf D_e}^{-1},\  \quad \forall\, 1\le j_1,\dots, j_m\le n \label{He-bij},\\
{\sf D_e}{\sf D_e}^{-1} ~=~ {\sf D_e}^{- 1} {\sf D_e}  ~&=~ 1_{\mathcal{H}} \label{He-Dinv},\\
\A\,\B ~&=~\I, \label{He-antiA}
\end{align}
where $\mathbb A$ and $\mathbb B$ are the matrices $(a_{ij})_{1\leq i,j \leq n}$ and $(b_{ij})_{1\leq i,j \leq n}$, respectively. 
\end{definition}

\begin{lemma}\label{L:RA(e)}
 The following hold in $\cH(\e)$.
\begin{align*}
\mathbb B\, \mathbb A=&\, \I\, =\mathbb A\, \mathbb B,\\
\mathbb B^T\,{\sf D}_\e^{-1}\, \mathbb P^T\, \mathbb A^T\, \mathbb P^{-T}\, {\sf D}_\e=&\, \I\,={\sf D}_\e^{-1}\, \mathbb P^T\, \mathbb A^T\, \mathbb P^{-T}\, {\sf D}_\e\, \mathbb B^T.
\end{align*}
\end{lemma}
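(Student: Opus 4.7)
The plan is to prove the second line of the lemma first, and then deduce $\mathbb B\,\mathbb A = \I$ from it by a uniqueness-of-inverses argument. Set $\mathbb M := {\sf D}_\e^{-1}\,\mathbb P^T\,\mathbb A^T\,\mathbb P^{-T}\,{\sf D}_\e$. The first stage is to verify $\mathbb B^T\mathbb M = \mathbb M\,\mathbb B^T = \I$; transposing this will give that $\mathbb B$ is two-sided invertible in $M_n(\cH(\e))$ with two-sided inverse $\mathbb M^T = {\sf D}_\e\,\mathbb P^{-1}\,\mathbb A\,\mathbb P\,{\sf D}_\e^{-1}$.

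The key input is the cyclic property of $\e$ in the form \eqref{rotate1} of \cref{rotate}: $\e_{i_1\cdots i_m} = \sum_k \mathbb P_{k i_m}\,\e_{k\,i_1\cdots i_{m-1}}$. First I would substitute this identity into the left-hand side of \eqref{He-aij} so as to absorb the last factor $a_{i_m j_m}$ into an entry of $\mathbb P\,\mathbb A$, and apply the analogous identity to the right-hand side. After relabeling, this yields a twisted form of \eqref{He-aij} in which one factor of $\mathbb A$ has effectively been replaced by a conjugate entry of $\mathbb P^{-1}\mathbb A\,\mathbb P$ and in which ${\sf D}_\e$ appears paired with $\mathbb P$. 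Multiplying this twisted identity by $\mathbb B$ on the appropriate side and using $\mathbb A\,\mathbb B = \I$ repeatedly to collapse the telescoping product of $a$'s and $b$'s should isolate a sum that matches the entries of $\mathbb B^T\mathbb M$.

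Concretely, writing $\mathbb M_{pj} = \sum_{k,\ell}\mathbb P_{kp}\,\mathbb P^{-1}_{j\ell}\,{\sf D}_\e^{-1}\,a_{\ell k}\,{\sf D}_\e$ (using that the scalar entries of $\mathbb P^{\pm T}$ commute with everything), the entry $(\mathbb B^T\mathbb M)_{ij}$ unfolds into a triple sum $\sum_{p,k,\ell}b_{pi}\,\mathbb P_{kp}\,\mathbb P^{-1}_{j\ell}\,{\sf D}_\e^{-1}\,a_{\ell k}\,{\sf D}_\e$ which the twisted relation collapses to $\delta_{ij}$. The companion identity $\mathbb M\,\mathbb B^T = \I$ follows from the mirror computation, starting from \eqref{He-bij} in place of \eqref{He-aij} and again shifting indices via the cyclic property.

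Once $\mathbb B$ is known to be two-sided invertible with inverse $\mathbb M^T$, the first line of the lemma is immediate: the defining relation $\mathbb A\,\mathbb B = \I$ asserts that $\mathbb A$ is a left inverse of $\mathbb B$, and uniqueness of two-sided inverses in $M_n(\cH(\e))$ forces $\mathbb A = \mathbb M^T = \mathbb B^{-1}$, whence $\mathbb B\,\mathbb A = \I$. I expect the main obstacle to be Step~1: tracking carefully how the cyclic property of $\e$ propagates ${\sf D}_\e$ past the entries of $\mathbb A$ and $\mathbb B$ is precisely the content that explains the appearance of $\mathbb P^{\pm T}$ and ${\sf D}_\e^{\pm 1}$ in the formula for $\mathbb M$, and some bookkeeping will be needed to iterate the cyclic substitution the correct number of times to produce a usable form of the twisted relation.
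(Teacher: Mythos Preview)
Your plan has a genuine gap: you never invoke the polar form $\widetilde\e\in\mathrm{Aff}(\e)$ from \cref{polar}, and without it the telescoping you describe cannot close. The only collapsing tool available from the defining relations is $\mathbb A\,\mathbb B=\I$, i.e.\ $\sum_k a_{ik}b_{kj}=\delta_{ij}$. Multiplying \eqref{He-aij} on the right by $b$'s and telescoping indeed reduces the $m$ factors of $a$ down to one, but the right–hand side then carries $m-1$ factors of $b$ together with ${\sf D}_\e$; to collapse those $b$'s you would need $\mathbb B\,\mathbb A=\I$, which is exactly what you are trying to prove. The same obstruction blocks the ``mirror computation'' from \eqref{He-bij}: telescoping a product $b_{i_m j_m}\cdots b_{i_1 j_1}$ requires pairing each $b$ with an $a$ in the order $b\cdot a$, and that again needs $\mathbb B\,\mathbb A=\I$. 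So neither $\mathbb B^T\mathbb M=\I$ nor $\mathbb M\,\mathbb B^T=\I$ can be reached by the route you sketch.

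The paper's argument avoids this circularity by using the nondegeneracy of $\e$ in a different way: it builds an explicit matrix $\mathbb C$ with entries ${\sf D}_\e^{-1}\,\widetilde\e_{i i_1\cdots i_{m-1}}\,a_{j_1 i_1}\cdots a_{j_{m-1} i_{m-1}}\,\e_{j_1\cdots j_{m-1} j}$ and checks directly, using only \eqref{He-aij} and \eqref{polar-e}, that $\mathbb C\,\mathbb A=\I$. Since $\mathbb A$ already has right inverse $\mathbb B$, one gets $\mathbb C=\mathbb B$ and hence $\mathbb B\,\mathbb A=\I$. Only \emph{after} this is established does the paper turn to the second line, and its computation of $\mathbb M\,\mathbb B^T=\I$ again substitutes the explicit $\widetilde\e$-formula for the entries of $\mathbb B=\mathbb C$. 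In short, the order you propose (second line first, then deduce $\mathbb B\,\mathbb A=\I$) is the reverse of what actually works, and the missing ingredient throughout is the polar form $\widetilde\e$.
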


\begin{proof}
We first show that $\mathbb B\mathbb A=\I=\mathbb A\mathbb B$. By \cref{polar}, we have an element $\widetilde \e  \in \text{Aff}( \e )$. Let $\mathbb C:=(c_{ij})_{1\le i,j\le n}$ be elements in $\cH(\e)$ such that 
\begin{equation}\label{leftantiA}
c_{ij}=\ts \sum_{i_1, \dots, i_{m-1}, j_1, \dots, j_{m-1}=1}^n ~~{\sf D_e}^{-1} \widetilde{ \e }_{i i_1 \cdots i_{m-1}}a_{j_1 i_1} \cdots a_{j_{m-1} i_{m-1}}  \e _{j_1 \cdots j_{m-1} j}.
\end{equation}
Thus we have
\[
\begin{array}{rl}
\sum_{k=1}^n c_{ik}a_{kj} 
&\overset{\cref{leftantiA}}{=}  \ts \sum_{i_1, \dots, i_{m-1}, k_1, \dots, k_{m-1}, k=1}^n 
~~{\sf D_e}^{-1} \widetilde{ \e }_{i i_1 \cdots i_{m-1}}a_{k_1 i_1} \cdots a_{k_{m-1} i_{m-1}}  \e _{k_1 \cdots k_{m-1} k}a_{kj}\\
&\overset{\cref{He-aij}}{=} \ts \sum_{i_1, \dots, i_{m-1}=1}^n 
~~{\sf D_e}^{-1} \widetilde{ \e }_{i i_1 \cdots i_{m-1}} \e _{i_1 \cdots i_{m-1} j} {\sf D}_ \e  \\
&\overset{\cref{polar-e}}{=}~~ \delta_{ij}.
\end{array}
\]
This shows that $\mathbb C\mathbb A=\I$. By \eqref{He-antiA}, $\mathbb C=\mathbb C\I=\mathbb C(\mathbb A\mathbb B)=(\mathbb C\mathbb A)\mathbb B=\I\mathbb B=\mathbb B$. Thus $\mathbb B\mathbb A=\I=\mathbb A\mathbb B$. 

Next, consider the following computation. 
\[
\begin{array}{l}
({\sf D}_\e^{-1}\, \mathbb P^T\, \mathbb A^T\, \mathbb P^{-T}\, {\sf D}_\e\, \mathbb B^T)_{ij}~=~\sum_{s,t,k = 1}^n ~{\sf D_e}^{-1}\, \mathbb{P}_{ti}\, a_{st}\, \mathbb{P}^{-1}_{ks}\, {\sf D_e}\, b_{jk}~=~\sum_{s,t,k = 1}^n ~{\sf D_e}^{-1}\, \mathbb{P}_{ti}\, a_{st}\, \mathbb{P}^{-1}_{ks}\, {\sf D_e}\, c_{jk}\\
\smallskip
\quad \overset{\cref{leftantiA}}{=} \sum_{s,t,k,j_1,\dots,j_{m-1},k_1,\dots,k_{m-1}= 1}^n ~{\sf D_e}^{-1}\mathbb{P}_{ti}a_{st} \mathbb{P}^{-1}_{ks}  {\sf D_e} ({\sf D_e}^{-1} \widetilde{ \e }_{j j_1 \cdots j_{m-1}} a_{k_1 j_1} \cdots a_{k_{m-1} j_{m-1}}  \e _{k_1 \cdots k_{m-1} k})\\
\smallskip
\quad \overset{\eqref{rotate1}}{=} \sum_{s,t,j_1,\dots,j_{m-1},k_1,\dots,k_{m-1}= 1}^n ~{\sf D_e}^{-1}\mathbb{P}_{ti} ( \e _{s k_1 \cdots k_{m-1}}  a_{st}  a_{k_1 j_1} \cdots a_{k_{m-1} j_{m-1}}) \widetilde{ \e }_{j j_1 \cdots j_{m-1}}\\
\smallskip
\quad \overset{\cref{He-aij}}{=} \sum_{s,j_1,\dots,j_{m-1}= 1}^n ~{\sf D_e}^{-1}\mathbb{P}_{ti}  \e _{t j_1 \cdots j_{m-1}} {\sf D_e} \widetilde{ \e }_{j j_1 \cdots j_{m-1}}\\
\smallskip
\quad \overset{\eqref{rotate1}}{=}  \sum_{j_1,\dots,j_{m-1}= 1}^n ~ \widetilde{ \e }_{j j_1 \cdots j_{m-1}}  \e _{j_1 \cdots j_{m-1}i}\\ 
\quad \overset{\cref{polar-e}}{=} \delta_{ij}.
\end{array}
\]
Then ${\sf D}_\e^{-1}\, \mathbb P^T\, \mathbb A^T\, \mathbb P^{-T}\, {\sf D}_\e\, \mathbb B^T=\I$. Similarly let $\mathbb D:=(d_{ij})_{1\le i,j\le n}$ be elements in $\cH(\e)$ such that 
\begin{equation*}
d_{ij}=\ts \sum_{i_1, \dots, i_{m-1}, j_1, \dots, j_{m-1}=1}^n \widetilde{ \e }_{i i_1 \cdots i_{m-1}}b_{j_{m-1} i_{m-1}} \cdots b_{j_1i_1}  \e _{j_1 \cdots j_{m-1} j}\, {\sf D_e}.
\end{equation*}
It is straightforward to check that $\B^T \mathbb D^T=\I$. Hence, we have ${\sf D}_\e^{-1}\, \mathbb P^T\, \mathbb A^T\, \mathbb P^{-T}\, {\sf D}_\e=\mathbb D^T$ and $\mathbb B^T\,({\sf D}_\e^{-1}\, \mathbb P^T\, \mathbb A^T\, \mathbb P^{-T}\, {\sf D}_\e)=\I$. This completes our proof. 
\end{proof}

\begin{proposition}\label{P:H(e)}
The algebra $\cH(\e)$ admits a Hopf algebra structure, with comultiplication $\Delta$ defined by  
\begin{align}
\quad \Delta(a_{ij}) = \ts \sum_{k=1}^n a_{ik}\otimes  a_{kj}, \quad \Delta(b_{ij}) = \ts \sum_{k=1}^n b_{kj}\otimes  b_{ik}, \quad \Delta({\sf D}_{\sf e}^{\pm 1})={\sf D}^{\pm 1}_{\sf e}\otimes {\sf D}^{\pm 1}_{\sf e},\label{He-coprod}
\end{align}
with counit $\varepsilon$ defined by 
\begin{align}
\varepsilon(a_{ij})=\varepsilon(b_{ij})=\delta_{ij}, \quad  \varepsilon({\sf D}^{\pm 1}_{\sf e})=1 \label{He-counit}
\end{align}
and with antipode $S$ defined by
\begin{align}
S(\mathbb A)=\mathbb B,\quad  S(\mathbb B)={\sf D}_ \e ^{-1}\, \mathbb P^{-1}\, \mathbb A\, \mathbb P\, {\sf D}_ \e,\quad S({\sf D}_{\sf e}^{\pm 1})={\sf D}_{\sf e}^{\mp 1}. \label{He-S}
\end{align}
\end{proposition}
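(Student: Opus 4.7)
The plan is to verify in turn that $\Delta$ and $\varepsilon$ extend to algebra homomorphisms, that coassociativity and counitality hold on generators, and finally that $S$ extends to an anti-algebra homomorphism satisfying the antipode axiom. First I would check that $\Delta$ and $\varepsilon$ preserve each of the four defining families of relations. Applying $\Delta$ to the left-hand side of \cref{He-aij}, the standard matrix-coalgebra expansion
\[
\ts\sum_{\mathbf i}\e_{i_1\cdots i_m}\Delta(a_{i_1 j_1})\cdots\Delta(a_{i_m j_m})=\sum_{\mathbf i,\mathbf k}\e_{i_1\cdots i_m}\,a_{i_1 k_1}\cdots a_{i_m k_m}\otimes a_{k_1 j_1}\cdots a_{k_m j_m}
\]
collapses, by invoking \cref{He-aij} in each tensor factor, to $\e_{j_1\cdots j_m}({\sf D_e}\otimes {\sf D_e}) = \e_{j_1\cdots j_m}\Delta({\sf D_e})$. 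The twisted formula $\Delta(b_{ij})=\sum_k b_{kj}\otimes b_{ik}$ is designed so that the analogous expansion for \cref{He-bij}, in which the multiplication order is reversed, closes in the same way. Relation \cref{He-antiA} reduces to $(\Delta(\A)\Delta(\B))_{ij}=\delta_{ij}(1\otimes 1)$, a matching-middle-index computation, and \cref{He-Dinv} is trivial because ${\sf D_e}^{\pm 1}$ is grouplike. The checks for $\varepsilon$ are immediate.

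For coassociativity, the cases $a_{ij}$ and ${\sf D_e}^{\pm 1}$ are routine; for $b_{ij}$ one writes out $(\Delta\otimes\id)\Delta(b_{ij}) = \sum_{k,\ell}b_{\ell j}\otimes b_{k\ell}\otimes b_{ik}$ and $(\id\otimes\Delta)\Delta(b_{ij}) = \sum_{k,\ell}b_{kj}\otimes b_{\ell k}\otimes b_{i\ell}$ and matches them after the relabeling $k\leftrightarrow \ell$. Counitality is immediate from \cref{He-counit}.

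Next I would verify that $S$ extends to a well-defined anti-homomorphism by confirming that $S$ sends each defining relation to an element already zero in $\cH(\e)$. Applying $S$ to \cref{He-aij} directly yields \cref{He-bij} after reindexing, and \cref{He-Dinv} is trivial. The substantive step is $S$ applied to \cref{He-bij}: substituting $S(b_{ij})=({\sf D_e}^{-1}\mathbb P^{-1}\A\mathbb P{\sf D_e})_{ij}$ and pulling the scalar entries of $\mathbb P^{\pm 1}$ past the generators, the telescoping ${\sf D_e}{\sf D_e}^{-1}$ cancellations reduce the left-hand side to
\[
\ts\sum_{\mathbf p,\mathbf q}\Bigl(\sum_{\mathbf i}\e_{i_1\cdots i_m}\prod_{s}\mathbb P^{-1}_{i_s p_s}\Bigr)\prod_{s}\mathbb P_{q_s j_s}\cdot {\sf D_e}^{-1}a_{p_1 q_1}\cdots a_{p_m q_m}{\sf D_e}.
\]
I would collapse this in three moves: iterate \cref{rotate1} $m$ times to obtain the $\mathbb P$-invariance $\sum_{\mathbf i}\e_{\mathbf i}\prod_s\mathbb P^{-1}_{i_s p_s}=\e_{\mathbf p}$, which is the $m$-fold iteration of the $\sigma$-cyclic identity of \cref{preregular}(b); apply \cref{He-aij} to replace $\sum_{\mathbf p}\e_{\mathbf p}a_{p_1 q_1}\cdots a_{p_m q_m}$ by $\e_{\mathbf q}{\sf D_e}$; and finally apply the second (dual) rotation identity in \cref{rotate1} $m$ times to collapse $\sum_{\mathbf q}\e_{\mathbf q}\prod_s\mathbb P_{q_s j_s}$ to $\e_{\mathbf j}$. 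This gives $\e_{\mathbf j}{\sf D_e}=\e_{\mathbf j}S({\sf D_e}^{-1})$, as required. For \cref{He-antiA}, the test $S(\B)S(\A)=\I$ unpacks to $({\sf D_e}^{-1}\mathbb P^{-1}\A\mathbb P {\sf D_e})\B=\I$; unpacking entry-wise and relabeling indices matches this precisely with the second identity in \cref{L:RA(e)}.

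Finally, I would verify the antipode axiom $\mu(S\otimes\id)\Delta=\mu(\id\otimes S)\Delta=\eta\varepsilon$ on each generator. For $a_{ij}$ both sides collapse to $(\B\A)_{ij}$ and $(\A\B)_{ij}$, which equal $\delta_{ij}$ by \cref{L:RA(e)} and \cref{He-antiA}. For $b_{ij}$ the two required identities are $\sum_k S(b_{kj})b_{ik}=\delta_{ij}$ and $\sum_k b_{kj}S(b_{ik})=\delta_{ij}$; substituting the formula for $S(b_{ij})$, each of these is exactly one of the two matrix identities in the second line of \cref{L:RA(e)} written out in components. For ${\sf D_e}^{\pm 1}$ the check is immediate from \cref{He-Dinv}. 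The main obstacle is the invariance of \cref{He-bij} under $S$: it is the single place where the $\sigma$-cyclic structure of $\e$ must be used in its $m$-fold iterated form, not one rotation at a time, and this computation is precisely what forces the conjugation by $\mathbb P$ and ${\sf D_e}$ into the formula for $S(\B)$.
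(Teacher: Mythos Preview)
Your proof is correct and complete. It differs from the paper's proof chiefly in the verification that $S$ preserves relation \cref{He-bij}. The paper does not substitute the formula for $S(b_{ij})$ at all: instead it first records the convolution identities
\[
\ts \sum_{k} S(b_{kj})b_{ik}=\delta_{ij}=\sum_{k} b_{kj}S(b_{ik})
\]
as purely algebraic consequences of \cref{L:RA(e)}, and then handles \cref{He-bij} by inserting that very relation to replace the scalar $\e_{i_1\cdots i_m}$ with $\sum_{\mathbf k}\e_{k_1\cdots k_m}b_{k_m i_m}\cdots b_{k_1 i_1}{\sf D}_\e$, after which the product $S(b_{i_m j_m})b_{k_m i_m}\cdots$ telescopes via those identities. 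Your route---direct substitution followed by the $m$-fold $\sigma$-invariance $\sum_{\mathbf i}\e_{\mathbf i}\prod_s\bP^{\pm 1}_{i_s p_s}=\e_{\mathbf p}$---is more hands-on but has the virtue of making transparent \emph{why} the conjugation by $\bP$ and ${\sf D}_\e$ in $S(\B)$ is forced. One notational caution: your line ``$S(\B)S(\A)=\I$ unpacks to $({\sf D_e}^{-1}\bP^{-1}\A\bP{\sf D_e})\B=\I$'' is not literally a matrix identity, since applying an anti-homomorphism entrywise does not commute with matrix multiplication; the correct componentwise statement is $\sum_k S(b_{kj})b_{ik}=\delta_{ij}$, which you do identify with the second line of \cref{L:RA(e)} after relabeling, so the argument stands.
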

\begin{proof}
One sees easily that \eqref{He-coprod} and \eqref{He-counit} equip the algebra $\cH(\e)$ with a bialgebra structure. It remains to show that $S$ defined in \eqref{He-S} is an antipode of $\cH(\e)$. As a consequence of \cref{L:RA(e)}, we have 
\begin{align}\label{antiHe}
\ts \sum_{k=1}^n a_{ik}S(a_{kj})=\sum_{k=1}^n S(a_{ik})a_{kj}=\varepsilon(a_{ij}), \quad \ts \sum_{k=1}^n b_{kj}S(b_{ik})=\sum_{k=1}^n S(b_{kj})b_{ik}=\varepsilon(b_{ij}), 
\end{align}
for all $1\le i,j\le n$. Thus it suffices to show that $S$ is compatible with the relations in $\cH(\e)$. It is clear that $S$ is compatible with the relation \cref{He-Dinv}. For \cref{He-aij}, \cref{He-bij}, \cref{He-antiA}, consider the following computations:
\[
\begin{array}{rl}
\smallskip
S(\sum_{i_1, \dots, i_m = 1}^n  \e _{i_1 \cdots i_m} a_{i_1 j_1} \cdots a_{i_m j_m}) 
&~= \sum_{i_1, \dots, i_m = 1}^n  \e _{i_1 \cdots i_m} b_{i_m j_m} \cdots b_{i_1 j_1}\\
\smallskip
&\overset{\cref{He-bij}}{=}  \e _{j_1 \cdots j_m} {\sf D_e}^{-1}
\quad ~= S( \e _{j_1 \cdots j_m} {\sf D_e})\\\\

\smallskip
S(\sum_{i_1, \dots, i_m = 1}^n  \e _{i_1 \cdots i_m} b_{i_m j_m} \cdots b_{i_1 j_1}) 
&\overset{\cref{He-bij}}{=} \sum_{\substack{i_1, \dots, i_m, \quad \\k_1, \dots, k_m = 1}}^n S(b_{i_1 j_1}) \cdots S(b_{i_m j_m})b_{k_m i_m} \cdots b_{k_1 i_1} \e _{k_1 \cdots k_m}{\sf D}_ \e \\
\smallskip
&\hspace{-.03in} \overset{\cref{antiHe}}{=} \sum_{k_1, \dots, k_m = 1}^n  \delta_{k_1,j_1} \cdots \delta_{k_m, j_m} \e _{k_1 \cdots k_m}{\sf D}_ \e \\
\smallskip
&~=  \e _{j_1 \cdots j_m} {\sf D_e}
\quad = S( \e _{j_1 \cdots j_m} {\sf D_e}^{-1})\\\\
S(\ts\sum_{k=1}^n a_{ik} b_{kj}) ~&~=\sum_{k=1}^n S(b_{kj}) S(a_{ik})
~=~\sum_{k=1}^n S(b_{kj}) b_{ik} ~\overset{\cref{antiHe}}{=}~ \delta_{ij}.
\end{array}
\]
This gives our result. 
\end{proof}

\begin{remark} We emphasize that the generators $b_{ij}$'s  above are needed to define the ``right inverse" (antipode) of $a_{ij}$'s for $\cH({\sf e})$. Indeed, the nondegeneracy of the preregular form $\e$ (\cref{preregular}(a)) implies that $\mathbb A$ must have a left inverse in terms of $a_{ij}$'s, which in the proof of \cref{L:RA(e)} is denoted by $\mathbb C=(c_{ij})_{1\le i,j\le n}$. Yet we do not necessarily have that $\mathbb A\mathbb C=\I$. If we impose that $\mathbb A\mathbb C=\I$, we obtain a Hopf quotient $\cH( \e , \widetilde \e )$ of $\cH( \e )$  as defined in \cite[Section~8]{DV2007}, where the antipode of  $\cH( \e , \widetilde \e )$ is given by $S(\mathbb A)=\mathbb C$ above.
\end{remark}

We denote by $\eta_{\sf D_{\sf e}}$ and $\eta_{\sf e}$ two automorphisms of $\cH(\sf e)$ satisfying 
\begin{align*}
\eta_{\sf D_{\sf e}}(\mathbb A)&={\sf D_{\sf e}}\mathbb A{\sf D_{\sf e}}^{-1},\hspace{0.2in}\eta_{\sf D_{\sf e}}(\mathbb B)={\sf D_{\sf e}}\mathbb B{\sf D_{\sf e}}^{-1},\hspace{0.2in}\eta_{\sf D_{\sf e}}({\sf D}_{\sf e}^{\pm 1})={\sf D}_{\sf e}^{\pm 1};\\
\eta_{\e}(\mathbb A)&=\mathbb P^{-1}\mathbb A\mathbb P, \hspace{0.42in}\eta_{\e}(\mathbb B)=\mathbb P^{-1}\mathbb B \mathbb P,\hspace{0.44in}\eta_\e({\sf D}_{\sf e}^{\pm 1})={\sf D}_{\sf e}^{\pm 1}.
\end{align*}
By \cref{P:H(e)}, it is clear that $\eta_{\sf D_{\sf e}}$ and $\eta_{\sf e}$ are Hopf algebra automorphisms of $\cH(\sf e)$.

\begin{corollary}\label{C:S2He}
We have $\eta_{\sf D_{\sf e}}\circ S^2=\eta_{\sf e}$ in $\cH({\sf e})$. Thus, the antipode $S$ of $\cH(\e)$ is bijective. 
\end{corollary}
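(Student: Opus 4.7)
The plan is to verify the identity $\eta_{\sf D_{\sf e}}\circ S^2=\eta_{\sf e}$ on the algebra generators $\mathbb A$, $\mathbb B$, ${\sf D}_{\sf e}^{\pm 1}$ of $\cH(\e)$; since both sides are algebra endomorphisms ($S$ being an algebra anti-endomorphism, $S^2$ is a genuine algebra endomorphism), agreement on a generating set will suffice.

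First I iterate the antipode formulas in \eqref{He-S}. Immediately $S^2(\mathbb A)=S(\mathbb B)={\sf D}_\e^{-1}\mathbb P^{-1}\mathbb A\mathbb P{\sf D}_\e$. For $S^2(\mathbb B)$, I apply $S$ entrywise to the matrix ${\sf D}_\e^{-1}\mathbb P^{-1}\mathbb A\mathbb P{\sf D}_\e$; because the entries of $\mathbb P^{\pm 1}$ lie in $\kk$, they pass through $S$ unchanged and are not reordered, and an index-level calculation then gives $S^2(\mathbb B)={\sf D}_\e^{-1}\mathbb P^{-1}\mathbb B\mathbb P{\sf D}_\e$. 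Of course $S^2$ fixes ${\sf D}_\e^{\pm 1}$.

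Next I apply $\eta_{\sf D_{\sf e}}$. The crucial observation is that $\mathbb P$ has scalar entries and hence commutes with ${\sf D}_\e$ in $\cH(\e)$. Since $\eta_{\sf D_{\sf e}}$ fixes ${\sf D}_\e^{\pm 1}$ and $\mathbb P^{\pm 1}$ while conjugating $\mathbb A$ and $\mathbb B$ by ${\sf D}_\e$, we compute
\[
\eta_{\sf D_{\sf e}}\bigl({\sf D}_\e^{-1}\mathbb P^{-1}\mathbb A\mathbb P{\sf D}_\e\bigr)={\sf D}_\e^{-1}\mathbb P^{-1}({\sf D}_\e\mathbb A{\sf D}_\e^{-1})\mathbb P{\sf D}_\e=\mathbb P^{-1}\mathbb A\mathbb P=\eta_{\sf e}(\mathbb A),
\]
where the second equality uses $\mathbb P^{\pm 1}{\sf D}_\e={\sf D}_\e\mathbb P^{\pm 1}$ to cancel the outer ${\sf D}_\e^{\pm 1}$'s. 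The identical calculation with $\mathbb B$ in place of $\mathbb A$ gives $\eta_{\sf D_{\sf e}}(S^2(\mathbb B))=\mathbb P^{-1}\mathbb B\mathbb P=\eta_{\sf e}(\mathbb B)$, and the ${\sf D}_\e^{\pm 1}$ generators are plainly fixed by both sides.

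Bijectivity of $S$ is then immediate: the identity yields $S^2=\eta_{\sf D_{\sf e}}^{-1}\circ\eta_{\sf e}$, which is a bijection, and injectivity (resp.\ surjectivity) of $S\circ S$ forces injectivity (resp.\ surjectivity) of $S$. The only genuine subtlety in the argument is the commutation $\mathbb P{\sf D}_\e={\sf D}_\e\mathbb P$ coming from $\mathbb P$ being scalar: without it, the ${\sf D}_\e$-conjugation introduced by $\eta_{\sf D_{\sf e}}$ would not cancel cleanly against the ${\sf D}_\e^{\pm 1}$ already present in $S^2(\mathbb A)$ and $S^2(\mathbb B)$, and the identity would fail.
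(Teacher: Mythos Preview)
Your proof is correct and follows the same strategy as the paper: verify $\eta_{\sf D_{\sf e}}\circ S^2=\eta_{\sf e}$ on the generators $\mathbb A$, $\mathbb B$, ${\sf D}_\e^{\pm 1}$ using the antipode formulas \eqref{He-S}, and deduce bijectivity of $S$ from bijectivity of $S^2$. The paper's write-up is slightly more compact because it uses that $\eta_{\sf D_{\sf e}}$ is literally inner conjugation by ${\sf D}_\e$ (so one can write $\eta_{\sf D_{\sf e}}\circ S^2(\mathbb A)={\sf D}_\e S(\mathbb B){\sf D}_\e^{-1}$ directly), but the content is identical.
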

\begin{proof}
It suffices to check for the generators of $\cH(\e)$. We apply \eqref{He-S} to get 
\begin{align*}
\eta_{\sf D_{\sf e}}\circ S^2({\sf D}_{\sf e})&={\sf D}_{\sf e}=\eta_{\sf e}({\sf D}_{\sf e}),\\
\eta_{\sf D_{\sf e}}\circ S^2(\mathbb A)&={\sf D}_{\sf e} S(\mathbb B){\sf D}_{\sf e}^{-1}=\mathbb P^{-1}\mathbb A\mathbb P=\eta_{\sf e}(\mathbb A),\\
\eta_{\sf D_{\sf e}}\circ S^2(\mathbb B)&={\sf D}_{\sf e} S({\sf D}_{\sf e}^{-1}\mathbb P^{-1}\mathbb A\mathbb P{\sf D}_{\sf e}){\sf D}_{\sf e}^{-1}=\mathbb P^{-1}S(\mathbb A)\mathbb P=\mathbb P^{-1}\mathbb B\mathbb P=\eta_{\sf e}(\mathbb B).
\end{align*}
The bijectivity of $S^2$, and thus $S$, follows immediately. 
\end{proof}

Now we establish universal properties of $\cH(\e)$.

\begin{proposition}\label{P:UH(e)} 
\
\begin{enumerate}
\item Endow both $V$ and $M_\e$ with the right $\cH(\e)$-comodule structures defined  respectively by  \linebreak $\rho_\cH(v_j)=\sum_{i=1}^n v_i\otimes a_{ij}$ and $\rho_\cH(m_\e)=m_\e\otimes {\sf D}_\e$. Then, the linear map $\hat \e: V^{\otimes m}\to M_\e$ is an $\cH(\e)$-comodule map. 

\smallskip

\item Let $K$ be a Hopf algebra that right coacts on both $V$ and $M_\e$ so that $\hat \e$ is a $K$-comodule map. Then, there exists a unique Hopf algebra map $\phi: \cH(\e)\to K$ such that $(\mathrm{id}_V\otimes \phi)\circ \rho_\cH=\rho_K$, where $\rho_\cH$ and $\rho_K$ denote the coactions on $V$ of $\cH(\e)$ and $K$, respectively. 

\smallskip

\item Endow $V^*$ with the left $\cH(\e)$-comodule structure defined by $\rho_\cH(\theta_i)=\sum_{j=1}^n a_{ij}\otimes \theta_j$. Then, $\kk \e^*$ is an $\cH(\e)$-subcomodule of $(V^*)^{\otimes m}$. 

\smallskip

\item Let $K$ be a Hopf algebra that left coacts on $V^*$ so that $\kk \e^*$ is a $K$-subcomodule of $(V^*)^{\otimes m}$. Then, there exists a unique Hopf algebra map $\phi: \cH(\e)\to K$ such that $(\phi\otimes \mathrm{id}_{V^*})\circ \rho_\cH=\rho_K$, where $\rho_\cH$ and $\rho_K$ denote the coactions on $V^*$ of $\cH(\e)$ and $K$, respectively. 
\end{enumerate}
\end{proposition}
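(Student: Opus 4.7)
My plan is to prove parts (a) and (c) by direct verification (or by specializing \cref{T:CoactionA(e)} to $K=\cH(\e)$), and then to deduce parts (b) and (d) from the universal way in which the relations in \cref{D:A(e)} were set up.

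For part (a), I would apply $(\hat\e \otimes \id_{\cH(\e)}) \circ \rho_{\cH}^{\otimes m}$ and $\rho_{\cH}^{M_\e} \circ \hat\e$ to a basis vector $v_{j_1}\otimes\cdots\otimes v_{j_m}$ of $V^{\otimes m}$; both sides collapse to $m_\e \otimes (\text{something in } \cH(\e))$, and equality of the two ``somethings'' is exactly relation~\eqref{He-aij}. For part (c), I would first observe that $\rho_{\cH}(\theta_i) := \sum_j a_{ij} \otimes \theta_j$ really does define a left coaction on $V^*$ (this is the comatrix coaction dual to the one on $V$), and then invoke \cref{T:CoactionA(e)}(e)$\Leftrightarrow$(c) with $K = \cH(\e)$ and $g = {\sf D}_\e$: the hypothesis (e) is literally relation~\eqref{He-aij}, so its equivalent (c) delivers the desired statement that $\kk\e^*$ is an $\cH(\e)$-subcomodule of $(V^*)^{\otimes m}$.

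For part (b), given $K$ right coacting on $V$ with matrix coefficients $\alpha_{ij}$ and on $M_\e$ via a grouplike $g \in K$ so that $\hat\e$ is a $K$-comodule map, I would define $\phi \colon \cH(\e) \to K$ on generators by $\phi(a_{ij}) := \alpha_{ij}$, $\phi({\sf D}_\e^{\pm 1}) := g^{\pm 1}$, and $\phi(b_{ij}) := S_K(\alpha_{ij})$; the last assignment is forced because $b_{ij} = S(a_{ij})$ in $\cH(\e)$ by \eqref{He-S} and any Hopf map commutes with the antipode. I would then check that $\phi$ respects the defining relations: \eqref{He-aij} is precisely the comodule-map hypothesis on $\hat\e$; \eqref{He-bij} follows by applying the antihomomorphism $S_K$ to \eqref{He-aij} together with $S_K(g) = g^{-1}$; \eqref{He-Dinv} holds because $g$ is grouplike; and \eqref{He-antiA} is the standard Hopf-algebra identity $\sum_k \alpha_{ik}\, S_K(\alpha_{kj}) = \varepsilon_K(\alpha_{ij})\, 1_K = \delta_{ij}$ for the comatrix coalgebra $(\alpha_{ij})$. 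That $\phi$ is a bialgebra (hence Hopf) map is a routine check on generators. Uniqueness is immediate: $\phi(a_{ij})$ is prescribed by $(\id_V \otimes \phi)\circ \rho_{\cH}=\rho_K$, then $\phi({\sf D}_\e)$ is forced by \eqref{He-aij} together with the nondegeneracy of $\e$ (choose $j_1,\dots,j_m$ with $\e_{j_1\cdots j_m}\neq 0$), and $\phi(b_{ij})$ is forced by the antipode.

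Part (d) follows the same pattern: the hypothesis that $\kk\e^*\hookrightarrow (V^*)^{\otimes m}$ is a left $K$-comodule map, combined with \cref{T:CoactionA(e)}(c)$\Rightarrow$(e), produces a unique grouplike $g \in K$ satisfying the analogue of \eqref{He-aij} for the $\alpha_{ij}$ and $g$, after which the same $\phi$ as in (b) is verified to be the desired Hopf algebra map. The main step to get right, and really the only non-routine one, is recognizing that the ``extra'' generators $b_{ij}$ and relation \eqref{He-antiA} in the presentation of $\cH(\e)$ are automatically accommodated in any Hopf algebra target $K$ by sending $b_{ij} \mapsto S_K(\alpha_{ij})$; once this is clear, all four parts reduce to relation~\eqref{He-aij} together with standard Hopf-algebra bookkeeping.
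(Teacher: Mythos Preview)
Your proposal is correct and follows essentially the same approach as the paper: both define $\phi$ on generators by $a_{ij}\mapsto \alpha_{ij}$, $b_{ij}\mapsto S_K(\alpha_{ij})$, ${\sf D}_\e^{\pm 1}\mapsto g^{\pm 1}$ and verify the defining relations, with (a) and (c) reduced to \eqref{He-aij} via \cref{T:CoactionA(e)}. The only cosmetic difference is that the paper spells out the verification of \eqref{He-bij} as an explicit telescoping computation, whereas you (equivalently and more cleanly) just apply the antihomomorphism $S_K$ to the image of \eqref{He-aij}.
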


\begin{proof}
We only prove for part (a,b), and part (c,d) can be obtained in a similar way. 

\smallskip

(a) This follows by \eqref{He-aij} and Theorem~\ref{T:CoactionA(e)}((e)$\Rightarrow$(d)).

\smallskip

(b) Write $\rho_K(v_j)=\sum_{i=1}^n v_i\otimes k_{ij}$ for some $k_{ij}\in K$ and $\rho_K(m_\e)=m_\e\otimes g$ for some grouplike element $g\in K$. By \cref{T:CoactionA(e)}((d)$\Rightarrow$(e)), the following hold in $K$.
\begin{align}\label{E:UPH(e)}
\ts \sum_{i_1,\cdots,i_m=1}^n \e_{i_1\cdots i_m}k_{i_1j_1}\cdots k_{i_mj_m}=e_{j_1\cdots j_m}g,\  \text{for all}\ 1\le j_1,\dots,j_m \le n.
\end{align}  
Furthermore, write $h_{ij}=S(k_{ij})\in K$. Thus, for any $1\le j_1,\dots,j_m\le n$,
\begin{align*}
\ts \sum_{i_1,\dots,i_m=1}^n \e_{i_1\cdots i_m}h_{i_mj_m}\cdots h_{i_1j_1}&\overset{\eqref{E:UPH(e)}}{=}\ts \sum_{i_1,\dots,i_m=1}^n(\sum_{r_1,\dots,r_m=1}^n g^{-1}\e_{r_1\cdots r_m} k_{r_1i_1}\cdots k_{r_mi_m}) h_{i_mj_m}\cdots h_{i_1j_1}\\
&\hspace{.08in}=\ts \sum_{r_1,\dots,r_m=1}^n g^{-1}\e_{r_1\cdots r_m} (\sum_{i_1,\dots,i_m=1}^n k_{r_1i_1}\cdots (k_{r_mi_m}h_{i_mj_m})\cdots h_{i_1j_1})\\
&\hspace{.08in}=\ts \sum_{r_1,\dots,r_m=1}^n \delta_{r_1j_1}\cdots \delta_{r_mj_m} \e_{r_1\cdots r_m}g^{-1}\\
&\hspace{.08in}=\e_{j_1\cdots j_m}g^{-1}.
\end{align*}
Hence, there is a Hopf algebra map $\phi: \cH(\e)\to K$ given by $\phi(a_{ij})=k_{ij}, ~~\phi(b_{ij})=h_{ij}$ and $\phi({\sf D}_\e^{\pm 1})=g^{\pm 1}$. It is clear that $\phi$ is the unique map satisfying $(\mathrm{id}_V\otimes \phi)\circ \rho_\cH=\rho_K$. This proves our result.  
\end{proof}

\subsection{Presentation of $\cH(\f)$}\label{sec:Hf-pres}
For the preregular form $\f: (V^*)^{\otimes m}\to \kk$, we can consider $\cH({\sf f})$ as a dual version of $\cH({\sf e})$ having a similar presentation and universal property. Therefore, the following results for $\cH(\f)$ can be obtained by the same fashion as in the previous section.

\begin{definition}\label{D:A(f)}
For a preregular form $\f: (V^*)^{\otimes m}\to \kk$, let $\cH(\f)$ be the algebra with generators $(a_{ij})_{1\leq i,j \leq n}$, $(b_{ij})_{1\leq i,j \leq n}$, ${\sf D}_ \f ^{\pm 1}$ satisfying the relations 
\begin{align}
\ts \sum_{i_1, \dots, i_m = 1}^n  \f _{i_1 \cdots i_m} a_{j_1 i_1} \cdots a_{j_m i_m} 
~&=~  \f _{j_1 \cdots j_m} {\sf D_f}^{-1},\quad  \forall\, 1\le j_1,\dots, j_m\le n,\label{Hf-aij}\\
\ts \sum_{i_1, \dots, i_m = 1}^n  \f _{i_1 \cdots i_m} b_{j_m i_m} \cdots b_{j_1 i_1} 
~&=~  \f _{j_1 \cdots j_m} {\sf D_f},\quad \quad  \forall\, 1\le j_1,\dots, j_m\le n, \label{Hf-bij}\\
{\sf D_f}{\sf D_f}^{-1} ~=~ {\sf D_f}^{- 1} {\sf D_f}  ~&=~ 1_{\mathcal{H}}, \label{Hf-Dinv}\\
\B\,\A ~&=~\I, \label{Hf-antiA}
\end{align}
where $\mathbb A$ and $\mathbb B$ are the matrices $(a_{ij})_{1\leq i,j \leq n}$ and $(b_{ij})_{1\leq i,j \leq n}$, respectively. 
\end{definition}

\begin{lemma}\label{L:RA(f)}
The following hold in $\cH(\f)$.
\begin{align*}
\mathbb B\, \mathbb A=&\, \I\, =\mathbb A\, \mathbb B,\\
\mathbb B^T\,{\sf D}_\f^{-1}\, \mathbb Q\, \mathbb A^T\, \mathbb Q^{-1}\, {\sf D}_\f=&\, \I\,={\sf D}_\f^{-1}\, \mathbb Q\, \mathbb A^T\, \mathbb Q^{-1}\, {\sf D}_\f\, \mathbb B^T.
\end{align*} 

\vspace{-.25in}

\qed
\end{lemma}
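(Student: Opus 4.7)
The plan is to mirror the proof of \cref{L:RA(e)} with appropriate dual adjustments reflecting the two structural differences between the presentations of $\cH(\e)$ and $\cH(\f)$: first, \cref{Hf-antiA} gives $\mathbb B\mathbb A=\I$ rather than $\mathbb A\mathbb B=\I$, and second, in \cref{Hf-aij} and \cref{Hf-bij} the indices of $\f$ are contracted against the second indices of the $a_{ij}$ and $b_{ij}$ rather than against the first. Accordingly the auxiliary matrices constructed below differ from their $\cH(\e)$ analogues by transposing index patterns and by swapping the placement of ${\sf D}_\f^{\pm 1}$.

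To establish the first identity $\mathbb A\mathbb B=\mathbb B\mathbb A=\I$, I would use a polar form $\widetilde{\f}\in\mathrm{Aff}(\f)$ from \cref{polar} to introduce a candidate right inverse $\mathbb C=(c_{ij})\in M_n(\cH(\f))$ of $\mathbb A$ defined by
\begin{equation*}
c_{kj}=\sum_{\substack{i_1,\dots,i_{m-1},\\ j_1,\dots,j_{m-1}=1}}^n \f_{k\,i_1\cdots i_{m-1}}\,a_{j_1 i_1}\cdots a_{j_{m-1}i_{m-1}}\,\widetilde{\f}_{j_1\cdots j_{m-1}\,j}\,{\sf D}_\f.
\end{equation*}
Computing $\sum_k a_{ik}c_{kj}$ amounts to matching the subexpression $\sum_{k,i_1,\dots,i_{m-1}}\f_{k\,i_1\cdots i_{m-1}}\,a_{ik}\,a_{j_1 i_1}\cdots a_{j_{m-1}i_{m-1}}$ against the left-hand side of \cref{Hf-aij} with free first indices $(i,j_1,\dots,j_{m-1})$ and summed second indices $(k,i_1,\dots,i_{m-1})$; this collapses the bracketed expression to $\f_{i\,j_1\cdots j_{m-1}}{\sf D}_\f^{-1}$. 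The trailing ${\sf D}_\f^{-1}$ cancels the ${\sf D}_\f$ at the end of $c_{kj}$ (the intervening $\f$ and $\widetilde{\f}$ factors are scalars from $\kk$), leaving $\sum_{j_1,\dots,j_{m-1}}\f_{i\,j_1\cdots j_{m-1}}\widetilde{\f}_{j_1\cdots j_{m-1}\,j}=\delta_{ij}$ via \eqref{polar-f}. So $\mathbb A\mathbb C=\I$; combining with $\mathbb B\mathbb A=\I$ from \cref{Hf-antiA} gives $\mathbb C=(\mathbb B\mathbb A)\mathbb C=\mathbb B(\mathbb A\mathbb C)=\mathbb B$, whence also $\mathbb A\mathbb B=\I$.

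To establish the second identity, I would first verify $({\sf D}_\f^{-1}\mathbb Q\mathbb A^T\mathbb Q^{-1}{\sf D}_\f\,\mathbb B^T)_{ij}=\delta_{ij}$ entrywise, by substituting $b_{jk}=c_{jk}$ via $\mathbb B=\mathbb C$, expanding the defining formula for $c_{jk}$, and collapsing the resulting products of $a$'s using \cref{Hf-aij} together with the cyclicity identity \eqref{rotate2}. Here $\mathbb Q$ plays the role that $\mathbb P^T$ plays in \cref{L:RA(e)}, since the cyclic automorphism now acts on $V^*$. Separately, I would define a matrix $\mathbb D=(d_{ij})$ built from $\widetilde{\f}$, $b$'s, and $\f$ (with index arrangement chosen, possibly after one application of \eqref{rotate2}, so that \cref{Hf-bij} is directly applicable) satisfying $\mathbb B^T\mathbb D^T=\I$. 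The associativity of $\cH(\f)$ then forces the left inverse ${\sf D}_\f^{-1}\mathbb Q\mathbb A^T\mathbb Q^{-1}{\sf D}_\f$ and the right inverse $\mathbb D^T$ of $\mathbb B^T$ to agree, giving $\mathbb B^T({\sf D}_\f^{-1}\mathbb Q\mathbb A^T\mathbb Q^{-1}{\sf D}_\f)=\I$ as well.

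The main obstacle will be the $\mathbb D$ construction in the second identity: because \cref{Hf-bij} contracts $\f$ against the second indices of the $b$'s, a verbatim transcription of the $\mathbb D$ from \cref{L:RA(e)} does not close up in $\cH(\f)$ (in the product $\sum_k b_{ki}d_{jk}$ the second index of the outermost $b$ is fixed, not summed). One must therefore insert a cyclic twist via \eqref{rotate2} into the $\f$-factor of $d_{ij}$ to reposition its free slot, matching the index convention of \cref{Hf-bij}; tracking the placements of ${\sf D}_\f^{\pm 1}$ throughout these rearrangements is the principal bookkeeping point, and the rest of each verification reduces to line-by-line analogues of the $\cH(\e)$ calculations.
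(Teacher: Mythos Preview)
Your proposal is correct and takes essentially the same approach as the paper, which simply declares that the result follows by dualizing the proof of \cref{L:RA(e)}. The only cosmetic adjustment is that, since your $c_{kj}$ carries ${\sf D}_\f$ on the right rather than the left, the direct entrywise check in the second identity goes through cleanly for $\mathbb B^T\,{\sf D}_\f^{-1}\mathbb Q\mathbb A^T\mathbb Q^{-1}{\sf D}_\f=\I$ (so that $b_{ki}\,{\sf D}_\f^{-1}$ collapses before applying \cref{Hf-aij}) and the auxiliary $\mathbb D$ should then satisfy $\mathbb D^T\mathbb B^T=\I$ --- precisely the left--right swap you anticipate in your final paragraph.
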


\begin{proposition}\label{P:H(f)}
The algebra $\cH(\f)$ admits a Hopf algebra structure, with comultiplication $\Delta$ defined by  
\begin{align}
\quad \Delta(a_{ij}) = \ts \sum_{k=1}^n a_{ik}\otimes  a_{kj},\quad \Delta(b_{ij}) = \ts \sum_{k=1}^n b_{kj}\otimes  b_{ik}, \quad \Delta({\sf D}_{\sf f}^{\pm 1})={\sf D}^{\pm 1}_{\sf f}\otimes {\sf D}^{\pm 1}_{\sf f},\label{Hf-coprod}
\end{align}
with counit $\varepsilon$ defined by 
\begin{align}
\varepsilon(a_{ij})=\varepsilon(b_{ij})=\delta_{ij}, \quad \varepsilon({\sf D}^{\pm 1}_{\sf f})=1 \label{Hf-counit}
\end{align}
and with antipode $S$ defined by
\begin{align}
S(\mathbb A)=\mathbb B,\quad  S(\mathbb B)={\sf D}_ \f ^{-1}\, \mathbb Q^{-T}\, \mathbb A\, \mathbb Q^T\, {\sf D}_ \f,\quad S({\sf D}_{\sf f}^{\pm 1})={\sf D}_{\sf f}^{\mp 1}. \label{Hf-S}
\end{align}\qed
\end{proposition}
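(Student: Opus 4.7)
The plan is to mirror the proof of Proposition~\ref{P:H(e)}, making the substitutions $\mathbb P \to \mathbb Q$ and adjusting transpose conventions to reflect that $\f$ is a form on $V^*$ rather than on $V$. First I would verify that \eqref{Hf-coprod} and \eqref{Hf-counit} make $\cH(\f)$ a bialgebra: compatibility of $\varepsilon$ with \eqref{Hf-aij}--\eqref{Hf-antiA} is immediate, while for $\Delta$ the only subtle point is that $\Delta(b_{ij}) = \sum_k b_{kj}\otimes b_{ik}$ is \emph{reversed}, which is exactly what makes $\Delta(\sum_k b_{ik} a_{kj} - \delta_{ij})$ vanish modulo the relation ideal using $\B\A = \I$. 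Compatibility of $\Delta$ with \eqref{Hf-aij} and \eqref{Hf-bij} follows because ${\sf D}_\f^{\pm 1}$ are grouplike.

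Next I would show that $S$ from \eqref{Hf-S} is a well-defined antialgebra antiendomorphism satisfying the antipode axiom on generators. The key input is Lemma~\ref{L:RA(f)}: its first identity $\B\A = \A\B = \I$ gives $\sum_k a_{ik} S(a_{kj}) = \sum_k S(a_{ik}) a_{kj} = \delta_{ij}$, and its second identity, after transposition, yields the corresponding pair $\sum_k b_{kj} S(b_{ik}) = \sum_k S(b_{kj}) b_{ik} = \delta_{ij}$ (which is the antipode axiom for $b_{ij}$ in view of its reversed coproduct). The antipode condition on ${\sf D}_\f^{\pm 1}$ is automatic since they are grouplike. Once these identities are in hand, compatibility of $S$ with each of \eqref{Hf-aij}--\eqref{Hf-antiA} is a routine substitution, exactly parallel to the three displayed calculations at the end of the proof of Proposition~\ref{P:H(e)}.

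The main (and essentially only) obstacle is therefore Lemma~\ref{L:RA(f)} itself, which is stated in the excerpt with a \texttt{qed} tag but no proof. I would establish it in parallel with Lemma~\ref{L:RA(e)}: introduce a candidate left inverse $\mathbb C$ of $\A$ built from a polar form $\widetilde \f \in \text{Aff}(\f)$ and the $a_{ij}$'s; use \eqref{Hf-aij}, \eqref{polar-f}, and \eqref{rotate2} to check $\mathbb C \A = \I$; then combine with the defining relation $\B \A = \I$ from \eqref{Hf-antiA} to deduce $\mathbb C = \B$, hence also $\A \B = \I$. The second identity is proved analogously via an auxiliary matrix $\mathbb D$ constructed from $\widetilde \f$ and the $b_{ij}$'s satisfying $\B^T \mathbb D^T = \I$. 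The appearance of $\mathbb Q$ rather than $\mathbb Q^T$ in Lemma~\ref{L:RA(f)} (compared with $\mathbb P^T$ in Lemma~\ref{L:RA(e)}) reflects that $\mathbb Q$ is written in the dual basis $\{\theta_i\}$ of $V^*$, and this is precisely what forces the transposes $\mathbb Q^{-T}, \mathbb Q^T$ in the formula for $S(\B)$ in~\eqref{Hf-S}.
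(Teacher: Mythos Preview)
Your overall strategy---mirror the proof of Proposition~\ref{P:H(e)} using Lemma~\ref{L:RA(f)} in place of Lemma~\ref{L:RA(e)}---is exactly what the paper intends, and the parts concerning the bialgebra axioms and the compatibility of $S$ with the relations are fine.

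There is, however, a genuine slip in your sketch of Lemma~\ref{L:RA(f)}. The defining relation \eqref{Hf-antiA} is $\B\A=\I$, so $\B$ is already a \emph{left} inverse of $\A$. If you construct from $\widetilde\f$ and \eqref{Hf-aij} another matrix $\mathbb C$ with $\mathbb C\A=\I$, you have two left inverses and cannot conclude $\mathbb C=\B$; the deduction ``$\mathbb C\A=\I$ and $\B\A=\I$, hence $\mathbb C=\B$, hence $\A\B=\I$'' is invalid. What the dual argument actually produces is a \emph{right} inverse: setting
\[
d_{ij}=\sum_{i_1,\dots,i_{m-1},j_1,\dots,j_{m-1}}\f_{i\,i_1\cdots i_{m-1}}\,a_{j_1 i_1}\cdots a_{j_{m-1} i_{m-1}}\,\widetilde\f_{j_1\cdots j_{m-1}\,j}\,{\sf D}_\f,
\]
one checks from \eqref{Hf-aij} and \eqref{polar-f} that $\A\mathbb D=\I$. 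Then $\mathbb D=(\B\A)\mathbb D=\B(\A\mathbb D)=\B$, giving $\A\B=\I$. (Compare the element $d_{ij}$ in Lemma~\ref{L:InverseA}, which plays exactly this role.) The second identity of Lemma~\ref{L:RA(f)} is then handled analogously with the sides swapped relative to Lemma~\ref{L:RA(e)}. Once you correct this left/right reversal, the rest of your plan goes through as written.
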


We denote by $\eta_{\sf D_{\sf f}}$ and $\eta_{\sf f}$ two Hopf algebra automorphisms of $\cH(\sf f)$ satisfying 
\begin{align*}
\eta_{\sf D_{\sf f}}(\mathbb A)&={\sf D_{\sf f}}\mathbb A{\sf D_{\sf f}}^{-1},\hspace{0.35in}\eta_{\sf D_{\sf f}}(\mathbb B)={\sf D_{\sf f}}\mathbb B{\sf D_{\sf f}}^{-1},\hspace{0.45in}\eta_{\sf D_{\sf f}}({\sf D}_{\sf f}^{\pm 1})={\sf D}_{\sf f}^{\pm 1};\\
\eta_{\f}(\mathbb A)&=\mathbb Q^{-T}\mathbb A\mathbb Q^T, \hspace{0.4in}\eta_{\f}(\mathbb B)=\mathbb Q^{-T}\mathbb B \mathbb Q^T,\hspace{0.5in}\eta_\f({\sf D}_{\sf f}^{\pm 1})={\sf D}_{\sf f}^{\pm 1}.
\end{align*}

\begin{corollary}\label{C:S2Hf}
We have $\eta_{\sf D_{\sf f}}\circ S^2=\eta_{\sf f}$ in $\cH({\sf f})$. Thus, the antipode $S$ of $\cH(\f)$ is bijective. \qed
\end{corollary}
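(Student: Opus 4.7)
The plan is to mirror verbatim the proof of \cref{C:S2He}, since the statement for $\cH(\f)$ is structurally identical to the one for $\cH(\e)$, with the roles of $\mathbb P$ and $\mathbb Q^{-T}$ interchanged. Both $\eta_{\sf D_{\sf f}}\circ S^2$ and $\eta_\f$ are algebra endomorphisms of $\cH(\f)$, so it suffices to check they agree on the generators ${\sf D}_\f^{\pm 1}$, $\mathbb A$, $\mathbb B$.

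On ${\sf D}_\f$ the identity is immediate, since $S({\sf D}_\f)={\sf D}_\f^{-1}$ gives $S^2({\sf D}_\f)={\sf D}_\f$, which is fixed by $\eta_{\sf D_{\sf f}}$, and $\eta_\f({\sf D}_\f)={\sf D}_\f$ as well. On $\mathbb A$, applying \eqref{Hf-S} twice gives $S^2(\mathbb A)=S(\mathbb B)={\sf D}_\f^{-1}\mathbb Q^{-T}\mathbb A\mathbb Q^T{\sf D}_\f$. Applying $\eta_{\sf D_{\sf f}}$ (which fixes ${\sf D}_\f$ and the scalar matrices $\mathbb Q^{\pm T}$, and sends $\mathbb A$ to ${\sf D}_\f\mathbb A{\sf D}_\f^{-1}$) yields ${\sf D}_\f^{-1}\mathbb Q^{-T}{\sf D}_\f\mathbb A{\sf D}_\f^{-1}\mathbb Q^T{\sf D}_\f=\mathbb Q^{-T}\mathbb A\mathbb Q^T=\eta_\f(\mathbb A)$, using that the entries of ${\sf D}_\f$ commute with the scalar entries of $\mathbb Q^{\pm T}$.

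For $\mathbb B$, one computes $S^2(\mathbb B)=S\bigl({\sf D}_\f^{-1}\mathbb Q^{-T}\mathbb A\mathbb Q^T{\sf D}_\f\bigr)$ entry by entry using that $S$ is antimultiplicative and fixes $\kk$. Since the entries of $\mathbb Q^{\pm T}$ are central scalars, the order of $\mathbb Q^{-T}$ and $\mathbb Q^T$ relative to ${\sf D}_\f^{\pm 1}$ and $S(\mathbb A)$ does not matter, and the two copies of ${\sf D}_\f^{\pm 1}$ swap position back to their original locations under reversal. The upshot is $S^2(\mathbb B)={\sf D}_\f^{-1}\mathbb Q^{-T}S(\mathbb A)\mathbb Q^T{\sf D}_\f={\sf D}_\f^{-1}\mathbb Q^{-T}\mathbb B\mathbb Q^T{\sf D}_\f$, and then $\eta_{\sf D_{\sf f}}$ substitutes ${\sf D}_\f\mathbb B{\sf D}_\f^{-1}$ for $\mathbb B$; cancelling ${\sf D}_\f$'s past $\mathbb Q^{\pm T}$ gives $\mathbb Q^{-T}\mathbb B\mathbb Q^T=\eta_\f(\mathbb B)$, as required.

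Finally, having shown $S^2=\eta_{\sf D_{\sf f}}^{-1}\circ\eta_\f$, the right-hand side is a composition of two Hopf algebra automorphisms of $\cH(\f)$, hence bijective; so $S^2$ is bijective, which forces $S$ to be bijective as well. The only real obstacle is the careful bookkeeping in the $\mathbb B$ case, where one must correctly apply the antimultiplicativity of $S$ to a matrix expression mixing noncommuting generators with $\kk$-scalar matrices—but this is entirely routine and directly parallel to the analogous step in the proof of \cref{C:S2He}.
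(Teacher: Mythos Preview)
Your proposal is correct and follows exactly the approach the paper intends: the paper omits the proof (marking the corollary with \qed) precisely because it is the verbatim analogue of the proof of \cref{C:S2He} with $\mathbb P$ replaced by $\mathbb Q^{-T}$ and ${\sf D}_\e$ by ${\sf D}_\f$, which is what you carry out. Your bookkeeping for the $\mathbb B$ case is accurate and matches the corresponding line in the proof of \cref{C:S2He}.
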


\begin{proposition}\label{P:UH(f)} 
\
\begin{enumerate}
\item Endow both $V^*$ and $M_\f$ with the left $\cH(\f)$-comodule structures defined respectively by \linebreak $\rho_\cH(\theta_i)=\sum_{j=1}^n a_{ij}\otimes \theta_j$ and $\rho_\cH(m_\f)={\sf D}_\f^{-1}\otimes m_\f$. Then, the linear map $\hat \f: (V^*)^{\otimes m}\to M_\f$ is an $\cH(\f)$-comodule map. 

\smallskip

\item Let $K$ be a Hopf algebra that left coacts on both $V^*$ and $M_\f$ so that $\hat \f$ is a $K$-comodule map. Then, there exists a unique Hopf algebra map $\phi: \cH(\f)\to K$ such that $(\phi\otimes \mathrm{id}_{V^*})\circ \rho_\cH=\rho_K$, where $\rho_\cH$ and $\rho_K$ denote the coactions on $V^*$ of $\cH(\f)$ and $K$, respectively. 

\smallskip

\item Endow $V$ with the right $\cH(\f)$-comodule structure defined by $\rho_\cH(v_j)=\sum_{i=1}^n v_i\otimes a_{ij}$. Then, $\kk \f^*$ is an $\cH(\f)$-subcomodule of $V^{\otimes m}$. 

\smallskip

\item Let $K$ be a Hopf algebra that right coacts on $V$ so that $\kk \f^*$ is a $K$-subcomodule of $V^{\otimes m}$. Then, there exists a unique Hopf algebra map $\phi: \cH(\f)\to K$ such that $(\mathrm{id}_{V}\otimes \phi)\circ \rho_\cH=\rho_K$, where $\rho_\cH$ and $\rho_K$ denote the coactions on $V$ of $\cH(\f)$ and $K$, respectively. \qed
\end{enumerate}
\end{proposition}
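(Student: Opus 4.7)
The plan is to mirror the proof of Proposition~\ref{P:UH(e)} essentially verbatim, substituting Theorem~\ref{T:CoactionA(f)} for Theorem~\ref{T:CoactionA(e)} while keeping careful track of the left/right conventions. First I would observe that relation~\eqref{Hf-aij}, after swapping the indices $i\leftrightarrow j$, reads
\[
\ts\sum_{i_1,\dots,i_m=1}^n \f_{j_1\cdots j_m}\, a_{i_1 j_1}\cdots a_{i_m j_m}=\f_{i_1\cdots i_m}\, {\sf D}_\f^{-1},
\]
which is exactly condition~(e) of Theorem~\ref{T:CoactionA(f)} with grouplike element $g={\sf D}_\f^{-1}$. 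Consequently, parts (a) and (c) follow directly from the implications (e)$\Rightarrow$(d) and (e)$\Rightarrow$(c) of that theorem applied to $K=\cH(\f)$ with its right coaction $\rho_\cH(v_j)=\sum_i v_i\otimes a_{ij}$ on $V$ and induced left coaction $\rho_\cH(\theta_i)=\sum_j a_{ij}\otimes\theta_j$ on $V^*$.

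For part (b), I would write $\rho_K(\theta_i)=\sum_j k_{ij}\otimes\theta_j$ and $\rho_K(m_\f)=h\otimes m_\f$ for some $k_{ij}\in K$ and grouplike $h\in K$. The hypothesis that $\hat\f$ is a $K$-comodule map is condition~(d) of Theorem~\ref{T:CoactionA(f)}, so implication (d)$\Rightarrow$(e) yields
\[
\ts\sum_{j_1,\dots,j_m=1}^n \f_{j_1\cdots j_m}\, k_{i_1 j_1}\cdots k_{i_m j_m}=\f_{i_1\cdots i_m}\, h
\]
for all $i_1,\dots,i_m$. I would then define $\phi:\cH(\f)\to K$ on generators by $\phi(a_{ij})=k_{ij}$, $\phi({\sf D}_\f^{\pm 1})=h^{\mp 1}$, and $\phi(b_{ij})=S_K(k_{ij})$, and verify compatibility with each defining relation of $\cH(\f)$: the displayed identity is exactly~\eqref{Hf-aij}; applying $S_K$ together with its antimultiplicativity and $S_K(h)=h^{-1}$ delivers~\eqref{Hf-bij}; relation~\eqref{Hf-antiA} reduces to the antipode axiom $\sum_\ell S_K(k_{i\ell})k_{\ell j}=\delta_{ij}$ for the matrix coalgebra spanned by the $k_{ij}$; and~\eqref{Hf-Dinv} is immediate from $h$ being invertible. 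Uniqueness of $\phi$ is forced by the intertwining condition $(\phi\otimes\mathrm{id}_{V^*})\circ\rho_\cH=\rho_K$, which pins down $\phi(a_{ij})=k_{ij}$, after which the other generators are determined by compatibility with the Hopf structure.

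Part (d) is the dual version: starting from a right $K$-coaction $\rho_K(v_j)=\sum_i v_i\otimes k_{ij}$ on $V$ for which $\kk\f^*$ is a $K$-subcomodule of $V^{\otimes m}$, I would apply Theorem~\ref{T:CoactionA(f)}((c)$\Rightarrow$(e)) to produce the unique grouplike $g\in K$ satisfying the same polynomial identity as in part (b), and then define $\phi$ by the same formulas. The only mildly subtle point, and thus the main (minor) obstacle, is keeping the bookkeeping straight between left and right coactions and between the normalizations ${\sf D}_\f^{-1}\leftrightarrow h$ when translating Definition~\ref{D:A(f)} (written in terms of the right $\cH(\f)$-coaction on $V$) into the language of condition~(d) of Theorem~\ref{T:CoactionA(f)} (written in terms of the induced left coaction on $V^*$); no genuinely new idea is needed beyond what appears in the proof of Proposition~\ref{P:UH(e)}.
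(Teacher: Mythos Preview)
Your proposal is correct and follows exactly the approach the paper intends: the paper omits the proof entirely, stating at the start of \S\ref{sec:Hf-pres} that all results for $\cH(\f)$ are obtained in the same fashion as for $\cH(\e)$, and your write-up is precisely the mirror of the proof of Proposition~\ref{P:UH(e)} with Theorem~\ref{T:CoactionA(f)} in place of Theorem~\ref{T:CoactionA(e)}. The one cosmetic difference is that you derive the image of relation~\eqref{Hf-bij} by applying $S_K$ directly to the identity coming from~\eqref{Hf-aij}, whereas the paper's proof of Proposition~\ref{P:UH(e)}(b) writes out the equivalent telescoping computation; both are valid and yield the same conclusion.
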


\subsection{Coaction on superpotential algebra $A(\e,N)$ or $A(\f,N)$} \label{sec:He-Hf-prop}
In this part, we discuss further universal properties of $\cH( \e )$ and $\cH( \f )$ pertaining to their coactions on superpotential algebras $A( \e ,N)$ and $A( \f ,N)$, respectively.  Recall by Lemma \ref{super-prereg}, we denote by $\e^*\in (V^*)^{\otimes m}$ and $\f^*\in V^{\otimes m}$ the twisted preregular superpotentials corresponding to $\e$ and $\f$. 

First, we define certain quotient Hopf algebras of $\cH(\e)$ and $\cH(\f)$, respectively. 

\begin{definition}\label{S(e) S(f)} [$\cS( \e )$, $\cS( \f )$]
 Let $\cS( \e )$ be the Hopf algebra $\cH(\e)/({\sf D}_\e-1)$ and let $\cS( \f )$ be the Hopf algebra $\cH(\f)/({\sf D}_\f-1)$.
\end{definition}

\begin{remark}\label{R:USL}
In $\cS(\e)$, we will still keep the notation $(a_{ij})_{1\le i,j\le n}$ and $(b_{ij})_{1\le i,j\le n}$ for their images under the quotient map $\cH(\e)\twoheadrightarrow \cS(\e)$. Then $\cS(\e)$ has generators $(a_{ij})_{1\le i,j\le n}$ and $(b_{ij})_{1\le i,j\le n}$ subject to relations \eqref{He-aij}, \eqref{He-bij}, \eqref{He-antiA} with ${\sf D}_\e^{\pm 1}=1$. Moreover, $\cS(\e)$ has universal properties like those of $\cH(\e)$ in \Cref{P:UH(e)} by requiring the corresponding coactions on $M_\e$ or $\kk \e^*$ to be trivial. 

Similar statements hold for $\cS(\f)$ as well. 
\end{remark}

Now we discuss inner-faithful coactions on superpotential algebras.

\begin{proposition}\label{HeHf-coacts} We have the following inner-faithful coactions:
\begin{enumerate}
\item $\cH( \e )$ left coacts on $A( \e ,N)$ inner-faithfully via $\rho_\cH(x_i) = \textstyle \sum_{j=1}^n a_{ij} \otimes x_j, 1\le i\le n$ with homological codeterminant ${\sf D}_\e$.

\smallskip

\item $\cS( \e )$ left coacts on $A( \e ,N)$ inner-faithfully via $\rho_\cS(x_i) = \textstyle \sum_{j=1}^n a_{ij} \otimes x_j, 1\le i\le n$ with trivial homological codeterminant.

\smallskip

\item  $\cH( \f )$ right coacts on $A( \f ,N)$ inner-faithfully via $\rho_\cH(y_j) = \textstyle \sum_{i=1}^n y_i \otimes a_{ij}, 1\le j\le n$ with homological codeterminant  ${\sf D}_\f$.

\smallskip

\item  $\cS( \f )$ right coacts on $A( \f ,N)$ inner-faithfully via $\rho_\cS(y_j) = \textstyle \sum_{i=1}^n y_i \otimes a_{ij}, 1\le j\le n$ with trivial homological codeterminant,
\end{enumerate}
\end{proposition}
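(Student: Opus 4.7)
The proof splits into two symmetric halves: (a,b) concern left coactions of $\cH(\e)$ and $\cS(\e)$ on $A(\e,N)$, while (c,d) concern right coactions of $\cH(\f)$ and $\cS(\f)$ on $A(\f,N)$. My plan is to prove (a) in detail, deduce (b) by descending through the Hopf quotient $\cH(\e)\twoheadrightarrow\cS(\e)=\cH(\e)/({\sf D}_\e-1)$, and then repeat the argument in its dual form for (c,d) using Theorem~\ref{T:CoactionA(f)} in place of Theorem~\ref{T:CoactionA(e)}.

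For (a), the existence of the coaction and the identification of its homological codeterminant are immediate from Theorem~\ref{T:CoactionA(e)} applied with $K=\cH(\e)$: the defining relation \eqref{He-aij} is exactly condition (e) of that theorem with grouplike $g={\sf D}_\e$, so the implication (e)$\Rightarrow$(a) extends $x_i\mapsto\sum_j a_{ij}\otimes x_j$ to a coaction on $A(\e,N)$, and the closing clause of the same theorem identifies the codeterminant as ${\sf D}_\e$.

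The real content of the statement is inner-faithfulness, to be handled as follows. Suppose $H'\subseteq\cH(\e)$ is a Hopf subalgebra with $\rho_\cH(A(\e,N))\subseteq A(\e,N)\otimes H'$. Linear independence of $\{x_j\}$ in $A(\e,N)_1$ forces $a_{ij}\in H'$ for all $i,j$; closure of $H'$ under the antipode together with $S(a_{ij})=b_{ij}$ (from \eqref{He-S}) then yields $b_{ij}\in H'$; and the nondegeneracy clause in Definition~\ref{preregular}(a) guarantees some $\e_{j_1\cdots j_m}\neq 0$, so that \eqref{He-aij} expresses ${\sf D}_\e$ as a $\kk$-scalar multiple of a polynomial in the $a_{ij}$, placing both ${\sf D}_\e$ and ${\sf D}_\e^{-1}=S({\sf D}_\e)$ in $H'$. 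Since these exhaust the generators, $H'=\cH(\e)$.

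Part (b) follows from (a) by descent along the projection $\cH(\e)\twoheadrightarrow\cS(\e)$: the coaction factors through $\cS(\e)$ because the only new relation imposed is ${\sf D}_\e=1$, the codeterminant is then trivial by Theorem~\ref{thm:codet}(b), and inner-faithfulness is the same argument executed inside $\cS(\e)$, which is generated by the images of the $a_{ij}$ (see Remark~\ref{R:USL}). Parts (c,d) are formally dual, invoking Theorem~\ref{T:CoactionA(f)} and the relations \eqref{Hf-aij}, \eqref{Hf-S}; the extra inversion in the codeterminant output of Theorem~\ref{T:CoactionA(f)} matches the fact that \eqref{Hf-aij} presents the grouplike as ${\sf D}_\f^{-1}$, yielding codeterminant $({\sf D}_\f^{-1})^{-1}={\sf D}_\f$ for the $\cH(\f)$-coaction. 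I do not expect any serious obstacle; the only point demanding vigilance is the nondegeneracy-based extraction of ${\sf D}_\e^{\pm 1}$ (respectively ${\sf D}_\f^{\pm 1}$) from the defining relations, which is supplied by Definition~\ref{preregular}(a).
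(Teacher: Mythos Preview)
Your proof is correct and follows essentially the same approach as the paper: both use Theorem~\ref{T:CoactionA(e)}((e)$\Rightarrow$(a)) and relation~\eqref{He-aij} to establish the coaction and codeterminant, then argue inner-faithfulness by showing that any Hopf subalgebra through which the coaction factors must contain all $a_{ij}$, hence all $b_{ij}=S(a_{ij})$, and finally ${\sf D}_\e^{\pm 1}$ (the paper cites Theorem~\ref{T:CoactionA(e)}((a)$\Rightarrow$(e)) for this last step, which amounts to your direct extraction from~\eqref{He-aij} via nondegeneracy). One minor notational slip: since $\rho_\cH$ is a \emph{left} coaction, the factoring hypothesis should read $\rho_\cH(A(\e,N))\subseteq H'\otimes A(\e,N)$ rather than $A(\e,N)\otimes H'$; the argument is unaffected.
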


\begin{proof}
Part (a) follows from \cref{T:CoactionA(e)}((e)$\Rightarrow$(a)) and relation \eqref{He-aij}; the homological codeterminant is also given by \cref{T:CoactionA(e)}. Suppose $K$ is any Hopf subalgebra of $\cH(\e)$ through which the coaction of $\cH(\e)$ factors. Then one sees that $a_{ij}\in K$. Moreover, $b_{ij}=S(a_{ij})\in K$ and ${\sf D}_\e^{\pm 1}\in K$ by \cref{T:CoactionA(e)}((a)$\Rightarrow$(e)). So $K=\cH(\e)$, which proves the inner-faithfulness of the coaction of $\cH(\e)$. The remaining parts follow in the same way. 
\end{proof}

Now we consider a version of Manin's universal quantum groups that coact on the $N$-Koszul Artin-Schelter regular algebras $A( \e ,N)$ and $A( \f ,N)$; we refer the reader to \cite[Section~2.2]{CWZ:Nakayama} and \cite[Section~7.5]{Manin:QGNCG} for background material.

\begin{definition}[one-sided quantum general/ special linear groups $\cO_{A(*, N)}(GL)$/ $\cO_{A(*, N)}(SL)$] \label{OA(e) OA(f)} 
\
\begin{enumerate}
\item The {\it \textnormal{(}left\textnormal{)} quantum general linear group of $A( \e ,N)$}, denoted by $\cO_{A( \e ,N)}(GL)$, is defined to be the Hopf algebra that left coacts on $A( \e ,N)$ universally so that for any Hopf algebra $K$ that left coacts on $A( \e ,N)$, there is a unique Hopf algebra map $\phi: \cO_{A( \e ,N)}(GL)\to K$ such that $(\phi\otimes \mathrm{id}_{A(\e,N)})\circ \rho_\cO=\rho_K$, where $\rho_\cO$ and $\rho_K$ denote the coactions on $A(\e,N)$ of $\cO_{A( \e ,N)}(GL)$ and $K$, respectively.

\medskip

\item The {\it \textnormal{(}left\textnormal{)} quantum special linear group of $A(\e,N)$}, denoted by $\cO_{A(\e, N)}(SL)$, is defined to be the Hopf algebra that left coacts on $A(\e, N)$ universally with trivial homological codeterminant so that for any Hopf algebra $K$ that left coacts on $A(\e, N)$ with trivial homological codeterminant, there is a unique Hopf algebra map $\phi: \cO_{A(\f ,N)}(SL)\to K$ such that $(\phi\otimes \mathrm{id}_{A(\e,N)})\circ \rho_\cO=\rho_K$, where $\rho_\cO$ and $\rho_K$ denote the coactions on $A(\e, N)$ of $\cO_{A(\e, N)}(SL)$ and $K$, respectively.

\medskip

\item The {\it \textnormal{(}right\textnormal{)} quantum general linear group of $A(\f, N)$}, denoted by $\cO_{A(\f, N)}(GL)$,  can be similarly defined as in part (a) by changing the left coaction into the right coaction.  

\medskip

\item The {\it \textnormal{(}right\textnormal{)} quantum special linear group of $A(\f, N)$}, denoted by $\cO_{A(\f, N)}(SL)$,  can be similarly defined as in part (b) by changing the left coaction into the right coaction.  
\end{enumerate}
\end{definition}

\begin{theorem}\label{pr.univ}
\
\begin{enumerate}
\item
The left coaction of $\cH(\e)$ \textnormal{(}resp., of $\cS(\e)$\textnormal{)} on $A(\e,N)$ given in \cref{HeHf-coacts}(a) \textnormal{(}resp.,~(b)\textnormal{)} satisfy the universal property described by \cref{OA(e) OA(f)}(a) \textnormal{(}resp.,~(b)\textnormal{)}. Therefore, we have the following equalities of universal quantum groups
\begin{align*}
\cH( \e ) = \cO_{A( \e ,N)}(GL)\quad \text{ and } \quad 
\cS(\e ) =  \cO_{A( \e ,N)}(SL).
\end{align*}

\item The right coactions of $\cH(\f)$ \textnormal{(}resp., of $\cS(\f)$\textnormal{)} on $A(\f,N)$ given in \cref{HeHf-coacts}(c) \textnormal{(}resp.,~(d)\textnormal{)} satisfy the universal properties described by \cref{OA(e) OA(f)}(c) \textnormal{(}resp.,~(d)\textnormal{)}. Therefore, we have the following equalities of universal quantum groups
\begin{align*}
\cH( \f ) = \cO_{A(\f, N)}(GL)\quad \text{ and } \quad 
\cS(\f ) =  \cO_{A(\f, N)}(SL).
\end{align*}
\end{enumerate}
\end{theorem}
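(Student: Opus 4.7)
The plan is to assemble this from ingredients already in place: \Cref{T:CoactionA(e)} translates left $K$-coactions on $A(\e,N)$ into comodule data on $V^*$ plus a grouplike for $\kk\e^*$, and \Cref{P:UH(e)} delivers the universal Hopf algebra receiving that data. The universal property of $\cO_{A(\e,N)}(GL)$ is then essentially just the composition of these two equivalences.

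For part (a), first direction: let $K$ be any Hopf algebra left coacting on $A(\e,N)$. The coaction restricts to a left $K$-comodule structure on $V^*$ of the form $\rho_K(x_i) = \sum_{j=1}^n k_{ij}\otimes x_j$. By \Cref{T:CoactionA(e)} ((a)$\Rightarrow$(c)), there exists a unique grouplike $g\in K$ making the inclusion $\kk\e^*\hookrightarrow (V^*)^{\otimes m}$ a $K$-comodule map, where $\kk\e^*$ carries the coaction by $g$. \Cref{P:UH(e)}(d) then yields a unique Hopf algebra map $\phi:\cH(\e)\to K$ with $(\phi\otimes \mathrm{id}_{V^*})\circ \rho_\cH = \rho_K$ on $V^*$. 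Extending this tautologically to $TV^*$ and passing to the quotient $A(\e,N) = TV^*/\partial^{m-N}(\kk\e^*)$ (which is well-defined because the relations are preserved) gives the compatibility $(\phi\otimes \mathrm{id}_{A(\e,N)})\circ \rho_\cH = \rho_K$ on all of $A(\e,N)$. Uniqueness of $\phi$ is forced on the generators $a_{ij}$ by $\phi(a_{ij}) = k_{ij}$; then $\phi(b_{ij}) = S(k_{ij})$ because $b_{ij} = S(a_{ij})$ in $\cH(\e)$, and $\phi({\sf D}_\e) = g$ since $g$ is uniquely determined by the formula in \Cref{T:CoactionA(e)}. Combined with the coaction of $\cH(\e)$ on $A(\e,N)$ from \Cref{HeHf-coacts}(a), this establishes $\cH(\e) = \cO_{A(\e,N)}(GL)$.

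For the $\cS(\e)$ statement, suppose further that the $K$-coaction on $A(\e,N)$ has trivial homological codeterminant. By \Cref{thm:codet}(b), the grouplike $g\in K$ corresponding to the $K$-coaction on $\kk\e^*$ equals $1_K$. Then the map $\phi:\cH(\e)\to K$ constructed above satisfies $\phi({\sf D}_\e) = 1$, so it factors uniquely through the quotient $\cS(\e) = \cH(\e)/({\sf D}_\e-1)$; conversely, \Cref{HeHf-coacts}(b) provides the required coaction with trivial codeterminant on the $\cS(\e)$ side. This gives $\cS(\e) = \cO_{A(\e,N)}(SL)$.

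Part (b) follows by the mirror-image argument using \Cref{T:CoactionA(f)} and \Cref{P:UH(f)} in place of their $\e$-counterparts, with the role of $\kk\e^*\subset (V^*)^{\otimes m}$ replaced by $\kk\f^*\subset V^{\otimes m}$ and the coaction on $V$ of the form $\rho_K(y_j) = \sum_i y_i\otimes k_{ij}$; the only bookkeeping detail is that the homological codeterminant appears as $g^{-1}$ rather than $g$ (cf.\ the closing sentence of \Cref{T:CoactionA(f)}), which does not affect the quotient by ${\sf D}_\f - 1$. The main (modest) obstacle is not conceptual but notational: keeping track of left/right sides and ensuring that the universal map produced on the level of $V^*$ (or $V$) indeed descends to $A(\e,N)$ (or $A(\f,N)$), which is automatic from the equivalence (a)$\Leftrightarrow$(c) in the two coaction theorems.
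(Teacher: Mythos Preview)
Your proposal is correct and follows essentially the same route as the paper: use \Cref{T:CoactionA(e)}((a)$\Rightarrow$(c)) to see that $\kk\e^*$ is a $K$-subcomodule, then invoke \Cref{P:UH(e)}(d) to obtain the unique Hopf map $\phi:\cH(\e)\to K$ on the generating space $V^*$, and finally extend to $A(\e,N)$ because both coactions are algebra maps agreeing on generators. The paper treats only the $\cH(\e)$ case explicitly and declares the rest analogous; your added detail for $\cS(\e)$ via \Cref{thm:codet}(b) (forcing $g=1_K$ and hence a factorization through the quotient by ${\sf D}_\e-1$) is exactly the intended argument.
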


\begin{proof}
We only verify the statement for the left coaction of $\cH(\e)$ on $A(\e,N)$, and the arguments are analogous for the remaining items. Take $A:=A(\e ,N)=TV^*/\partial^{m-N}(\kk \e^*)$ with degree one generating space $V^*$. Let $K$ be a Hopf algebra coacting on $A$ from the left via $\rho_K: A\to K\otimes A$ induced by $\rho_K|_{V^*}: V^*\to K\otimes V^*$. By \cref{T:CoactionA(e)}((a)$\Rightarrow$(c)), $\kk \e^*$ is a $K$-subcomodule of $(V^*)^{\otimes m}$. Then, the universal property of $\cH(\e)$ in \cref{P:UH(e)}(d) ensures that there is a unique Hopf algebra map $\phi: \cH(\e)\to K$ satisfying 
\begin{align}\label{E:ManinQG1}
\rho_K|_{V^*}=(\phi\otimes \mathrm{id}_{V^*})\rho_\cH|_{V^*},
\end{align}
where $\rho_\cH: A\to \cH(\e)\otimes A$ is the coaction of $\cH(\e)$ given by $\rho_\cH(y_i)=\sum_{j=1}^n a_{ij}\otimes y_j$. Hence we have 
\begin{align}\label{E:ManinQG2}
\rho_K=(\phi\otimes \mathrm{id}_A)\rho_\cH
\end{align}
since both sides of \eqref{E:ManinQG2} are algebra maps from $A$ to $K\otimes A$ and  since they are equal when restricted to the generating space $V^*$ of $A$ due to \eqref{E:ManinQG1}. The Hopf algebra map $\phi: \cH(\e)\to K$  in \eqref{E:ManinQG2} is clearly unique. So, by definition, $\cH(\e)=\cO_{A(\e,N)}(GL)$ as Hopf algebras.
\end{proof}


 \section{The quantum group $\cH( \e ,  \f )$ associated to a pair of preregular forms $ \e $ and $ \f $} \label{sec:Hef}

This brings us to the heart of this work: the study of a universal quantum group $\cH(\e,\f)$ associated to preregular forms $\e$ and $\f$. We define this Hopf algebra in Section~\ref{def:Hef} and study its simultaneous coactions on superpotential algebras $A(\e,N)$ and $A(\f,N)$ in Section~\ref{sec:Hef-univ}. Then in view of Remark~\ref{R:Bialgebra}, we define and examine an analogous bialgebra that coacts simultaneously on $A(\e,N)$ and $A(\f,N)$ in the following section. The presentation of $\cH(\e,\f)$ as a pushout of $\cH(\e)$ and $\cH(\f)$ is then provided in Section~\ref{sec:pushout}. The section ends with a discussion of properties of $\cH(\e,\f)$ and related Hopf algebras in Section~\ref{sec:properties}.

\subsection{Presentation of $\cH( \e , \f )$ via generators and relations} \label{sec:Hef-pres}
\begin{definition}[$\cH( \e , \f )$] \label{def:Hef} 
For a pair of preregular forms $\e$ and $\f$, let $\cH(\e,\f)$ be the algebra with generators $(u_{ij})_{1 \leq i,j \leq n}$ and ${\sf D_e}^{\pm 1}$, ${\sf D_f}^{\pm 1}$ satisfying the relations
\begin{align}
\ts \sum_{i_1, \dots, i_m = 1}^n  \e _{i_1 \dots i_m} u_{i_1 j_1} \cdots u_{i_m j_m} 
~&=~  \e _{j_1 \dots j_m} {\sf D_e}, ~~\quad \quad \forall\, 1\le j_1,\dots, j_m\le n, \label{PresE}\\
\ts \sum_{i_1, \dots, i_m = 1}^n  \f _{i_1 \dots i_m} u_{j_1 i_1} \cdots u_{j_m i_m} 
~&=~  \f _{j_1 \dots j_m} {\sf D_f^{-1}}, \quad \quad \forall\, 1\le j_1,\dots, j_m\le n, \label{PresF}\\
{\sf D_e}{\sf D_e}^{-1} ~=~ {\sf D_e}^{- 1} {\sf D_e} ~&=~ {\sf D_f}{\sf D_f}^{-1} ~=~ {\sf D_f}^{- 1} {\sf D_f} ~=~ 1_{\mathcal{H}}. \label{Hef-Dinv}
\end{align}
\end{definition}

\begin{lemma}\label{L:InverseA}
For any $\tilde \e\in \mathrm{Aff}(\e)$ and $\tilde \f\in \mathrm{Aff}(\f)$, consider the following elements in $\cH(\e,\f)$
\begin{align*}
c_{ij}&=\ts \sum_{i_1, \dots, i_{m-1}, j_1, \dots, j_{m-1}=1}^n~~ {\sf D_e}^{-1} \widetilde{ \e }_{i i_1 \cdots i_{m-1}} u_{j_1 i_1} \cdots u_{j_{m-1} i_{m-1}}  \e _{j_1 \cdots j_{m-1} j},\\
d_{ij}&=\ts \sum_{i_1, \dots, i_{m-1}, j_1, \dots, j_{m-1}=1}^n ~~ \f _{i i_1 \cdots i_{m-1}} u_{j_1 i_1} \cdots u_{j_{m-1} i_{m-1}} \widetilde{ \f }_{j_1 \cdots j_{m-1} j} {\sf D_f},
\end{align*}
for all $1\le i,j\le n$. Then $\mathbb C\mathbb U=\I= \mathbb U \mathbb D$, where $\mathbb U$, $\mathbb C$ and $\mathbb D$ are the matrices $(u_{ij})_{1\leq i,j \leq n}$, $(c_{ij})_{1\le i,j\le n}$ and $(d_{ij})_{1\le i,j\le n}$, respectively. Moreover, $\mathbb C=\mathbb D$ and they do not depend on the choice of $\tilde\e$ and $\tilde\f$ in $\mathrm{Aff}(\e)$ and $\mathrm{Aff}(\f)$, respectively.
\end{lemma}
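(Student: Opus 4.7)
The plan is to establish the three assertions in the lemma---$\mathbb C\mathbb U=\I$, $\mathbb U\mathbb D=\I$, and the identification $\mathbb C=\mathbb D$ together with its independence from $\widetilde\e,\widetilde\f$---as successive consequences of the defining relations of $\cH(\e,\f)$ combined with the polar identities \cref{polar-e} and \cref{polar-f}. The matrices $\mathbb C$ and $\mathbb D$ are built so that a single application of \cref{PresE} (respectively \cref{PresF}) telescopes an $m$-fold product of $u$'s into an $\e$ (respectively $\f$) coefficient together with a factor of ${\sf D_e}$ (respectively ${\sf D_f}^{-1}$); this is precisely the factor that cancels the stray ${\sf D_e}^{-1}$ built into $c_{ij}$ (respectively the ${\sf D_f}$ built into $d_{ij}$).

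First I would compute $(\mathbb U\mathbb D)_{ij}=\sum_k u_{ik}d_{kj}$. Expanding $d_{kj}$ places the product $u_{ik}\,u_{j_1i_1}\cdots u_{j_{m-1}i_{m-1}}$ against the coefficient $\f_{ki_1\cdots i_{m-1}}$; summing over $k,i_1,\dots,i_{m-1}$ and invoking \cref{PresF} collapses this to $\f_{ij_1\cdots j_{m-1}}\,{\sf D_f}^{-1}$. Commuting the scalar ${\sf D_f}^{-1}$ through the scalar coefficient $\widetilde\f_{j_1\cdots j_{m-1}j}$ cancels the trailing ${\sf D_f}$ by \cref{Hef-Dinv}, after which \cref{polar-f} delivers $\delta_{ij}$. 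The computation of $(\mathbb C\mathbb U)_{ij}=\sum_k c_{ik}u_{kj}$ proceeds by the mirror argument: relabeling $j_1,\dots,j_{m-1},k$ as the summed indices in \cref{PresE} converts $\sum\e_{j_1\cdots j_{m-1}k}\,u_{j_1i_1}\cdots u_{j_{m-1}i_{m-1}}u_{kj}$ into $\e_{i_1\cdots i_{m-1}j}\,{\sf D_e}$, the outer ${\sf D_e}^{-1}$ kills this ${\sf D_e}$, and \cref{polar-e} finishes with $\delta_{ij}$.

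Once $\mathbb C\mathbb U=\I=\mathbb U\mathbb D$ is in hand, the standard one-line argument
\[
\mathbb C \;=\; \mathbb C\,\I \;=\; \mathbb C(\mathbb U\mathbb D) \;=\; (\mathbb C\mathbb U)\mathbb D \;=\; \I\,\mathbb D \;=\; \mathbb D
\]
identifies the two matrices. Their common value is then a two-sided inverse of $\mathbb U$ in the associative algebra $\mathrm{Mat}_n(\cH(\e,\f))$, and two-sided inverses are unique; this uniqueness automatically yields the asserted independence of $\mathbb C$ and $\mathbb D$ from the chosen polar forms $\widetilde\e\in\Aff(\e)$ and $\widetilde\f\in\Aff(\f)$.

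I do not anticipate a serious obstacle here: each step is a bookkeeping calculation using exactly one defining relation of $\cH(\e,\f)$ paired with the corresponding polar identity from \cref{polar}. The only care needed is in commuting the grouplike elements ${\sf D_e}^{\pm1},{\sf D_f}^{\pm1}$ past the scalar-valued coefficients $\e_{\cdots},\f_{\cdots},\widetilde\e_{\cdots},\widetilde\f_{\cdots}$, which is harmless since the latter lie in $\kk$.
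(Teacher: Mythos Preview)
Your proposal is correct and follows essentially the same route as the paper: the paper refers back to the computation in \cref{L:RA(e)} (with $u_{ij}$ in place of $a_{ij}$) for $\mathbb C\mathbb U=\I$, argues symmetrically for $\mathbb U\mathbb D=\I$, and then uses exactly your sandwich argument $\mathbb C=\mathbb C(\mathbb U\mathbb D)=(\mathbb C\mathbb U)\mathbb D=\mathbb D$, with independence deduced from uniqueness of a two-sided inverse. Your phrasing of the independence step via uniqueness of two-sided inverses in $\mathrm{Mat}_n(\cH(\e,\f))$ is marginally cleaner than the paper's explicit re-run of the sandwich for a second $\mathbb C'$, but the content is identical.
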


\begin{proof}
As shown in the proof of  \cref{L:RA(e)}, we have $\mathbb C\mathbb U=\I$. Similarly, we get that $\mathbb U\mathbb D=\I$. Hence, $\mathbb C=\mathbb C(\mathbb U\mathbb D)=(\mathbb C\mathbb U)\mathbb D=\mathbb D$. Let $\e'\in \mathrm{Aff}(\e)$, and $\mathbb C'$ be the corresponding matrix $(c_{ij}')_{1\le i,j\le n}$. Since $\mathbb D$ is the right inverse of $\mathbb U$, $\mathbb C'=\mathbb C'(\mathbb U\mathbb D)=\I\mathbb D=(\mathbb C\mathbb U)\mathbb D=\mathbb C$. The uniqueness of $\mathbb D$ can be proved by using $\mathbb C$ as the left inverse of $\mathbb U$.
\end{proof}

\begin{proposition}\label{P:H(ef)}
The algebra $\cH(\e,\f)$ admits a Hopf algebra structure, with comultiplication $\Delta$ defined by  
\begin{align}
\Delta(u_{ij}) = \ts \sum_{k=1}^n u_{ik}\otimes  u_{kj},\quad \Delta({\sf D}_\e^{\pm 1})={\sf D}_\e^{\pm 1}\otimes {\sf D}_\e^{\pm 1},\quad \Delta({\sf D}_\f^{\pm 1})={\sf D}_\f^{\pm 1}\otimes {\sf D}_\f^{\pm 1}, \label{Hef-coprod}
\end{align}
with counit $\varepsilon$ defined by 
\begin{align}
\varepsilon(u_{ij})  ~&=~ \delta_{ij}, \quad \varepsilon({\sf D}_\e^{\pm 1})=\varepsilon({\sf D}_\f^{\pm 1})=1 \label{Hef-counit}
\end{align}
and with antipode $S$ defined by
\begin{align}
S(u_{ij}) ~&=~ \ts \sum_{i_1, \dots, i_{m-1}, j_1, \dots, j_{m-1}=1}^n~~ {\sf D_e}^{-1} \widetilde{ \e }_{i i_1 \cdots i_{m-1}} u_{j_1 i_1} \cdots u_{j_{m-1} i_{m-1}}  \e _{j_1 \cdots j_{m-1} j} \label{Hef-SA1}\\
~&=~ \ts \sum_{i_1, \dots, i_{m-1}, j_1, \dots, j_{m-1}=1}^n ~~ \f _{i i_1 \cdots i_{m-1}} u_{j_1 i_1} \cdots u_{j_{m-1} i_{m-1}} \widetilde{ \f }_{j_1 \cdots j_{m-1} j} {\sf D_f}, \label{Hef-SA2}
\end{align}
for $1\le i,j\le n$ and for any choice of  $\tilde \e\in \mathrm{Aff}(\e)$, $\tilde \f\in \mathrm{Aff}(\f)$, with $S({\sf D_e}^{\pm 1})={\sf D_e}^{\mp 1}$, $S({\sf D_f}^{\pm 1})={\sf D_f}^{\mp 1}$. 
\end{proposition}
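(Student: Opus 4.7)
The strategy is to verify the three layers of structure in turn: first the bialgebra structure coming from $\Delta$ and $\varepsilon$, then the well-definedness of $S$ (using Lemma~\ref{L:InverseA}), and finally the antipode axiom.

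First I would check that $\Delta$ and $\varepsilon$, defined on generators by \eqref{Hef-coprod}--\eqref{Hef-counit}, extend to algebra maps $\cH(\e,\f)\to \cH(\e,\f)\otimes \cH(\e,\f)$ and $\cH(\e,\f)\to\kk$, respectively. This amounts to checking that the defining relations \eqref{PresE}, \eqref{PresF}, \eqref{Hef-Dinv} are annihilated. The coproduct computation for \eqref{PresE} is a routine matrix-coalgebra calculation: applying $\Delta$ to the left-hand side, substituting $\Delta(u_{i_aj_a})=\sum_{k_a} u_{i_a k_a}\otimes u_{k_a j_a}$, and invoking \eqref{PresE} twice (once on the first tensor factor summing over $i_1,\dots,i_m$, and once on the second summing over $k_1,\dots,k_m$) yields $\e_{j_1\cdots j_m}{\sf D}_\e\otimes {\sf D}_\e$, which equals $\Delta$ of the right-hand side. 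The verification for \eqref{PresF} is symmetric, and \eqref{Hef-Dinv} is immediate. Counit compatibility is trivial. Coassociativity and the counit axiom then follow from the fact that the $(u_{ij})$ form a matrix coalgebra and ${\sf D}_\e^{\pm 1}$, ${\sf D}_\f^{\pm 1}$ are grouplike.

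Next I would address the antipode. The first step is to observe that formulas \eqref{Hef-SA1} and \eqref{Hef-SA2} agree and are independent of the chosen polar forms $\widetilde\e\in\mathrm{Aff}(\e)$, $\widetilde\f\in\mathrm{Aff}(\f)$: this is exactly Lemma~\ref{L:InverseA}, which identifies both with the common matrix $\mathbb C=\mathbb D$. To show $S$ extends to an anti-algebra map, one has to verify that it sends each defining relation to zero (in reversed order). For the relation \eqref{PresE}, applying $S$ produces $\sum \e_{i_1\cdots i_m} S(u_{i_mj_m})\cdots S(u_{i_1j_1}) = \e_{j_1\cdots j_m}{\sf D}_\e^{-1}$; expanding each $S(u_{ij})$ using \eqref{Hef-SA2} (the $\f$-form), collapsing the telescoping product $\mathbb U\mathbb D=\I$ from Lemma~\ref{L:InverseA} between consecutive factors, and then invoking \eqref{PresE} itself to simplify the surviving $u$-string gives the desired identity. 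The relation \eqref{PresF} is handled symmetrically, using the $\e$-expression \eqref{Hef-SA1} for $S(u_{ij})$ together with $\mathbb C\mathbb U=\I$. Relation \eqref{Hef-Dinv} is trivial.

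Finally I would verify the antipode axiom $m(S\otimes \id)\Delta = \eta\varepsilon = m(\id\otimes S)\Delta$ on generators. For the grouplikes ${\sf D}_\e^{\pm 1}, {\sf D}_\f^{\pm 1}$ this is immediate. For $u_{ij}$ the two required identities
\[
\ts \sum_{k=1}^n S(u_{ik})u_{kj} = \delta_{ij} = \sum_{k=1}^n u_{ik}S(u_{kj})
\]
are precisely the statements $\mathbb C\mathbb U=\I$ and $\mathbb U\mathbb D=\I$ from Lemma~\ref{L:InverseA}, once the two formulas for $S$ are substituted appropriately. The main obstacle will be the bookkeeping in the anti-algebra compatibility step: verifying that $S$ carries \eqref{PresE} to (an equivalent form of) \eqref{PresF} and vice versa requires choosing which of the two equivalent expressions \eqref{Hef-SA1}, \eqref{Hef-SA2} to use in each slot so that adjacent pairs $S(u)\cdot u$ or $u\cdot S(u)$ telescope via Lemma~\ref{L:InverseA}. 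Once that bookkeeping is set up, everything reduces mechanically to \eqref{PresE}, \eqref{PresF}, and the polar form identities \eqref{polar-e}, \eqref{polar-f}.
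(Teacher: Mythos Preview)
Your overall architecture matches the paper's proof: bialgebra structure first, then Lemma~\ref{L:InverseA} to make $S$ well-defined and to get the convolution identities $\sum_k u_{ik}S(u_{kj})=\sum_k S(u_{ik})u_{kj}=\delta_{ij}$, then compatibility of $S$ with the relations. The gap is in the last step, where your proposed mechanism does not actually telescope.

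When you apply $S$ to \eqref{PresE} you are looking at
\[
\textstyle \sum_{i_1,\dots,i_m}\e_{i_1\cdots i_m}\,S(u_{i_mj_m})\cdots S(u_{i_1j_1}),
\]
a product consisting \emph{only} of $S(u)$-factors. Expanding each $S(u_{i_aj_a})$ via \eqref{Hef-SA2} produces blocks of $u$'s separated by ${\sf D}_\f$'s and by scalars $\f,\widetilde\f$ whose indices are independent from block to block; there is no shared summation index between a $u$ at the end of one block and the $S(u)$ (or $u$) at the start of the next, so neither $\mathbb U\mathbb D=\I$ nor $\mathbb C\mathbb U=\I$ applies ``between consecutive factors.'' In short, there are no adjacent $u\cdot S(u)$ pairs to collapse until you manufacture some $u$'s against which the $S(u)$'s can cancel.

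The paper manufactures them by a substitution trick: since $\e_{i_1\cdots i_m}$ is a scalar, \eqref{PresE} gives
\[
\e_{i_1\cdots i_m}={\sf D}_\e^{-1}\textstyle\sum_{r_1,\dots,r_m}\e_{r_1\cdots r_m}\,u_{r_1i_1}\cdots u_{r_mi_m}.
\]
Plugging this in places $u_{r_mi_m}$ immediately to the left of $S(u_{i_mj_m})$, and now the already-established identity $\sum_{i_m}u_{r_mi_m}S(u_{i_mj_m})=\delta_{r_mj_m}$ peels off one layer; iterating collapses everything to ${\sf D}_\e^{-1}\e_{j_1\cdots j_m}$. The verification for \eqref{PresF} is symmetric, using \eqref{PresF} to rewrite $\f_{i_1\cdots i_m}$ and then $\sum_k S(u_{ik})u_{kj}=\delta_{ij}$. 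So the ingredients you list (the convolution identities from Lemma~\ref{L:InverseA} and the relations \eqref{PresE}--\eqref{PresF}) are exactly right; what is missing is that \eqref{PresE} must be used to substitute for the scalar coefficients, not merely to simplify a ``surviving $u$-string'' at the end.
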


\begin{proof}
One sees easily that \eqref{Hef-coprod} and \eqref{Hef-counit} equip $\cH(\e,\f)$ with a bialgebra structure. In view of \cref{L:InverseA}, $S(u_{ij})$ is well-defined in $\cH(\e,\f)$ and
\begin{align*}
\ts \sum_{k=1}^n u_{ik}S(u_{kj})=\sum_{k=1}^n S(u_{ik})u_{kj}=\varepsilon(u_{ij}),
\end{align*}
for all $1\le i,j\le n$. Hence it suffices to show that $S$ preserves the relations \eqref{PresE}-\eqref{Hef-Dinv}. For \eqref{PresE}, consider 
\begin{align*}
S(\ts \sum_{i_1,\dots,i_m=1}^n \e_{i_1\cdots i_m}& u_{i_1j_1}\cdots u_{i_mj_m})=\ts \sum_{i_1,\dots,i_m=1}^n\e_{i_1\cdots i_m} S(u_{i_mj_m})\cdots S(u_{i_1j_1})\\
&= \ts \sum_{i_1,\dots,i_m=1}^n(\sum_{r_1,\dots,r_m=1}^n {\sf D}_\e^{-1} \e_{r_1\cdots r_m}u_{r_1i_1}\cdots u_{r_mi_m})\, S(u_{i_mj_m})\cdots S(u_{i_1j_1})\\
&=\ts \sum_{r_1,\dots,r_m=1}^n {\sf D}_\e^{-1} \e_{r_1\cdots r_m}(\sum_{i_1,\dots,i_m=1}^n u_{r_1i_1}\cdots (u_{r_mi_m}S(u_{i_mj_m}))\cdots S(u_{i_1j_1}))\\
&=\ts \sum_{r_1,\dots,r_m=1}^n \delta_{r_1j_1}\cdots \delta_{r_mj_m}{\sf D}_\e^{-1} \e_{r_1\cdots r_m}={\sf D}_\e^{-1} \e_{j_1\cdots j_m}=S(\e_{j_1\cdots j_m}{\sf D}_\e).
\end{align*}
Similarly, the relation \eqref{PresF} is preserved  and the preservation of \eqref{Hef-Dinv} is obvious. Thus, $S$ is the antipode of $\cH(\e,\f)$. 
\end{proof}

\begin{corollary}\label{C:QHef}
We have two quotient Hopf algebra maps 
\[\phi_\e: \cH(\e)\to \cH(\e,\f)\quad \text{and}\quad \phi_\f: \cH(\f)\to \cH(\e,\f)\] 
defined by $\phi_\e(a_{ij})=u_{ij}$, $\phi_\e(b_{ij})=S(u_{ij})$, $\phi_\e({\sf D}_\e^{\pm 1})={\sf D}_\e^{\pm 1}$ and $\phi_\f(a_{ij})=u_{ij}$, $\phi_\f(b_{ij})=S(u_{ij})$, $\phi_\f({\sf D}_\f^{\pm 1})={\sf D}_\f^{\pm 1}$, respectively.  \qed
\end{corollary}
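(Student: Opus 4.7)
The plan is to verify, separately, that each prescribed formula extends to a well-defined surjective Hopf algebra morphism; the two cases being formally parallel via the duality $V \leftrightarrow V^*$, $\e \leftrightarrow \f$, I will focus on constructing $\phi_\e$ and only note the symmetric argument for $\phi_\f$ at the end. I will first check that the assignment on generators respects each defining relation of $\cH(\e)$, then verify compatibility with $\Delta$, $\varepsilon$, and $S$, and finally deduce surjectivity by expressing the extra generators ${\sf D}_\f^{\pm 1}$ of the codomain in terms of the $u_{ij}$'s.

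For well-definedness of $\phi_\e$ at the algebra level I must check relations \eqref{He-aij}, \eqref{He-bij}, \eqref{He-Dinv}, \eqref{He-antiA}. Relation \eqref{He-aij} maps directly to \eqref{PresE} in $\cH(\e,\f)$. Relation \eqref{He-bij} is the image of \eqref{PresE} under $S$: indeed, applying the antipode of $\cH(\e,\f)$ to \eqref{PresE} yields
\[
\ts \sum_{i_1,\dots,i_m=1}^n \e_{i_1\cdots i_m}\, S(u_{i_m j_m})\cdots S(u_{i_1 j_1}) \;=\; \e_{j_1\cdots j_m}\, {\sf D}_\e^{-1},
\]
which is exactly $\phi_\e$ applied to \eqref{He-bij}. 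Relation \eqref{He-Dinv} is clear. For \eqref{He-antiA}, $\phi_\e(\A\B) = \mathbb U \cdot S(\mathbb U)$; by the antipode axiom for $\cH(\e,\f)$, this equals $\varepsilon(u_{ij})\I = \I$. Hence $\phi_\e$ is a well-defined algebra morphism.

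Next I will verify Hopf compatibility. On $a_{ij}$, $\Delta(\phi_\e(a_{ij})) = \sum_k u_{ik}\otimes u_{kj} = (\phi_\e\otimes\phi_\e)\Delta(a_{ij})$, and similarly for ${\sf D}_\e^{\pm1}$. On $b_{ij}$, using $\Delta\circ S = \tau\circ(S\otimes S)\circ\Delta$ in $\cH(\e,\f)$, one gets $\Delta(S(u_{ij})) = \sum_k S(u_{kj})\otimes S(u_{ik})$, which matches $(\phi_\e\otimes\phi_\e)\Delta(b_{ij})$. Counit compatibility is immediate, and antipode compatibility reduces to $S(\phi_\e(a_{ij})) = S(u_{ij}) = \phi_\e(b_{ij}) = \phi_\e(S(a_{ij}))$ together with the analogous check for $b_{ij}$ via $S^2$ and \eqref{He-S}, which unpacks to a straightforward matrix identity in $\cH(\e,\f)$. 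For surjectivity, the image contains all $u_{ij}$ and ${\sf D}_\e^{\pm 1}$; by non-degeneracy of $\f$ there exist indices with $\f_{j_1\cdots j_m}\neq 0$, and then \eqref{PresF} realizes ${\sf D}_\f^{-1}$ (hence ${\sf D}_\f$) as a polynomial in the $u_{ij}$'s, so the image is all of $\cH(\e,\f)$.

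The construction of $\phi_\f$ proceeds in exactly the same way, with the roles of \eqref{He-aij}, \eqref{He-bij} played by \eqref{Hf-aij}, \eqref{Hf-bij}, relation \eqref{Hf-antiA} $\B\A=\I$ coming now from the antipode axiom on the other side, and ${\sf D}_\e^{\pm 1}$ recovered from \eqref{PresE} by non-degeneracy of $\e$. I expect no genuine obstacle: the main bookkeeping step is verifying that \eqref{He-bij} and \eqref{Hf-bij} map to valid relations, which requires the observation (already essentially recorded in the proof of \Cref{P:H(ef)}) that those relations are precisely the images of \eqref{PresE} and \eqref{PresF} under the antipode of $\cH(\e,\f)$.
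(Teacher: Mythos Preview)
Your proof is correct and is exactly the direct verification the paper leaves implicit (the corollary carries a \qed with no argument, relying on the presentations just established and on the computation in the proof of \Cref{P:H(ef)} showing that applying $S$ to \eqref{PresE} yields the analogue of \eqref{He-bij}). One small streamlining: once $\phi_\e$ is a bialgebra map, antipode compatibility is automatic---any bialgebra morphism between Hopf algebras commutes with the antipodes---so you can skip the $S^2$ identity for $b_{ij}$; this also cleanly closes your surjectivity step, since the image is then a Hopf subalgebra and ${\sf D}_\f=S({\sf D}_\f^{-1})$ lies in it.
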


We denote by $\eta'_{\sf D_{\sf e}}, \eta'_{\sf D_{\sf f}}$ and $\eta'_{\sf e},\eta'_{\sf f}$ four automorphisms of $\cH(\e,\f)$ satisfying 
\begin{align*}
\eta'_{\sf D_{\sf e}}(\mathbb U)&={\sf D_{\sf e}}\mathbb U{\sf D_{\sf e}}^{-1},\hspace{0.5in} \eta'_{\sf D_{\sf e}}({\sf D}_{\sf e}^{\pm 1})={\sf D}_{\sf e}^{\pm 1},\hspace{0.5in} \eta'_{\sf D_{\sf e}}({\sf D}_{\sf f}^{\pm 1})={\sf D_{\sf e}}{\sf D}_{\sf f}^{\pm 1}{\sf D_{\sf e}}^{-1};\\
\eta'_{\sf D_{\sf f}}(\mathbb U)&={\sf D_{\sf f}}\mathbb U{\sf D_{\sf f}}^{-1},\hspace{0.52in} \eta'_{\sf D_{\sf f}}({\sf D}_{\sf f}^{\pm 1})={\sf D}_{\sf f}^{\pm 1},\hspace{0.5in} \eta'_{\sf D_{\sf f}}({\sf D}_{\sf e}^{\pm 1})={\sf D_{\sf f}}{\sf D}_{\sf e}^{\pm 1}{\sf D_{\sf f}}^{-1};\\
\eta'_{\e}(\mathbb U)&=\mathbb P^{-1}\mathbb U\mathbb P,\hspace{0.72in} \eta'_\e({\sf D}_{\sf e}^{\pm 1})={\sf D}_{\sf e}^{\pm 1},\hspace{0.58in} \eta'_\e({\sf D}_{\sf f}^{\pm 1})={\sf D}_{\sf f}^{\pm 1};\\
\eta'_{\f}(\mathbb U)&=\mathbb Q^{-T}\mathbb U\mathbb Q^T,\hspace{0.56in} \eta'_\f({\sf D}_{\sf e}^{\pm 1})={\sf D}_{\sf e}^{\pm 1},\hspace{0.58in} \eta'_\f({\sf D}_{\sf f}^{\pm 1})={\sf D}_{\sf f}^{\pm 1}.
\end{align*}
By \cref{P:H(ef)}, it is clear that they are all Hopf algebra automorphisms of $\cH(\e,\f)$.

\begin{corollary}\label{Hef-invol} 
We have $\eta'_{\sf D_{\sf e}}\circ S^2=\eta'_{\sf e}$ and  $\eta'_{\sf D_{\sf f}}\circ S^2=\eta'_{\sf f}$ in $\cH(\e,\f)$. Thus, the antipode $S$ of $\cH(\e,\f)$ is bijective. Moreover, the Hopf algebra $\cH( \e ,  \f )$ is involutory if one of the conditions below is satisfied: 
\begin{enumerate}
\item ${\sf D}_ \e $ is central and the matrix $\bP$ is a scalar multiple of $\bI$; or
\item ${\sf D}_ \f $ is central and the matrix $\bQ$ is a scalar multiple of $\bI$.
\end{enumerate}
\end{corollary}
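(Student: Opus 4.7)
The plan is to follow the template of \cref{C:S2He} and~\cref{C:S2Hf}, which handle the analogous identities in $\cH(\e)$ and $\cH(\f)$. To prove $\eta'_{{\sf D}_\e}\circ S^2 = \eta'_\e$, I check the identity on each algebra generator of $\cH(\e,\f)$, namely $\mathbb U$ and ${\sf D}_\e^{\pm 1}, {\sf D}_\f^{\pm 1}$. On $\mathbb U$: the Hopf surjection $\phi_\e\colon\cH(\e)\twoheadrightarrow\cH(\e,\f)$ of \cref{C:QHef} imports the formula $S^2(\mathbb U) = {\sf D}_\e^{-1}\mathbb P^{-1}\mathbb U\mathbb P{\sf D}_\e$ from the $\cH(\e)$ computation; applying $\eta'_{{\sf D}_\e}$ conjugates by ${\sf D}_\e$, and since the entries of $\mathbb P$ are scalars, they commute with ${\sf D}_\e$ so the outer ${\sf D}_\e^{\pm 1}$ cancel, yielding $\mathbb P^{-1}\mathbb U\mathbb P = \eta'_\e(\mathbb U)$. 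On ${\sf D}_\e^{\pm 1}$, both sides act trivially. The identity $\eta'_{{\sf D}_\f}\circ S^2 = \eta'_\f$ follows symmetrically via $\phi_\f$ and the formula $S^2(\mathbb U) = {\sf D}_\f^{-1}\mathbb Q^{-T}\mathbb U\mathbb Q^T{\sf D}_\f$ imported from $\cH(\f)$.

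Bijectivity of $S$ is then immediate: $\eta'_{{\sf D}_\e}$ is bijective (being an inner automorphism) and $\eta'_\e$ is a Hopf algebra automorphism by construction, so $S^2 = (\eta'_{{\sf D}_\e})^{-1}\circ\eta'_\e$ is bijective, and hence so is $S$. For the involutory statement under hypothesis~(a), centrality of ${\sf D}_\e$ makes $\eta'_{{\sf D}_\e} = \mathrm{id}_{\cH(\e,\f)}$, while $\mathbb P = \lambda\mathbb I$ forces $\mathbb P^{-1}\mathbb U\mathbb P = \mathbb U$; combined with $\eta'_\e({\sf D}_\e^{\pm 1}) = {\sf D}_\e^{\pm 1}$ and $\eta'_\e({\sf D}_\f^{\pm 1}) = {\sf D}_\f^{\pm 1}$, this gives $\eta'_\e = \mathrm{id}$. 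The first identity then collapses to $S^2 = \mathrm{id}$, so $\cH(\e,\f)$ is involutory. Case~(b) is handled identically using the $\f$-side.

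The most delicate step, and the one I expect to be the main obstacle, is the verification of the first identity on the cross-generator ${\sf D}_\f$ (symmetrically, the second on ${\sf D}_\e$). One has $S^2({\sf D}_\f) = {\sf D}_\f$ and $\eta'_\e({\sf D}_\f) = {\sf D}_\f$, but $\eta'_{{\sf D}_\e}({\sf D}_\f) = {\sf D}_\e{\sf D}_\f{\sf D}_\e^{-1}$, so the identity requires ${\sf D}_\e{\sf D}_\f = {\sf D}_\f{\sf D}_\e$ in $\cH(\e,\f)$. This commutation is not apparent from the defining relations~\eqref{PresE}--\eqref{Hef-Dinv}; I expect to extract it by equating the two antipode formulas~\eqref{Hef-SA1} and~\eqref{Hef-SA2} for $S(u_{ij})$ and invoking the polar forms of \cref{polar} to move ${\sf D}_\e$ and ${\sf D}_\f$ past products of $u_{ij}$, which should yield the required commutation among these grouplike codeterminants.
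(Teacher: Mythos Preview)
Your outline is the paper's argument: transport the identity $\eta_{{\sf D}_\e}\circ S^2=\eta_\e$ from $\cH(\e)$ to $\cH(\e,\f)$ through the Hopf map $\phi_\e$ of \cref{C:QHef} (and symmetrically via $\phi_\f$), then read off bijectivity of $S$ and the involutory statement exactly as you do.

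The gap you flag on the cross-generator ${\sf D}_\f$ is real, but it closes far more cheaply than the polar-form extraction you sketch. The key observation is that ${\sf D}_\f^{-1}$ already lies in the subalgebra generated by the $u_{ij}$: pick any multi-index with $\f_{i_1\cdots i_m}\neq 0$, and relation~\eqref{PresF} expresses ${\sf D}_\f^{-1}$ as a scalar multiple of a monomial in the $u_{ij}$. Since $\eta'_{{\sf D}_\e}\circ S^2$ and $\eta'_\e$ are both algebra endomorphisms and you have already verified they agree on each $u_{ij}$, they automatically agree on ${\sf D}_\f^{-1}$, and hence on its inverse ${\sf D}_\f$. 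The commutation ${\sf D}_\e{\sf D}_\f={\sf D}_\f{\sf D}_\e$ then drops out as a \emph{consequence} of the identity rather than a prerequisite for it; you do not need to establish it beforehand.

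This is precisely what the paper packages by calling $\phi_\e$ a \emph{quotient} (i.e.\ surjective) Hopf algebra map in \cref{C:QHef}: the image is a Hopf subalgebra containing the $u_{ij}$ and ${\sf D}_\e^{\pm 1}$, hence ${\sf D}_\f^{-1}$ by~\eqref{PresF}, hence ${\sf D}_\f=S({\sf D}_\f^{-1})$; so $\phi_\e$ is onto and the identity, once known on the image, holds on all of $\cH(\e,\f)$. Replace your final paragraph with this one-line observation and the proof is complete.
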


\begin{proof}
By \cref{C:QHef}, there is a surjection $\phi_{\e}: \cH(\e)\to \cH(\e,\f)$, which commutes with their coactions on $V$ and $M_\e$. Apply $\phi= \phi_{\e}$ to \cref{C:S2He}. Since $\phi\circ \eta_{{\sf D}_\e}=\eta'_{{\sf D}_\e}\circ \phi$ and $\phi\circ \eta_\e=\eta'_\e\circ \phi$, we obtain that $\eta'_{\sf D_{\sf e}}\circ S^2=\eta'_{\sf e}$ for $\cH(\e,\f)$. Likewise, $\phi = \phi_{\f}$  to get  $\eta'_{\sf D_{\sf f}}\circ S^2=\eta'_{\sf f}$ for $\cH(\e,\f)$. 

Now if $\mathbb P=\lambda \I$ for some nonzero $\lambda\in \kk$ and ${\sf D}_\e$ is central, it is clear that $\eta'_{{\sf D}_\e}=\eta'_\e=\mathrm{id}$, which implies that $S^2=\id$. Similarly, (b) implies that $S^2=\id$.
 \end{proof}

We end this section by discussing  universal properties of $\cH(\e,\f)$.

\begin{proposition}\label{P:UHef} 
\
\begin{enumerate}
\item Endow $V$ with the right $\cH(\e,\f)$-comodule structure defined by $\rho_\cH(v_j)=\sum_{i=1}^n v_i\otimes u_{ij}$. Then, $\kk \f^*$ is a right $\cH(\e,\f)$-subcomodule of $V^{\otimes m}$. 

\medskip

\item Endow $V^*$ with the induced left $\cH(\e,\f)$-comodule structure such that $\rho_\cH(\theta_i)=\sum_{j=1}^n u_{ij}\otimes \theta_j$. Then, $\kk \e^*$ is a left $\cH(\e,\f)$-subcomodule of $(V^*)^{\otimes m}$.

\medskip

\item Let $K$ be a Hopf algebra that right coacts on $V$ so that $\kk \f^*$ is a right $K$-subcomodule of $V^{\otimes m}$. Moreover, with the induced left $K$-coaction on $V^*$, $\kk \e^*$ is a left $K$-subcomodule of $(V^*)^{\otimes m}$. Then, there exists a unique Hopf algebra map $\phi: \cH(\e,\f)\to K$ such that $(\mathrm{id}_V\otimes \phi)\circ \rho_\cH=\rho_K$, where $\rho_\cH$ and $\rho_K$ denote the coactions on $V$ of $\cH(\e,\f)$ and $K$, respectively. 

\medskip

\item Endow the three vector spaces $V$, $M_\e$ and $\kk \f^*$ with the right $\cH(\e,\f)$-comodule structures defined by $\rho_\cH(v_j)=\sum_{i=1}^n v_i\otimes u_{ij}$, $\rho_\cH(m_\e)=m_\e \otimes {\sf D}_\e$ and $\rho_\cH(\f^*)=\f^*\otimes {\sf D}_\f^{-1}$, respectively. Then, the following 
\[
\xymatrix{
\kk \f^*\ar@{^{(}->}[rr] & &  V^{\otimes m} \ar[rr]^-{\hat e} & & M_\e
}
\]
is a sequence of right $\cH(\e,\f)$-comodule maps. 

\medskip

\item Let $K$ be a Hopf algebra that right coacts on $V$, on $M_\e$ and on $\kk \f^*$ so that the sequence above is a sequence of right $K$-comodule maps. Then, there exists a unique Hopf algebra map $\phi: \cH(\e,\f)\to K$ such that $(\mathrm{id}_V\otimes \phi)\circ \rho_\cH=\rho_K$, where $\rho_\cH$ and $\rho_K$ denote the coactions on $V$ of $\cH(\e,\f)$ and $K$, respectively.

\medskip

\item Endow the three vector spaces $V^*$, $M_\f$ and $\kk \e^*$ with the left $\cH(\e,\f)$-comodule structures defined by $\rho_\cH(\theta_i)=\sum_{j=1}^n u_{ij}\otimes \theta_j$, $\rho_\cH(m_\f)={\sf D}_\f^{-1}\otimes m_\f$ and $\rho_\cH(\e^*)={\sf D}_\e\otimes \e^*$, respectively. Then, the following
\[
\xymatrix{
\kk \e^*\ar@{^{(}->}[rr] & &  (V^*)^{\otimes m} \ar[rr]^-{\hat f} & & M_\f
}
\]
is a sequence of left $\cH(\e,\f)$-comodule maps.

\medskip

\item Let $K$ be a Hopf algebra that left coacts on all $V^*$, $M_\f$ and $\kk \e^*$ so that the  sequence above is a sequence of $K$-comodule maps. Then, there exists a unique Hopf algebra map \linebreak $\phi: \cH(\e,\f)\to K$ such that $(\phi\otimes \mathrm{id}_{V^*})\circ \rho_\cH=\rho_K$, where $\rho_\cH$ and $\rho_K$ denote the coactions on $V^*$ of $\cH(\e,\f)$ and $K$, respectively. 
\end{enumerate}
\end{proposition}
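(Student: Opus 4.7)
The plan is to derive all seven parts by matching the defining relations of $\cH(\e,\f)$ against the equivalent conditions of \cref{T:CoactionA(e)} and \cref{T:CoactionA(f)}, applied with the tautological generators $u_{ij}$. Specifically, relation \eqref{PresE} (respectively \eqref{PresF}) is exactly condition (e) of \cref{T:CoactionA(e)} (respectively \cref{T:CoactionA(f)}) under the identifications $a_{ij} := u_{ij}$ and $g := {\sf D}_\e$ (respectively $g := {\sf D}_\f^{-1}$, after relabelling indices).

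For (a) and (b), the implication (e)$\Rightarrow$(c) of the two theorems gives the claimed subcomodule statements immediately. For part (c), assume $K$ right-coacts on $V$ via $\rho_K(v_j) = \sum_i v_i \otimes k_{ij}$ so that $\kk\f^* \subset V^{\otimes m}$ is a right subcomodule, and so that $\kk\e^* \subset (V^*)^{\otimes m}$ is a left subcomodule under the induced coaction $\theta_i \mapsto \sum_j k_{ij} \otimes \theta_j$. The implication (c)$\Rightarrow$(e) of \cref{T:CoactionA(f)} and \cref{T:CoactionA(e)} then produces unique grouplikes $g_\f, g_\e \in K$ such that the $k_{ij}$ satisfy the analogues of \eqref{PresF} (with $g_\f$ in place of ${\sf D}_\f^{-1}$) and \eqref{PresE} (with $g_\e$ in place of ${\sf D}_\e$). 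One defines
$$\phi: \cH(\e,\f) \longrightarrow K, \qquad u_{ij} \longmapsto k_{ij}, \quad {\sf D}_\e^{\pm 1} \longmapsto g_\e^{\pm 1}, \quad {\sf D}_\f^{\pm 1} \longmapsto g_\f^{\mp 1},$$
which respects \eqref{PresE}--\eqref{Hef-Dinv} by construction, is automatically a coalgebra map (hence a Hopf algebra map) because $\phi$ is determined on the $u_{ij}$'s and on grouplikes, and is unique because the $u_{ij}$'s and ${\sf D}_\e^{\pm 1}, {\sf D}_\f^{\pm 1}$ generate $\cH(\e,\f)$ together with the uniqueness of $g_\e, g_\f$.

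Parts (d) and (f) are mild repackagings of the previous steps: the map $\hat\e$ is a right $\cH(\e,\f)$-comodule map by \cref{T:CoactionA(e)}((e)$\Rightarrow$(d)) applied to \eqref{PresE}, $\hat\f$ is a left $\cH(\e,\f)$-comodule map by \cref{T:CoactionA(f)} analogously, and the inclusions of $\kk\f^*$ and $\kk\e^*$ are precisely (a) and (b). For the universal statements (e) and (g), the hypothesis that the given sequence is a sequence of $K$-comodule maps yields both relations \eqref{PresE} and \eqref{PresF} inside $K$: the inclusion of the one-dimensional piece supplies the (c)$\Rightarrow$(e) input of one theorem, and the projection onto $M_\e$ or $M_\f$ supplies the (d)$\Rightarrow$(e) input of the other. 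One then constructs $\phi$ exactly as in (c).

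I do not foresee any serious obstacle. The only point that warrants a moment's care is that in (c), (e), (g) the two grouplikes $g_\e, g_\f$ produced by the two separate theorems must be assignable independently as the images of ${\sf D}_\e$ and ${\sf D}_\f^{-1}$; this is immediate because \eqref{PresE}--\eqref{Hef-Dinv} never couple the two grouplikes and impose no relation between them beyond the obvious inverse conditions, which are automatic for grouplikes in a Hopf algebra.
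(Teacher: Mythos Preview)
Your proposal is correct and follows essentially the same approach as the paper: both arguments reduce every part to the implications among the equivalent conditions (c), (d), (e) of \cref{T:CoactionA(e)} and \cref{T:CoactionA(f)}, identifying relations \eqref{PresE} and \eqref{PresF} with condition (e) under $a_{ij}:=u_{ij}$, and then reading off the map $\phi$ on generators. The paper only writes out parts (d) and (e) explicitly and declares the rest analogous, whereas you spell out all seven; otherwise the reasoning is the same.
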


\begin{proof}
We only verify for part (d,e); the rest follows similarly or is straightforward. 

\smallskip

(d) By \cref{T:CoactionA(e)}((e)$\Rightarrow$(d)) and \eqref{PresE}, one sees that $\hat \e: V^{\otimes m}\to M_\e$ is a right $\cH(\e,\f)$-comodule map with the $\cH(\e,\f)$-coactions on $V$ and $M_\e$ given above. Similarly, we get $\kk\f^*\hookrightarrow V^{\otimes m}$ is a left $\cH(\e,\f)$-comodule map by \cref{T:CoactionA(f)}((e)$\Rightarrow$(c)).

\smallskip

(e) Let $K$ be a Hopf algebra that right coacts on $V$, $M_\e$ and $\kk \f^*$ via $\rho_K(v_j)=\sum_{i=1}^n v_i\otimes k_{ij}$, $\rho_K(m_\e)=m_\e \otimes d$ and $\rho_K(\f^*)=\f^*\otimes d'$ respectively, where $k_{ij}\in K$ and $d,d'\in K$ are two grouplike elements. Suppose the sequence of maps in part (d) is a sequence of right $K$-comodule maps. Then, by \cref{T:CoactionA(e)}((d)$\Rightarrow$(e)) and \cref{T:CoactionA(f)}((c)$\Rightarrow$(e)), there exists a Hopf algebra map $\phi: \cH(\e,\f)\to K$ given by $\phi(u_{ij})=k_{ij}$, $\phi({\sf D}_\e^{\pm 1})=d^{\pm 1}$ and $\phi({\sf D}_\f^{\pm 1})=d'^{\mp 1}$. It is clear that $\phi$ is the unique map satisfying $(\mathrm{id}_V\otimes \phi)\circ \rho_\cH=\rho_K$. This proves our result.  
\end{proof}


\subsection{Balanced coaction on a pair of superpotential algebras $A(\e,N)$ and $A(\f,N)$} \label{sec:Hef-univ}
Now we discuss universal coactions on a pair of superpotential algebras $A( \e ,N)$  and $A( \f ,N)$. To begin, consider the following Hopf quotient of $\cH(\e,\f)$.

\begin{definition}[$\cS(\e,\f)$]\label{D:S(e,f)}
Take $\cS(\e,\f):=\cH(\e,\f)/({\sf D}_\e-1,{\sf D}_\f-1)$, a Hopf quotient of $\cH(\e,\f)$.  
\end{definition}

In $\cS(\e,\f)$, we will still keep the notation $(u_{ij})_{1\le i,j\le n}$ for its image under the quotient map $\cH(\e,\f)\twoheadrightarrow \cS(\e,\f)$, subject to \eqref{PresE}, \eqref{PresF}, with ${\sf D}_\e^{\pm 1}={\sf D}_\f^{\pm 1}=1$. Moreover, $\cS(\e,\f)$ has similar universal properties as those of $\cH(\e,\f)$ in \Cref{P:UHef} by requiring the corresponding coaction on $M_\e$, $M_\f$ or $\kk \e^*$, $\kk \f^*$ to be trivial.

\begin{definition}[balanced coaction] \label{D:Sym}
Let $K$ be a Hopf algebra that simultaneously left coacts on $A(\e,N)$ and right coacts on  $A(\f,N)$. We say that the $K$-coactions are \emph{balanced} if we have elements $(k_{ij})_{1\le i,j\le n}\in K$ such that 
$$\ts \rho_K^\ell(x_i)=\sum_{j=1}^n k_{ij}\otimes x_j \quad \text{and} \quad \rho_K^r(y_j)=\sum_{i=1}^n y_i\otimes k_{ij},$$ where $\rho_K^\ell$ denotes the left $K$-coaction on $A(\e,N)$ and $\rho_K^r$ denotes the right $K$-coaction on $A(\f,N)$. 
\end{definition}

\begin{proposition}\label{Hef-coacts}
We have the following inner-faithful balanced coactions:
\begin{enumerate}
\item $\cH( \e , \f )$ left coacts on $A( \e ,N)$ and right coacts on $A( \f ,N)$ via 
$$\rho^\ell_\cH(x_i) = \textstyle \sum_{j=1}^n u_{ij} \otimes x_j
\quad \text{ and } \quad
\rho^r_\cH(y_j) = \textstyle \sum_{i=1}^n y_i \otimes u_{ij}.$$
Moreover, the homological codeterminant of the left \textnormal{(}resp., right\textnormal{)} $\cH( \e , \f )$-coaction on $A( \e ,N)$ \textnormal{(}resp., $A( \f ,N)$\textnormal{)} is given by ${\sf D}_ \e $ \textnormal{(}resp., ${\sf D}_ \f $\textnormal{)}.
\item $\cS( \e , \f )$ left coacts on $A( \e ,N)$ and right coacts on $A( \f ,N)$ via 
$$\rho^\ell_\cS(x_i) = \textstyle \sum_{j=1}^n u_{ij} \otimes x_j
\quad \text{ and } \quad
\rho^r_\cS(y_j) = \textstyle \sum_{i=1}^n y_i \otimes u_{ij}.$$
Moreover, these $\cS(\e,\f)$-coactions both have trivial homological codeterminant. 
\end{enumerate}
\end{proposition}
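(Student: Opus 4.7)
The plan is to reduce everything in part (a) to the general criteria of Theorems~\ref{T:CoactionA(e)} and~\ref{T:CoactionA(f)}, and then obtain part (b) from part (a) by pushing the coactions along the Hopf quotient $\cH(\e,\f)\twoheadrightarrow\cS(\e,\f)$. The inner-faithfulness assertions will then follow from a short argument on generators once well-definedness is in place.

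For part (a), the formula $\rho^\ell_\cH(x_i)=\sum_j u_{ij}\otimes x_j$ extends uniquely to an algebra map $TV^*\to\cH(\e,\f)\otimes TV^*$, and descending to $A(\e,N)$ amounts to verifying condition (a) of Theorem~\ref{T:CoactionA(e)}. By the equivalence (a)$\Leftrightarrow$(e) in that theorem, this reduces to the existence of a grouplike $g\in\cH(\e,\f)$ satisfying $\sum\e_{i_1\cdots i_m}u_{i_1j_1}\cdots u_{i_mj_m}=\e_{j_1\cdots j_m}g$, which is exactly our defining relation~\eqref{PresE} with $g={\sf D}_\e$. The final statement of Theorem~\ref{T:CoactionA(e)} then identifies the homological codeterminant as~${\sf D}_\e$. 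The right coaction on $A(\f,N)$ is handled symmetrically: after relabeling summation indices, relation~\eqref{PresF} implements condition (e) of Theorem~\ref{T:CoactionA(f)} with $g={\sf D}_\f^{-1}$, so the coaction is well-defined and its homological codeterminant is $g^{-1}={\sf D}_\f$. The balanced condition is immediate since the same matrix $(u_{ij})$ is used on both sides by construction.

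For inner-faithfulness of the left coaction, suppose $K\subseteq\cH(\e,\f)$ is a Hopf subalgebra through which $\rho^\ell_\cH$ factors. Linear independence of $\{x_j\}$ in $A(\e,N)$ forces $u_{ij}\in K$ for all $i,j$. Choosing $(j_1,\dots,j_m)$ with $\e_{j_1\cdots j_m}\neq 0$, which exists by the non-degeneracy in Definition~\ref{preregular}(a), relation~\eqref{PresE} then expresses ${\sf D}_\e$ as a $\kk$-linear combination of products of the $u_{ij}$, so ${\sf D}_\e\in K$ and hence ${\sf D}_\e^{-1}=S({\sf D}_\e)\in K$; similarly relation~\eqref{PresF} together with non-degeneracy of $\f$ places ${\sf D}_\f^{\pm 1}$ in $K$. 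Since every generator of $\cH(\e,\f)$ then lies in $K$, we conclude $K=\cH(\e,\f)$. The right coaction is treated identically using linear independence of $\{y_i\}$. Part (b) then follows by composing the coactions of part (a) with the quotient map $\cH(\e,\f)\twoheadrightarrow\cS(\e,\f)$: the images of ${\sf D}_\e$ and ${\sf D}_\f$ are~$1$, so~\eqref{PresE} and~\eqref{PresF} reduce to the $g=1$ instance of the criteria above, yielding well-defined balanced coactions with trivial homological codeterminants, and the inner-faithfulness argument transports verbatim. I anticipate no serious obstacle; the most delicate bookkeeping is keeping the correct exponent so that the right coaction codeterminant comes out as ${\sf D}_\f$ rather than ${\sf D}_\f^{-1}$ via the inversion in the last line of Theorem~\ref{T:CoactionA(f)}.
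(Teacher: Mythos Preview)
Your proposal is correct and follows essentially the same route as the paper: the paper's proof is a single line invoking relations~\eqref{PresE} and~\eqref{PresF} together with the implications (e)$\Rightarrow$(a) of Theorems~\ref{T:CoactionA(e)} and~\ref{T:CoactionA(f)}, leaving the inner-faithfulness and the codeterminant bookkeeping implicit. Your write-up simply spells out those implicit steps (in particular the inner-faithfulness argument, which parallels the one the paper gave earlier for $\cH(\e)$ in Proposition~\ref{HeHf-coacts}), so there is no substantive difference in approach.
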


\begin{proof}
By using \eqref{PresE} and~\eqref{PresF}, this follows from \cref{T:CoactionA(e)}((e)$\Rightarrow$(a)) and \cref{T:CoactionA(f)}((e)$\Rightarrow$(a)).
\end{proof}

\begin{definition}[balanced general/special linear group, $\mathcal{O}_{A( \e , N), A( \f , N)}(GL)$, $\mathcal{O}_{A( \e , N), A( \f , N)}(SL)$] \label{OA(e)A(f)}
\
\begin{enumerate}
\item
The {\it balanced quantum general linear group associated to a pair of $N$-Koszul AS regular algebras $A(\e, N)$ and $A(\f, N)$}, denoted by $\mathcal{O}_{A( \e , N), A( \f , N)}(GL)$, is defined to be the Hopf algebra that left coacts on $A(\e, N)$ and right coacts on $A(\f, N)$ in a balanced fashion satisfying the following universal property: for any Hopf algebra $K$ that left coacts on $A( \e ,N)$ and right coacts on $A( \f ,N)$ in a balanced manner, there exists a unique Hopf algebra map $\phi: \mathcal{O}_{A( \e , N), A( \f , N)}(GL)\to K$ such that 
$$(\phi\otimes \mathrm{id}_{\mathcal{O}_{A( \e , N), A( \f , N)}(GL)})\circ \rho_\cO^\ell=\rho_K^\ell \quad \text{and} \quad (\mathrm{id}_{\mathcal{O}_{A( \e , N), A( \f , N)}(GL)}\otimes \phi)\circ \rho_\cO^r=\rho_K^r,$$ where $\rho_\cO^\ell$ and $\rho_K^\ell$ (resp. $\rho_\cO^r$ and $\rho_K^r$) denote the left (resp. right) coactions on $A(\e,N)$ (resp. $A(\f,N)$) of $\mathcal{O}_{A( \e , N), A( \f , N)}(GL)$ and $K$, respectively. 

\medskip

\item Similarly, we can define the {\it balanced quantum special linear group associated to a pair of $N$-Koszul AS regular algebras $A(\e, N)$ and $A(\f, N)$} by requiring all the coactions in part (a) to have trivial homological codeterminant. We denoted it by $\mathcal{O}_{A( \e , N), A( \f , N)}(SL)$.
\end{enumerate}
\end{definition}

\begin{theorem} \label{H-Oisom}
\
\begin{enumerate}
\item The balanced coactions of $\cH(\e,\f)$ on $A(\e,N)$ and $A(\f,N)$ given in \cref{Hef-coacts}(a) satisfy the universal property described by \cref{OA(e)A(f)}(a). Therefore, we have the following equality of universal quantum groups
\begin{align*}
\cH(\e,\f)= \mathcal{O}_{A( \e , N), A( \f , N)}(GL).
\end{align*}
\item The balanced coactions of $\cS(\e,\f)$ on $A(\e,N)$ and $A(\f,N)$ given in \cref{Hef-coacts}(b) satisfy the universal property described by \cref{OA(e)A(f)}(b). Therefore, we have the following equality of universal quantum groups
\begin{align*}
\cS(\e,\f)= \mathcal{O}_{A( \e , N), A( \f , N)}(SL).
\end{align*}
\end{enumerate}
\end{theorem}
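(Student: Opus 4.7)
The plan is to deduce this theorem from the universal property of $\cH(\e,\f)$ already established in \cref{P:UHef}(c), combined with \cref{T:CoactionA(e)} and \cref{T:CoactionA(f)} which translate coactions on the superpotential algebras $A(\e,N)$ and $A(\f,N)$ into statements about the one-dimensional subcomodules $\kk\e^* \subseteq (V^*)^{\otimes m}$ and $\kk\f^* \subseteq V^{\otimes m}$. The argument parallels the proof of \cref{pr.univ}, but must be carried out simultaneously for the left coaction on $A(\e,N)$ and the right coaction on $A(\f,N)$, with the necessary consistency guaranteed by the balanced hypothesis.

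For part (a), suppose $K$ is any Hopf algebra with balanced coactions $\rho_K^\ell(x_i) = \sum_{j=1}^n k_{ij} \otimes x_j$ on $A(\e,N)$ and $\rho_K^r(y_j) = \sum_{i=1}^n y_i \otimes k_{ij}$ on $A(\f,N)$, sharing a common matrix $(k_{ij}) \in M_n(K)$. By \cref{T:CoactionA(e)}((a)$\Leftrightarrow$(c)), the left $K$-coaction on $A(\e,N)$ is equivalent to $\kk\e^*$ being a left $K$-subcomodule of $(V^*)^{\otimes m}$ under the coaction $\theta_i \mapsto \sum_{j=1}^n k_{ij} \otimes \theta_j$. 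By \cref{T:CoactionA(f)}((a)$\Leftrightarrow$(c)), the right $K$-coaction on $A(\f,N)$ is equivalent to $\kk\f^*$ being a right $K$-subcomodule of $V^{\otimes m}$ under the coaction $v_j \mapsto \sum_{i=1}^n v_i \otimes k_{ij}$. Because the two coactions share the same matrix $(k_{ij})$, the induced left coaction on $V^*$ arising from the right coaction on $V$ (via the antipode of $K$) agrees up to the antipode with $\theta_i \mapsto \sum_j k_{ij}\otimes \theta_j$. This places us in exactly the setting of \cref{P:UHef}(c), yielding a unique Hopf algebra map $\phi: \cH(\e,\f) \to K$ with $\phi(u_{ij}) = k_{ij}$ that intertwines the coactions on $V$. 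Since coactions of $K$ on the generating spaces $V$ and $V^*$ extend uniquely to their respective tensor algebras and descend to the quotients $A(\f,N)$ and $A(\e,N)$, the intertwining on the degree-one generators upgrades to intertwining on all of $A(\e,N)$ and $A(\f,N)$. Uniqueness of $\phi$ on the generators $u_{ij}$ is forced by the intertwining condition; uniqueness on ${\sf D}_\e^{\pm 1}$ and ${\sf D}_\f^{\pm 1}$ follows from the last part of \cref{T:CoactionA(e)} and \cref{T:CoactionA(f)}, which fix these grouplike images as the homological codeterminants of the $K$-coactions on $A(\e,N)$ and $A(\f,N)$, respectively.

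For part (b), apply part (a) to an arbitrary $K$ with balanced coactions having trivial homological codeterminants. The formulas of \cref{T:CoactionA(e)} and \cref{T:CoactionA(f)} then give $\phi({\sf D}_\e) = 1_K$ and $\phi({\sf D}_\f) = 1_K$, so $\phi$ factors uniquely through the Hopf quotient $\cS(\e,\f) = \cH(\e,\f)/({\sf D}_\e - 1,\, {\sf D}_\f - 1)$. Combined with \cref{Hef-coacts}(b), which supplies the canonical balanced trivial-codeterminant coactions of $\cS(\e,\f)$, this establishes the universal property of \cref{OA(e)A(f)}(b) and gives $\cS(\e,\f) = \mathcal{O}_{A(\e,N), A(\f,N)}(SL)$.

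The main conceptual point, and the only place where part (a) differs in substance from \cref{pr.univ}, is recognizing that the balanced hypothesis is exactly what is needed to align the two a priori independent universal-property inputs (one from $A(\e,N)$, one from $A(\f,N)$) into the single matrix-coefficient hypothesis required by \cref{P:UHef}(c). Without the balanced condition, the left coaction on $A(\e,N)$ would only produce a Hopf map out of $\cH(\e)$ and the right coaction on $A(\f,N)$ a Hopf map out of $\cH(\f)$, with no mechanism forcing the two matrices to coincide and hence no canonical map out of $\cH(\e,\f)$.
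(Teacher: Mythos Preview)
Your argument is correct and follows essentially the same route as the paper's proof: translate the balanced $K$-coactions on $A(\e,N)$ and $A(\f,N)$ via \cref{T:CoactionA(e)} and \cref{T:CoactionA(f)} into conditions on the common matrix $(k_{ij})$, then obtain the unique Hopf map $\phi:\cH(\e,\f)\to K$ with $\phi(u_{ij})=k_{ij}$; the paper uses implication (a)$\Rightarrow$(e) to define $\phi$ directly on generators (setting $\phi({\sf D}_\e)=d$, $\phi({\sf D}_\f)=d'^{-1}$), while you route through (a)$\Rightarrow$(c) and invoke \cref{P:UHef}(c), which is equivalent bookkeeping. One wording issue: in the paper's convention (see \cref{P:UHef}(a,b)) the ``induced left $K$-coaction on $V^*$'' from $v_j\mapsto\sum_i v_i\otimes k_{ij}$ is simply $\theta_i\mapsto\sum_j k_{ij}\otimes\theta_j$ with no antipode involved, so your parenthetical ``via the antipode of $K$'' and ``agrees up to the antipode'' are misleading and should be dropped---the balanced hypothesis makes the two coactions on $V^*$ literally identical, which is exactly what \cref{P:UHef}(c) requires.
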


\begin{proof}
We only verify for part (a); the argument for part (b) is analogous. Let $K$ be a Hopf algebra that left coacts on $A(\e,N)$ and right coacts on $A(\f,N)$ via $\rho_K^\ell(x_i)=\sum_{j=1}^n k_{ij}\otimes x_j$ and  $\rho_K^r(y_j)=\sum_{i=1}^n y_i\otimes k_{ij}$ for some $k_{ij}\in K$. By \cref{T:CoactionA(e)}((a)$\Rightarrow$(e)), there is a unique grouplike element $d\in K$ satisfying  
\begin{align*}
\ts \sum_{i_1,\dots,i_m=1}^n \e_{i_1\dots i_m}\, k_{i_1j_1}\cdots k_{i_mj_m}=\e_{j_1\cdots j_m}d,
\end{align*} 
for all $1\le j_1,\dots,j_m\le n$. Similarly, by \cref{T:CoactionA(f)}((a)$\Rightarrow$(e)), there is a unique grouplike element $d'\in K$ satisfying  
\begin{align*}
\ts \sum_{i_1,\dots,i_m=1}^n \f_{i_1\dots i_m}\, k_{j_1i_1}\cdots k_{j_mi_m}=\f_{j_1\cdots j_m}d',
\end{align*} 
for all $1\le j_1,\dots,j_m\le n$. Thus, there is a Hopf algebra map $\phi: \cH(\e,\f)\to K$ via $\phi(u_{ij})=k_{ij}$, $\phi({\sf D}_\e^{\pm 1})=d^{\pm 1}$ and $\phi({\sf D}_\f^{\pm 1})=d'^{\mp 1}$. Hence, $(\phi\otimes \mathrm{id}_{A(\e,N)})\circ \rho_\cH^\ell=\rho_K^\ell$ since they are equal on the generators $x_i$'s of $A(\e,N)$. 
Likewise, we  have $(\mathrm{id}_{A(\f,N)}\otimes \phi)\circ \rho_\cH^r=\rho_K^r$. Finally, the uniqueness of $\phi$ is clear. This proves our result. 
\end{proof}

\subsection{Bialgebras associated to a pair of preregular forms $\e$ and $\f$} In light of \Cref{R:Bialgebra}, we consider the following bialgebra.

\begin{definition}[$\mathcal{O}_{A( \e , N), A( \f , N)}(M)$]
For a pair of $N$-Koszul AS regular algebras $A(\e,N)$ and $A(\f,N)$, let $\mathcal{O}_{A( \e , N), A( \f , N)}(M)$ be the algebra with generators $(z_{ij})_{1\le i,j\le n}$ satisfying the relations
\begin{align*}
\ts \sum_{i_1,\dots,i_N,j_1,\dots,j_N=1}^n \e_{\lambda_1\cdots \lambda_{m-N}i_1\cdots i_N}\mu_{j_1\cdots j_N} z_{i_1j_1}\cdots z_{i_Nj_N}&=0, \text{and}\\
\ts \sum_{i_1,\dots,i_N,j_1,\dots,j_N=1}^n \f_{\lambda_1\cdots \lambda_{m-N}j_1\cdots j_N}\nu_{i_1\cdots i_N} z_{i_1j_1}\cdots z_{i_Nj_N}&=0,
\end{align*}
for all $\lambda_1,\dots \lambda_{m-N}\in\{1,\dots,n\}$, $\mu\in \partial^{m-N}(\kk \e^*)^\perp$ and $\nu \in \partial^{m-N}(\kk \f^*)^\perp$. Moroever, we have a bialgebra structure on $\mathcal{O}_{A( \e , N), A( \f , N)}(M)$ by defining $\Delta(z_{ij})=\sum_{k=1}^n z_{ik}\otimes z_{kj}$ and $\varepsilon(z_{ij})=\delta_{ij}$ for all $1\le i,j\le n$.
\end{definition}

\begin{proposition}\label{P:QMSHef}
The bialgebra $\mathcal{O}_{A( \e , N), A( \f , N)}(M)$ left coacts on $A(\e,N)$ and right coacts on $A(\f,N)$ in a balanced fashion via 
$$\ts \rho_\cO^\ell(x_i)=\sum_{j=1} z_{ij}\otimes x_j \quad \text{and} \quad \rho_\cO^r(y_j)=\sum_{i=1}^n y_i\otimes z_{ij},$$ respectively. Moreover,  the coactions satisfy the following universal property: for any bialgebra $B$ that left coacts on $A(\e,N)$ and right coacts on $A(\f,N)$ in a balanced way, there exists a unique bialgebra map $\phi: \mathcal{O}_{A( \e , N), A( \f , N)}(M)\to B$ such that 
$$(\phi\otimes \mathrm{id}_{A(\e,N)})\circ \rho_\cO^\ell=\rho_B^\ell \quad \text{and} \quad (\mathrm{id}_{A(\f,N)}\otimes \phi)\circ \rho_\cO^r=\rho_B^r,$$ where $\rho_\cO^\ell$ and $\rho_B^\ell$ \textnormal{(}resp., $\rho_\cO^r$ and $\rho_B^r$\textnormal{)} denote the left \textnormal{(}resp., right\textnormal{)} coactions on $A(\e,N)$ \textnormal{(}resp.,~$A(\f,N)$\textnormal{)} of $\mathcal{O}_{A( \e , N), A( \f , N)}$ and $B$, respectively. 
\end{proposition}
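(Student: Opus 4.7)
The plan is to mirror the structure of the proof of \cref{H-Oisom}, with two adjustments: replace each appeal to \cref{T:CoactionA(e)} or \cref{T:CoactionA(f)} by its bialgebra analogue from \cref{R:Bialgebra}, and drop every mention of antipodes and homological codeterminants, since these play no role at the bialgebra level.

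First I would check that $\rho_\cO^\ell$ and $\rho_\cO^r$ are well-defined coactions on $A(\e,N)$ and $A(\f,N)$ respectively. The algebra map $\rho_\cO^\ell : TV^* \to \mathcal{O}_{A(\e,N),A(\f,N)}(M) \otimes TV^*$ extending $x_i \mapsto \sum_j z_{ij} \otimes x_j$ descends to $A(\e,N)$ if and only if it kills each relator $r_\lambda = \sum_{i_1,\dots,i_N} \e_{\lambda_1 \cdots \lambda_{m-N} i_1 \cdots i_N} x_{i_1} \cdots x_{i_N}$ in the right-hand slot modulo relations. Applying $\rho_\cO^\ell$ to $r_\lambda$ produces an element of $\mathcal{O}_{A(\e,N),A(\f,N)}(M) \otimes V^{*\otimes N}$; it vanishes in $\mathcal{O}_{A(\e,N),A(\f,N)}(M) \otimes A(\e,N)_N$ exactly when it pairs trivially with every $\mu \in (A(\e,N)_N)^* = \partial^{m-N}(\kk\e^*)^\perp \subset V^{\otimes N}$ in the second tensor slot, and by \cref{L:ANNR} this pairing condition is exactly the first family of relations imposed in the definition of $\mathcal{O}_{A(\e,N),A(\f,N)}(M)$. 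The right coaction $\rho_\cO^r$ on $A(\f,N)$ is handled by the same argument with $\e$ replaced by $\f$ and sides swapped, using the second family of relations.

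Next I would establish the universal property. Given a bialgebra $B$ with balanced coactions $\rho_B^\ell(x_i) = \sum_j b_{ij} \otimes x_j$ on $A(\e,N)$ and $\rho_B^r(y_j) = \sum_i y_i \otimes b_{ij}$ on $A(\f,N)$, the bialgebra version (a)$\Leftrightarrow$(b) of \cref{T:CoactionA(e)} recorded in \cref{R:Bialgebra}(1) says that the $b_{ij}$ satisfy the first family of relations defining $\mathcal{O}_{A(\e,N),A(\f,N)}(M)$; analogously, \cref{R:Bialgebra}(2) gives the second family. Hence there is a unique algebra map $\phi : \mathcal{O}_{A(\e,N),A(\f,N)}(M) \to B$ sending $z_{ij} \mapsto b_{ij}$. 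Since both bialgebras carry the standard matrix-coalgebra comultiplication and counit on the generating matrix, $\phi$ is automatically a coalgebra map and hence a bialgebra map. The intertwining identities $(\phi \otimes \mathrm{id}_{A(\e,N)}) \circ \rho_\cO^\ell = \rho_B^\ell$ and $(\mathrm{id}_{A(\f,N)} \otimes \phi) \circ \rho_\cO^r = \rho_B^r$ are then verified on the generators $x_i$ and $y_j$, and uniqueness of $\phi$ is forced by its values on the $z_{ij}$.

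I do not expect a serious obstacle: the proof is a straightforward downgrade of the Hopf-algebra argument for \cref{H-Oisom} to the bialgebra level, with the extra Hopf-theoretic content (antipode, codeterminants) stripped away. The only moderately delicate point is the duality step in the first half, which identifies when an element of the form $\sum b \otimes (x_{j_1} \cdots x_{j_N}) \in \mathcal{O}_{A(\e,N),A(\f,N)}(M) \otimes A(\e,N)_N$ vanishes; this reduces to the characterization of $\partial^{m-N}(\kk\e^*)^\perp$ in \cref{L:ANNR} and should go through mechanically.
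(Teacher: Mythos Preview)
Your proposal is correct and matches the paper's approach exactly: the paper's proof consists of the single sentence ``Apply the same proof of \cref{H-Oisom} by using \cref{R:Bialgebra},'' which is precisely the downgrade-to-bialgebras strategy you describe. Your expansion of the details (well-definedness via the (a)$\Leftrightarrow$(b) equivalence in \cref{R:Bialgebra}, then the universal property by the same equivalence in reverse) is the natural way to flesh this out.
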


\begin{proof}
Apply the same proof of \cref{H-Oisom} by using \cref{R:Bialgebra}. 
\end{proof}

According to  \cref{R:Bialgebra} (or Theorem~\ref{T:CoactionA(e)}((b)$\Rightarrow$(e)) and Theorem~\ref{T:CoactionA(f)}((b)$\Rightarrow$(e))), we have two unique grouplike elements in $\mathcal{O}_{A( \e , N), A( \f , N)}(M)$ given by

\[
g_1:=\ts \frac{1}{\e_{j_1\cdots j_m}}\sum_{i_1,\dots,i_m=1}^n \e_{i_1\cdots i_m}\, z_{i_1j_1}\cdots z_{i_mj_m}, \quad \quad   g_2:=\frac{1}{\f_{j_1\cdots j_m}}\sum_{i_1,\dots,i_m=1}^n \f_{i_1\cdots i_m}\, z_{j_1i_1}\cdots z_{j_mi_m},
\]

\smallskip

\noindent for any choice of $j_1,\dots,j_m\in \{1,\dots,n\}$ satisfying $\e_{j_1\cdots j_m}$, $\f_{j_1\cdots j_m}\neq 0$.

\begin{theorem}\label{T:LocalBi}
We have the following isomorphisms of universal quantum linear groups. 
\begin{enumerate}
\item $\mathcal{O}_{A( \e , N), A( \f , N)}(GL)\cong \mathcal{O}_{A( \e , N), A( \f , N)}(M)G^{-1}$, when the multiplicative set $G=\langle g_1,g_2\rangle$ is an Ore set of $\mathcal{O}_{A( \e , N), A( \f , N)}(M)$.
\smallskip
\item $\mathcal{O}_{A( \e , N), A( \f , N)}(SL)\cong \mathcal{O}_{A( \e , N), A( \f , N)}(M)/(g_1-1,g_2-1)$.
\end{enumerate}
\end{theorem}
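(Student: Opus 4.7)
Write $\cM := \mathcal{O}_{A(\e,N),A(\f,N)}(M)$ for brevity. The plan is, in each part, to exhibit a pair of mutually inverse bialgebra maps by combining the universal property of $\cM$ (Proposition~\ref{P:QMSHef}) on one side with the bialgebra-level refinement of the equivalence (b)$\Leftrightarrow$(e) in Theorems~\ref{T:CoactionA(e)} and~\ref{T:CoactionA(f)} (as recorded in Remark~\ref{R:Bialgebra}) on the other. Once a bialgebra isomorphism is established, the Hopf structure automatically transfers to the $\cM$-side from the Hopf algebra $\cH(\e,\f)$ or $\cS(\e,\f)$.

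For part (b), first observe that since $g_1, g_2 \in \cM$ are grouplike, the ideal $I := (g_1 - 1, g_2 - 1)$ is a bi-ideal via $\Delta(g_i - 1) = (g_i - 1) \otimes g_i + 1 \otimes (g_i - 1)$, so $\overline\cM := \cM/I$ inherits a bialgebra structure. Applying the universal property of $\cM$ to $\cS(\e,\f)$ with its balanced coactions from Proposition~\ref{Hef-coacts}(b) yields a bialgebra map $\cM \to \cS(\e,\f)$ sending $z_{ij} \mapsto u_{ij}$; the uniqueness of the grouplike in Theorems~\ref{T:CoactionA(e)} and~\ref{T:CoactionA(f)} forces $g_1 \mapsto 1$ and $g_2 \mapsto 1$, so this map factors through $\overline\cM \to \cS(\e,\f)$. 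In the reverse direction, the images $\bar z_{ij}$ of $z_{ij}$ in $\overline\cM$ satisfy
\[
\ts \sum_{i_1, \dots, i_m} \e_{i_1 \cdots i_m}\, \bar z_{i_1 j_1} \cdots \bar z_{i_m j_m} \,=\, \e_{j_1 \cdots j_m}, \qquad \sum_{i_1, \dots, i_m} \f_{i_1 \cdots i_m}\, \bar z_{j_1 i_1} \cdots \bar z_{j_m i_m} \,=\, \f_{j_1 \cdots j_m}
\]
for all $(j_1, \dots, j_m)$, which are precisely the defining relations of $\cS(\e,\f)$; hence $u_{ij} \mapsto \bar z_{ij}$ extends to a bialgebra map $\cS(\e,\f) \to \overline\cM$ inverse to the previous one on generators.

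For part (a), since $g_1, g_2$ are grouplike, the Ore localization $\cM G^{-1}$ inherits a bialgebra structure via $\Delta(g_i^{-1}) = g_i^{-1} \otimes g_i^{-1}$ and $\varepsilon(g_i^{-1}) = 1$. The universal bialgebra map $\cM \to \cH(\e,\f)$ sends $g_1 \mapsto {\sf D}_\e$ and $g_2 \mapsto {\sf D}_\f^{-1}$, both units, so it extends uniquely to a bialgebra map $\cM G^{-1} \to \cH(\e,\f)$ by the universal property of Ore localization. Conversely, the assignment $u_{ij} \mapsto z_{ij}$, ${\sf D}_\e^{\pm 1} \mapsto g_1^{\pm 1}$, ${\sf D}_\f^{\pm 1} \mapsto g_2^{\mp 1}$ respects the defining relations~\eqref{PresE},~\eqref{PresF},~\eqref{Hef-Dinv} of $\cH(\e,\f)$ by the defining identities of $g_1, g_2$, and thus yields a bialgebra map $\cH(\e,\f) \to \cM G^{-1}$ mutually inverse to the previous one on generators.

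The main technical point in both parts is verifying that the identity
\[
\ts \sum_{i_1, \dots, i_m} \e_{i_1 \cdots i_m}\, z_{i_1 j_1} \cdots z_{i_m j_m} \,=\, \e_{j_1 \cdots j_m}\, g_1
\]
holds in $\cM$ for \emph{every} multi-index $(j_1, \dots, j_m)$, and not merely for one choice with $\e_{j_1 \cdots j_m} \neq 0$ used to define $g_1$ (and likewise for $g_2$ and $\f$). This is exactly the content of the bialgebra-level implication (b)$\Rightarrow$(e) in Theorems~\ref{T:CoactionA(e)} and~\ref{T:CoactionA(f)} highlighted in Remark~\ref{R:Bialgebra}. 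Once that is in hand, the remaining checks that both candidate maps respect the coproduct, counit, and relations reduce to bookkeeping on the generating sets.
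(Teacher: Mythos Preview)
Your proof is correct and follows essentially the same approach as the paper's: identify $\mathcal{O}_{A(\e,N),A(\f,N)}(GL)$ with $\cH(\e,\f)$ via Theorem~\ref{H-Oisom}, use the universal property of $\cM$ (Proposition~\ref{P:QMSHef}) to obtain the map $z_{ij}\mapsto u_{ij}$, extend to the localization (or factor through the quotient), and then check directly that $u_{ij}\mapsto z_{ij}$, ${\sf D}_\e\mapsto g_1$, ${\sf D}_\f\mapsto g_2^{-1}$ gives an inverse bialgebra map, with the Hopf structure transported at the end. Your explicit isolation of the technical step---that the identities $\sum_{i_\bullet}\e_{i_1\cdots i_m}z_{i_1j_1}\cdots z_{i_mj_m}=\e_{j_1\cdots j_m}g_1$ hold in $\cM$ for \emph{all} multi-indices, via the bialgebra implication (b)$\Rightarrow$(e) recorded in Remark~\ref{R:Bialgebra}---is a helpful clarification the paper leaves implicit, but it does not constitute a different argument.
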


\begin{proof}
We only prove for part (a), and part (b) can be proved in the same fashion. By \cref{H-Oisom}, we can take $\mathcal{O}_{A( \e , N), A( \f , N)}(GL)=\cH(\e,\f)$. We apply \cref{P:QMSHef} to the balanced coactions of $\cH(\e,\f)$ on $A(\e,N)$ and $A(\f,N)$ in \cref{Hef-coacts}. Hence there is a bialgebra map $\phi: \mathcal{O}_{A( \e , N), A( \f , N)}(M)\to \cH(\e,\f)$ via $\phi(z_{ij})=u_{ij}$. One checks that $\phi(g_1)={\sf D}_\e$ and $\phi(g_2)={\sf D}_\f^{-1}$. By assumption, $\phi$ naturally extends to the localization of $\mathcal{O}_{A( \e , N), A( \f , N)}(M)$ at $G$, where we still write $\phi: \mathcal{O}_{A( \e , N), A( \f , N)}(M)G^{-1}\to \cH(\e,\f)$. Note that $\phi$ has an inverse given by $\phi^{-1}(u_{ij})=z_{ij}$ and $\phi^{-1}({\sf D}_\e^{\pm 1})=g_1^{\pm 1}, \phi^{-1} ({\sf D}_\f^{\pm 1})=g_2^{\mp 1}$. Therefore, $\phi: \mathcal{O}_{A( \e , N), A( \f , N)}(M)G^{-1}\to \cH(\e,\f)$ is an isomorphism of bialgebras. Finally, $\phi^{\pm 1}$ equips $\mathcal{O}_{A( \e , N), A( \f , N)}(M)G^{-1}$ with a unique Hopf algebra structure by \Cref{P:H(ef)}. 
\end{proof}
  
 \subsection{Presentations of $\cH( \e , \f )$ and $\cS(\e,\f)$ via pushouts} \label{sec:pushout}
In this part, we realize the Hopf algebra $\cH( \e , \f )$ as a pushout in the category of Hopf algebras, by making use of its universal property.

\begin{definition} \label{D:pushout}
Let $A,B,C$ be three Hopf algebras together with Hopf algebra maps $f: C\to A$ and $g:C\to B$. The \emph{pushout} of $A$ and $B$ along $C$ is a Hopf algebra, denoted by $A\amalg_C B$, together with Hopf algebra maps $\alpha: A\to A\amalg_C B$ and $\beta: B\to A\amalg_C B$ so that $\alpha f=\beta g$ satisfying the following condition. For any Hopf algebra $D$ with Hopf algebra maps $p:A\to D$ and $q: B\to D$ with $pf=qg$, there is a unique Hopf algebra map $\Psi: A\amalg_C B\to D$ with $p=\Psi\alpha$ and $q=\Psi\beta$.
\[
\xymatrix{
 &  A\ar[dr]_-{\alpha}\ar@/^1pc/[rrrd]^-{p} &     &&\\
C\ar[ur]^-{f}\ar[dr]_-{g} &    & A\amalg_C B\ar@{-->}[rr]^\Psi  && D\\
  &  B\ar[ru]^-{\beta}\ar@/_1pc/[rrru]_-{q} &&
}
\]
\end{definition}

The following result pertains to the existence of pushouts of Hopf algebras.

\begin{proposition} \cite[Theorem~2.2]{agore}  \cite{porst} \label{P:PushA}
A pushout of Hopf algebras  exists in the category of Hopf algebras,  and it is preserved under the forgetful functor from the category of Hopf algebras to the category of algebras.  \qed
\end{proposition}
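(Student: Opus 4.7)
The plan is to construct the pushout at the level of algebras and then transfer the Hopf structure using universality. First, I would set $P := A \amalg_C B$ to be the pushout in the category of unital associative $\kk$-algebras, together with its universal algebra maps $\alpha : A \to P$ and $\beta : B \to P$ satisfying $\alpha f = \beta g$. Its existence is standard: take the amalgamated free product, or equivalently the quotient of $A * B$ by the relations $f(c) = g(c)$ for $c \in C$. This simultaneously addresses the second assertion of the proposition, because if the construction below produces a Hopf algebra with the correct universal property in Hopf algebras, then the forgetful functor sends the Hopf pushout to $P$.

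Next, I would lift the Hopf structure to $P$. The structure maps $\Delta_A, \Delta_B, \varepsilon_A, \varepsilon_B$ are algebra homomorphisms, and the antipodes $S_A, S_B$ are anti-algebra homomorphisms. For the comultiplication, the algebra maps $(\alpha \otimes \alpha)\Delta_A : A \to P \otimes P$ and $(\beta \otimes \beta)\Delta_B : B \to P \otimes P$ agree after restriction along $f$ and $g$ respectively, because $f, g$ are bialgebra maps; hence the universal property of $P$ produces a unique algebra map $\Delta_P : P \to P \otimes P$ extending them. The counit $\varepsilon_P : P \to \kk$ is constructed analogously from $\varepsilon_A, \varepsilon_B$. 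For the antipode, one observes that $P^{op}$ is the pushout of $A^{op}, B^{op}$ along $C^{op}$ (the opposite-algebra functor is an involutive self-equivalence of $\kk$-algebras and therefore preserves colimits), so the anti-algebra maps $S_A, S_B$ amalgamate into an anti-algebra map $S_P : P \to P$.

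To verify that $(P, \Delta_P, \varepsilon_P, S_P)$ is a Hopf algebra, each axiom reduces to an equality of algebra maps out of $P$; for instance, coassociativity compares $(\Delta_P \otimes \mathrm{id})\Delta_P$ and $(\mathrm{id} \otimes \Delta_P)\Delta_P$, both algebra maps $P \to P^{\otimes 3}$. By the uniqueness clause in the universal property of $P$, it suffices to check each axiom after precomposition with $\alpha$ and $\beta$, where it follows immediately from the corresponding axiom in $A$ and $B$ together with the naturality of $\otimes$. For the universal property of $P$ as a Hopf pushout, given any Hopf algebra $D$ and Hopf algebra maps $p : A \to D$, $q : B \to D$ with $pf = qg$, the algebra pushout produces a unique algebra map $\Psi : P \to D$ satisfying $\Psi \alpha = p$, $\Psi \beta = q$. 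That $\Psi$ respects $\Delta$ is the equality $(\Psi \otimes \Psi)\Delta_P = \Delta_D \Psi$ of algebra maps $P \to D \otimes D$, which holds because both sides agree after pulling back along $\alpha$ and $\beta$; compatibility with the counit and antipode is analogous.

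The one delicate point is the treatment of the antipode, since it is anti-multiplicative rather than multiplicative; the cleanest way around this is the opposite-algebra reformulation above, after which the same universality reasoning applies uniformly. Once the existence of $\Delta_P, \varepsilon_P, S_P$ is secured, every remaining verification is a mechanical application of the uniqueness part of the universal property of the algebra pushout, and the preservation statement for the forgetful functor falls out of the construction itself.
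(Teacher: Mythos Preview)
The paper does not supply its own proof of this proposition; it simply records the citations \cite[Theorem~2.2]{agore} and \cite{porst} and appends a \qed. Your proposal therefore goes well beyond what the paper does, giving an explicit construction where the paper defers to the literature. The construction is correct and is in fact the standard one underlying the cited results: build the pushout $P$ in $\kk$-algebras, transport $\Delta$, $\varepsilon$, $S$ along the universal property, and verify the Hopf axioms and the Hopf-pushout universal property by uniqueness of algebra maps out of $P$.

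One small imprecision is worth flagging. You write that ``each axiom reduces to an equality of algebra maps out of $P$,'' and this is true for coassociativity, the counit axioms, and the bialgebra compatibilities. It is \emph{not} literally true for the antipode axiom: the convolution $S_P * \mathrm{id}_P$ is only a linear map, not an algebra map, so the uniqueness clause of the algebra pushout does not apply directly. The repair is routine: once $S_P$ is anti-multiplicative and $\Delta_P$ is multiplicative, the set of $h \in P$ satisfying $\sum S_P(h_{(1)})h_{(2)} = \varepsilon_P(h)1_P$ is closed under multiplication, and it contains $\alpha(A)$ and $\beta(B)$ by the antipode axioms in $A$ and $B$; since these generate $P$ as an algebra, the axiom holds globally. (Equivalently, having already shown $P$ is a bialgebra, one can note that antipodes are unique when they exist and that bialgebra maps between Hopf algebras automatically intertwine antipodes, which also handles the verification that $\Psi$ is a Hopf map.) With this adjustment your argument is complete.
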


Let us define a Hopf algebra that will play the role of $C$ in Definition~\ref{D:pushout}: the free Hopf algebra on the $n\times n$ matrix coalgebra spanned by $(t_{ij})_{1 \leq i,j \leq n}$.
Take $t_{ij}=:t_{ij}^1$ and consider the Hopf algebra below.

\begin{definition}[$\cF$] \cite{tak_free} \label{D:FreeH}
Let $\mathcal F$ be the Hopf algebra with generators $(t_{ij}^\ell)_{1\le i,j\le n, ~\ell \ge 1}$ satisfying the relations 
\[
\begin{cases}
\smallskip
\ts \sum_{k=1}^n t^\ell_{ik}t^{\ell+1}_{kj}=\ts \sum_{k=1}^n t_{ik}^{\ell+1}t_{kj}^\ell=\delta_{ij}, &\quad \text{for } \ell \text{ odd} \\

\ts \sum_{k=1}^n t^\ell_{kj}t^{\ell+1}_{ik}=\ts \sum_{k=1}^n t^{\ell+1}_{kj}t^{\ell}_{ik}=\delta_{ij}, &\quad \text{for } \ell \text{ even}
\end{cases}
\]
for all $1\le i,j\le n$ and $\ell\ge 1$, with comultiplication $\Delta$ and counit $\varepsilon$ defined by 
\[
\Delta(t_{ij}^\ell)=
\begin{cases}
\ts \sum_{k=1}^n t_{ik}^\ell \otimes t_{kj}^\ell &   \quad \text{for } \ell \text{ odd}\\
\ts \sum_{k=1}^n t_{kj}^\ell \otimes t_{ik}^\ell & \quad \text{for } \ell \text{ even}
\end{cases}
,\quad \quad \varepsilon(t^\ell_{ij})=\delta_{ij},
\]
and with antipode $S$ defined by $S(t_{ij}^\ell)=t_{ij}^{\ell+1}$ for all $1\le i,j\le n$ and $\ell\ge 1$. 
\end{definition}

The result below is clear.

\begin{lemma} \label{L:Funiv}
The Hopf algebra $\mathcal F$  right coacts universally on $V$  via  $\rho_{\mathcal F}(v_j)=\sum_{i=1}^n v_i\otimes t_{ij}$, that is, for any Hopf algebra $K$ that right coacts on $V$ via $\rho_K$, there is a unique Hopf algebra $\phi: \mathcal F\to K$ such that $(\mathrm{id}_V\otimes \phi)\circ \rho_{\mathcal F}=\rho_K$.  \qed
\end{lemma}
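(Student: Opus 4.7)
The plan is to define $\phi: \cF \to K$ explicitly by iterating the antipode $S_K$ of $K$, then verify that this assignment is well-defined, is a Hopf algebra map, and intertwines the coactions.

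More precisely, given a right $K$-comodule structure $\rho_K(v_j) = \sum_{i=1}^n v_i \otimes k_{ij}$ with $k_{ij}\in K$, the coassociativity and counitality axioms force the matrix $(k_{ij})$ to span a subcoalgebra with $\Delta(k_{ij}) = \sum_k k_{ik}\otimes k_{kj}$ and $\varepsilon(k_{ij}) = \delta_{ij}$. I would then set
\[
\phi(t_{ij}^\ell) := S_K^{\ell-1}(k_{ij}), \qquad 1 \le i,j \le n,\ \ell \ge 1.
\]
To see this respects the defining relations of $\cF$, note that the standard antipode identity $\sum_k k_{ik}\, S_K(k_{kj}) = \sum_k S_K(k_{ik})\, k_{kj} = \delta_{ij}$ handles the case $\ell = 1$; applying $S_K^{\ell-1}$ (which reverses multiplication) to this identity yields exactly the two relations listed in \Cref{D:FreeH}, with the parity of $\ell$ controlling whether the antipode swaps the factors. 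The coproduct formula is verified by a similar parity argument: $\Delta\circ S_K = (S_K\otimes S_K)\circ\tau\circ\Delta$, where $\tau$ is the flip, so $\Delta(S_K^{\ell-1}(k_{ij}))$ matches the two formulas of \Cref{D:FreeH} depending on whether $\ell$ is odd or even. Counitality is immediate since $\varepsilon\circ S_K = \varepsilon$, and compatibility with the antipode is built in by construction: $\phi(S_\cF(t_{ij}^\ell)) = \phi(t_{ij}^{\ell+1}) = S_K^\ell(k_{ij}) = S_K(\phi(t_{ij}^\ell))$.

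Intertwining the coactions is then tautological at the level of generators: $(\mathrm{id}_V\otimes \phi)(\rho_\cF(v_j)) = \sum_i v_i\otimes \phi(t_{ij}^1) = \sum_i v_i\otimes k_{ij} = \rho_K(v_j)$. For uniqueness, any Hopf algebra map $\phi': \cF\to K$ intertwining the coactions must satisfy $\phi'(t_{ij}) = k_{ij}$, and then $\phi'(t_{ij}^{\ell+1}) = \phi'(S_\cF(t_{ij}^\ell)) = S_K(\phi'(t_{ij}^\ell))$ forces $\phi' = \phi$ by induction on $\ell$.

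I do not expect any genuine obstacle here; the definition of $\cF$ has been engineered so that its generators and relations encode exactly the behavior of iterated antipodes applied to a matrix coalgebra. The only mildly delicate point is keeping track of the parity conventions in \Cref{D:FreeH} when checking that $\phi$ preserves both the multiplicative relations and the comultiplication, since $S_K$ is an anti-coalgebra, anti-algebra map and each application flips the order of factors; a careful bookkeeping by induction on $\ell$ suffices.
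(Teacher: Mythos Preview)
Your proposal is correct. The paper gives no proof at all (the lemma is stated with an immediate \qed and the preceding line reads ``The result below is clear''), relying on the fact that $\cF$ is by construction Takeuchi's free Hopf algebra on the $n\times n$ matrix coalgebra \cite{tak_free}, whose defining universal property is precisely that any coalgebra map from the matrix coalgebra to a Hopf algebra $K$ extends uniquely to a Hopf algebra map $\cF\to K$; your argument is the explicit unpacking of that universal property in terms of the presentation in \Cref{D:FreeH}.
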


In the following, we denote by $\rho_\e$ the universal right $\cH(\e)$-coactions  on $V$ and $M_\e$ described in \cref{P:UH(e)}(a); and by $\rho_\f$ the universal right $\cH(\f)$-coaction on $V$ described in \cref{P:UH(f)}(c). Thus by Lemma~\ref{L:Funiv}, there are two unique Hopf algebra maps $f: \mathcal F\to \cH(\e)$ and $g: \mathcal F\to \cH(\f)$ satisfying 
\begin{align}\label{E:Push1}
(\mathrm{id}_V\otimes f)\rho_{\mathcal F}=\rho_\e\quad \text{and}\quad  (\mathrm{id}_V\otimes g)\rho_{\mathcal F}=\rho_\f.
\end{align}
Moreover, we denote by $\rho_\cH$ the universal right $\cH(\e,\f)$-coactions on $V$ and $M_\e$ given in \cref{P:UHef}(d). Hence by \cref{P:UH(e)}(b) and \cref{P:UH(f)}(d), there are unique Hopf algebra maps $\alpha: \cH(\e)\to \cH(\e,\f)$ and $\beta: \cH(\f)\to \cH(\e,\f)$ satisfying 
\begin{align}\label{E:Push2}
(\mathrm{id}_V\otimes \alpha)\rho_\e=\rho_\cH\quad \text{and}\quad  (\mathrm{id}_V\otimes \beta)\rho_\f=\rho_\cH,
\end{align}
which are explicitly given in \Cref{C:QHef} as $\alpha=\phi_\e$ and $\beta=\phi_\f$.

\begin{theorem}\label{push}
Retain the notation above. We have $\alpha f=\beta g$. Moreover, $\cH(\e,f)$ is the pushout of $\cH(\e)$ and $\cH(\f)$ along $\mathcal F$ as in the following diagram. 
\begin{align}\label{eq:push}
\begin{tabular}{c}
\xymatrix{
 &  \cH(\e)\ar[dr]_-{\alpha}\ar@/^1pc/[rrrd]^-{p} &     &&\\
\mathcal F\ar[ur]^-{f}\ar[dr]_-{g} &    & \cH(\e,\f)\ar@{-->}[rr]^\Psi  && K\\
  &  \cH(\f)\ar[ru]^-{\beta}\ar@/_1pc/[rrru]_-{q} && \\
}
\end{tabular}
\end{align}
\end{theorem}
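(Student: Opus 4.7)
The plan is to establish the identity $\alpha f = \beta g$ first, and then to verify the universal property of the pushout. For the former, I would compute, using \eqref{E:Push1} and \eqref{E:Push2},
\[ (\mathrm{id}_V \otimes \alpha f)\rho_{\mathcal F} = (\mathrm{id}_V \otimes \alpha)\rho_\e = \rho_\cH, \qquad (\mathrm{id}_V \otimes \beta g)\rho_{\mathcal F} = (\mathrm{id}_V \otimes \beta)\rho_\f = \rho_\cH, \]
and then appeal to the uniqueness clause of \cref{L:Funiv} to conclude $\alpha f = \beta g$.

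For the universal property, let $K$ be a Hopf algebra equipped with Hopf algebra maps $p \colon \cH(\e) \to K$ and $q \colon \cH(\f) \to K$ satisfying $pf = qg$. I would push the right $\cH(\e)$-coactions on $V$ and $M_\e$ from \cref{P:UH(e)}(a) forward along $p$ to obtain $K$-coactions making $\hat\e \colon V^{\otimes m} \to M_\e$ a $K$-comodule map; symmetrically, pushing the right $\cH(\f)$-coactions on $V$ and $\kk\f^*$ from \cref{P:UH(f)}(c) along $q$ produces $K$-coactions under which $\kk\f^* \hookrightarrow V^{\otimes m}$ is a $K$-comodule map. The condition $pf = qg$, combined with \eqref{E:Push1}, ensures that the two resulting $K$-coactions on $V$ agree, since each equals $(\mathrm{id}_V \otimes pf)\rho_{\mathcal F} = (\mathrm{id}_V \otimes qg)\rho_{\mathcal F}$. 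The hypotheses of \cref{P:UHef}(e) are then satisfied and yield a unique Hopf algebra map $\Psi \colon \cH(\e,\f) \to K$ whose pushforward of $\rho_\cH$ on $V$ equals this common $K$-coaction.

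It remains to check $p = \Psi \alpha$ and $q = \Psi \beta$. Both $p$ and $\Psi\alpha$ are Hopf algebra maps $\cH(\e) \to K$; using \eqref{E:Push2} and the defining property of $\Psi$,
\[ (\mathrm{id}_V \otimes \Psi\alpha)\rho_\e = (\mathrm{id}_V \otimes \Psi)\rho_\cH = (\mathrm{id}_V \otimes p)\rho_\e, \]
so both intertwine $\rho_\e$ with the $K$-coaction on $V$, and the uniqueness in \cref{P:UH(e)}(b) forces $p = \Psi\alpha$. The identity $q = \Psi\beta$ follows by the analogous argument using \cref{P:UH(f)}(d), and uniqueness of $\Psi$ itself is immediate from the uniqueness clause of \cref{P:UHef}(e), since any $\Psi'$ with $p = \Psi'\alpha$ and $q = \Psi'\beta$ induces the same $K$-coaction on $V$ as $\Psi$. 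The main obstacle is bookkeeping: one must carefully track which $K$-coaction (on $V$, on $M_\e$, or on $\kk\f^*$) is in play at each stage to verify the hypotheses of the relevant universal properties. As a cross-check, one could alternatively invoke \cref{P:PushA} to form the abstract pushout $\cH(\e) \amalg_{\mathcal F} \cH(\f)$ and then exhibit mutually inverse Hopf algebra maps between it and $\cH(\e,\f)$ by the very same universal properties, but the direct verification above seems the cleaner route.
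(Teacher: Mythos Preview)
Your proposal is correct and follows essentially the same route as the paper: you establish $\alpha f=\beta g$ via the uniqueness in \cref{L:Funiv}, construct the common $K$-coaction on $V$ from $pf=qg$, invoke \cref{P:UHef}(e) to produce $\Psi$, and then use the uniqueness clauses of \cref{P:UH(e)}(b) and \cref{P:UH(f)}(d) to verify $p=\Psi\alpha$ and $q=\Psi\beta$. Your write-up is slightly more explicit than the paper's in tracking the $K$-coactions on $M_\e$ and $\kk\f^*$ and in justifying the final uniqueness of $\Psi$, but the argument is the same.
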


\begin{proof}
By \eqref{E:Push1} and \eqref{E:Push2}, we have $\rho_\cH=(\mathrm{id}_V\otimes \alpha f)\rho_{\mathcal F}$. Similarly, we obtain $\rho_\cH=(\mathrm{id}_V\otimes \beta g)\rho_{\mathcal F}$. Thus the universal property of $\mathcal F$ implies that $\alpha f=\beta g$. 

Let $K$ be a Hopf algebra with two Hopf algebra maps $p: \cH(\e)\to K$ and $q: \cH(\f)\to K$ satisfying $pf=qg$. Consider the $K$-coaction on $V$ induced by 
\begin{align*}
(\mathrm{id}_V\otimes p)\rho_\e \overset{\eqref{E:Push1}}{=} (\mathrm{id}_V\otimes pf)\rho_{\mathcal F}=(\mathrm{id}_V\otimes qg)\rho_{\mathcal F} \overset{\eqref{E:Push1}}{=} (\mathrm{id}_V\otimes q)\rho_\f, 
\end{align*}
which we denote  by $\rho_K: V\to V\otimes K$. In view of \cref{P:UH(e)}(a) and \cref{P:UH(f)}(c), the sequence $\xymatrix{
\kk \f^*\ar@{^{(}->}[r]&V^{\otimes m} \ar[r]^-{\hat \e} &M_\e
}$
is  a sequence of $K$-comodule maps via $\rho_K$. By \cref{P:UHef}(e), there is a unique Hopf algebra map $\Psi: \cH(\e,\f)\to K$ such that $(\mathrm{id}_V\otimes \Psi)\rho_\cH=\rho_K$.  Hence with \eqref{E:Push2}, we get $(\mathrm{id}_V\otimes \Psi\alpha)\rho_\e=\rho_K=(\mathrm{id}_V\otimes p)\rho_\e$. So, $p=\Psi\alpha$ by the universal property of $\cH(\e)$. Similarly, we have $q=\Psi\beta$. The uniqueness of $\Psi$ is clear. This proves our result.
\end{proof}

\begin{remark}
Note that \Cref{push} both proves the existence of a Hopf algebra with the universal properties required of $\cH(\e,\f)$ and provides a presentation (e.g., generators and relations) for $\cH(\e,\f)$ simply by combining the presentations for $\cH(\e)$ and $\cH(\f)$. This is because, according to \cref{P:PushA}, \Cref{eq:push} is a pushout in the category of algebras as well as that of Hopf algebras. 
\end{remark}

Parallel to \Cref{push}, we have maps $f':\mathcal F\to \cS(\e)$, $g': \mathcal F\to \cS(\f)$ and $\alpha': \cS(\e)\to \cS(\e,\f)$, $\beta': \cS(\f)\to \cS(\e,\f)$ satisfying $\alpha' f'=\beta' g'$. The following result can be proved in the same manner as \Cref{push} using \cref{R:USL}.

\begin{theorem}\label{push_sl}
The following commutative diagram is the pushout of $\cS(\e)$ and $\cS(\f)$ along $\mathcal F$. 
\begin{align}\label{eq:push_sl}
\begin{tabular}{c}
\xymatrix{
 &  \cS(\e)\ar[dr]_-{\alpha'}\ar@/^1pc/[rrrd]^-{p'} &  &   &\\
\mathcal F\ar[ur]^-{f'}\ar[dr]_-{g'} &    & \cS(\e,\f)\ar@{-->}[rr]^\Psi && K\\
  &  \cS(\f)\ar[ru]^-{\beta'}\ar@/_1pc/[rrru]_-{q'} & &
}
\end{tabular}
\end{align}

\vspace{-.2in}

\qed
\end{theorem}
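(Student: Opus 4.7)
The plan is to run the argument of \Cref{push} verbatim, replacing each use of a $\cH(-)$ universal property by its $\cS(-)$ counterpart as recorded in \Cref{R:USL} and in the paragraph following \Cref{D:S(e,f)}. Concretely, I would first reinterpret the four maps in the diagram via universal properties: let $\rho_{\e}^{\cS}$ denote the right $\cS(\e)$-coaction on $V$ (with trivial coaction on $M_\e$) from \Cref{R:USL}, and $\rho_{\f}^{\cS}$ the right $\cS(\f)$-coaction on $V$ (with trivial coaction on $\kk\f^*$). Applying \Cref{L:Funiv} to these two coactions produces the unique Hopf algebra maps $f'\colon \mathcal F\to \cS(\e)$ and $g'\colon \mathcal F\to \cS(\f)$ satisfying $(\mathrm{id}_V\otimes f')\rho_{\mathcal F}=\rho_\e^{\cS}$ and $(\mathrm{id}_V\otimes g')\rho_{\mathcal F}=\rho_\f^{\cS}$. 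Next, the right $\cS(\e,\f)$-coaction $\rho_\cS$ on $V$ from \Cref{Hef-coacts}(b) has trivial induced coactions on $M_\e$, $M_\f$, $\kk\e^*$ and $\kk\f^*$, so the $\cS$-analogs of \Cref{P:UH(e)}(b) and \Cref{P:UH(f)}(d) (i.e., \Cref{R:USL}) yield unique Hopf algebra maps $\alpha'\colon \cS(\e)\to \cS(\e,\f)$ and $\beta'\colon \cS(\f)\to \cS(\e,\f)$ with $(\mathrm{id}_V\otimes\alpha')\rho_\e^{\cS}=\rho_\cS$ and $(\mathrm{id}_V\otimes\beta')\rho_\f^{\cS}=\rho_\cS$.

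Commutativity $\alpha' f' = \beta' g'$ then follows exactly as in \Cref{push}: both compositions, when combined with $\rho_{\mathcal F}$, realize $\rho_\cS$, and the universal property of $\mathcal F$ (\Cref{L:Funiv}) forces equality. For the universal property itself, given a Hopf algebra $K$ and maps $p'\colon \cS(\e)\to K$, $q'\colon \cS(\f)\to K$ with $p' f'=q' g'$, I would define a right $K$-coaction on $V$ by $\rho_K:=(\mathrm{id}_V\otimes p')\rho_\e^{\cS}=(\mathrm{id}_V\otimes q')\rho_\f^{\cS}$, the equality holding by the usual chase through $\rho_{\mathcal F}$.

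Because $\rho_\e^{\cS}$ is the $\cS(\e)$-coaction (and not the $\cH(\e)$-coaction), the induced $K$-coaction on $M_\e$ via $p'$ is \emph{trivial}, and likewise the induced $K$-coaction on $\kk\f^*$ via $q'$ is trivial. Thus the sequence $\kk\f^*\hookrightarrow V^{\otimes m}\xrightarrow{\hat\e} M_\e$ (with trivial $K$-coactions on the outer terms) is a sequence of right $K$-comodule maps by \Cref{P:UH(e)}(a) and \Cref{P:UH(f)}(c). The $\cS$-analog of \Cref{P:UHef}(e) now produces a unique Hopf algebra map $\Psi\colon \cS(\e,\f)\to K$ with $(\mathrm{id}_V\otimes\Psi)\rho_\cS=\rho_K$, and the identities $p'=\Psi\alpha'$ and $q'=\Psi\beta'$ follow from the uniqueness clauses in the universal properties of $\cS(\e)$ and $\cS(\f)$, applied to the coactions on $V$. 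Uniqueness of $\Psi$ is immediate from the universal property of $\cS(\e,\f)$.

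The only point that requires any care, and which I would verify carefully rather than leave implicit, is that the constraint ${\sf D}_\e={\sf D}_\f=1$ imposed when passing from $\cH$ to $\cS$ is compatible at every step of the diagram: i.e., the coactions $\rho_K$, $\rho_\e^{\cS}$, $\rho_\f^{\cS}$, $\rho_\cS$ we are comparing all genuinely have trivial homological codeterminants, so that each invocation of a universal property indeed lands inside the quotient by $({\sf D}_\e-1)$ and/or $({\sf D}_\f-1)$ rather than in the ambient $\cH(-)$. This triviality is automatic from the construction, since every coaction in sight is pulled back along a map whose source is already an $\cS$-type Hopf algebra; still, writing it out confirms that the pushout diagram \Cref{eq:push_sl} is well-defined and has the claimed universal property.
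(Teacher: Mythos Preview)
Your proposal is correct and is exactly the approach the paper takes: the paper simply states that the result ``can be proved in the same manner as \Cref{push} using \Cref{R:USL}'' and omits the details, whereas you have written them out explicitly. Your care in checking that the triviality of the codeterminants propagates through the diagram is a welcome elaboration of what the paper leaves implicit.
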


\vspace{.05in}
  
\subsection{Properties of $\cS(\e)$, $\cS(\f)$, $\cS(\e,\f)$ and $\cH( \e , \f )$}\label{sec:properties}
Now we study further properties of $\cH( \e , \f )$ and related Hopf algebras. We start by establishing a sufficient condition when the elements ${\sf D}_ \e $ and ${\sf D}_ \f^{-1}$ are equal in $\cH( \e ,  \f )$, and further, when these elements are central.

\begin{notation}[$ \e  \odot  \f $, $ \e  \star  \f $] \label{N:odot} Consider the following scalars:
$$ \e \odot  \f :=\textstyle \sum_{i_1, \dots, i_m=1}^n  \e _{i_1\cdots i_m} \f _{i_1\cdots i_m}
\quad \quad \text{and} \quad \quad ( \e \star \f )_{ij}=\sum_{i_1,\dots, i_{m-1}=1}^n  \e _{ii_1\cdots i_{m-1}} \f _{i_1\cdots i_{m-1}j}.$$
It is clear that $ \e \odot \f = \f \odot \e $.  Moreover, it is easy to check that $ \f \star  \e =\mathbb Q^{-T}({\sf e\star \sf f})^T\mathbb P$ by \Cref{rotate}.
\end{notation}

\begin{proposition}\label{P:PropHEF}
Let $K$ be a Hopf algebra that right coacts on  $V$, on $M_\e$ and on $\kk \f^*$ via \linebreak $\rho_K(v_{j})=\sum_{i=1}^n v_i\otimes x_{ij}$, $\rho_K(m_\e)=m_\e\otimes d$ and $\rho_K(\f^*)=\f^*\otimes d'$, resp., for some $(x_{ij})_{1\le i,j\le n}\in K$ and two grouplike elements $d,d'\in K$. Then, the following statements hold.
\begin{enumerate}
\item If $ \e \odot  \f \neq 0$, then $d=d'$ in $K$.
\item  $\mathbb X\, ( \f \star  \e )\, d=d'\, ( \f \star  \e )\, \mathbb X$ and $\mathbb X\, ( \e \star  \f )^T\, d'^{-1}=d^{-1} ( \e \star  \f )^T\,\mathbb X$, where $\mathbb X$ is the matrix  $(x_{ij})_{1\le i,j\le n}$.
\end{enumerate}
Further, suppose $ \e \odot  \f \neq 0$. 
\begin{enumerate}
\item[(c)] If $V$ is an inner-faithful $K$-comodule, and if either $\f\star \e$ or $\e\star \f$ is a nonzero scalar multiple of $\I$, then $d=d'$ is in the center of $K$.
\end{enumerate}  
\end{proposition}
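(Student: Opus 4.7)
The plan has three steps: (i) invoke part (a) to obtain $d=d'$; (ii) use part (b) together with the scalar hypothesis to deduce that $x_{ij}d=dx_{ij}$ for every $i,j$; and (iii) upgrade this matrix-coefficient commutation to centrality of $d$ via inner-faithfulness and closure of the centralizer under the antipode.

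Step (i) is immediate, since the standing assumption $\e\odot\f\ne 0$ is exactly what part (a) requires. For step (ii), suppose first that $\f\star\e=\lambda\I$ with $\lambda\in\kk^\times$; the first identity of part (b) then collapses to $\lambda\mathbb X d=\lambda d'\mathbb X$, and after cancelling $\lambda$ and using $d=d'$ I read off $x_{ij}d=dx_{ij}$ for all $i,j$. The symmetric case $\e\star\f=\mu\I$ with $\mu\in\kk^\times$ is handled analogously by the second identity of part (b), which yields $\mathbb X d^{-1}=d^{-1}\mathbb X$, an equivalent condition.

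For step (iii), I would introduce $N:=\{k\in K \mid kd=dk\}$, a subalgebra of $K$ which by step (ii) contains each $x_{ij}$. The key observation is that $N$ is stable under $S$: applying the antipode to $kd=dk$ and using $S(d)=d^{-1}$ (since $d$ is grouplike) gives $S(k)d=dS(k)$. By induction $S^n(x_{ij})\in N$ for every $n\ge 0$. Since the $x_{ij}$ span a subcoalgebra of $K$ (as matrix coefficients of a comodule structure), the subalgebra of $K$ generated by $\bigcup_{n\ge 0}S^n\{x_{ij}\}_{i,j}$ is the smallest Hopf subalgebra through which $\rho_K|_V$ factors. Inner-faithfulness of $V$ then forces this Hopf subalgebra to equal $K$, whence $N=K$; that is, $d=d'$ is central.

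I foresee no serious obstacle. The only non-routine ingredient is the stability of the centralizer of a grouplike element under $S$, which is precisely what lets the commutation with the generating matrix coefficients be bootstrapped to centrality in all of $K$.
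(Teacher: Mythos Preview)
Your proof is correct and follows essentially the same approach as the paper. In fact, your step~(iii) is more careful than the paper's proof, which simply asserts that inner-faithfulness implies $K$ is generated by $\mathbb X$; you correctly note that inner-faithfulness only gives that $K$ equals the Hopf subalgebra generated by the $x_{ij}$, which as an algebra is generated by $\bigcup_{n\ge 0}S^n\{x_{ij}\}$, and you supply the needed observation that the centralizer of the grouplike $d$ is $S$-stable.
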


\begin{proof}
(a) Denote by $\iota: \kk \f^*\hookrightarrow V^{\otimes m}$ the natural inclusion, ${\sf f}^* \mapsto \sum_{i_1,\dots,i_m=1}^n {\sf f}_{i_1 \cdots i_m} v_{i_1} \otimes \cdots \otimes v_{i_m}$. Observe that the composition of right $K$-comodule maps $\hat \e\circ \iota: \kk \f^*\hookrightarrow V^{\otimes m}\to M_\e$ is given by ${\sf f}^* \mapsto ({\sf e} \odot {\sf f}) m_{\e}$. Hence, if $ \e \odot  \f \neq 0$, we get $d'=d$ in $K$ by Schur's Lemma.

\smallskip

(b) It is straightforward to check that the following composition of $K$-comodule maps 
\[
\xymatrix{
\kk \f^*\otimes V\ar[rr]^-{ \iota\otimes \text{id}_V} && V^{\otimes (m+1)}\ar[rr]^-{\text{id}_V\otimes \hat \e } &&V\otimes M_\e,
}
\]
is induced by $( \f \star  \e )$ such that $\f^* \otimes v_i\mapsto \sum_{j=1}^n(\f \star  \e )_{ji}(v_{j}\otimes m_\e)$. By using the  matrix of coefficients $\mathbb X$, we get the first identity $\mathbb X( \f \star  \e )d=d'( \f \star  \e )\mathbb X$. Indeed,
\[
\begin{array}{ll}
\smallskip

\textstyle  \sum_{\ell=1}^n  v_\ell \otimes m_\e  \otimes \bigg(\sum_{k=1}^n d' ( \f  \star  \e )_{\ell k} x_{ki}\bigg)
&=\sum_{k,\ell=1}^n ( \f  \star  \e )_{\ell k} v_\ell \otimes m_\e  \otimes d' x_{ki}\\
\smallskip

=(( \f  \star  \e )\otimes \mathrm{id}_K)(\sum_{k=1}^n \f^*\otimes v_k \otimes d' x_{ki})
&=( ( \f  \star  \e )\otimes \mathrm{id}_K)(\rho_K (\f^* \otimes v_i))\\
\smallskip

=\rho_K(( \f  \star  \e )  (\f^* \otimes v_i))
&=\rho_K (\sum_{j=1}^n ( \f  \star  \e )_{ji} (v_j \otimes m_\e))\\
\smallskip

= \sum_{j,\ell=1}^n ( \f  \star  \e )_{ji} v_\ell \otimes m_\e \otimes x_{\ell j} d
&=\sum_{\ell=1}^n  v_\ell \otimes m_\e  \otimes \bigg(\sum_{j =1}^n  x_{\ell j} ( \f  \star  \e )_{ji} d\bigg).
\end{array}
\]
The second identity can be obtained similarly by considering the map from $V\otimes \kk \f^*$ to $M_\e\otimes V$. 

\smallskip

(c) By part (a), we know $d=d'$. Moreover, $\f\star \e=\I$ or $\e\star \f=\I$ implies that $\mathbb Xd=d\mathbb X$ by part~(b). So $d=d'$ commutes with $x_{ij}$'s. Now by inner-faithfulness, $K$ is generated by $\mathbb X$. Thus, $d=d'$ is in the center of $K$. 
\end{proof}

\begin{corollary} \label{C:central}
The following statements hold for $\cH(\e,\f)$.
\begin{enumerate}
\item If $ \e \odot  \f \neq 0$, then ${\sf D}_\e{\sf D}_\f=1$. 
\item $\mathbb U\, ( \f \star  \e )\, {\sf D}_\e={\sf D}_\f^{-1}\, ( \f \star  \e )\, \mathbb U$ and $\mathbb U\, ( \e \star  \f )^T\, {\sf D}_\f={\sf D}_\e^{-1}\, ( \e \star  \f )^T\,\mathbb U$, where $\mathbb U$ is the matrix  $(u_{ij})_{1\le i,j\le n}$.
\end{enumerate}
Further, suppose $ \e \odot  \f \neq 0$.
\begin{enumerate}
\item[(c)] If either $\f\star \e$ or $\e\star \f$ is  a nonzero scalar multiple of $\I$, then ${\sf D}_\e={\sf D}_\f^{-1}$ is in the center of $\cH(\e,\f)$.
\end{enumerate}
\end{corollary}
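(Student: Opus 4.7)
The strategy is to apply \Cref{P:PropHEF} directly, with $K = \cH(\e,\f)$, specializing all three conclusions in a single stroke. By \Cref{P:UHef}(d), the Hopf algebra $\cH(\e,\f)$ right coacts on $V$, on $M_\e$, and on $\kk\f^*$ via
\[
\rho_\cH(v_j) = \ts \sum_{i=1}^n v_i \otimes u_{ij}, \qquad \rho_\cH(m_\e) = m_\e \otimes {\sf D}_\e, \qquad \rho_\cH(\f^*) = \f^* \otimes {\sf D}_\f^{-1}.
\]
Thus in the setup of \Cref{P:PropHEF}, we take $\mathbb X = \mathbb U$, $d = {\sf D}_\e$, and $d' = {\sf D}_\f^{-1}$.

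For part (a), this identification immediately gives ${\sf D}_\e = {\sf D}_\f^{-1}$, i.e., ${\sf D}_\e {\sf D}_\f = 1_{\cH}$, as soon as $\e\odot\f \neq 0$. For part (b), the two displayed equations in \Cref{P:PropHEF}(b) translate verbatim into the two matrix identities in the statement.

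Part (c) is the only non-automatic piece: to apply \Cref{P:PropHEF}(c) we must verify that $V$ is an inner-faithful $\cH(\e,\f)$-comodule. The plan is to show that any Hopf subalgebra $H' \subseteq \cH(\e,\f)$ containing all $u_{ij}$ must equal $\cH(\e,\f)$. Since $\e$ and $\f$ are preregular, they have some nonzero entries, so we may fix indices $(j_1,\dots,j_m)$ with $\e_{j_1\cdots j_m}\neq 0$ and $(j'_1,\dots,j'_m)$ with $\f_{j'_1\cdots j'_m}\neq 0$. Solving the defining relations \eqref{PresE} and \eqref{PresF} for ${\sf D}_\e$ and ${\sf D}_\f^{-1}$ expresses them as explicit polynomials in the $u_{ij}$, so ${\sf D}_\e, {\sf D}_\f^{-1} \in H'$. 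As $H'$ is closed under the antipode and these elements are grouplike, the inverses ${\sf D}_\e^{-1}$ and ${\sf D}_\f$ also lie in $H'$, showing $H' = \cH(\e,\f)$. This is the main (and really only) obstacle; once it is in hand, \Cref{P:PropHEF}(c) finishes the proof by placing ${\sf D}_\e = {\sf D}_\f^{-1}$ in the center of $\cH(\e,\f)$.
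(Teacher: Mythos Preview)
Your proof is correct and follows the paper's approach exactly: apply \Cref{P:PropHEF} with $K=\cH(\e,\f)$, $\mathbb X=\mathbb U$, $d={\sf D}_\e$, $d'={\sf D}_\f^{-1}$, invoking \Cref{P:UHef}(d) for the comodule structures. Your explicit verification of inner-faithfulness for part~(c) is a welcome addition, as the paper's one-line proof leaves this hypothesis implicit (it is essentially the content of \Cref{Hef-coacts}).
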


\begin{proof}
We can apply $K= \cH( \e ,  \f )$ in the result above with $d = {\sf D}_ \e $ and $d' = {\sf D}_ \f^{-1}$ according to \cref{P:UHef}(d). 
\end{proof}

Now we discuss the cosovereignty property of the quantum special linear groups in this work. Recall that for any algebra $A$ and coalgebra $C$, the convolution product in $\mathrm{Hom}_\kk(C,A)$ is given by $f*g=m_A(f\otimes g)\Delta_C$ for any $f,g\in \mathrm{Hom}_\kk(C,A)$ with unit $u_A\varepsilon_C$.

\begin{definition}[cosovereign Hopf algebra, sovereign character] \cite[Definition 2.7]{cosv}\label{def.cosv}
An algebra morphism $\Phi: H\to \kk$ is said to be a {\it sovereign character} on $H$ if $S^2 = \Phi \ast \text{Id} \ast \Phi^{-1}$. A {\it cosovereign Hopf algebra} is a Hopf algebra equipped with a sovereign character. 
\end{definition}

We turn our attention to the cosovereign property being preserved under pushout. The following lemma is straightforward. 

\begin{lemma}\label{L:Cosov}
Let $H$ be any Hopf algebra, and $\Phi: H\to \kk$ be an algebra map. Then, $\Phi^{-1}=\Phi\circ S$.

Further, suppose $S^2=\Phi* \mathrm{id}*\Phi^{-1}$ holds for some $a_i$s in $H$. If $\{a_i, S(a_i),S^2(a_i),\dots\}$ generate $H$ as an algebra, then $S^2=\Phi* \mathrm{id}*\Phi^{-1}$ for $H$.  \qed
\end{lemma}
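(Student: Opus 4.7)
The plan is to handle the two assertions in turn. For the first, I would directly verify that $\Phi \circ S$ is the convolution inverse of $\Phi$: for any $h \in H$, using that $\Phi$ is an algebra map together with the antipode axiom,
\[
(\Phi * (\Phi \circ S))(h) = \ts \sum \Phi(h_{(1)}) \Phi(S(h_{(2)})) = \Phi\bigl(\ts \sum h_{(1)} S(h_{(2)})\bigr) = \varepsilon(h),
\]
and symmetrically $(\Phi \circ S) * \Phi = \varepsilon$. Applying this same identity once more to the algebra map $\Phi^{-1}$ then yields $\Phi = (\Phi^{-1})^{-1} = \Phi^{-1} \circ S = \Phi \circ S^2$, an identity I will need below.

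For the second assertion, set $f := \Phi * \mathrm{id} * \Phi^{-1}$, so that $f(h) = \sum \Phi(h_{(1)})\, h_{(2)}\, \Phi^{-1}(h_{(3)})$ in Sweedler notation. I would first observe that both $S^2$ and $f$ are algebra endomorphisms of $H$: this is classical for $S^2$, and for $f$ it follows from a short Sweedler calculation using that the scalars $\Phi(h_{(1)}), \Phi^{-1}(h_{(3)}) \in \kk$ commute past any element of $H$. Consequently the equalizer $E := \{h \in H : S^2(h) = f(h)\}$ is a subalgebra of $H$. By hypothesis each $a_i$ lies in $E$, so in view of the generation assumption it suffices to show $S(E) \subseteq E$.

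To prove this stability, I would compute, for $h \in E$, that $S(f(h)) = \sum \Phi(h_{(1)}) S(h_{(2)}) \Phi^{-1}(h_{(3)})$ and, using that $S$ is an anti-coalgebra map so the iterated coproduct of $S(h)$ is $\sum S(h_{(3)}) \otimes S(h_{(2)}) \otimes S(h_{(1)})$, together with the identities $\Phi \circ S = \Phi^{-1}$ and $\Phi^{-1} \circ S = \Phi$ from the first part,
\[
f(S(h)) = \ts \sum \Phi^{-1}(h_{(3)}) S(h_{(2)}) \Phi(h_{(1)}) = S(f(h))
\]
after commuting the scalar factors. Hence $S^2(S(h)) = S(S^2(h)) = S(f(h)) = f(S(h))$, so $S(h) \in E$. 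Iterating gives $S^k(a_i) \in E$ for all $i$ and all $k \geq 0$, and since this is an algebra generating set for $H$, we conclude $E = H$.

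The crucial observation, and essentially the only point requiring care, is the identity $\Phi^{-1} \circ S = \Phi$: it is what makes $f$ commute with $S$, and it follows almost for free by applying the first assertion to $\Phi^{-1}$ in place of $\Phi$. Everything else is standard bookkeeping with Sweedler indices and the commutativity of $\kk$.
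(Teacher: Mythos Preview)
Your proof is correct; the paper itself leaves this lemma unproved (it is marked \qed\ as straightforward), so there is no paper argument to compare against. Your approach---verifying $\Phi\circ S$ is the convolution inverse directly, then showing the equalizer $E=\{h:S^2(h)=f(h)\}$ is a subalgebra stable under $S$---is the natural one, and the key identity $\Phi^{-1}\circ S=\Phi$ that makes $f$ commute with $S$ is exactly the right observation.
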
 

\begin{proposition}\label{P:PushCov}
Let $A,B,C$ be three Hopf algebras with Hopf algebra maps $f: C\to A$ and $g: C\to B$. Let $A\amalg_C B$ be the pushout of $A$ and $B$ along $C$ with Hopf algebra maps $\alpha: A\to A\amalg_C B$ and $\beta: B\to A\amalg_C B$ satisfying $\alpha f=\beta g$. Suppose $A$ and $B$ are cosovereign with sovereign characters $\Phi_A: A\to \kk$ and $\Phi_B: B\to \kk$, respectively. If $\Phi_A f=\Phi_B g$, then $A\amalg_C B$ is cosovereign with sovereign character $\Phi: A\amalg_C B\to \kk$ uniquely determined by $\Phi_A=\Phi\alpha$ and $\Phi_B=\Phi \beta$.
\end{proposition}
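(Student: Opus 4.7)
The plan is to construct $\Phi$ by invoking the algebra-level universal property of the pushout, and then verify the sovereign identity $S^2 = \Phi * \mathrm{id} * \Phi^{-1}$ on the natural generating set coming from the images of $\alpha$ and $\beta$.

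First, I appeal to \cref{P:PushA}: the Hopf algebra pushout $A \amalg_C B$ is simultaneously a pushout in the category of associative $\kk$-algebras. Since $\Phi_A$ and $\Phi_B$ are algebra maps satisfying $\Phi_A f = \Phi_B g$ by hypothesis, the algebra-level universal property applied with target $\kk$ produces a unique algebra morphism $\Phi \colon A \amalg_C B \to \kk$ with $\Phi \alpha = \Phi_A$ and $\Phi \beta = \Phi_B$. This $\Phi$ is the only possible candidate for a sovereign character with the prescribed restrictions, so the remaining task is to check the convolution identity of \cref{def.cosv}.

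Both $S^2$ and $\Phi * \mathrm{id} * \Phi^{-1}$ are algebra endomorphisms of $A \amalg_C B$, the first being an automorphism of any Hopf algebra and the second because the scalar factors supplied by the character $\Phi$ commute freely with everything. Hence the identity need only be checked on a set of algebra generators, and since the pushout of algebras is a quotient of the free product of $A$ and $B$ by the relations $\alpha f(c) = \beta g(c)$, the union $\alpha(A) \cup \beta(B)$ serves. For $a \in A$, using that $\alpha$ preserves $\Delta$ and $S$ together with $\Phi \alpha = \Phi_A$ and the formula $\Phi^{-1} = \Phi \circ S$ of \cref{L:Cosov}, I compute
\[
(\Phi * \mathrm{id} * \Phi^{-1})(\alpha(a)) \;=\; \alpha\!\left((\Phi_A * \mathrm{id}_A * \Phi_A^{-1})(a)\right) \;=\; \alpha(S_A^2(a)) \;=\; S^2(\alpha(a)),
\]
where the middle equality is the cosovereignty of $A$ with character $\Phi_A$. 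The symmetric calculation with $\beta(b)$ for $b \in B$ dispatches the remaining generators.

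The only minor obstacle is the compatibility between sovereign characters, which are required to be merely algebra maps, and the fact that the pushout was formed in the Hopf category; this is precisely what \cref{P:PushA} dissolves by identifying the two pushouts, so that $\Phi_A$ and $\Phi_B$ can be fed into the universal property at the algebra level without having to check coalgebra compatibility up front. The rest is a bookkeeping exercise in convolution arithmetic combined with the standard fact that both $S^2$ and inner twists by a character are algebra maps, so equality on a generating family propagates to the entire Hopf algebra.
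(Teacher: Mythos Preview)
Your proof is correct and follows essentially the same approach as the paper: construct $\Phi$ via the algebra-level universal property granted by \cref{P:PushA}, then verify $S^2=\Phi*\mathrm{id}*\Phi^{-1}$ on the generating set $\alpha(A)\cup\beta(B)$ using that $\alpha,\beta$ are Hopf maps. The only cosmetic difference is that the paper packages the ``equality of algebra maps on generators implies equality'' step inside \cref{L:Cosov}, whereas you observe directly that both $S^2$ and $\Phi*\mathrm{id}*\Phi^{-1}$ are algebra endomorphisms; this is the same argument.
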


\begin{proof}
In view of \Cref{P:PushA}, $A\amalg_CB$ is also the pushout of $A$ and $B$ along $C$ in the category of algebras. By its universal property, there is a unique algebra map $\Phi: A\amalg_C B\to \kk$ such that $\Phi_A=\Phi\alpha$ and $\Phi_B=\Phi \beta$. It remains to show that $\Phi$ is a sovereign character on $\cH(\e,\f)$. 

For any $a\in A$, we have
\[
\begin{array}{rll}
S^2(\alpha(a))& =\alpha(S^2(a))=\alpha(\ts \sum_{(a)}\Phi_A(a_1)\, a_2\, \Phi^{-1}_A(a_3)) &=\sum_{(a)} \Phi_A(a_1)\, \alpha(a_2)\, \Phi_A(S(a_3))\\
&=\ts \sum_{(a)} \Phi(\alpha(a_1))\, \alpha(a_2)\, \Phi(\alpha(S(a_3)))&=\ts \sum_{(a)} \Phi(\alpha(a_1))\, \alpha(a_2)\, (\Phi\circ S)(\alpha(a_3))\\
&= (\Phi*\mathrm{id}*\Phi^{-1})(\alpha(a)).
\end{array}
\]

\noindent Here, we use the fact that $\alpha: A\to A\amalg_CB$ is a Hopf algebra map along with \Cref{L:Cosov}. Similarly, we can show $S^2=\Phi*\mathrm{id}*\Phi^{-1}$ holds for $\beta(b)$ for any $b\in B$. Now being a pushout of $A$ and $B$ along $C$ in the category of algebras implies that $A\amalg_CB$ is generated by $\alpha(A)$ and $\beta(B)$ as an algebra. By \Cref{L:Cosov}, $S^2=\Phi*\mathrm{id}*\Phi^{-1}$ holds for $A\amalg_CB$. This proves our result.  
\end{proof}

\begin{theorem}\label{pr.cosv_SeSf}
The following universal quantum groups are cosovereign:
\begin{enumerate}
\item $\cS(\e)$ with sovereign character $\Phi_\e: \cS(\e)\to \kk$ given by $\Phi_\e(\bA)=\bP^{-1}$ and $\Phi_\e(\bB)=\bP$;
\smallskip

\item $\cS(\f)$  with sovereign character $\Phi_\f: \cS(\f)\to \kk$ given by $\Phi_\f(\bA)=\bQ^{-T}$ and $\Phi_\f(\bB)=\bQ^T$; and 
\smallskip

\item $\cS( \e , \f )$ with sovereign character $\Phi: \cS(\e,\f)\to \kk$ given by $\Phi(\mathbb U)=\bP^{-1}$, when  $\bP=\bQ^T$. 
 \end{enumerate}
\end{theorem}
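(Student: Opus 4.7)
The plan is to establish parts (a) and (b) by direct computation on generators, and then deduce (c) from them via the pushout presentation of \Cref{push_sl}. For (a), I first verify that the assignment $\Phi_\e(\bA)=\bP^{-1}$, $\Phi_\e(\bB)=\bP$ descends to a well-defined algebra map $\cS(\e)\to\kk$, i.e.\ that it respects the defining relations \cref{He-aij}, \cref{He-bij} (with ${\sf D}_\e=1$), and $\bA\bB=\I$. The third relation is immediate, and the first two both reduce to the scalar identity
\[
\sum_{k_1,\dots,k_m=1}^{n}\e_{k_1\cdots k_m}\,\bP_{k_1j_1}\cdots\bP_{k_mj_m}\;=\;\e_{j_1\cdots j_m}
\]
together with its $\bP^{-1}$-analogue; this identity follows by iterating the $\sigma$-cyclicity of $\e$ from \Cref{preregular}(b) a total of $m$ times so that every slot is simultaneously twisted by $\sigma$, and substituting $z_i\mapsto\sigma^{-1}(z_i)$ yields the version with $\bP^{-1}$.

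Second, I verify $S^2=\Phi_\e\ast\id\ast\Phi_\e^{-1}$ on generators. In $\cS(\e)$, the condition ${\sf D}_\e=1$ collapses \Cref{C:S2He} to $S^2=\eta_\e$, whence $S^2(\bA)=\bP^{-1}\bA\bP$ and $S^2(\bB)=\bP^{-1}\bB\bP$. Combining $\Phi_\e^{-1}=\Phi_\e\circ S$ from \Cref{L:Cosov} with $S(\bB)=\bP^{-1}\bA\bP$ from \cref{He-S} (valid in $\cS(\e)$) yields $\Phi_\e^{-1}(\bB)=\bP^{-1}$, and a direct computation using the coproducts \cref{He-coprod} then matches both sides on $\bA$ and $\bB$; the only subtle point is the reversed ordering in $\Delta(b_{ij})=\sum_k b_{kj}\otimes b_{ik}$. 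Part (b) is proved by the same template with $(\bP,\e)$ replaced by $(\bQ^{-T},\f)$ and \Cref{C:S2Hf} in place of \Cref{C:S2He}.

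For (c), \Cref{push_sl} realizes $\cS(\e,\f)$ as the pushout $\cS(\e)\amalg_{\mathcal F}\cS(\f)$, so \Cref{P:PushCov} supplies the sovereign character $\Phi$ as soon as I verify that $\Phi_\e\circ f'=\Phi_\f\circ g'$ as algebra maps $\mathcal F\to\kk$. Inspecting the relations in \Cref{D:FreeH}, every such algebra map satisfies $\Phi(T^{\ell+1})=\Phi(T^\ell)^{-1}$ for all $\ell\ge 1$ (writing $T^\ell=(t_{ij}^\ell)$, and noting that the parity-dependent ordering of the defining relations is irrelevant in the commutative target $\kk$), so $\Phi$ is uniquely determined by the invertible matrix $\Phi(T^1)$. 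The compatibility therefore reduces to $\Phi_\e(a_{ij})=\Phi_\f(a_{ij})$, i.e.\ $\bP^{-1}=\bQ^{-T}$, which is precisely $\bP=\bQ^T$. Under this hypothesis, the resulting $\Phi$ satisfies $\Phi(u_{ij})=\Phi(\alpha'(a_{ij}))=(\bP^{-1})_{ij}$, as claimed.

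The principal obstacle is the well-definedness step in (a) and its $\f$-analogue in (b): this is the only point where the full preregularity hypothesis is actually used, and it requires careful bookkeeping of how $\sigma\in GL(V)$ versus $\psi\in GL(V^*)$ get encoded by the matrices $\bP$ and $\bQ$. The appearance of $\bQ^{-T}$ rather than $\bQ$ in $\Phi_\f$, which is exactly what turns the compatibility condition in (c) into $\bP=\bQ^T$, must be traced through the fact that $\f$ is a form on $(V^*)^{\otimes m}$ while the generators of $\cS(\f)$ still coact on $V$.
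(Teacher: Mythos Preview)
Your proposal is correct and follows essentially the same approach as the paper: verify well-definedness of $\Phi_\e$ via the cyclicity identity (which the paper obtains from \Cref{rotate}, while you spell out the $m$-fold iteration explicitly), reduce $S^2=\eta_\e$ in $\cS(\e)$ via \Cref{C:S2He}, match $\Phi_\e\ast\id\ast\Phi_\e^{-1}$ on generators using \Cref{L:Cosov}, and then handle (c) by the pushout \Cref{push_sl} together with \Cref{P:PushCov} and the observation that $\mathrm{Hom}_{\mathrm{Alg}}(\mathcal F,\kk)\cong GL_n(\kk)$. The only difference is cosmetic: the paper checks the sovereign identity on $\bA$ alone and invokes \Cref{L:Cosov} (since $b_{ij}=S(a_{ij})$), whereas you check both $\bA$ and $\bB$ directly.
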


\begin{proof}
(a) It follows from the relations \Cref{He-aij,He-bij} (with ${\sf D}_\e=1$) and \Cref{rotate} that $\Phi_\e$ extends to an algebra morphism $\Phi_\e:\cS(\e)\to \kk$. By \Cref{C:S2He} with ${\sf D}_\e=1$, we have that $S^2=\eta_\e$ in $\cS(\e)$. Hence, one sees that 
\[
\begin{array}{rll}
S^2(a_{ij})&=\ts \sum_{k,\ell=1}^n \mathbb P^{-1}_{ik}\, a_{k\ell}\, \mathbb P_{\ell j}&=\ts \sum_{k,\ell=1}^n \Phi_\e(a_{ik})\, a_{k\ell}\, \Phi_\e(b_{\ell j})\\
&=\ts \sum_{k,\ell=1}^n \Phi_\e(a_{ik})\, a_{k\ell}\, \Phi_\e(S(a_{\ell j}))&=(\Phi_\e*\mathrm{id}*{\Phi_\e}^{-1})(a_{ij}),
\end{array}
\]
where we use Lemma~\ref{L:Cosov} for the last equality. Again by \Cref{L:Cosov}, $\cS(\e)$ is cosovereign. The argument is analogous for part (b). 

\smallskip

(c) Recall the notation of Section~\ref{sec:pushout}. The algebra map $\Phi_\e:\cS(\e)\to \kk$ of part (a) gives an algebra map $\Phi_\e f': \mathcal F\to \kk$ with $t_{ij} \mapsto \mathbb P^{-1}_{ij}$. Similarly, the algebra map $\Phi_\f:\cS(\f)\to \kk$ gives rise to an algebra map $\Phi_\f g': \mathcal F\to \kk$ with $t_{ij}\mapsto \mathbb Q^{-T}_{ij}$ (note that  $\phi\mapsto (\phi(t_{ij}))_{1\le i,j\le n}$ yields a bijection between $\mathrm{Hom}_{\mathrm{Alg}}(\mathcal F,\kk)$ and $\mathrm{GL}_n(\kk)$). Under the hypothesis $\bP=\bQ^T$, we get $\Phi_\e f'=\Phi_\f g'$. Then, the result follows from \Cref{P:PushCov} and parts (a) and (b).
\end{proof}



\section{Examples of $\mathcal H( \e , \f )$ arising in the literature} \label{oldexamples}

In this section, we examine several special cases of the Hopf algebra $\cH( \e ,  \f )$ that have appeared in other articles.

\subsection{Mrozinski's  and Dubois-Violette and Launer's quantum groups}

Note that when $m=2$, we get that $ \e ,  \f $ are bilinear forms identified with matrices $\mathbb{E}=(e_{ij}), \mathbb{F}=(f_{ij}) \in GL_n(\kk)$, respectively. Namely, $e_{ij} =  \e (v_i \otimes v_j)$ and $f_{ij} =  \f (\theta_i \otimes \theta_j)$ for $1 \leq i,j \leq n$. Moreover, $\widetilde \e $, $\widetilde \f $ are unique and correspond to $\mathbb{E}^{-1}, \mathbb{F}^{-1}$, respectively. We obtain the following Hopf algebra in this case.

\begin{definition}[$\cH(\bE, \bF)$] The Hopf algebra $\cH(\bE, \bF)$ is defined as the Hopf algebra $\cH( \e ,  \f )$ with $m=2$, where the matrices $\bE, \bF \in GL_n(\kk)$ correspond to the forms $ \e ,  \f $. In other words, $\cH(\bE, \bF)$ is generated by $\bU:= (u_{ij})_{1 \leq i,j \leq n}$, ${\sf D_e}^{\pm 1}$, ${\sf D_f}^{\pm 1}$ with relations 
$$ \bE^{-1} \bU^T \bE \bU = {\sf D_e} \bI, \quad \bU \bF \bU^T \bF^{-1} = {\sf D_f}^{-1} \bI, \quad 
{\sf D_e}{\sf D_e}^{-1} = {\sf D_e}^{- 1} {\sf D_e} = {\sf D_f}{\sf D_f}^{-1} = {\sf D_f}^{- 1} {\sf D_f} = 1_{\mathcal{H}},$$
with $\Delta(u_{ij}) = \sum_{k=1}^n u_{ik} \otimes u_{kj}$, $\varepsilon(u_{ij}) = \delta_{ij}$, and $S(\bU) = {\sf D_e}^{-1} \bE^{-1} \mathbb U^T \bE = \bF\bU^T \bF^{-1} {\sf D_f}$.
\end{definition} 
 
Now, Mrozinski's $GL(2)$-like quantum group $\cG(\bE, \bF)$ (\cite{Mrozinski}, \cite[Definition~2.10(b)]{Wsquared})  and Dubois-Violette and Launer's quantum group $\cB(\bE)$ (\cite{DVL}, \cite[Definition~2.10(a)]{Wsquared})  both arise as Hopf quotients of $\cH(\bE, \bF)$ as follows.
 
 \begin{proposition}
 We obtain that 
 $$\cG(\bE, \bF) \cong \cH(\bE, \bF)/({\sf D_e}{\sf D_f} - 1_{\cH}) \quad
 \text{ and } \quad \cB(\bE) \cong \cS(\bE, \bE^{-1}),$$
  as Hopf algebras. \qed
 \end{proposition}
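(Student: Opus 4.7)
The plan is to verify both isomorphisms by matching defining generators and relations and then checking compatibility with the coalgebra and antipode structures.

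For the first isomorphism, recall Mrozinski's presentation of $\cG(\bE,\bF)$ (as in \cite[Definition~2.10(b)]{Wsquared}): it is generated by an $n\times n$ matrix $\bU=(u_{ij})$ together with a single invertible grouplike $D$, subject to relations of the form $\bE^{-1}\bU^T\bE\,\bU = D\,\bI$ and $\bU\,\bF\bU^T\bF^{-1} = D\,\bI$, with $\Delta(u_{ij})=\sum_k u_{ik}\otimes u_{kj}$, $\Delta(D)=D\otimes D$, and $\varepsilon(u_{ij})=\delta_{ij}$, $\varepsilon(D)=1$. In the quotient $\cH(\bE,\bF)/({\sf D_e}{\sf D_f}-1_{\cH})$, the images of ${\sf D_e}$ and ${\sf D_f}^{-1}$ are identified; setting $D:={\sf D_e}={\sf D_f}^{-1}$, the two relations of $\cH(\bE,\bF)$ become exactly those listed for $\cG(\bE,\bF)$. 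I would therefore construct a Hopf algebra map $\cG(\bE,\bF)\to \cH(\bE,\bF)/({\sf D_e}{\sf D_f}-1_{\cH})$ by $u_{ij}\mapsto u_{ij}$, $D^{\pm 1}\mapsto {\sf D_e}^{\pm 1}$, and its inverse by $u_{ij}\mapsto u_{ij}$, ${\sf D_e}\mapsto D$, ${\sf D_f}\mapsto D^{-1}$. Both maps are well-defined since the relations on either side map to identities on the other, they respect $\Delta$ and $\varepsilon$ on generators, and they agree with the antipode formula automatically by uniqueness of antipodes.

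For the second isomorphism, specialize $\bF=\bE^{-1}$ and pass to $\cS(\bE,\bE^{-1})=\cH(\bE,\bE^{-1})/({\sf D_e}-1,{\sf D_f}-1)$. The two defining relations reduce to
\[
\bE^{-1}\bU^T\bE\,\bU = \bI = \bU\,\bE^{-1}\bU^T\bE,
\]
which state precisely that $\bE^{-1}\bU^T\bE$ is a two-sided inverse of $\bU$ in $M_n(\cS(\bE,\bE^{-1}))$. This is exactly the defining relation of Dubois-Violette and Launer's $\cB(\bE)$ from \cite{DVL}. Moreover, the antipode formula $S(\bU)={\sf D_e}^{-1}\bE^{-1}\bU^T\bE$ from \Cref{P:H(ef)} specializes to $S(\bU)=\bE^{-1}\bU^T\bE$, which matches the antipode of $\cB(\bE)$. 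I would then define mutually inverse Hopf algebra maps between the two on generators in the evident way, with the relation check being immediate from the equivalence just observed.

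The main (mild) obstacle is reconciling conventions across \cite{Mrozinski,DVL} and the present paper: left-versus-right coactions, positions of transposes and inverses, and whether Mrozinski's grouplike corresponds to ${\sf D_e}$ or to ${\sf D_f}^{-1}$. Once this bookkeeping is done, the verifications are mechanical. An alternative, more conceptual route---which I would mention as a cross-check---is to invoke the universal properties established in \Cref{pr.univ,H-Oisom}: both $\cG(\bE,\bF)$ and $\cB(\bE)$ are characterized by coacting universally on a pair of quadratic AS-regular algebras (namely $A(\e,2)$ and $A(\f,2)$ for the relevant bilinear forms) with the corresponding codeterminant constraints (${\sf D_e}{\sf D_f}=1$ in the first case, trivial in the second), and the claimed isomorphisms follow from uniqueness of universal objects.
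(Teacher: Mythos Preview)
Your proposal is correct. The paper gives no proof for this proposition (it ends with \qed\ immediately after the statement), treating both isomorphisms as immediate from comparing the presentations in \cite{Mrozinski,DVL,Wsquared} with the $m=2$ specialization of $\cH(\e,\f)$ and $\cS(\e,\f)$; your generator-and-relation matching is exactly the routine verification the paper is leaving to the reader.
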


\begin{definition}[$\cG(\e, \f)$]\label{D:G(e,f)} 
Generally, we define $\cG(\e,\f)=\cH(\e,\f)/({\sf D}_\e{\sf D}_\f-1)$ to be the quotient Hopf algebra of $\cH(\e,\f)$ by identifying ${\sf D}_\e$ with ${\sf D}_\f^{-1}$. 
\end{definition}

\subsection{Artin-Schelter-Tate's quantum deformation of $\mathcal{O}(GL_n)$}  \label{AST}
Artin, Schelter and Tate studied  in \cite{AST} a quantum deformation of $\mathcal{O}(GL_n)$ by considering the universal quantum group coacting on a pair of skew polynomial rings: 
\begin{align*}
\kk_{{\bf q}}[x_1, \dots, x_n]:&=\kk \langle x_1, \dots, x_n \rangle/(x_j x_i - q_{ji} x_i x_j)\ \text{and}\\
\kk_{{\bf p}}[y_1, \dots, y_n]:&=\kk \langle y_1, \dots, y_n \rangle/(y_j y_i - p_{ij} y_i y_j).
\end{align*}
Here ${\bf p}=(p_{ij})$ and ${\bf q}=(q_{ji})$ are multiplicative anti-symmetric matrices in $M_n(\kk^{\times})$, that is
\begin{align}\label{E:RRAST}
p_{ij}p_{ji}=1,\ p_{ii}=1, \ q_{ji}q_{ij}=1,\ q_{ii}=1 \quad \text{ for } i \neq j.
\end{align}
It is well-known that these skew polynomial rings are Koszul AS-regular algebras of global dimension~$n$.

Next, let $V = \bigoplus_{k=1}^n \kk y_k$ be the generating space of $\kk_{\bf p}[y_1, \dots, y_n]$, and $V^* = \bigoplus_{k=1}^n \kk x_k$ be the generating space of $\kk_{\bf q}[x_1, \dots, x_n]$ with $x_k = y_k^*$. 
Considering \cref{propA}, we define two preregular $n$-linear  forms $ \e _{\bf q}: V^{\otimes n}\to \kk$ and  $ \f _{\bf p}: (V^*)^{\otimes n}\to \kk$ associated with ${\bf q}$ and ${\bf p}$ by

\begin{align}\label{E:QAST}
{\small  \e _{\bf q}(y_{i_1},\dots,y_{i_n})=
\begin{cases}
\prod\limits_{\substack{j<j'\\ i_j>i_{j'}}}\left(-q_{i_{j'} i_{j}}\right)  & \text{if}\ (i_1,\dots,i_n)=(\sigma(1),\dots,\sigma(n))\ \text{for some}\ \sigma\in S_n\\
0 & \text{otherwise}
\end{cases}}
\end{align}
\begin{align}\label{E:PAST}
{\small \f _{\bf p}(x_{i_1},\dots,x_{i_n})=
\begin{cases}
\prod\limits_{\substack{j<j'\\ i_j>i_{j'}}}\left(-p_{ i_{j} i_{j'}}\right)  & \text{if}\ (i_1,\dots,i_n)=(\sigma(1),\dots,\sigma(n))\ \text{for some}\ \sigma\in S_n\\
0 & \text{otherwise.}
\end{cases}}
\end{align}

\begin{lemma}\label{L:FAST}
We have that $\kk_{\bf q}[x_1, \dots, x_n]\cong A( \e _{\bf q},2)$ and $\kk_{\bf p}[y_1, \dots, y_n]\cong A( \f _{\bf p},2)$.
\end{lemma}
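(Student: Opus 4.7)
The plan is to unwind both sides of each isomorphism using~\Cref{super-prereg}(d) and then to match the resulting relations with the defining ones via a short combinatorial calculation on permutations. Applying~\Cref{super-prereg}(d) with $m = n$ and $N = 2$, the ideal of relations of $A(\e_{\bf q}, 2)$ is generated, as $(i_1, \dots, i_{n-2})$ ranges over $\{1, \dots, n\}^{n-2}$, by the elements $\sum_{j_1, j_2 = 1}^n \e_{\bf q}(y_{i_1}, \dots, y_{i_{n-2}}, y_{j_1}, y_{j_2})\, x_{j_1} x_{j_2}$. Since $\e_{\bf q}$ is supported on the image of the symmetric group $S_n$ by~\Cref{E:QAST}, any such generator vanishes unless the $i_k$'s are pairwise distinct; in that case, writing $\{a,b\} = \{1, \dots, n\} \setminus \{i_1, \dots, i_{n-2}\}$ with $a < b$, only two terms survive and the relation takes the form $\e_{\bf q}(\sigma)\, x_a x_b + \e_{\bf q}(\tau)\, x_b x_a = 0$, where $\sigma, \tau \in S_n$ agree on positions $1, \dots, n-2$ and differ precisely by the swap of their values at positions $n-1$ and $n$.

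The combinatorial heart of the proof is then to show that $\e_{\bf q}(\tau) = -q_{ab}\, \e_{\bf q}(\sigma)$. Comparing the two products from~\Cref{E:QAST}, inversions among positions $\{1, \dots, n-2\}$ are identical, and for each $j \leq n-2$ the factors arising from the pairs $(j, n-1)$ and $(j, n)$ are merely swapped between $\sigma$ and $\tau$ (in $\sigma$ they contribute $-q_{a, i_j}$ when $i_j>a$ and $-q_{b, i_j}$ when $i_j>b$, while in $\tau$ these roles are interchanged), so their products coincide. The only genuinely new contribution comes from the pair $(n-1, n)$, which is an inversion for $\tau$ but not $\sigma$, and it contributes the factor $-q_{ab}$. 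Dividing through by the nonzero quantity $\e_{\bf q}(\sigma)$ reduces the relation to $x_a x_b - q_{ab}\, x_b x_a = 0$, which by the skew-symmetry $q_{ab} q_{ba} = 1$ from~\Cref{E:RRAST} is equivalent to $x_b x_a - q_{ba}\, x_a x_b = 0$, matching the defining relations of $\kk_{\bf q}[x_1, \dots, x_n]$ exactly.

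An entirely parallel argument, differing only in notation because $\f_{\bf p}$ involves factors $-p_{i_j i_{j'}}$ rather than $-q_{i_{j'} i_j}$, yields $\f_{\bf p}(\tau) = -p_{ba}\, \f_{\bf p}(\sigma)$, so that the relations of $A(\f_{\bf p}, 2)$ reduce to $y_a y_b - p_{ba}\, y_b y_a = 0$ for $a < b$, coinciding with the defining relations of $\kk_{\bf p}[y_1, \dots, y_n]$ via~\Cref{E:RRAST}. No serious obstacle is anticipated; the work is a routine verification that no extra inversions appear beyond the transposed pair, after which the two presentations match generator-by-generator and relation-by-relation.
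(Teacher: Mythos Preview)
Your argument is correct, but it proceeds along a different route than the paper's. The paper works from the other direction: starting from the known Koszul AS-regular algebra $\kk_{\bf q}[x_1,\dots,x_n]$, it writes down the Koszul dual $S$ (a quantum exterior algebra with relations $\xi_i^2=0$, $\xi_j\xi_i=-q_{ij}\xi_i\xi_j$), invokes \Cref{N-Koszul}(e) to identify the preregular form with the multiplication map $S_1^{\otimes n}\twoheadrightarrow S_n\cong\kk$, and then reads off the coefficients of $\xi_{i_1}\cdots\xi_{i_n}$ in the basis element $\xi_1\cdots\xi_n$ to recover exactly the formula~\Cref{E:QAST}. In other words, the paper shows that the form canonically attached to $\kk_{\bf q}$ by the general theory \emph{is} $\e_{\bf q}$, whence $\kk_{\bf q}\cong A(\e_{\bf q},2)$ by construction. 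Your approach is more hands-on: you take the explicit formula for $\e_{\bf q}$ as given, compute the relation space of $A(\e_{\bf q},2)$ directly via \Cref{super-prereg}(d), and match relations with those of $\kk_{\bf q}$ by a short inversion-counting argument. This is more elementary and self-contained (no Koszul duality or \Cref{N-Koszul} needed), whereas the paper's route is more conceptual in that it explains where $\e_{\bf q}$ comes from. One small point worth making explicit in your write-up: for every pair $a<b$ one can choose $i_1,\dots,i_{n-2}$ to be the remaining indices, so each defining relation $x_bx_a-q_{ba}x_ax_b$ of $\kk_{\bf q}$ actually occurs among the generators of the ideal of $A(\e_{\bf q},2)$; this confirms the two ideals coincide rather than one merely containing the other.
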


\begin{proof}
We establish the first isomorphism here; the second one follows from the same argument. We consider its Koszul dual of $\kk_{\bf q}[x_1, \dots, x_n]$,  denoted by $S=\bigoplus_{i\geq 0} S_i$. To avoid confusion, suppose that the dual space $V^{**}\cong V$ of $V^*$ has dual basis $\{\xi_1,\xi_2,\dots,\xi_n\}$. Hence, $S$ is generated by degree one elements $\{\xi_1,\xi_2,\dots,\xi_n\}$  with the defining relations 
\[
\xi_i^2=0,\quad \xi_j\xi_i=-q_{ij}\xi_i\xi_j,\quad \quad \text{for } 1\le i,j\le n;
\]
see \cite[page 881]{AST}.
It is clear that $S_i=0$ for $i>n$ and $\dim S_n=1$. Now, \Cref{N-Koszul}(e) implies that $\kk_{\bf q}[x_1, \dots, x_n]$ is isomorphic to a superpotential algebra $A({\sf s},2)$, where ${\sf s}: V^{\otimes n}\to \kk$ is induced by the multiplication in $S$, namely, $V^{\otimes n}=S_1^{\otimes n}\twoheadrightarrow S_n\cong \kk$. We choose a basis $\{\xi_1\xi_2\cdots \xi_n\}$ of $S_n$ and we obtain
\[
{\small
{\sf s}(\xi_{i_1}\xi_{i_2}\cdots \xi_{i_n})=
\begin{cases}
\prod\limits_{\substack{j<j'\\ i_j>i_{j'}}}\left(-q_{i_{j'} i_{j}}\right)  & \text{if}\ (i_1,\cdots,i_n)=(\sigma(1),\dots,\sigma(n))\ \text{for some}\ \sigma\in S_n\\
0 & \text{otherwise.}
\end{cases}
}
\]
by extracting the coefficient needed to write $\xi_{i_1}\xi_{i_2}\cdots \xi_{i_n}$ in terms of $\xi_1\xi_2\cdots \xi_n$. Therefore, we get  ${\sf s} =  \e _{\bf q}$ as described in \eqref{E:QAST}. 
\end{proof}

\begin{hypothesis}[$\lambda$] \label{hypAST}
For the rest of this part, assume that for $A( \e _{\bf q},2)$ and $A( \f _{\bf p},2)$ there is a scalar $\lambda \neq -1$ in $\kk$ so that $q_{ji}=\lambda p_{ji}$ for all $j>i$, 
\end{hypothesis}

Now we examine Hopf algebras that have a balanced coaction  on $A( \e _{\bf q},2)$ and $A( \f _{\bf p},2)$.

\begin{lemma}\label{L:RAST}
 Let $H$ be any Hopf algebra that has a balanced coaction on $A( \e _{\bf q},2)$ and $ A( \f _{\bf p},2)$  via
\begin{equation}\label{ASTcoact}
\textstyle x_j\mapsto \sum_{i=1}^n u_{ji}\otimes x_i  \quad \text{ and } \quad y_j\mapsto \sum_{i=1}^n y_i\otimes u_{ij}
\end{equation}
for some $u_{ij} \in H$. Then, the following statements hold.
\begin{enumerate}
\item We obtain the relations of $H$ below
\begin{align}\label{E:URAST}
u_{j\beta}\,u_{i\alpha}=
\begin{cases}
p_{ji}\,p_{\alpha\beta}\, u_{i\alpha} \,u_{j\beta}+(\lambda-1)\,p_{ji}\,u_{i\beta}\,u_{j\alpha}, & \text{if}\ j>i,~~\beta>\alpha\\
\lambda \,p_{ji}\,p_{\alpha\beta}\, u_{j\alpha}\,u_{j\beta},  & \text{if}\ j>i,~~\beta \le \alpha\\
p_{\alpha\beta}\, u_{i\alpha}\,u_{j\beta}, & \text{if}\ j=i, ~~\beta>\alpha. 
\end{cases}
\end{align}
\item The element ${\sf D}$ defined by the following equality 
\begin{align}\label{E:ASTD}
\quad \quad \quad \textstyle \sum_{i_1,i_2,\dots, i_n=1}^n ~~( \e _{\bf q})_{i_1 \cdots i_n}\,u_{i_11}\,u_{i_22}\cdots u_{i_nn}~~=~~\sum_{i_1,i_2,\dots, i_n=1}^n ~~( \f _{\bf p})_{i_1\cdots i_n}\,u_{1i_1}\,u_{2i_2}\cdots u_{ni_n}
\end{align}
is a grouplike element in $H$. 
\item The homological codeterminant for the left $H$-coaction on $A( \e _{\bf q},2)$ is ${\sf D}$, and the homological codeterminant for the right $H$-coaction on $A( \f _{\bf p},2)$ is  ${\sf D}^{-1}$.
\end{enumerate}
\end{lemma}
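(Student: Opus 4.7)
My plan is to prove the three parts in order, leveraging Theorems~\ref{T:CoactionA(e)} and~\ref{T:CoactionA(f)} as the key engine.

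For part (a), I would apply the balanced $H$-coaction to the defining quadratic relations of each skew polynomial ring. For $A(\e_{\bf q},2)$ with left coaction $x_j\mapsto \sum_\alpha u_{j\alpha}\otimes x_\alpha$, the relation $x_jx_i = q_{ji}x_ix_j$ (for $j>i$) yields $\sum_{\alpha,\beta}(u_{j\alpha}u_{i\beta}-q_{ji}u_{i\alpha}u_{j\beta})\otimes x_\alpha x_\beta = 0$ in $H\otimes A(\e_{\bf q},2)$. Passing to the PBW basis $\{x_\alpha x_\beta : \alpha\le \beta\}$ via $x_\alpha x_\beta = q_{\alpha\beta} x_\beta x_\alpha$ for $\alpha>\beta$ and equating coefficients produces one family of quadratic relations on the $u_{ij}$. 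The same procedure applied to the relation $y_jy_i = p_{ij}y_iy_j$ under the right coaction on $A(\f_{\bf p},2)$ produces a second family. After substituting $q_{ji}=\lambda p_{ji}$ and combining the two families linearly to eliminate the ``mixed'' monomials $u_{j\alpha}u_{i\beta}$ that do not appear on the right-hand side of \eqref{E:URAST}, one recovers the three stated cases (corresponding to $j=i$ with $\beta>\alpha$, $j>i$ with $\beta>\alpha$, and $j>i$ with $\beta\le\alpha$).

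For parts (b) and (c), I would invoke Theorem~\ref{T:CoactionA(e)}((a)$\Rightarrow$(e)) applied to the left $H$-coaction on $A(\e_{\bf q},2)$: this produces a unique grouplike element $g_\e\in H$ satisfying
\[
\ts\sum_{i_1,\dots,i_n}(\e_{\bf q})_{i_1\cdots i_n}u_{i_1j_1}\cdots u_{i_nj_n} = (\e_{\bf q})_{j_1\cdots j_n}\,g_\e
\]
for all $j_1,\dots,j_n$, and identifies $g_\e$ as the homological codeterminant of that coaction. Specializing $(j_1,\dots,j_n)=(1,\dots,n)$ and using $(\e_{\bf q})_{12\cdots n}=1$ shows that the left-hand side of \eqref{E:ASTD} coincides with $g_\e$. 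Dually, Theorem~\ref{T:CoactionA(f)}((a)$\Rightarrow$(e)) produces a grouplike $g_\f\in H$ equal to the right-hand side of \eqref{E:ASTD}, whose inverse $g_\f^{-1}$ is the homological codeterminant of the right $H$-coaction on $A(\f_{\bf p},2)$. Once the equality $g_\e=g_\f$ is established, one sets ${\sf D}:=g_\e = g_\f$ and part (c) is immediate from the two theorems.

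The crux of the argument is therefore establishing $g_\e = g_\f$ inside $H$, which I expect to be the main obstacle. A conceptual route would be to exploit the composition $\kk\f_{\bf p}^*\hookrightarrow V^{\otimes n}\overset{\hat\e_{\bf q}}{\to} M_{\e_{\bf q}}$ of right $H$-comodule maps as in \Cref{P:PropHEF}(a); this forces $g_\e = g_\f$ whenever $\e_{\bf q}\odot\f_{\bf p}\neq 0$, and a short combinatorial count shows this pairing equals $\prod_{k=1}^n[k]_{1/\lambda}$, which reduces in the $n=2$ case to $1+1/\lambda$, nonzero by the standing hypothesis $\lambda\neq -1$. In general, however, some $[k]_{1/\lambda}$ may vanish, and this conceptual shortcut breaks down; then the equality must be shown by a direct sign-tracked bookkeeping over permutations in $S_n$, using the mixed quasi-commutation relations \eqref{E:URAST} to transform the ``column'' quantum determinant on the left of \eqref{E:ASTD} into the ``row'' quantum determinant on the right, in the spirit of the classical Artin-Schelter-Tate identity relating these two expressions.
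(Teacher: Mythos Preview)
Your proposal is correct and follows essentially the same route as the paper. The paper's own proof is terse: for (a) it simply cites \cite[Theorem~1]{AST}, for the equality in (b) it cites \cite[Lemma~1]{AST} (or says it follows from the relations in (a)), and for the grouplike property and (c) it invokes Theorems~\ref{T:CoactionA(e)} and~\ref{T:CoactionA(f)} together with $(\e_{\bf q})_{1\cdots n}=(\f_{\bf p})_{1\cdots n}=1$, exactly as you do. Your sketch of (a) is precisely the content of the cited Artin--Schelter--Tate computation, and your plan for (b),(c) matches the paper verbatim.

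The one genuine addition in your write-up is the attempted shortcut via $\e_{\bf q}\odot\f_{\bf p}$ and \Cref{P:PropHEF}(a). Your computation $\e_{\bf q}\odot\f_{\bf p}=\prod_{k=1}^n[k]_{1/\lambda}$ is correct, as is your observation that this can vanish for $n\ge 3$ even under Hypothesis~\ref{hypAST}, so the shortcut is not available in general. The paper does not attempt this route; it goes straight to the direct argument (the Artin--Schelter--Tate identity between the column and row quantum determinants), which is exactly the fallback you describe.
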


\begin{proof}
Part (a) follows from \cite[Theorem 1]{AST}. The equality in part (b) can be derived from the relations in part (a) or see \cite[Lemma 1]{AST}. The fact that ${\sf D}$ is a grouplike element and part (c) follow from \Cref{T:CoactionA(e)}((a)$\Rightarrow$(e)) and \Cref{T:CoactionA(f)}((a)$\Rightarrow$(e)) noticing that $(\e _{\bf q})_{1 \cdots n}=(\f_{\bf p})_{1 \cdots n}=1$.
\end{proof}

\begin{proposition}\cite{AST} \label{P:BAST}
The following statements hold for the bialgebra $\mathcal O_{A( \e _{\bf q},2), A(\f_{\bf p},2)}(M)=:\mathcal B$ associated to $A( \e _{\bf q},2)$ and $A( \f _{\bf p},2)$.
\begin{enumerate}
\item $\mathcal B$ has generators $(u_{ij})_{1\le i,j\le n}$ defined by the relations \eqref{E:URAST} with $\Delta(u_{ij})=\sum_{k=1}^n u_{ik}\otimes u_{kj}$ and $\varepsilon(u_{ij})=\delta_{ij}$. 
\item $\mathcal B$ is a right and left Noetherian domain and Koszul Artin-Schelter regular of dimension $n^2$.
\item The element ${\sf D}$ defined in \eqref{E:ASTD} is a regular normal element of $\mathcal B$. Hence, the multiplicative set $\langle {\sf D}\rangle$ is an Ore set of $\mathcal B$. 
\end{enumerate}
\end{proposition}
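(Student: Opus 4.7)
The proof proceeds by treating the three parts separately, with the bulk of the work tracing through computations in the spirit of Artin--Schelter--Tate \cite{AST}, adapted to the preregular-form framework of this paper.

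For part (a), the plan is to apply Remark~\ref{R:Bialgebra} directly. By construction, $\mathcal{B}:=\mathcal{O}_{A(\e_{\bf q},2),A(\f_{\bf p},2)}(M)$ is generated by $(u_{ij})_{1\le i,j\le n}$ subject to exactly the relations that force its tautological coactions $x_j\mapsto\sum_i u_{ji}\otimes x_i$ and $y_j\mapsto\sum_i y_i\otimes u_{ij}$ to preserve the quadratic relation spaces of $A(\e_{\bf q},2)$ and $A(\f_{\bf p},2)$. Since $m=n$ and $N=2$, these relation spaces (by Lemma~\ref{L:FAST}) are spanned by $x_jx_i-q_{ji}x_ix_j$ and $y_jy_i-p_{ij}y_iy_j$ for $j>i$, together with the annihilators of $\partial^{n-2}(\kk\e_{\bf q}^*)$ and $\partial^{n-2}(\kk\f_{\bf p}^*)$, which under the normalization \eqref{E:RRAST} simply pick out the products in a single order. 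Plugging the two coaction Ansätze into these relations and using \Cref{hypAST} (the fact that $q_{ji}/p_{ji}=\lambda$ is a single scalar independent of $i,j$) collapses the two resulting families of relations into the three cases listed in \eqref{E:URAST}; this computation is carried out in \cite[Thm.~1]{AST}, and the coalgebra structure is the standard matrix-coalgebra structure.

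For part (b), the plan is to realize $\mathcal{B}$ as a $2$-cocycle deformation of the commutative polynomial ring $\kk[u_{ij}:1\le i,j\le n]\cong \mathcal{O}(M_n)$. Specifically, one shows that the relations \eqref{E:URAST} can be ordered so as to form a Gr\"obner basis, yielding a PBW-type basis of $\mathcal{B}$ consisting of ordered monomials in the $u_{ij}$. Equivalently, $\mathcal{B}$ can be written as an iterated Ore extension. Since being Noetherian, a domain, Koszul, and Artin--Schelter regular of dimension $n^2$ are all properties preserved under cocycle twists (or under iterated Ore extensions by automorphisms and derivations), the desired properties transfer from $\mathcal{O}(M_n)$. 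This is exactly the content of \cite[Thm.~1]{AST} combined with \cite[\S1]{AST}.

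For part (c), the plan is to first check that the two expressions for ${\sf D}$ in \eqref{E:ASTD} are indeed equal (this uses the relations \eqref{E:URAST} together with \Cref{hypAST}, and is recorded in Lemma~\ref{L:RAST}(b)). One then computes, using \eqref{E:URAST}, the commutation relations between ${\sf D}$ and each generator $u_{ij}$: a direct calculation yields $u_{ij}\,{\sf D}=c_{ij}\,{\sf D}\,u_{ij}$ for explicit nonzero scalars $c_{ij}\in\kk^\times$, proving normality of ${\sf D}$ (this is essentially \cite[Lemma~1]{AST}). Regularity of ${\sf D}$ then follows from part (b), since $\mathcal{B}$ is a domain and ${\sf D}\neq 0$. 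The Ore condition for the multiplicative set $\langle{\sf D}\rangle$ is an immediate consequence of normality.

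The principal obstacle is in part (a): namely, verifying that after expanding both coaction conditions, no relations strictly stronger than \eqref{E:URAST} are forced. The hypothesis $\lambda\neq -1$ in \Cref{hypAST} is precisely what prevents degeneration into a larger relation ideal, and ensures that the two families of relations coming from $\e_{\bf q}$ and from $\f_{\bf p}$ are compatible rather than contradictory; keeping careful track of this is the main bookkeeping burden of the proof, and is where the computation in \cite{AST} does its real work.
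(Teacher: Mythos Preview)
Your proposal is correct and takes essentially the same approach as the paper: all three parts are deferred to the computations already carried out in \cite{AST}, with part (a) additionally invoking the universal property of $\mathcal{O}_{A(\e_{\bf q},2),A(\f_{\bf p},2)}(M)$ (via \Cref{P:QMSHef} and \Cref{L:RAST}(a) in the paper's version, via direct expansion of the defining relations in yours). The paper's proof is terser---simply citing \cite[Theorems~2 and~3]{AST}---while you unpack more of the mechanism (cocycle twist / iterated Ore extension for (b), explicit normality computation for (c)); both are valid and amount to the same argument.

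One small wording issue in your part (a): the relation spaces of the skew polynomial rings are spanned by the $q$- and $p$-commutators, full stop; the annihilators $\partial^{n-2}(\kk\e_{\bf q}^*)^\perp$ and $\partial^{n-2}(\kk\f_{\bf p}^*)^\perp$ are what parametrize the defining relations of $\mathcal{B}$, not additional elements of the relation spaces themselves. As written, the clause ``together with the annihilators of\ldots'' reads as though you are enlarging the relation space, which is not what is meant.
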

\begin{proof}
By \cite[Theorem 2]{AST}, $\mathcal B$ is a well-defined. Hence part (a) follows from the universal property of $\mathcal O_{A( \e _{\bf q},2), A(\f_{\bf p},2)}(M)$ stated in \Cref{P:QMSHef} and \Cref{L:RAST}(a). Part (b) is also follows from \cite[Theorem 2]{AST} and part (c) follows from \cite[Theorem 3]{AST}.
\end{proof}

Next, we recall the presentation of Artin-Schelter-Tate's  quantum group defined in \cite{AST}.

\begin{definition}[$\cO_{{\bf p},\lambda}(GL_n)$] \cite[Theorem~3]{AST}\label{D:HAST}
Take $\cO_{{\bf p},\lambda}(GL_n)$ to be the Hopf algebra generated by $\bU:=(u_{ij})_{1 \leq i,j \leq n}$ and grouplike elements ${\sf D}^{\pm 1}$ satisfying the relations of \cref{L:RAST}(a,b), with 
$\Delta(u_{ij})=\sum_{k=1}^n u_{ik}\otimes u_{kj}$ and $\varepsilon(u_{ij})=\delta_{ij}$ and with antipode given by
\[
\begin{array}{rl}
\bigskip
S(u_{jk})&=\frac{\textstyle \prod_{m=k+1}^{n}\left(-q_{km}\right)}{\textstyle \prod_{m=j+1}^{n}\left(-q_{jm}\right)} \,{\sf D}^{-1}\,\bigg(\sum_{f\in [\hat{j},\hat{k}]}\sigma(p,f)\prod_{i\in \hat{j}} u_{i,f(i)}\bigg)\\

&=\frac{\textstyle\prod_{m=1}^{j-1}\left(-p_{jm}\right)}{\textstyle\prod_{m=1}^{k-1}\left(-p_{km}\right)}\,\bigg(\sum_{g\in [\hat{k},\hat{j}]}\sigma(q^{-1},g)\prod_{i\in \hat{k}} u_{g(i),i}\bigg) \,{\sf D}^{-1},
\end{array}
\]
where $\hat{\ell}:=\{1,2,\dots,n\}\setminus\{\ell\}$, and $[\hat{\ell},\hat{\ell'}]$ denotes the set of bijections from  $\hat{\ell}$ to $\hat{\ell'}$, and \linebreak $\sigma(r,h):=\prod_{j<j',h(j)>h({j'})} (-r_{h(j),h({j'})})$ for $r =p,q$ and $h=f,g$.
\end{definition}

Finally, we present the main result of this section.

\begin{proposition}\label{P:AST}
Let $ \e _{\bf q}$ and $ \f _{\bf p}$ be two preregular $n$-linear forms defined in \eqref{E:QAST} and \eqref{E:PAST}. Under \Cref{hypAST},  we have the following isomorphisms of universal quantum groups
\[
\mathcal O_{{\bf p},\lambda}(GL_n)~ \cong ~\mathcal O_{A( \e _{\bf q},2), A(\f_{\bf p},2)}(M){\sf D}^{-1} ~ \cong ~ \mathcal O_{\kk_{\bf q}[x_1,\dots,x_n],\kk_{\bf p}[y_1,\dots,y_n]}(GL) ~\cong ~  \mathcal H( \e _{\bf q}, \f _{\bf p}) ~ \cong ~ \mathcal G( \e _{\bf q}, \f _{\bf p}).
\]
\end{proposition}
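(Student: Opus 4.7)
The plan is to establish the four individual isomorphisms in the chain by leveraging \cref{L:RAST}(b), \cref{H-Oisom}(a), \cref{T:LocalBi}(a), and the explicit presentation of the AST bialgebra in \cref{P:BAST}(a). The key unifying observation is that all five Hopf algebras share the generators $(u_{ij})_{1\le i,j\le n}$ modulo the AST relations \eqref{E:URAST}, and differ only in how the distinguished grouplike elements ${\sf D}_\e$, ${\sf D}_\f^{-1}$, $g_1$, $g_2$, and AST's ${\sf D}$ are encoded; under \cref{hypAST}, all of these will be shown to coincide.

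The rightmost isomorphism $\mathcal H(\e_{\bf q},\f_{\bf p})\cong \mathcal O_{\kk_{\bf q}[x_1,\dots,x_n],\kk_{\bf p}[y_1,\dots,y_n]}(GL)$ is immediate from \cref{H-Oisom}(a) combined with \cref{L:FAST}. For $\mathcal H(\e_{\bf q},\f_{\bf p})\cong \mathcal G(\e_{\bf q},\f_{\bf p})$, I would show that the relation ${\sf D}_\e{\sf D}_\f=1$ already holds in $\mathcal H$: since $\mathcal H$ coacts balancedly on the pair by \cref{Hef-coacts}(a), \cref{L:RAST}(b) produces a single grouplike ${\sf D}\in \mathcal H$ equal to both sides of \eqref{E:ASTD}; specializing \cref{PresE} and \cref{PresF} at $(j_1,\dots,j_n)=(1,\dots,n)$ and using $(\e_{\bf q})_{1\cdots n}=(\f_{\bf p})_{1\cdots n}=1$ identifies those two sides with ${\sf D}_\e$ and ${\sf D}_\f^{-1}$, respectively. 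The same argument applied to the bialgebra $\mathcal B:=\mathcal O_{A(\e_{\bf q},2),A(\f_{\bf p},2)}(M)$ places $g_1=g_2={\sf D}$ inside $\mathcal B$, so $\langle g_1,g_2\rangle=\langle {\sf D}\rangle$ is Ore by \cref{P:BAST}(c); \cref{T:LocalBi}(a) then yields $\mathcal O_{A(\e_{\bf q},2),A(\f_{\bf p},2)}(M){\sf D}^{-1}\cong \mathcal O_{A(\e_{\bf q},2),A(\f_{\bf p},2)}(GL)$, which matches the middle isomorphism via \cref{L:FAST}.

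It remains to identify $\mathcal O_{A(\e_{\bf q},2),A(\f_{\bf p},2)}(M){\sf D}^{-1}$ with $\mathcal O_{{\bf p},\lambda}(GL_n)$ by comparing presentations: by \cref{P:BAST}(a), $\mathcal B$ is generated by $(u_{ij})$ modulo \eqref{E:URAST}, so inverting the grouplike ${\sf D}$ produces an algebra with the same generators and relations as \cref{D:HAST}, while the coproduct and counit data visibly match. The main obstacle I anticipate is verifying that the elaborate explicit antipode formula in \cref{D:HAST} coincides with the one induced by the unique Hopf algebra structure on the localized bialgebra $\mathcal B[{\sf D}^{-1}]$. By the uniqueness of the antipode on a bialgebra, however, this reduces to confirming that the matrix of $(i,j)$-entries in \cref{D:HAST} is both a left and a right convolution inverse to $(u_{ij})$---which follows from \cref{L:InverseA} applied to $\e_{\bf q}$ and $\f_{\bf p}$, combined with the identification $g_1=g_2={\sf D}$ noted above, so that both formulas in \cref{Hef-SA1}--\cref{Hef-SA2} specialize to the two expressions displayed in \cref{D:HAST}.
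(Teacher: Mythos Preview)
Your proposal is correct and follows essentially the same route as the paper's proof: the paper invokes \Cref{P:BAST}(a) together with \Cref{D:HAST} for the first isomorphism, \Cref{P:BAST}(c) with \Cref{T:LocalBi}(a) and \Cref{L:FAST} for the second, \Cref{H-Oisom}(a) for the third, and \Cref{L:RAST}(b) for the last---exactly the ingredients you identify. Your additional care in arguing that $g_1=g_2={\sf D}$ already in the bialgebra $\mathcal B$ (so that $\langle g_1,g_2\rangle=\langle{\sf D}\rangle$ is Ore) and in reconciling the antipode via uniqueness is more explicit than the paper, but not a different approach; note in particular that your final claim that \Cref{Hef-SA1}--\Cref{Hef-SA2} ``specialize'' to the formulas in \Cref{D:HAST} is superfluous once you have invoked uniqueness of the antipode, and establishing it directly would require fixing particular polar forms $\widetilde{\e}_{\bf q},\widetilde{\f}_{\bf p}$.
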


\begin{proof}
The first isomorphism follows from \Cref{P:BAST}(a) and \Cref{D:HAST}. By \Cref{P:BAST}(c), the multiplicative set $\langle {\sf D}\rangle$ is an Ore set of $\cO_{A(\e _{\bf q},2), A(\f_{\bf p},2)}(M)$. Hence the second isomorphism is obtained by \Cref{T:LocalBi}(a) and \Cref{L:FAST}. 
The third isomorphism is \Cref{H-Oisom}(a). The last isomorphism follows from \cref{L:RAST}(b).
\end{proof}

\subsection{Takeuchi's two-parameter quantum deformation of $\mathcal{O}(GL_n)$} \label{Takeuchi}
Takeuchi in \cite{Takeuchi} constructed a two-parameter quantum deformation $\cO_{p,q}(GL_n)$ of $\cO(GL_n)$ depending on two units $p,q \in \kk$. As pointed out in \cite{AST}, this family can be obtained as a special case of the Hopf algebras $\mathcal O_{{\bf p},\lambda}(GL_n)$ discussed in the previous section. To define $\cO_{p,q}(GL_n)$, define two multiplicatively anti-symmetric $n\times n$-matrices ${\bf p}=(p_{ij})$ and ${\bf q}=(q_{ji})$ according to \eqref{E:RRAST} such that 
\begin{align*}
p_{ij}=p \quad \text{and} \quad q_{ji}=q\ \text{for all}\ j>i.
\end{align*}
We also define two skew polynomial rings 
\begin{align*}
\kk_{q}[x_1, \dots, x_n]:&=\kk \langle x_1, \dots, x_n \rangle/(x_j x_i - q x_i x_j,\ j>i)\ \text{and}\\
\kk_{p}[y_1, \dots, y_n]:&=\kk \langle y_1, \dots, y_n \rangle/(y_j y_i - p y_i y_j,\ j>i).
\end{align*}

Next, let $V = \bigoplus_{k=1}^n \kk y_k$ be the generating space of $\kk_{p}[y_1, \dots, y_n]$, and $V^* = \bigoplus_{k=1}^n \kk x_k$ be the generating space of $\kk_{q}[x_1, \dots, x_n]$ with $x_k = y_k^*$. 
Following \eqref{E:QAST} and \eqref{E:PAST}, the two preregular $n$-linear  forms $ \e _q: V^{\otimes n}\to \kk$ and  $ \f _p: V^{*\otimes n}\to \kk$ associated with $q$ and $p$ are given by 
\begin{align}\label{E:QTake}
 \e _q(y_{i_1},\cdots,y_{i_n})=
\begin{cases}
(-q)^{-\ell(\sigma)} & \text{if}\ (i_1,\cdots,i_n)=(\sigma(1),\cdots,\sigma(n))\ \text{for some}\ \sigma\in S_n\\
0 & \text{otherwise}
\end{cases}
\end{align}
\begin{align}\label{E:PTake}
 \f _p(x_{i_1},\cdots,x_{i_n})=
\begin{cases}
(-p)^{-\ell(\sigma)} & \text{if}\ (i_1,\cdots,i_n)=(\sigma(1),\cdots,\sigma(n))\ \text{for some}\ \sigma\in S_n\\
0 & \text{otherwise},
\end{cases}
\end{align}
where $\ell(\sigma)$ denotes the number of inversions for a permutation $\sigma\in S_n$. We obtain from  \cref{L:FAST}  that $\kk_{q}[x_1, \dots, x_n]\cong A( \e _q,2)$ and $\kk_{p}[y_1, \dots, y_n]\cong A( \f _p,2)$.

Now we examine Hopf algebras that have a balanced coaction on $A( \e _q,2)$ and $A( \f _p,2)$. The  result below follows from \cref{L:RAST}.

\begin{lemma}\label{L:RTake}
Let $H$ be any Hopf algebra that left coacts on $A( \e _q,2)$ and right coacts on $ A( \f _p,2)$  via 
$x_j\mapsto \sum_{i=1}^n u_{ji}\otimes x_i$ and $y_j\mapsto \sum_{i=1}^n y_i\otimes u_{ij}$ for some $u_{ij} \in H$. 
If $pq\neq -1$, then we have the following relations in $H$
\begin{align*}\label{E:URAST}
u_{j\beta}\, u_{i\alpha}=
\begin{cases}
u_{i\alpha}\, u_{j\beta}+(q-p^{-1})\, u_{i\beta}\, u_{j\alpha}, & \text{if}\ j>i,~~\beta>\alpha\\
q\, u_{j\alpha}\, u_{j\beta},  & \text{if}\ j>i,~~\beta=\alpha\\
qp^{-1}\, u_{j\alpha}\, u_{j\beta},  & \text{if}\ j>i, ~~\beta < \alpha\\
p\, u_{i\alpha}\, u_{j\beta}, & \text{if}\ j=i, ~~\beta>\alpha. 
\end{cases}
\end{align*}
Moreover, the element 
\[
{\sf D}:=~~\textstyle \sum_{\sigma\in S_n}(-q)^{-\ell(\sigma)}u_{\sigma(1)1}u_{\sigma(2)2}\cdots u_{\sigma(n)n}~~=~~\sum_{\sigma\in S_n}(-p)^{-\ell(\sigma)}u_{1\sigma(1)}u_{2\sigma(2)}\cdots u_{n\sigma(n)}
\]
is a grouplike element in $H$. Further, the homological codeterminant for the left $H$-coaction on $A( \e _q,2)$ is given by ${\sf D}$ and the homological codeterminant for the right $H$-coaction on $A( \f _p,2)$ is given by ${\sf D}^{-1}$. \qed
\end{lemma}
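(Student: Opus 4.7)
The plan is to verify that the Takeuchi data fit the Artin–Schelter–Tate framework of \cref{hypAST,L:RAST} and then read off all the claimed formulas by substitution. First I would unpack the multiplicatively antisymmetric matrices ${\bf p}=(p_{ij})$, ${\bf q}=(q_{ji})$ attached to Takeuchi's parameters: for $j>i$ one has $p_{ij}=p$, $p_{ji}=p^{-1}$ and $q_{ji}=q$, $q_{ij}=q^{-1}$, with diagonal entries equal to $1$. Setting $\lambda:=pq$, the identity $q_{ji}=q=(pq)\cdot p^{-1}=\lambda\, p_{ji}$ holds for all $j>i$, so \cref{hypAST} is satisfied precisely under the hypothesis $pq\neq -1$, which is exactly the condition $\lambda\neq -1$.

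Next I would feed this specialization into the three cases of relation \eqref{E:URAST} from \cref{L:RAST}(a). For $j>i$ and $\beta>\alpha$, the coefficient $p_{ji}\,p_{\alpha\beta}=p^{-1}\cdot p=1$ and $(\lambda-1)\,p_{ji}=(pq-1)p^{-1}=q-p^{-1}$, giving the first relation in \cref{L:RTake}. For $j>i$ and $\beta=\alpha$, one gets $\lambda\,p_{ji}\,p_{\alpha\beta}=pq\cdot p^{-1}\cdot 1=q$; for $j>i$ and $\beta<\alpha$, one gets $\lambda\,p_{ji}\,p_{\alpha\beta}=pq\cdot p^{-1}\cdot p^{-1}=qp^{-1}$; and for $j=i$ with $\beta>\alpha$, the coefficient $p_{\alpha\beta}=p$ yields the last case. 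This is all bookkeeping, but I would be careful to match the sign and exponent conventions hidden in the indices of $p$ and $q$.

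For the grouplike element, I would substitute the explicit formulas \eqref{E:QTake} and \eqref{E:PTake} into \eqref{E:ASTD}. Since $(\e_{\bf q})_{\sigma(1)\cdots\sigma(n)}=(-q)^{-\ell(\sigma)}$ and vanishes off the symmetric-group orbit of $(1,\dots,n)$, the left sum collapses to $\sum_{\sigma\in S_n}(-q)^{-\ell(\sigma)}u_{\sigma(1)1}\cdots u_{\sigma(n)n}$, and similarly for the right sum with $(-p)^{-\ell(\sigma)}$. That the two expressions coincide and define a grouplike element of $H$ is then an instance of \cref{L:RAST}(b). Finally, the statement on homological codeterminants is immediate from \cref{L:RAST}(c) applied to the specialized data.

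The only genuinely delicate point I anticipate is keeping the sign and exponent conventions consistent between \eqref{E:QAST}/\eqref{E:PAST} and their Takeuchi counterparts \eqref{E:QTake}/\eqref{E:PTake}; once the identification $\lambda=pq$ is in place, everything else is direct substitution into the results already proved in the Artin–Schelter–Tate section.
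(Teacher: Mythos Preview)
Your proposal is correct and matches the paper's own approach: the paper simply states that the result ``follows from \cref{L:RAST}'' with no further argument, and your proof carries out exactly this specialization, identifying $\lambda=pq$ so that \cref{hypAST} becomes the hypothesis $pq\neq -1$ and then substituting the Takeuchi values $p_{ji}=p^{-1}$, $p_{\alpha\beta}=p^{\pm 1}$ into \eqref{E:URAST} and \eqref{E:ASTD}. Your explicit bookkeeping is more detailed than what the paper provides, but the route is identical.
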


%
%

Next, we recall the presentation of the quantum group defined by Takeuchi in \cite{AST}.
\begin{definition}[$\cO_{p,q}(GL_n)$] \cite{Takeuchi}
Take units $p,q \in \kk$. Let $\cO_{p,q}(GL_n)$ to be the Hopf algebra generated by $\bU:=(u_{ij})_{1 \leq i,j \leq n}$ and grouplike elements ${\sf D}^{\pm 1}$ satisfying the relations in \cref{L:RTake}, with $\Delta(u_{ij})=\sum_{k=1}^n u_{ik}\otimes u_{kj}$, $\varepsilon(u_{ij})=\delta_{ij}$ and with antipode given by 
\begin{align*}
S(u_{ij})~=~(-q)^{j-i}\,{\sf D}^{-1}\,|\mathbb U_{ji}|~=~(-p)^{j-i}\,|\mathbb U_{ji}|\,{\sf D}^{-1},
\end{align*}
where $|\mathbb U_{ji}|$ denotes the quantum determinant of the $(n-1)\times(n-1)$ minor obtained by removing the $j$-th row and the $i$-th column.
\end{definition}

By Proposition \ref{P:AST}, we obtain the result below.
\begin{proposition}\label{P:Take}
Let $p,q$ be two units in $\kk$ such that $pq\neq -1$. Suppose $ \e _q$ and $ \f _p$ are the two preregular $n$-linear forms defined in \eqref{E:QTake} and \eqref{E:PTake}. Then, we have the following isomorphisms of universal quantum groups. 
\[
\cO_{p,q}(GL_n)~\cong~\mathcal O_{A( \e _q,2), A(\f_p,2)}(M){\sf D}^{-1}~\cong ~\mathcal O_{\kk_q[x_1,\dots,x_n],\kk_p[y_1,\dots,y_n]}(GL)~\cong ~\mathcal H( \e _q, \f _p)~\cong ~\mathcal G( \e _q, \f _p).
\]
\qed
\end{proposition}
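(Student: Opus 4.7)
The plan is to deduce this result from Proposition~\ref{P:AST} by specializing the two-parameter setup. First, I would introduce the multiplicatively anti-symmetric matrices ${\bf p}=(p_{ij})$ and ${\bf q}=(q_{ji})$ in $M_n(\kk^\times)$ defined by $p_{ij}=p$ and $q_{ji}=q$ for $j>i$ (with $p_{ii}=q_{ii}=1$ and the remaining entries forced by the anti-symmetry conditions \eqref{E:RRAST}). Under this choice, the skew polynomial algebras of \Cref{Takeuchi} are exactly the algebras $\kk_{\bf q}[x_1,\dots,x_n]$ and $\kk_{\bf p}[y_1,\dots,y_n]$ of \Cref{AST}; a direct calculation (counting inversions of a permutation $\sigma$ and simplifying powers of $-p$, $-q$ via the anti-symmetry relations) shows that the preregular forms $\e_q$ and $\f_p$ of \eqref{E:QTake}--\eqref{E:PTake} coincide with the AST forms $\e_{\bf q}$ and $\f_{\bf p}$ of \eqref{E:QAST}--\eqref{E:PAST}.

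Next, set $\lambda:=pq$. For $j>i$ we have $p_{ji}=p^{-1}$ and $q_{ji}=q$, so the identity $q_{ji}=\lambda p_{ji}$ of \Cref{hypAST} reduces to $q=pq\cdot p^{-1}$, which holds tautologically. The standing assumption $pq\neq -1$ in the statement of \Cref{P:Take} translates precisely into the condition $\lambda\neq -1$ required by \Cref{hypAST}, so the hypotheses of \Cref{P:AST} are satisfied.

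It then remains to identify $\cO_{p,q}(GL_n)$ with $\cO_{{\bf p},\lambda}(GL_n)$ under this specialization. This is a direct comparison of generators and relations: the defining relations in \Cref{L:RTake} are obtained from those in \Cref{L:RAST}(a) (i.e.\ \eqref{E:URAST}) by substituting $p_{ji}=p^{-1}$, $q_{ji}=q$, $p_{\alpha\beta}=p$ for $\beta>\alpha$, and $\lambda=pq$, upon which the three cases of \eqref{E:URAST} collapse to the four cases of \Cref{L:RTake} (the case $j>i$, $\beta\le \alpha$ of \eqref{E:URAST} splits into $\beta=\alpha$ and $\beta<\alpha$ here because the value of $p_{\alpha\beta}$ depends on the sign of $\beta-\alpha$). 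The grouplike element ${\sf D}$ of \eqref{E:ASTD} specializes to the one in \Cref{L:RTake} after substituting the explicit formulas for $\e_{\bf q}$ and $\f_{\bf p}$, and the closed-form antipode in \Cref{D:HAST} reduces to Takeuchi's quantum-cofactor formula once ${\bf p}$, ${\bf q}$ take the single-parameter values, so that $\sigma(r,h)$ becomes a power of $-q$ or $-p$ counting inversions of $h$ and the sum over $[\hat{j},\hat{k}]$ becomes the quantum minor $|\mathbb U_{ji}|$.

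Once these verifications are in place, the four isomorphisms follow immediately from the corresponding ones in \Cref{P:AST} applied to the specialized data. The only mildly technical point I anticipate is the bookkeeping in matching the explicit antipode formulas of \Cref{D:HAST} with those in Takeuchi's presentation; the combinatorics is routine but must be carried out with care regarding the ordering conventions for $p_{ij}$ versus $p_{ji}$.
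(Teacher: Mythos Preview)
Your proposal is correct and follows exactly the paper's approach: the paper simply states that \Cref{P:Take} follows from \Cref{P:AST}, and your argument spells out the specialization (${\bf p},{\bf q}$ with constant off-diagonal entries, $\lambda=pq$, and the identification of $\cO_{p,q}(GL_n)$ with $\cO_{{\bf p},\lambda}(GL_n)$) that makes this work. Your check that $\lambda=pq$ satisfies \Cref{hypAST} precisely when $pq\neq -1$ is the key point, and it is handled correctly.
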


\begin{remark} \label{polynomial}
In particular, when $q=1$,  the associated preregular form $ \e := \e _1:V^{\otimes n}\to \kk$ is exactly the ``signature form'' $\varepsilon \in V^{* \otimes n}$ given in \cite[Section~7.1]{BDV}. Here, $\varepsilon_{i_1\cdots i_n}=0$  if two indices are equal, and $\varepsilon_{i_1\cdots i_n}$ is the signature of the corresponding permutation otherwise. The same is true for $ \f _1: V^{*\otimes n}\to \kk$.  So, by Proposition \ref{P:Take}, we get the isomorphisms of Hopf algebras below
\[
\mathcal O(GL_n)~\cong~\mathcal O_{A(\varepsilon,2), A(\varepsilon,2)}(M){\sf D}^{-1}~\cong ~\mathcal O_{\kk[x_1,\dots,x_n],\kk [y_1,\dots,y_n]}(GL)~\cong ~\mathcal H(\varepsilon,\varepsilon)~\cong~ \mathcal G(\varepsilon,\varepsilon).\]
\end{remark}


\section{New  quantum groups coacting on higher dimensional AS-regular algebras} \label{sec:examples}

Using our construction of the universal quantum group $\cH( \e , \f )$ from \cref{sec:Hef}, we provide new examples of Hopf algebras that coact universally on a pair of $N$-Koszul Artin-Schelter regular algebras of global dimensions 3 and 4. Here, we take $\kk$ to be a field of characteristic 0.

\subsection{On 3-dimensional Sklyanin algebras} \label{Skly3}
In \cite{Sklyanin}, Sklyanin considered certain graded algebras (now called {\it 4-dimensional Sklyanin algebras}) arising from the study of Yang-Baxter matrices and the related {\it Quantum Inverse Scattering Method}. We consider the 3-dimensional version of these algebras here, and postpone the study of 4-dimensional Sklyanin algebras for the next section.  

\begin{definition}[$Skl_3(a,b,c)$]\cite{AS,ATV1} The {\it 3-dimensional Sklyanin algebras}, denoted by $Skl_3(a,b,c)$, are the graded algebras  generated by variables $x,y,z$ of degree one, subject to relations
\begin{align}\label{E:RSkly3}
ayz+bzy+cx^2~=~azx+bxz+cy^2~=~
axy+byx+cz^2~=~0
\end{align}
for $[a:b:c]\in \mathbb P^2$ with $abc \neq 0$ and $(3abc)^3 \neq (a^3+b^3+c^3)^3$.
\end{definition}

The algebras $Skly_3(a,b,c)$ are Koszul and AS-regular of global dimension~$3$ by \cite{AS, ATV1}, and a result of J. Zhang \cite[Theorem~5.11]{Smith}. Let $V = \kk x \oplus \kk y \oplus \kk z$ be the generating space of $Skly_3(a,b,c)$ with dual space $V^*$ spanned by the dual basis $\{x^*,y^*,z^*\}$. Recall the notation of Example~\ref{exSabc}. By Example \ref{exSabc}(b), we have the isomorphism of algebras
$$Skly_3(a,b,c) ~\cong~ A({\sf s}_{abc},2).$$

Now we have that the universal quantum group that has a balanced coaction on a pair of 3-dimensional Sklyanin algebras $Skly_3(a',b',c')$ and $Skly_3(a,b,c)$ is understood by the following proposition; this holds by Theorem \ref{H-Oisom}.

\begin{proposition}
We have $\mathcal {O}_{Skly_3(a',b',c'),Skly_3(a,b,c)}(GL) \cong \mathcal H({\sf s}_{a'b'c'},{\sf s}_{abc})$ as Hopf algebras. \qed
\end{proposition}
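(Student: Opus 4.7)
The plan is to deduce this statement as an immediate application of \Cref{H-Oisom}(a). First, I would set $V = \kk v_1 \oplus \kk v_2 \oplus \kk v_3$ with dual basis $\{\theta_1,\theta_2,\theta_3\}$ for $V^*$, and translate the Sklyanin algebras into the superpotential-algebra language of \Cref{Skly3} via \Cref{exSabc}(b). Concretely, I would identify the generating variables $x,y,z$ of $Skly_3(a',b',c')$ with $\theta_1,\theta_2,\theta_3$ (so that this algebra is presented as a quotient of $TV^*$), and the generators of $Skly_3(a,b,c)$ with $v_1,v_2,v_3$ (so that it is a quotient of $TV$). Define the preregular $3$-linear form $\e:V^{\otimes 3}\to \kk$ by taking $\e^* = {\sf s}_{a'b'c'} \in (V^*)^{\otimes 3}$ and the preregular form $\f:(V^*)^{\otimes 3}\to \kk$ by taking $\f^* = {\sf s}_{abc}\in V^{\otimes 3}$; these satisfy Definition~\ref{preregular} by \Cref{propA} combined with the classical fact (Artin--Tate--Van den Bergh, J.\ Zhang) that three-dimensional Sklyanin algebras are Koszul AS-regular of global dimension $3$. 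Under this dictionary, $\cH(\e,\f)$ is by convention what we denote $\cH({\sf s}_{a'b'c'},{\sf s}_{abc})$, and \Cref{exSabc}(b) gives $A(\e,2)=Skly_3(a',b',c')$ and $A(\f,2)=Skly_3(a,b,c)$.

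Next I would verify that the two algebras satisfy \Cref{hyp}, i.e., they are $2$-Koszul AS-regular with the paired forms $\e,\f$ preregular; as already noted, this is a standard consequence of the Sklyanin parameter restriction $abc\ne 0$ and $(3abc)^3\ne (a^3+b^3+c^3)^3$ (and likewise for the primed parameters), so we are squarely in the setting of \Cref{sec:Hef-univ}.

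Finally, I would invoke \Cref{H-Oisom}(a), which asserts that $\cH(\e,\f)=\cO_{A(\e,N),A(\f,N)}(GL)$ as Hopf algebras for any admissible pair of preregular forms. Substituting the identifications $A(\e,2)=Skly_3(a',b',c')$ and $A(\f,2)=Skly_3(a,b,c)$ yields the desired isomorphism $\cO_{Skly_3(a',b',c'),Skly_3(a,b,c)}(GL)\cong \cH({\sf s}_{a'b'c'},{\sf s}_{abc})$.

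Because the proposition is essentially a packaging of \Cref{H-Oisom}(a) in the Sklyanin setting, there is no real conceptual obstacle; the only care required is bookkeeping which algebra is coacted on from the left versus the right (hence which Sklyanin triple plays the role of $\e$ and which plays the role of $\f$ in \Cref{Notation not,def:Hef}), and making sure the identification of superpotentials ${\sf s}_{abc}, {\sf s}_{a'b'c'}$ with the twisted preregular superpotentials $\f^*, \e^*$ respects the side on which the tensor algebra is generated.
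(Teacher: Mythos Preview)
Your proposal is correct and matches the paper's own argument: the proposition is stated with a \qed and the surrounding text says explicitly ``this holds by Theorem~\ref{H-Oisom},'' together with the identification $Skly_3(a,b,c)\cong A({\sf s}_{abc},2)$ from \Cref{exSabc}(b). Your write-up simply makes the bookkeeping (which side is $\e$ versus $\f$, and the abuse of notation between the superpotential ${\sf s}_{abc}$ and its associated preregular form) more explicit than the paper does.
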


\subsection{On 4-dimensional Sklyanin algebras}
Here,  we study the universal quantum group that has balanced coaction on a pair of 4-dimensional Sklyanin algebras. We follow the notation of \cite{SmithStafford}. 

\begin{definition}[$Skly_4(\alpha,\beta,\gamma)$] Let $\alpha,\beta,\gamma\in \kk$ satisfy
$\alpha+\beta+\gamma+\alpha\beta\gamma=0$, or equivalently
$$(1+\alpha)(1+\beta)(1+\gamma)=(1-\alpha)(1-\beta)(1-\gamma).$$
The {\it 4-dimensional Sklyanin algebras} $Skly_4(\alpha,\beta,\gamma)$ are the graded algebras generated by variables $x_0,x_1,x_2,x_3$ of degree one, subject to relations $f_i=0$, where
\begin{align*}
f_1:=x_0x_1-x_1x_0-\alpha(x_2x_3+x_3x_2),\quad f_2:=x_0x_1+x_1x_0-(x_2x_3-x_3x_2),\notag\\
f_3:=x_0x_2-x_2x_0-\beta(x_3x_1+x_1x_3),\quad f_4:=x_0x_2+x_2x_0-(x_3x_1-x_1x_3),\notag\\
f_5:=x_0x_3-x_3x_0-\gamma(x_1x_2+x_2x_1),\quad f_6:=x_0x_3+x_3x_0-(x_1x_2-x_2x_1).
\end{align*}
\end{definition}

By \cite[Theorem 0.3]{SmithStafford}, $Skly_4(\alpha,\beta,\gamma)$ is Koszul AS-regular of global dimension $4$  if $(\alpha,\beta,\gamma)$ is not equal to $(-1,+1,\gamma), (\alpha,-1,+1)$ or $(+1,\beta,-1)$. In the following, we always assume $(\alpha,\beta,\gamma)$ is not equal to these exceptional triples. 

Take $V = \bigoplus_{i=0}^3~ \kk x_i$ to be the generating space of $Skly_4(\alpha,\beta,\gamma)$  with  $V^* = \bigoplus_{i=0}^3 ~\kk\xi_i$.  We define the preregular $4$-linear form ${\sf s}_{\alpha \beta \gamma}: V^{*\otimes 4} \to \kk$ by the following values:
\smallskip

\[
\small{
\begin{array}{|l|l||l|l|}
\hline
 (i_0,i_1,i_2,i_3) & 
{\sf s}_{\alpha \beta \gamma}(\xi_{i_0},\xi_{i_1},\xi_{i_2},\xi_{i_3})&
 (i_0,i_1,i_2,i_3) & 
{\sf s}_{\alpha \beta \gamma}(\xi_{i_0},\xi_{i_1},\xi_{i_2},\xi_{i_3})\\
\hline 
(0,1,0,1) & 1 &
(1,0,1,0) & -1 \\
(0,2,0,2) & (1-\alpha)/(1+\beta) &
(2,0,2,0) & (\alpha-1)/(1+\beta) \\
(0,3,0,3) & (1+\alpha)/(1-\gamma)&
(3,0,3,0) & (1+\alpha)/(\gamma-1)\\
(1,2,1,2) & \gamma(1+\alpha)/(1-\gamma)&
(2,1,2,1) & \gamma(1+\alpha)/(\gamma-1)\\
(1,3,1,3)& \beta(\alpha-1)/(1+\beta)&
(3,1,3,1) & \beta(1-\alpha)/(1+\beta)\\
(2,3,2,3) & \alpha&
(3,2,3,2) & -\alpha\\
(0,1,2,3), (2,3,0,1) & -(1+\alpha)/2&
(1,2,3,0), (3,0,1,2) & (1+\alpha)/2\\
(0,1,3,2), (3,2,0,1)  & (1-\alpha)/2 &
(1,3,2,0), (2,0,1,3) & (\alpha-1)/2\\
(0,2,1,3), (1,3,0,2) & (1-\beta)(1-\alpha)/(2+2\beta)&
(2,1,3,0), (3,0,2,1) & (\beta-1)(1-\alpha)/(2+2\beta)\\
(0,2,3,1), (3,1,0,2)  & (\alpha-1)/2&
(2,3,1,0), (1,0,2,3) & (1-\alpha)/2\\
(0,3,1,2), (1,2,0,3)  & (1+\gamma)(1+\alpha)/(2\gamma-2) &
(3,1,2,0), (2,0,3,1) & (1+\gamma)(1+\alpha)/(2-2\gamma)\\
(0,3,2,1), (2,1,0,3)  & (1+\alpha)/2 &
(3,2,1,0),  (1,0,3,2) &-(1+\alpha)/2\\
\text{otherwise} & 0 & &\\
\hline
\end{array}
}
\]

\medskip

\begin{lemma}\label{L:Skly4}
We obtain that $Skly_4(\alpha,\beta,\gamma)$ is isomorphic to the superpotential algebra $A({\sf s}_{\alpha \beta \gamma},2)$, as algebras. \textnormal{(}Here, we abuse notation and let ${\sf s}_{\alpha \beta \gamma}$ denote both the  preregular  form and the corresponding twisted superpotential.\textnormal{)}
\end{lemma}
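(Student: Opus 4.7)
The plan is to apply Proposition~\ref{N-Koszul}(e), mimicking the strategy used for skew polynomial rings in the proof of Lemma~\ref{L:FAST}. Since $Skly_4(\alpha,\beta,\gamma)$ is Koszul and Artin--Schelter regular of global dimension $4$ under the stated hypothesis on $(\alpha,\beta,\gamma)$, Proposition~\ref{propA} guarantees an isomorphism $Skly_4(\alpha,\beta,\gamma)\cong A({\sf s},2)$ for some preregular twisted superpotential ${\sf s}\in V^{\otimes 4}$, unique up to scalar. The task reduces to identifying ${\sf s}$ with ${\sf s}_{\alpha\beta\gamma}^*$; once this match is done, preregularity of ${\sf s}_{\alpha\beta\gamma}$ is then automatic from Proposition~\ref{propA}.

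The most natural route is via the Koszul dual $S := Skly_4(\alpha,\beta,\gamma)^!$, a graded Frobenius algebra of length~$4$ with $\dim S_4 = 1$, whose degree-one generators $\xi_0,\ldots,\xi_3 \in S_1$ satisfy six quadratic relations obtained by orthogonality to $f_1,\dots,f_6$ under the natural pairing $V^{\otimes 2}\otimes (V^*)^{\otimes 2}\to \kk$. Fix $\omega := \xi_0\xi_1\xi_0\xi_1 \in S_4$ as a basis of the top degree; the normalization ${\sf s}_{\alpha\beta\gamma}(\xi_0,\xi_1,\xi_0,\xi_1)=1$ in the table forces this choice. By Proposition~\ref{N-Koszul}(e), ${\sf s}^*$ sends $(\xi_{i_0},\xi_{i_1},\xi_{i_2},\xi_{i_3})$ to the coefficient of $\omega$ when $\xi_{i_0}\xi_{i_1}\xi_{i_2}\xi_{i_3}\in S_4$ is written in the basis $\{\omega\}$, so it suffices to check that these coefficients match the $36$ nonzero entries of the table and that all other length-four monomials in $S$ collapse to $0$. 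An equivalent and perhaps more economical route is via Lemma~\ref{super-prereg}(d) at the level of presentations: form the $16$ ``partial derivatives''
\[
r_{ik} = \sum_{j_1,j_2=0}^3 {\sf s}_{\alpha\beta\gamma}(\xi_i,\xi_{j_1},\xi_{j_2},\xi_k)\, x_{j_1}x_{j_2}, \qquad i,k \in \{0,1,2,3\},
\]
read directly from the table, and verify that their span equals $\langle f_1,\dots,f_6\rangle \subset V^{\otimes 2}$.

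The main obstacle is purely computational: the triple dependence on $(\alpha,\beta,\gamma)$, the asymmetry between ``diagonal'' table entries (those of the form $(i,j,i,j)$) and ``permutation'' entries, and the number of monomials to track make it easy to misplace a sign or a factor. Organizing the verification systematically --- for example, by grouping monomials according to the multiset of indices involved and noting that each such multiset contributes exactly one row of the table --- is the key to keeping the calculation manageable.
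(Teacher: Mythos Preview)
Your proposal is correct and follows essentially the same route as the paper: invoke Proposition~\ref{propA} to realize $Skly_4(\alpha,\beta,\gamma)$ as $A({\sf s},2)$, then identify ${\sf s}$ with ${\sf s}_{\alpha\beta\gamma}$ by computing the multiplication map $(S^!_1)^{\otimes 4}\to S^!_4\cong\kk$ in the Koszul dual, normalized by the basis $\xi_0\xi_1\xi_0\xi_1$. The paper carries out exactly this computation, but rather than organizing it from scratch it imports both the presentation of $S^!$ and the key reduction identities (e.g.\ $\xi_0\xi_j\xi_0\xi_j=-\xi_j\xi_0\xi_j\xi_0$, $\xi_0\xi_i\xi_0\xi_j=0$ for $i\neq j$, and the explicit ratios $\xi_0\xi_3\xi_0\xi_3=\frac{1+\alpha}{1-\gamma}\xi_0\xi_1\xi_0\xi_1$, $\xi_0\xi_2\xi_0\xi_2=\frac{1-\alpha}{1+\beta}\xi_0\xi_1\xi_0\xi_1$) directly from the proof of \cite[Proposition~4.5]{SmithStafford}, which collapses your ``main obstacle'' to a short check; your alternative route via the sixteen $r_{ik}$ is not used.
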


\begin{proof}
We use the same argument as in the proof of Lemma \ref{L:FAST}. According to \cite[Section~4.4]{SmithStafford}, the Koszul dual $S^!$ of the Sklyanin algebra $S:=Skly_4(\alpha,\beta,\gamma)$ is generated by $\{\xi_0,\xi_1,\xi_2,\xi_3\}$ with defining relations
\[
\begin{array}{lll}
2\xi_2\xi_3+(\alpha+1)\xi_0\xi_1-(\alpha-1)\xi_1\xi_0=0,&&
2\xi_3\xi_2+(\alpha-1)\xi_0\xi_1-(\alpha+1)\xi_1\xi_0=0,\\
2\xi_3\xi_1+(\beta+1)\xi_0\xi_2-(\beta-1)\xi_2\xi_0=0,&&
2\xi_1\xi_3+(\beta-1)\xi_0\xi_2-(\beta+1)\xi_2\xi_0=0,\\
2\xi_1\xi_2+(\gamma+1)\xi_0\xi_3-(\gamma-1)\xi_3\xi_0=0,&&
2\xi_2\xi_1+(\gamma-1)\xi_0\xi_3-(\gamma+1)\xi_3\xi_0=0,\\
\xi_0^2=\xi_1^2=\xi_2^2=\xi_3^2=0. &&
\end{array}
\]
Moreover, by \cite[Proposition 4.5]{SmithStafford}, $S_n^!=0$ for $n\ge 5$ and $S_4^!$ is spanned by $\xi_0\xi_1\xi_0\xi_1$.

By \cref{propA}, $Skly_4(\alpha,\beta,\gamma) \cong A({\sf s}, 2)$ for some twisted superpotential ${\sf s}$. In order to verify ${\sf s} = {\sf s}_{\alpha \beta \gamma}: V^{*\otimes 4}=S_1^{\otimes 4}\twoheadrightarrow S_4\cong \kk$, we apply the identities in the proof of \cite[Proposition 4.5]{SmithStafford}
\[
\begin{array}{lll}
\smallskip

\xi_0\xi_j\xi_0\xi_j=-\xi_j\xi_0\xi_j\xi_0\quad \text{for}\ 1\le j\le 3,\   &&\xi_0\xi_i\xi_0\xi_j=0\quad  \text{for}\ i\neq j,\\

 \xi_0\xi_3\xi_0\xi_3=\displaystyle\frac{1+\alpha}{1-\gamma} \xi_0\xi_1\xi_0\xi_1,
&&\xi_0\xi_2\xi_0\xi_2=\displaystyle\frac{1-\alpha}{1+\beta} \xi_0\xi_1\xi_0\xi_1.
\end{array}
\]
It is straightforward to see that we can write every $\xi_{i_0}\xi_{i_1}\xi_{i_2}\xi_{i_3}$ for $0\le i_0,i_1,i_2,i_3\le 3$ as a scalar multiple of $\xi_0\xi_1\xi_0\xi_1$ with the coefficient being the image of ${\sf s}_{\alpha \beta \gamma}$. This completes the proof.
\end{proof}

Now, the following proposition follows from Theorem \ref{H-Oisom} and Lemma \ref{L:Skly4}.

\begin{proposition}
We have that $\mathcal {O}_{Skly_4(\alpha',\beta',\gamma'),Skly_4(\alpha,\beta,\gamma)}(GL) \cong \mathcal H({\sf s}_{\alpha'\beta'\gamma'},{\sf s}_{\alpha\beta\gamma})$ as Hopf algebras. \qed
\end{proposition}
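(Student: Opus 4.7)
The plan is to invoke the main theorem of the paper (\Cref{H-Oisom}(a)) together with the identification of 4-dimensional Sklyanin algebras as superpotential algebras provided by \Cref{L:Skly4}. Concretely, take the preregular $4$-linear forms $\e := {\sf s}_{\alpha'\beta'\gamma'}$ and $\f := {\sf s}_{\alpha\beta\gamma}$ on the vector space $V^*$ (where $V$ is spanned by $x_0,x_1,x_2,x_3$). By the previous lemma, these forms realize
\[
A(\e,2) \cong Skly_4(\alpha',\beta',\gamma') \quad \text{and} \quad A(\f,2) \cong Skly_4(\alpha,\beta,\gamma)
\]
as graded algebras, and both Sklyanin algebras are Koszul Artin-Schelter regular of global dimension $4$ under the hypothesis on $(\alpha,\beta,\gamma)$ and $(\alpha',\beta',\gamma')$, so \Cref{hyp}(a) is satisfied.

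Next, I would verify that the universal coaction framework applies: the generating spaces of $A(\e,2)$ and $A(\f,2)$ should be identified with $V^*$ and $V$ respectively, matching Notation~\ref{not}. Once this identification is in place, \Cref{H-Oisom}(a) applied to the pair $(\e,\f)$ yields the Hopf algebra isomorphism
\[
\cH({\sf s}_{\alpha'\beta'\gamma'}, {\sf s}_{\alpha\beta\gamma}) = \cH(\e,\f) \,\cong\, \mathcal{O}_{A(\e,2),A(\f,2)}(GL) \,\cong\, \mathcal{O}_{Skly_4(\alpha',\beta',\gamma'),Skly_4(\alpha,\beta,\gamma)}(GL),
\]
where the last isomorphism is simply a relabeling via \Cref{L:Skly4}.

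The only non-trivial point (and essentially the main obstacle, already handled in \Cref{L:Skly4}) is the verification that $Skly_4(\alpha,\beta,\gamma) \cong A({\sf s}_{\alpha\beta\gamma},2)$, since the preregular form ${\sf s}_{\alpha\beta\gamma}$ is defined in an ad hoc way via an explicit table of values. In the final proof I would not need to redo this computation; it suffices to cite \Cref{L:Skly4}. Thus the entire argument reduces to combining \Cref{H-Oisom}(a) with \Cref{L:Skly4}, and the proof is essentially a one-line application of these two results, analogous to the Sklyanin-$3$ case handled in \S\ref{Skly3}.
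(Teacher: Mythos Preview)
Your proposal is correct and matches the paper's approach exactly: the paper's proof is the single line ``follows from Theorem~\ref{H-Oisom} and Lemma~\ref{L:Skly4}'', which is precisely the combination you identify. Your additional remark about aligning the generating spaces with the $V$/$V^*$ conventions of Notation~\ref{not} is a fair bookkeeping point, but no further argument is needed.
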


\subsection{On Yang-Mills algebras} Throughout the section, $\kk = \mathbb R$ and $s \in \mathbb{Z}_{\ge 1}$.
In \cite{ConnesDubois}, Connes and Dubois-Violette examined a cubic algebra generated by the covariant derivatives of a generic Yang-Mills connection over the $(s+1)$-dimensional pseudo-Euclidean space, which is given as follows.

\begin{definition}[$\mathcal{YM}(\mathbb G)$] Let $\mathbb G=(g_{ij})_{0\le i,j\le s}$ be an invertible real symmetric matrix. The {\it Yang-Mills algebra}  $\mathcal {YM}(\mathbb G)$ is generated by $x_0, x_1, \dots, x_s$  in degree one with $(s+1)$ cubic relations
$$
\textstyle \sum_{i,j=0}^s ~g_{ij}[x_i,[x_j,x_k]]=0, 
$$
for all $k\in \{0,\dots, s\}$. 
\end{definition}

It is shown in \cite[Theorem 1]{ConnesDubois} that the Yang-Mills algebra is $3$-Koszul and AS-regular of global dimension $3$. 
Moreover, the relations of $\mathcal {YM}(\mathbb G)$ can be rewritten as 
\begin{align}\label{E:YangMills2}
\textstyle \sum_{j,k,l=0}^s ~(g_{ij}g_{kl}+g_{il}g_{jk}-2g_{ik}g_{jl})x_jx_kx_l=0, 
\end{align}
for all $i\in \{0,\cdots,s\}$. We define the associated multilinear form ${\sf g}: (\mathbb R^{s+1})^{\otimes 4}\to \mathbb R$ such that 
\begin{align}\label{E:YMform}
{\sf g}_{ijkl}=g_{ij}g_{kl}+g_{il}g_{jk}-2g_{ik}g_{jl}
\end{align}
for $i,j,k,l\in \{0,\cdots, s\}$. The following lemma is straightforward.

\begin{lemma}
For any invertible real symmetric matrix $\mathbb G$, the Yang-Mills algebra $\mathcal{YM}(\mathbb G)$ and the superpotential algebra $A({\sf g},3)$ are isomorphic. \qed
\end{lemma}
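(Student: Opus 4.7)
The strategy is simply to (i) verify that ${\sf g}$ is cyclic, so that the associated ${\sf g}^* \in (\mathbb{R}^{s+1})^{\otimes 4}$ is a (trivially-twisted) superpotential and $A({\sf g}, 3)$ is well-defined as a superpotential algebra, and then (ii) observe that the relation space $\partial^{m-N}(\kk\,{\sf g}^*) = \partial^1(\kk\,{\sf g}^*)$ is generated by exactly the relations \eqref{E:YangMills2} under the obvious identification of generators.

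For step (i), under the identification of \cref{super-prereg}(b), the component ${\sf g}^*_{ijkl}$ equals ${\sf g}_{ijkl} = g_{ij}g_{kl} + g_{il}g_{jk} - 2g_{ik}g_{jl}$. By \cref{twist-superpot} with $\phi = \id$, it suffices to verify that ${\sf g}_{jkli} = {\sf g}_{ijkl}$.  Using the symmetry $g_{pq} = g_{qp}$, we compute
\[
{\sf g}_{jkli} \;=\; g_{jk}g_{li} + g_{ji}g_{kl} - 2 g_{jl}g_{ki} \;=\; g_{ij}g_{kl} + g_{il}g_{jk} - 2g_{ik}g_{jl} \;=\; {\sf g}_{ijkl},
\]
so ${\sf g}^*$ is indeed a superpotential.

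For step (ii), with $m=4$ and $N=3$ we have $m-N=1$. By \cref{superpotential alg}, $\partial^{1}(\kk\,{\sf g}^*)$ is spanned by
\[
(e_i^* \otimes \id^{\otimes 3})({\sf g}^*) \;=\; \sum_{j,k,l=0}^{s} {\sf g}_{ijkl}\, x_j\, x_k\, x_l, \qquad i \in \{0,\dots,s\},
\]
upon identifying the basis $\{e_i\}$ of $\mathbb{R}^{s+1}$ with the generators $\{x_i\}$. These are precisely the rewritten Yang-Mills relations \eqref{E:YangMills2}, so $A({\sf g}, 3)$ and $\mathcal{YM}(\mathbb{G})$ are presented by the same generators and relations.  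No serious obstacle is anticipated: the only substantive check is cyclicity of ${\sf g}$, which is immediate from the symmetry of $\mathbb{G}$, and the matching of relations holds essentially by the definition of ${\sf g}$.
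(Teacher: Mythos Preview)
Your proposal is correct and is exactly the straightforward verification the paper has in mind: the paper gives no proof beyond the sentence ``The following lemma is straightforward'' and the \qed, so your two steps (checking cyclicity of ${\sf g}$ from the symmetry of $\mathbb G$, then matching $\partial^1(\kk\,{\sf g}^*)$ with the relations \eqref{E:YangMills2} by inspection of \eqref{E:YMform}) are precisely what is being left to the reader.
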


Now the {\it Yang-Mills quantum group} can be understood by the result below, which  follows from Theorem \ref{H-Oisom}.

\begin{proposition}
We have $\mathcal O_{\mathcal{YM}(\mathbb G'),\mathcal{YM}(\mathbb G)}(GL) \cong \mathcal H({\sf g}',{\sf g})$ as Hopf algebras.  \qed
\end{proposition}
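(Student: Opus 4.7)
My plan is to derive this isomorphism as a direct corollary of Theorem~\ref{H-Oisom}(a) applied to the pair of preregular forms $({\sf g}', {\sf g})$ associated with the Yang-Mills algebras $\mathcal{YM}(\mathbb{G}')$ and $\mathcal{YM}(\mathbb{G})$. The preceding lemma identifies $\mathcal{YM}(\mathbb G)\cong A({\sf g},3)$ and, analogously, $\mathcal{YM}(\mathbb G')\cong A({\sf g}',3)$, where both ${\sf g}$ and ${\sf g}'$ are $4$-linear forms on $\mathbb R^{s+1}$ with $m=4$ and $N=3$. Since the machinery of $\cH(\e,\f)$ requires the inputs to be preregular forms on dual vector spaces, I first need to recast these data appropriately and check the preregularity hypotheses in \Cref{preregular}.

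First I would verify that ${\sf g}$ is a preregular $4$-linear form. Its cyclicity (condition (b)) is immediate from the symmetry of $\mathbb{G}$ and the defining expression \eqref{E:YMform}: a direct check shows ${\sf g}_{ijkl} = {\sf g}_{jkli}$ so in fact ${\sf g}$ is $\phi$-cyclic with $\phi = \id$ (i.e., it is a genuine superpotential). Non-degeneracy (condition (a)) follows because if $\sum_{j,k,l}{\sf g}_{ijkl}\,\lambda_{jkl}=0$ for all choices of $(j,k,l)$ forces $\lambda_{i}=0$ via the invertibility of $\mathbb{G}$ — specifically one contracts against $g^{-1}$ entries to recover a relation of the form $\mathbb{G}\cdot(\text{vector})=0$. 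The same verification works verbatim for ${\sf g}'$ after swapping the roles of $V$ and $V^*$; here I would pick a basis so that ${\sf g}'$ is viewed as a form on $(V^*)^{\otimes 4}$ with the same formula in the dual indices, so that $A({\sf g}',3)$ is generated (as in \Cref{not}) by $V$ and $A({\sf g},3)$ is generated by $V^*$, matching the conventions of the theorem.

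With both ${\sf g}, {\sf g}'$ established as preregular forms and with the identifications $A({\sf g},3)=\mathcal{YM}(\mathbb{G})$, $A({\sf g}',3)=\mathcal{YM}(\mathbb{G}')$ in hand, \Cref{hyp} holds by \cite[Theorem~1]{ConnesDubois}. Then \Cref{H-Oisom}(a) directly yields
\[
\cH({\sf g}',{\sf g}) \;=\; \mathcal{O}_{A({\sf g}',3),\,A({\sf g},3)}(GL) \;=\; \mathcal{O}_{\mathcal{YM}(\mathbb{G}'),\,\mathcal{YM}(\mathbb{G})}(GL),
\]
which is the desired statement.

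The only non-routine step I anticipate is the careful verification of condition~(a) of \Cref{preregular} for ${\sf g}$ — that is, ensuring the $4$-linear form remains nondegenerate in the first slot despite the symmetrization involved in \eqref{E:YMform}. This amounts to showing that the slot-one partial contraction map $V\to (V^*)^{\otimes 3}$, $v_i\mapsto \sum_{jkl}{\sf g}_{ijkl}\theta_j\otimes\theta_k\otimes\theta_l$, is injective; by composing with appropriate contractions using $\mathbb{G}^{-1}$ one reduces this to the invertibility of $\mathbb{G}$, which is assumed. Once preregularity is confirmed, the isomorphism is an immediate specialization of the general framework.
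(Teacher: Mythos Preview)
Your proposal is correct and follows essentially the same route as the paper: identify the Yang-Mills algebras as superpotential algebras via the preceding lemma, then invoke \Cref{H-Oisom}(a). The only difference is that you verify preregularity of ${\sf g}$ directly (cyclicity with $\phi=\id$ is indeed immediate from the symmetry of $\bG$, and your sketched nondegeneracy argument via contraction against $\bG^{-1}$ works, yielding $s\cdot d_j=0$ with $s\ge 1$), whereas the paper implicitly relies on \Cref{propA} together with the $3$-Koszul AS-regularity of $\mathcal{YM}(\bG)$ from \cite{ConnesDubois} to guarantee that the form is preregular.
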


\section*{Acknowledgments}
A. Chirvasitu and C. Walton are both partially supported by the National Science Foundation by respective grants \#1565226 and \#1550306. X. Wang is partially supported with an AMS-Simons Travel grant.

\bibliography{QG-twopreregularforms-biblio}

\end{document}